\documentclass{article}
\usepackage[margin=1in]{geometry} 
\usepackage{amsmath,amsthm,amssymb,enumitem, bbm, amsthm}
\usepackage[style=alphabetic]{biblatex}
\usepackage{hyperref}
\hypersetup{
	colorlinks=true,
	linkcolor=blue,
	citecolor=blue,
	filecolor=magenta,      
	urlcolor=cyan,
	linktoc=page
}
\usepackage{graphics}
\usepackage{tikz}
\usepackage[dvipsnames]{xcolor}
\usepackage{ stmaryrd }
\usepackage{xcolor}
\usepackage{placeins}
\usepackage{graphicx}
\usepackage{epstopdf}
\usepackage{float}
\usepackage{caption}
\usepackage{subcaption}
\usepackage{geometry}
\usepackage{parskip}
\renewcommand{\P}{\mathbb{P}}

\newcommand{\hmA}{\hat{\mathcal A}}
  
\newcommand{\R}{\mathbb{R}}  
\newcommand{\Z}{\mathbb{Z}}
\newcommand{\N}{\mathbb{N}}
\newcommand{\Q}{\mathbb{Q}}

\newcommand{\prob}{\mathbb{P}}
\newcommand{\Lag}{\mathcal{L}}
\newcommand{\A}{\mathcal{A}}
\newcommand{\cZ}{\mathcal{Z}}
\newcommand{\xto}{\xrightarrow}

\newcommand{\E}{\mathbb{E}}

\newcommand{\Sheet}{\mathcal{S}}

\newcommand{\x}{\mathbf{x}}
\newcommand{\y}{\mathbf{y}}

\newcommand{\ext}{\textrm{ext}}

\newcommand{\cA}{\mathcal{A}}

\newcommand{\cS}{\mathcal{S}}
\newcommand{\cL}{\mathcal{L}}

\newcommand{\fl}[1]{{\left\lfloor #1 \right\rfloor}}

\newcommand{\sset}{\subset}
\newcommand{\lf}{\left}
\newcommand{\rg}{\right}

\usepackage{MnSymbol}

\DeclareMathOperator*{\sh}{sh}

\newcommand{\bx}{\mathbf{x}}
\newcommand{\by}{\mathbf{y}}

\newcommand{\bp}{\mathbf{p}}
\newcommand{\bq}{\mathbf{q}}

\newcommand{\bu}{\mathbf{u}}

\newcommand{\II}[1]{\llbracket #1 \rrbracket}

\usepackage{amsthm}

\newtheoremstyle{noindentplain}
{3pt}        
{3pt}        
{\itshape}   
{0pt}        
{\bfseries}  
{.}          
{ }          
{}           

\newtheoremstyle{noindentdefinition}
{3pt}        
{3pt}        
{\normalfont}
{0pt}        
{\bfseries}  
{.}          
{ }          
{}           

\theoremstyle{noindentplain}
\newtheorem{thm}{Theorem}[section]
\newtheorem{cor}[thm]{Corollary}
 
\newtheorem{prop}[thm]{Proposition}
\newtheorem{lemma}[thm]{Lemma}

\theoremstyle{noindentdefinition}
\newtheorem{defn}[thm]{Definition}
\newtheorem{remark}[thm]{Remark}

\numberwithin{equation}{section}

\newcommand*{\defeq}{\mathrel{\vcenter{\baselineskip0.5ex \lineskiplimit0pt
			\hbox{\scriptsize.}\hbox{\scriptsize.}}}%
	=}

\newcounter{CommentCounter}

\newif\ifShowComments
\ShowCommentstrue

\title{Down-the-middle isometries for the Airy and KPZ sheets}
\author{Duncan Dauvergne \footnote{University of Toronto, Department of Mathematics, Toronto, Canada. e-mail: duncan.dauvergne@utoronto.ca} \and Fardin Syed \footnote{University of Toronto, Department of Mathematics, Toronto, Canada. e-mail: fardin.syed@mail.utoronto.ca}}

\begin{document}

	\maketitle

	\begin{abstract}
		We find a new coupling between the Airy sheet $\mathcal{S}$ and the Airy line ensemble $\{ \mathcal{A}_n\}_{n \in \N}$, which we call the down-the-middle isometry. In this coupling, the Airy sheet is encoded by last passage values that start at $\mathcal{A}_k(0)$ for some $k \in \N$ and end on the top line $\mathcal{A}_1$. Using this coupling, we give a new construction of the (extended) Airy sheet as a scaling limit of Brownian last passage percolation and study coalescence of geodesics in the directed landscape. This coupling extends to discrete polymers and last passage percolation models, as well as to the KPZ line ensemble and KPZ sheet.
	\end{abstract}

	\begin{figure}[htbp]
		\centering
		\includegraphics[width=0.4\textwidth]{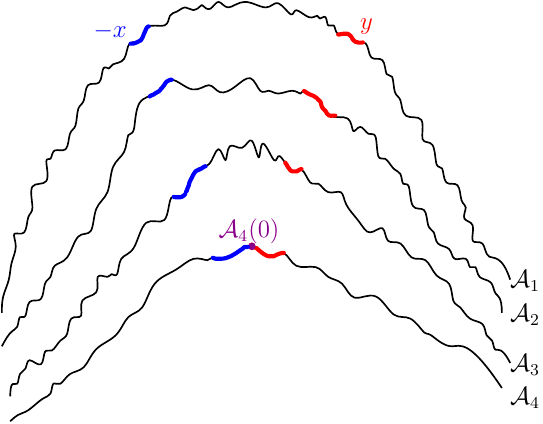} 
		\caption{A sketch of the down-the-middle isometry for the Airy sheet value $\mathcal{S}(x, y)$. The representation includes two last passage values in the Airy line ensemble, one ending at $\mathcal{A}_1(-x)$ and the other ending at $\mathcal{A}_1(y)$, starting at a common point $\mathcal{A}_k(0)$. The maximum value attained over all such $k$ exactly corresponds to $\mathcal{S}(x, y)$ in the coupling described in Theorem \ref{T:shadow-sheet-simple}.} 
		\label{fig:DtM_AirySketch}
	\end{figure} 
	
	\tableofcontents
	
	\section{Introduction}

	The KPZ (Kardar-Parisi-Zhang) universality class is a family of one-dimensional random growth models and two-dimensional random metric and polymer models which exhibit the same universal behavior under rescaling. Our understanding of the KPZ universality class has improved dramatically in the last thirty years, propelled by the discovery of a handful of exactly solvable models. Baik, Deift, and Johansson \cite{baik1999distribution} were the first group to harness this exact solvability, confirming the characteristic KPZ scaling exponents and proving a one-point limit theorem for the longest increasing subsequence in a uniform permutation. Since then, similar limit theorems have been shown for a variety of solvable models, including certain exclusion processes, directed polymers, last passage percolation, and the KPZ equation itself, see \cite{johansson2000shape, PSale, tracy2009asymptotics, amir2011probability} for a few highlights. The full scaling limit for random growth models in the KPZ class, the KPZ fixed point, was constructed by \cite{matetski2016kpz}, and the full scaling limit for random metrics and coupled random growth, the directed landscape, was constructed by \cite{dauvergne2022directed}. 
	
	One of the highlights of the present work is a new construction of the directed landscape. Our proof bypasses much of the technical difficulty of \cite{dauvergne2022directed} and the companion paper \cite{dauvergne2021bulk}, and gives access to new geometric information. The construction is based on a down-the-middle (DtM) isometry for the Robinson-Schensted-Knuth correspondence, which also holds for discrete models of last passage percolation and directed polymer models. In the setting of the KPZ equation, a DtM isometry also allows us to construct the KPZ sheet from the KPZ line ensemble.
	
	\subsection{The DtM representation for the directed landscape}
	
	To make things more concrete, consider a sequence of continuous functions $f = (f_i:\R \to \R, i \in \Z)$. For $x \le y$ and $m \ge n$, a path $\pi$ from $(x, m)$ to $(y, n)$ is a nonincreasing cadlag function from $[x, y]$ to $\II{n, m}$. Letting $t_i = \inf\{t \in [x, y] : \pi(t) \le i\}$, we define the length of $\pi$ with respect to the environment $f$ by 
	$$
	\|\pi\|_f := \sum_{i = n}^m f_i(t_{i-1}) - f_i(t_i),
	$$
	and define the last passage value in $f$ from $(x, m)$ to $(y, n)$ by 
	$$
	f[(x, m) \to (y, n)] = \sup_\pi \|\pi\|_f,
	$$
	where the supremum is over all paths from $(x, m)$ to $(y, n)$.
	Here and throughout we will picture $f$ with matrix coordinates, so that the index $m$ increases as we move down the page, and our maximum is over up-right paths, as in Figure \ref{fig:LP-basic}. Last passage percolation is best thought of as a potentially real-valued metric, where we can only assign distances to points in a certain order. The triangle inequality is reversed since we maximize, rather than minimize, path length.
	
	When the functions $f_i = B_i$ are independent Brownian motions, this model is known as Brownian last passage percolation (LPP). Brownian LPP is an exactly solvable, random directed metric in the KPZ universality class. Its exact solvability comes from a version of the celebrated Robinson-Schensted-Knuth (RSK) correspondence. More precisely, if we consider only the functions $B = (B_1, \dots, B_n)$ on the interval $[0, \infty)$, then the semi-discrete RSK correspondence is a bijection $B \mapsto W$ which maps $B$ to a sequence of non-intersecting Brownian motions $W_i:[0, \infty) \to \R, W_1 \ge W_2 \ge \cdots \ge W_n$. The top line satisfies 
	$$
	W_1(x) = B[(0,n) \to (x, 1)],
	$$
	and the remaining lines have \textit{multi-path} last passage interpretations, see Section \ref{SS:semi-discrete-RSK}. There are tractable exact formulas for non-intersecting Brownian motions, which allow us to take asymptotics for $W$.
	
	\begin{figure}
		\centering
		\includegraphics[width=3in]{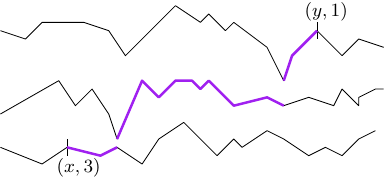}
		\caption{An example of last passage across three functions. The purple path is the last passage path from $(x, 3)$ to $(y, 1)$.}
		\label{fig:LP-basic}
	\end{figure}
	
	While not all exact solvability for KPZ models comes from the RSK correspondence, the first limit theorems all relied on RSK, see e.g. \cite{baik1999distribution, johansson2000shape}, and it remains a powerful tool. A high watermark for this approach was the construction of the Airy line ensemble \cite{PSale, CH} as the full scaling limit of the $W$ side of the RSK map. The parabolic Airy line ensemble is a random sequence of continuous functions $\cA = (\mathcal A_i:\R \to \R)$ ordered so that $\cA_1 > \cA_2 > \cdots$. The process $x \mapsto \cA(x) + x^2$ is stationary.
	
	The directed landscape $\mathcal L:\R^4_\uparrow \to \R$, $\R^4_\uparrow:= \{(x, s; y, t)  \in \R^4 : s < t\}$ was constructed in \cite{dauvergne2022directed} as the full scaling limit of Brownian LPP. The original construction goes through RSK, by means of the RSK isometry \cite{noumi2002, biane2005littelmann, dauvergne2022directed, DNV2}: for all $x \le y \in [0, \infty)$,
	\begin{equation}
		\label{E:BW-iso}
		B[(x,n) \to (y, 1)] = W[(x, n) \to (y, 1)].
	\end{equation}
	Much of the work in \cite{dauvergne2022directed} and the companion paper \cite{dauvergne2021bulk} is devoted to taking a scaling limit of this isometry. The final result of this work is a probabilistic description of the Airy sheet $\cS(\cdot, \cdot) := \cL(\cdot, 0; \cdot, 1)$ in terms of Busemann functions across the Airy line ensemble. Given a uniqueness theorem for the Airy sheet, the full construction of the directed landscape follows from soft and straightforward geometric arguments.
	
	The goal of the present paper is to explore a different \textit{down-the-middle (DtM)} isometry for RSK. In the discrete setting, this isometry has a slightly more complicated presentation than \eqref{E:BW-iso} above. However, it is much more suitable for taking limits. In particular, we obtain a new description of the Airy sheet, together with a new proof of convergence.
	\begin{thm}\label{T:shadow-sheet-simple}
		Let $B = (B_1, \dots, B_n)$ be a family of independent two-sided Brownian motions, and define
		\begin{align*}
			S_n(x, y ) &= n^{-1/3} \left( B[(2 x n^{2/3} , n) \to (n + 2 y n^{2/3} , 1) ] - 2n -  2(y - x) n^{2/3} \right).
		\end{align*}
		Then $S_n$ converges in law, in the uniform-on-compact topology, to a limit $\mathcal{S}$, called the \textbf{Airy sheet}. The law of $\mathcal{S}$ is uniquely characterized by the following two properties:
		\begin{itemize}[nosep]
			\item (Stationarity) For all $c \in \R$, $\mathcal{S}(\cdot + c , \cdot + c) \stackrel{d}{=}  \mathcal{S} (\cdot, \cdot)$.
			\item (Down-the-middle isometry) $\cS$ can be coupled to a parabolic Airy line ensemble $\cA$ so that for all $x, y \ge 0$:
			\begin{align} \label{E:DtM_AirySheet_simple}
				\mathcal{S}(x, y) 
				&=\max_{k \in \N}  \tilde{\mathcal{A}}[(0,k) \to (x, 1)] + \mathcal{A}_k(0) + \mathcal{A}[(0, k) \to (y, 1)] .
			\end{align}
			Here $\tilde \cA$ is a simple reflection of $\cA$, $\tilde{\cA}_i(x) := \cA_i(-x)$, and the maximum above is almost surely attained for all $x, y \ge 0$.
		\end{itemize}
	\end{thm}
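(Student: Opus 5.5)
The plan has three parts: establish a discrete down-the-middle identity for Brownian LPP, pass it to the limit using convergence to the Airy line ensemble, and then show that stationarity together with the limiting identity pins down the law.

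\emph{Discrete identity.} Split each $B_i$ at time $0$ into a right half $B_i|_{[0,\infty)}$ and a reflected left half $\tilde B_i(t) = B_i(-t)$, $t \ge 0$. Apply semi-discrete RSK separately to each half, giving non-intersecting ensembles $W$ and $\tilde W$, which are independent. For $x \le 0 \le y$, any path from $(x,n)$ to $(y,1)$ crosses time $0$ at some line $k$. Splitting there gives
\[
B[(x,n)\to(y,1)] = \max_k \bigl( \tilde B[(0,n)\to(-x,k)]\text{-type term} + B[(0,k)\to(y,1)] \bigr).
\]
This is not yet of the right form. The multi-path RSK isometry (Section \ref{SS:semi-discrete-RSK}) converts last passage from the bottom corner over $\II{k,n}$ into $W$-data, and I expect the correct discrete statement to be
\[
B[(x,n)\to(y,1)] = \max_k \tilde W[(0,k)\to(-x,1)] + W_k(0) + W[(0,k)\to(y,1)].
\]
Here $W_k(0)$ is zero at time $0$, so the middle term should really come from a third ensemble obtained by running RSK from a point in the middle. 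I would therefore first prove a deterministic lemma: apply RSK on $[-a,\infty)$ with $a$ large, so that the line values at $0$ encode the "down" part. The lemma would be proved by induction on $n$ using the Pitman/Greene-type greedy property of the RSK maps $(f,g)\mapsto (f\wedge g, f\vee g)$-style operations. Checking this combinatorial isometry is the first step.

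\emph{Limit.} After the KPZ rescaling $x\mapsto 2xn^{2/3}$, centred at time $n$ (equivalently, after shifting so that the split point sits in the bulk), the ensemble $W$ converges to a parabolic Airy line ensemble $\cA$, by \cite{PSale,CH} and Brownian scaling of non-intersecting Brownian motions. The identity should pass to the limit term by term, since last passage values are continuous in the uniform-on-compact topology for finitely many lines. The main obstacle is controlling the maximum over $k$ uniformly in $n$. I need tightness: with high probability the maximizing $k$ stays bounded on compact sets of $(x,y)$. I would try the following estimate. $\cA_k(0) \approx -c k^{2/3}$, by rigidity of the Airy ensemble and its prelimit, while the gains $\cA[(0,k)\to(y,1)] - \cA_1(y)$ grow only like $2\sqrt{2k y}$-type terms, which is smaller. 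This forces the argmax to be $O(1)$. Tightness of $S_n$ itself follows from standard modulus-of-continuity estimates for Brownian LPP. Together these give subsequential limits satisfying \eqref{E:DtM_AirySheet_simple}, with the maximum attained.

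\emph{Uniqueness and remaining points.} The identity determines $\cS$ on $[0,\infty)^2$ as a measurable function of $\cA$, so the law on that quadrant is fixed. Stationarity of $S_n$ under $(x,y)\mapsto(x+c,y+c)$ is immediate from shift invariance of Brownian increments, and it persists in the limit. Any compact set is a shift of a subset of the quadrant, so the finite-dimensional laws, and hence the whole law, are determined. All subsequential limits therefore agree, which gives convergence.
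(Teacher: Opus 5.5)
Your outer architecture is the paper's: a prelimiting identity, passage to the limit with a tight maximizing index, and uniqueness from stationarity plus determination on the quadrant. Your uniqueness step is fine as written. The gap is the prelimiting identity itself, which you guess rather than derive, and which cannot be reached the way you suggest.

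Splitting the Brownian path where it crosses the split time leaves you with two pieces: $B[(0,k)\to(y,1)]$ on lines $1,\dots,k$, and a piece from line $n$ up to line $k$. Neither is encoded by any melon, because the RSK isometry (Lemma \ref{lemma:RSK_Isometry_semidiscrete}) only preserves last passage values from the bottom line to the top line. The paper reverses the order (Lemma \ref{L:mc-consequence}). Take the melon $Wf$ started at time $0$, to the left of all starting points. Apply the isometry to the whole path, and only then split inside $Wf$ at $b=n$ by metric composition:
\[
f[(x,n)\to(y,1)]=\max_k \, Wf[(x,n)\to(b,k)]+Wf[(b,k)\to(y,1)], \qquad x\le b\le y.
\]
The second term is already the ``up'' term. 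The first term, from the bottom corner to the right edge of the melon, is the real content. By Lemma \ref{C:f-corollary} (in the single-path case, \cite[Lemma 5.3]{dauvergne2022directed}) it equals $\tilde W^b f[(0,k)\to(x,1)]+\tilde W^b f_k(0)$. Here $\tilde W^bf(s)=WR^bf(b-s)$ is the melon of the $\pi$-rotated environment on $[0,b]$, characterized by $\sum_{i\le\ell}\tilde W^bf_i(s)=f[(s^\ell,n)\to(b^\ell,1)]$.

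Two consequences follow. The middle term is $\tilde W^bf_k(0)=Wf_k(b)$. The ``down'' problem is read off the rotated melon near its far end, next to the starting times $2xn^{2/3}$, at distance of order $n$ from the split. It is not read off an ensemble started at the split point, as in your $\tilde W[(0,k)\to(-x,1)]$. The paper obtains this from a geometric-RSK rotation identity for shadow partition functions (Theorem \ref{T:rotation-isometry}), followed by tropical and semi-discrete limits. Nothing in your sketch (an induction on $n$ through greedy properties of RSK) identifies or proves such a statement.

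Second, even with the right identity, you need the left and right ensembles to glue into \emph{one} two-sided parabolic Airy line ensemble. The statement uses a single $\cA$ with $\tilde\cA_i(x)=\cA_i(-x)$ and a shared $\cA_k(0)$. Convergence of $W$ alone does not give this. The two pieces are also strongly dependent (already $\tilde W^bf(0)=Wf(b)$), so your independence claim points the wrong way.

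This is Lemma \ref{L:An-limit}: the glued rescaled ensemble has exactly the law of rescaled Dyson Brownian motion on $[-n^{1/3}/2,\infty)$. Its proof uses three facts:
\begin{enumerate}
\item time-reversal invariance of Brownian increments;
\item the Markov property of Dyson Brownian motion;
\item $Wf|_{[0,b]}$ and $\tilde W^bf$ are measurable functions of each other, since both encode all values $f[(x^k,n)\to(y^k,1)]$ with $x,y\in[0,b]$ by the RSK isometry.
\end{enumerate}

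Finally, your $k^{2/3}$ versus $\sqrt{k}$ heuristic for the maximizing index is the right one. However, it has to hold uniformly in $n$ and simultaneously over all $k\le n$. The paper gets this from three inputs: GUE rigidity bounds on $\tilde A^n_j(0)$, a modulus of continuity for every line of Dyson Brownian motion with Gaussian tails uniform in the line index, and tightness of $\inf_K S_n$.
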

	The convergence of $S_n$ was originally proven in \cite[Theorem 1.3]{dauvergne2022directed}, with the aforementioned `Busemann function' description for the limit. A stronger version of this theorem is stated as Theorem \ref{T:shadow-sheet}; the coupling there gives a bijection between the Airy line ensemble and the extended Airy sheet.
	
	The identity \eqref{E:DtM_AirySheet_simple} can be viewed as an LPP value between the points $(-x, 1)$ and $(y, 1)$ in the Airy line ensemble $\mathcal A$. Here paths first go down from $(-x, 1)$ until they hit the vertical axis, and then travel back up to $(y, 1)$. Path length is obtained by adding up all Airy increments along the path together with the value of the turnaround point at $0$, with negative signs included for the increments obtained while the path travels downwards. See Figure \ref{fig:DtM_AirySketch} for an example.

	The astute reader may notice that the coupling in \eqref{E:DtM_AirySheet} is not a standard one, e.g. we do not have $\cS(0, x) = \cA_1(x)$ for all $x \in \R$. Rather, this only holds for $x \ge 0$, and for $x \le 0$ we have $\cS(|x|, 0) = \cA_1(x)$. Higher-index Airy lines have a similar relationship to the extended Airy sheet, see \eqref{E:AiryLE_retrieval}. The existence of these kinds of couplings is a manifestation of shift-invariance for exactly solvable KPZ models, e.g.\ see \cite{borodin2022shift, dauvergne2022hidden, galashin2021symmetries, zhang2023shift, he2026shift}. The specific shift invariance we use here follows from results of \cite{dauvergne2022hidden}. We give a quick self-contained proof in Lemma \ref{L:An-limit}.   
	
	\begin{remark}[Proof ingredients]
		\label{R:ingredients}
		Just as in \cite{dauvergne2022directed}, one of the main ingredients in our proof is uniform-on-compact convergence of the edge of non-intersecting Brownian motions to the Airy line ensemble, shown in \cite{CH}. From this input, tightness of $S_n$ follows by short soft arguments, see \cite[Section 4]{dauvergne2021scaling}. Rather than rehash those arguments we take tightness as a black box. Of course, the main novel piece of the proof is showing a prelimiting version of \eqref{E:DtM_AirySheet_simple} between Brownian LPP and non-intersecting Brownian motions. Given this input, to check that this identity passes to the limit we just need to show that the maximizing index is tight. Here we give a two-page proof by applying recent strong results of \cite{wu2024applications} together with straightforward estimates on GUE eigenvalues. However, even without the input from \cite{wu2024applications}, the tightness problem is quite manageable, and we expect that many methods could resolve it. For example, in Appendix \ref{sec:shadow_sheet_geomproof}, we present a different geometric approach which shows that this problem is equivalent to proving a lower bound on the growth rate of a geodesic $k$-melon. The latter problem was understood in \cite{basu_interlacing}. 
	\end{remark}

		\subsection{The DtM representation for other models}
		
		As with the usual RSK isometry \cite{noumi2002, dauvergne2022hidden, corwin2020invariance}, the down-the-middle isometry is a property of the (lattice) geometric RSK correspondence. It passes to all degenerations, including the standard RSK correspondence, and the KPZ equation/sheet, where it simplifies. We state a special case of the down-the-middle isometry here, and refer the reader to Theorem \ref{thm:RSK_Representation_discrete} for the statement in full generality.  
		
		Let $M: \II{1, n}  \times  \II{1, m}  \to (0, \infty), u \mapsto M_u$ be an $m \times n$ matrix.  For $p = (x, s), q = (y, t) \in \Z^2$ with $x \le y$ and $s \ge t$, define the \emph{polymer partition function}
		\begin{align*}
			Z_M(p; q) = \sum_{\pi : p \to q} \prod_{u \in \pi} M_u,
		\end{align*}
		where the sum is over all up-right lattice paths $\pi$ starting at $p$ and ending at $q$. Note that as in the semi-discrete setting, we picture paths in matrix coordinates.
		
		For vectors $\bp, \bq \in (\Z^2)^k$ with the same ordering constraints on each pair $(p_i, q_i)$, we similarly define the $k$-path partition function
		$$
		Z_M(\bp; \bq) = \sum_{\pi : \bp \to \bq} \prod_{u \in \pi_1 \cup \dots \cup \pi_k} M_u,
		$$ 
		where now the sum is over all $k$-tuples of disjoint up-right lattice paths $\pi = (\pi_1, \dots, \pi_k)$, where $\pi_i$ starts at $p_i$ and ends at $q_i$.
		
		Next, define $\Gamma_{n, m} = \{ (x, y) \in  \II{1, n} \times \II{1, m} :x \ge y\}$. Given the matrix $M$, there is a unique function $\Phi(M):\Gamma_{n, m} \to (0, \infty)$ such that for all $j \in \II{1, n}$ and $k \in \II{1, j \wedge m}$ we have the equality
		\begin{align*}
			Z_M(((1, m), \dots, (k, m)); ((j - k + 1, 1), \dots (j, 1))) 
			=   Z_{\Phi(M)}(((1, 1), \dots, (k , k)); ((j - k + 1, 1), \dots (j, 1))),
		\end{align*}
		See Section \ref{SS:geometric-RSK} for a more explicit construction of $\Phi$.
		The geometric RSK correspondence and its variants can be constructed from this map $\Phi$, together with rotations and reflections. For the purposes of our next theorem, we let $P^s:\Gamma_{n, m} \to (0, \infty)$ be the output of first applying a 180-degree rotation to $M$ and then applying $\Phi$, and let $Q:\Gamma_{m, n} \to (0, \infty)$ be the output of first reflecting $M$ across the diagonal $x = m + 1 - y$, and then applying $\Phi$. We also set
		$$
		\lambda_i = Z_{P^s}(i, i; n, i) = Z_Q(i, i; m, i).
		$$
		The map $M \mapsto (P^s, Q)$ is invertible. This is not quite the standard geometric RSK correspondence, but rather an application of geometric RSK followed by a Sch\"utzenberger involution on $P$. This is also known as the geometric Burge correspondence, see \cite{bisi2020geometric} for background and probabilistic properties. The standard RSK/Burge correspondences are recovered from this description if we replace every instance of $+$ and $\times$ above with $\max$ and $+$, respectively. The functions $P^s$ and $Q$ are then essentially two Young tableaux, and $(\lambda_1, \dots, \lambda_{n \wedge m})$ is their shared shape.
		
		Finally, for $x \le y$ and $k \ge 1$ we define the \textit{shadow partition function}
		$$
		\check Z_M(x, 1; y, k) = \sum_{\tau:(x, 1) \to (y, k)} \prod_{u \in C(\tau)} M_u,
		$$
		where the sum is over all lattice paths $\tau = (\tau_0, \dots, \tau_{y-x})$ from $(x, 1)$ to $(y, k)$ with $\tau_{i+1} - \tau_i \in \{(1, 0), (1, 1)\}$ for all $i$, and $C(\tau) = \{\tau_i :  i \ge 1, \tau_i - \tau_{i-1} = (1, 0)\}$.
		
		\begin{thm}\label{T:DTM_Discrete_Informal}
			In the above setup, for all $x \in \II{1, n}$ and $y \in \II{1, m}$ we have the identity
			\begin{align*}
				Z_M(x, m; n, y) &= \sum_{k = 1}^{x \wedge y} \check Z_{(P^s)^{-1}}(n - x + 1, 1; n, k) \cdot \lambda_k \cdot \check Z_{Q^{-1}}(m - y + 1, 1; m, k). 
			\end{align*}
			Here $(P^s)^{-1}, Q^{-1}$ are obtained from $P^s, Q$ by applying the inverse map $x \mapsto 1/x$ entry-wise.
		\end{thm}
		An identical statement holds for the usual RSK correspondence if we replace the $(+, \times)$-algebra with the $(\max, +)$-algebra by taking a zero-temperature limit.

		After proving Theorem \ref{T:DTM_Discrete_Informal} (or rather its generalization in Theorem \ref{thm:RSK_Representation_discrete}), we found a weaker version of the theorem in \cite[Equation 3.93]{noumi2002}. However our proof method, which is based on the Desnanot-Jacobi determinant identity (Lemma \ref{lemma:desnanot_jacobi}), differs significantly and generalizes better to continuum settings. In particular, our argument extends to give a direct proof that a DtM Representation holds between the KPZ sheet and the KPZ line ensemble. Again, a more general version of this theorem is given in the body of the paper as Theorem \ref{thm:KPZ_Case}. All objects in the forthcoming theorem are introduced precisely in Section \ref{sec:KPZ_Sheet}.
		
		\begin{thm} \label{thm:KPZ_Case_simply}
			Let $\{ \mathcal{H}^{(t)}_n \}_{n \in \N}$ be the KPZ line ensemble (normalized with the homogeneous Gibbs property), and let $\mathcal{H}^{(t)}(x, y)$ be the KPZ sheet (see Section \ref{sec:KPZ_Sheet} for precise definitions). Let
			\begin{align*}
				&\mathcal{Z}_n^{(t)} =  e^{\mathcal{H}_n^{(t)}}  , \qquad M^{(t)} (x, y) = e^{\mathcal{H}^{(t)} (x, y)}
			\end{align*}
			denote the corresponding stochastic heat (SH) line ensemble and SH sheet. Then there exists a coupling between the SH line ensemble and the SH sheet such that, for every $x, y \geq 0$,
			\begin{align}
				\label{E:KPZ-simple-intro}
				M^{(t)}(x, y) &=  \sum_{k \in \N} \tilde{\mathcal{Z}}^{(t)} \{ (0, k) \to (x, 1)\} \cdot \mathcal{Z}^{(t)}_{k}(0)   \cdot  \mathcal{Z}^{(t)} \{ (0, k) \to (y, 1)\},
			\end{align}
			where $\tilde{\mathcal{Z}}^{(t)}_n(x) = \mathcal{Z}_{n}^{(t)}(-x)$, and $\mathcal{Z}^{(t)} \{(\cdots) \to (\cdots)\}$ denotes a partition function across the SH line ensemble.
		\end{thm}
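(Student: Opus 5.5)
The plan is to prove Theorem \ref{thm:KPZ_Case_simply} by passing a prelimiting discrete identity to the limit. The prelimit is the O'Connell--Yor semi-discrete polymer, or equivalently an intermediate-disorder scaling of the log-gamma polymer, to which Theorem \ref{T:DTM_Discrete_Informal} (in its general form, Theorem \ref{thm:RSK_Representation_discrete}) applies.

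The first step is to establish a semi-discrete DtM identity for the O'Connell--Yor polymer. Take independent two-sided Brownian motions $B_1,\dots,B_N$ and the geometric semi-discrete RSK map $B\mapsto W$, where $W$ is the O'Connell--Yor line ensemble. Run it with time reversed on $(-\infty,0]$ and forward on $[0,\infty)$, and glue at $0$. The claim is
\[
Z_B[(-x,N)\to(y,1)]=\sum_k Z_{\tilde W}\{(0,k)\to(x,1)\}\,e^{W_k(0)}\,Z_W\{(0,k)\to(y,1)\}.
\]
I would obtain this either as the continuum limit of Theorem \ref{thm:RSK_Representation_discrete}, with matrix entries $e^{\text{small Gaussian}}$ under Donsker scaling, or directly. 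The direct route splits the path at time $0$. The factor $\sum_k$ then comes from the Cauchy--Binet/Desnanot--Jacobi expansion of the polymer into multi-path partition functions on each side. The geometric RSK isometry converts multi-path partition functions in $B$ on $[0,y]$ into ones in $W$. The shadow partition functions become ordinary semi-discrete partition functions in $W$ started at $(0,k)$, as in Lemma \ref{lemma:desnanot_jacobi}.

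Next, scale. By Nica/Quastel et al., the O'Connell--Yor line ensemble, centered and rescaled with $N\to\infty$ at fixed $t$, converges to the KPZ line ensemble $\mathcal{H}^{(t)}$. The polymer partition functions $Z_B$ from $(-x,N)$ to $(y,1)$ converge jointly to the SHE sheet $M^{(t)}(x,y)$. This is the definition of the KPZ sheet via the multi-parameter convergence of Alberts--Khanin--Quastel and Wu. Each individual term of the sum converges, because partition functions across the line ensemble are continuous functionals under uniform-on-compact convergence. Taking a joint subsequential limit of $(Z_B, W)$ then gives the coupling, provided the sum over $k$ can be exchanged with the limit. The reflection $\tilde{\mathcal Z}$ arises because the left half of the ensemble, run backward from $0$, appears with argument $-x$.

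The main obstacle is tightness of the tail in $k$: we need $\lim_{K\to\infty}\sup_N$ of the probability that $\sum_{k>K}(\cdots)$ exceeds $\epsilon$ to be $0$, uniformly for $x,y$ in compacts. I would bound each term by $e^{\text{(top-line partition function)}}$ times the factor $e^{\mathcal H_k(0)}$. I would use the known lower-curve decay $\mathcal H^{(t)}_k(0)\le -c\,k\log k$ uniformly in $N$ (or Wu's tail bounds for the KPZ line ensemble), together with the fact that a partition function from $(0,k)$ to $(y,1)$ is at most $e^{\max}$ times a volume factor $y^{k}/k!$. The resulting superexponential decay is summable. If the prelimiting uniform bound on the lower curves is unavailable, a fallback is to prove the identity for finite $N$ only for $k\le K$ with error estimates from the discrete Desnanot--Jacobi expansion, and then to control the remainder via monotonicity, since all terms are positive.
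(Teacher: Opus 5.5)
Your outline is reasonable, but there is a genuine gap at the step you yourself flag as the main obstacle: the tail control in $k$. The estimate you propose for it does not hold. In the normalization of the statement, Proposition~\ref{prop:KPZ_blkheight} centers $\mathcal H^{(t)}_k(0)$ at $\log\bigl((k-1)!\,t^{1-k}\bigr)+t/24$. So the lower curves \emph{grow} like $k\log k$, and there is no decay $\mathcal H^{(t)}_k(0)\le -ck\log k$.

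Nor are path weights controlled by the top line, since the SH line ensemble is not ordered. Expanding the weights, the $k$th summand equals $\mathcal Z_1(-x)\mathcal Z_1(y)\,\mathcal Z_k(0)^{-1} I_k(x) J_k(y)$, where $J_k(y)=\int_{0\le t_{k-1}\le\cdots\le t_1\le y}\prod_{i<k}\mathcal Z_{i+1}(t_i)/\mathcal Z_i(t_i)\,dt$ and $I_k$ is its reflection. The gap ratios $\mathcal Z_{i+1}/\mathcal Z_i$ are typically of size $i/t$, which is large for $i\gg t$. The summand is of order $(xy/t)^{k-1}/(k-1)!$ only because factorials cancel between $\mathcal Z_k(0)$ and the two integrals.

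A valid term-wise bound would need something like $\mathcal H_k(0)\le(2-\delta)\log (k-1)!$, together with oscillation bounds on $[-a,a]$ for every line $i\le k$. Because you pass the sum to the limit from $N$ lines, you would need these uniformly over all $k\le N$ and all $N$. That includes $k\asymp N$, where the rescaled O'Connell--Yor ensemble is not close to the KPZ line ensemble at all. Nothing of this kind is available; even for the limiting ensemble, uniform-in-$k$ control of $\mathcal H_k(0)$ is the content of \cite{Syed_2025}.

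The positivity fallback does not close the gap. It only gives $\sum_{k\le K}(\cdots)\le M^{(t)}(x,y)$ for every $K$, which is one inequality. Nothing in the argument rules out the partial sums increasing to something strictly smaller than $M^{(t)}(x,y)$.

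The paper avoids prelimits for this step. It proves the truncated identity with an \emph{explicit} remainder $R^{(n)}(x,y)=x^ny^nM^t(\{0^n,x\},\{0^n,y\})/M^t(0^n,0^n)$ directly in the continuum (Lemma~\ref{lemma:KPZSheet_rep1}). This uses Karlin--McGregor plus Desnanot--Jacobi at spread-out points, then $\epsilon\to0$ using continuity of $M^t$. It then shows $R^{(n)}\to0$ by splitting at time $t/2$ with Chapman--Kolmogorov and Fischer's inequality (Lemma~\ref{lemma:remainder_firstestimate}). One resulting piece is a top-path excursion term. The other is $B_nC_{n,\eta}$: $C_{n,\eta}$ is handled by a GUE large deviation, and $B_n$ needs a \emph{lower}-tail bound on $\sum_{i\le n}\mathcal H_i(0)$ (Corollary~\ref{cor:KPZ_LE_input}).

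Your first step has two further problems.
\begin{enumerate}
\item As written, both melons are started at the gluing point. A geometric melon is degenerate there (for instance $W_1(0)=-\infty$), so the middle factor $e^{W_k(0)}$ is meaningless. As in Lemma~\ref{L:mc-consequence}, the forward melon must start at the left end of the environment, where the polymer starts. The backward object is the melon of the rotated environment between that point and the break point.
\item The claim that the two glued halves form a single SH line ensemble is a genuine input, namely the shift invariance \eqref{E:SH_tower_symmetry}, and you do not address it.
\end{enumerate}
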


		\subsection{Geodesic disjointness and melon widths in the directed landscape}
		
		Unlike in the original construction of the Airy sheet in \cite{dauvergne2022directed}, in the DtM representation, path lengths only depend on a compact region of the Airy line ensemble. This is useful when studying properties of geodesics in $\mathcal L$, such as coalescence. Coalescence of geodesics and polymers is an important feature of the KPZ geometry, and has been heavily studied, e.g. see \cite{BSS, ZhangOE, Hammond_2019}. It is also closely connected to KPZ scaling exponents, e.g. see \cite{gu2026integration}.

		In Section \ref{SS:turnaround-geometry} we show that coalescence of geodesics in the landscape is equivalent (almost surely) to coalescence of the corresponding paths in the Airy line ensemble. Hence, coalescence probabilities in the directed landscape can be reduced to events that only depend on a finite region of the Airy line ensemble, whose probabilities can be understood using the Brownian Gibbs Property. To demonstrate this, we identify geodesic coalescence exponents for the directed landscape.

		\begin{thm} \label{thm:Hammond_conj}
			For $\epsilon > 0$, $k \in \N$, let $M(\epsilon)$ be the maximum number of disjoint geodesics in the directed landscape that start in the set $[0, \epsilon] \times \{0\}$ and end in the set $[0, \epsilon] \times \{1\}$. Then for all $k \in \N$, there exists a constant $c_k > 0$ such that for all $\epsilon \in (0, 1)$ we have the lower bound
			$$
			\mathbb P(M(\epsilon) \ge k) \ge c_k \epsilon^{\frac{k^2 - 1}{2}}.
			$$
		\end{thm}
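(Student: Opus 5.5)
We will reduce the event $\{M(\epsilon)\ge k\}$ to an event about a compact region of the parabolic Airy line ensemble, using the DtM coupling of Theorem \ref{T:shadow-sheet-simple} (in its extended form, Theorem \ref{T:shadow-sheet}). By stationarity and the shear and scaling invariance of $\mathcal L$, it suffices to work with the Airy sheet $\mathcal S = \mathcal L(\cdot,0;\cdot,1)$ and with geodesics from $(x_i,0)$ to $(y_i,1)$ for $x_i, y_i \in [0,\epsilon]$. In the DtM representation \eqref{E:DtM_AirySheet_simple}, a geodesic for $\mathcal S(x,y)$ corresponds to a pair of last passage paths in $\mathcal A$. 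One goes down from $(-x,1)$ to the turnaround point $(0,k)$ in the reflected ensemble, and the other goes up from $(0,k)$ to $(y,1)$. Section \ref{SS:turnaround-geometry} shows that disjointness and coalescence of landscape geodesics are almost surely equivalent to disjointness and coalescence of these Airy paths. So it suffices to find an event $E_\epsilon$, measurable with respect to $\mathcal A$ restricted to $[-\epsilon,\epsilon]\times\II{1,K}$ and a bounded exterior region, on which there are $k$ pairs $(x_i,y_i)$ whose DtM path pairs are pairwise disjoint. We then aim to show $\P(E_\epsilon)\ge c_k\epsilon^{(k^2-1)/2}$.

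The natural candidate is the following. The $k$ Airy paths use turnaround indices $1,\dots,k$. Path $i$ descends from $(-x_i,1)$ to $(0,i)$ and ascends back to $(y_i,1)$, with $x_1<\dots<x_k$ and $y_1<\dots<y_k$ spread through $[0,\epsilon]$. For the paths to be disjoint and DtM-optimal, the top $k$ lines must be well separated on $[-\epsilon,\epsilon]$ relative to their fluctuations, and they must cross over in a prescribed staircase. Concretely, on $[-\epsilon,\epsilon]$ we want each gap $\mathcal A_i-\mathcal A_{i+1}$, $i<k$, to be of order $\epsilon^{1/2}$. We want the increments of $\mathcal A_1,\dots,\mathcal A_k$ to follow prescribed tilted profiles at scale $\epsilon^{1/2}$ which force the optimal jump times, so that the multi-path last passage values decompose as desired. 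We also want line $k+1$ to lie at least a constant times $\epsilon^{1/2}$ below line $k$. This last condition ensures that for points in $[0,\epsilon]$ the maximizing index in \eqref{E:DtM_AirySheet_simple} is exactly the intended $i$ and not something larger.

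To estimate $\P(E_\epsilon)$ we use the Brownian Gibbs property. We condition on $\mathcal A_{k+1}$ and on the values of $\mathcal A_1,\dots,\mathcal A_k$ at $\pm 1$, restricting to a positive-probability good event for these data, for instance that $\mathcal A_{k+1}$ is well below the top $k$ lines at $0$. Then the top $k$ curves on $[-1,1]$ are $k$ Brownian bridges conditioned not to intersect each other or $\mathcal A_{k+1}$. Their law near $0$ is absolutely continuous, with uniformly bounded density ratios, with respect to $k$ nonintersecting Brownian bridges whose top-$k$ gaps at time $0$ are of order one. The event that these gaps shrink to order $\epsilon^{1/2}$ at time $0$ is controlled by the Vandermonde/Karlin–McGregor density. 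The joint density of the positions is $\asymp\prod_{i<j}(z_i-z_j)^2$ near coincidence, and after removing the common translation this costs $\epsilon^{\frac12\cdot k(k-1)}$. Taking the region of positions of volume $\epsilon^{(k-1)/2}$ contributes a further factor, and together these should give the exponent $(k^2-1)/2$. Given the gaps at time $0$, Brownian scaling makes the conditional probability of the shape requirements on $[-\epsilon,\epsilon]$ bounded below by a constant.

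The main obstacle is the exponent bookkeeping in this last step: pinning down exactly which event on the $k$ nonintersecting bridges has probability $\asymp \epsilon^{(k^2-1)/2}$ and still forces $k$ disjoint DtM path pairs. It is tempting to require only the gaps to be $\epsilon^{1/2}$, but then the count is $\epsilon^{k(k-1)/2}$ times a volume factor, and the crossing times must be confined to the window without extra cost. We would first try to verify the case $k=2$ directly, where the target exponent is $3/2$, to calibrate the construction. We would then use a Karlin–McGregor lower bound on the joint density to handle general $k$.
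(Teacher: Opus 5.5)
Your overall route is the paper's, and so is your exponent count. You reduce $\{M(\epsilon)\ge k\}$ via Lemma \ref{lemma:coalescence_equivalence} to disjoint Airy switchback geodesics, force a nested staircase of the top $k$ lines on $[-\epsilon,\epsilon]$, and pay for it with the Brownian Gibbs property and Karlin--McGregor. The paper conditions at times $\pm\epsilon$ and gets $\Delta(\mathbf u)\Delta(\mathbf u+\mathbf v)\,\mathrm{Vol}(U)\asymp\epsilon^{k(k-1)/4}\cdot\epsilon^{k(k-1)/4}\cdot\epsilon^{(k-1)/2}$, which is your count. Imposing the shape on $[-\epsilon,\epsilon]$ costs only an $\epsilon$-independent constant by Brownian scaling, so your bookkeeping worry is unfounded. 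You do still have to write down the profiles and check that they force disjoint geodesics. The paper uses sawtooth functions $f_i(x)=s^{1/2}h_i(x/s)$ with $s=\epsilon/(2k)$. It checks disjointness when the lines equal the profiles exactly, then uses continuity and scale invariance.

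The genuine gap is your claim that putting line $k+1$ at least a constant times $\epsilon^{1/2}$ below line $k$ forces every turnaround index to be at most $k$. Write $\gamma_i=\mathcal A_i-\mathcal A_{i+1}$. A switchback path with turnaround index $j$ and jump times $-t_i\le 0\le s_i$ has length
\[
\mathcal A_1(-x)+\mathcal A_1(y)-\mathcal A_1(0)+\sum_{i<j}\bigl[\gamma_i(0)-\gamma_i(-t_i)-\gamma_i(s_i)\bigr].
\]
\begin{itemize}
\item If $\gamma_k$ is only of order $\epsilon^{1/2}$, it fluctuates on that same scale across the window. The $i=k$ term can then be positive, and turning at $k+1$ can win.
\item More seriously, nothing in your event controls the terms with $i\ge k+1$, i.e.\ lines $k+2,k+3,\dots$. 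The maximum in \eqref{E:DtM_AirySheet_simple} runs over all $j\in\N$, so no event confined to $[-\epsilon,\epsilon]\times\II{1,K}$ plus a bounded exterior region can do this.
\item You also cannot intersect with a high-probability event afterwards, because your main event has probability $\epsilon^{(k^2-1)/2}$.
\end{itemize}

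The missing ingredient is an event that controls all deeper lines at once and is measurable with respect to the exterior $\sigma$-algebra, so that it survives the Gibbs conditioning. The paper takes
\[
G^b_k=\Bigl\{\sup_{j>k}\,\sup_{x,y\in[0,1]}\mathcal A_j(0)+\mathcal A[(0,j)\to(y,k+1)]+\tilde{\mathcal A}[(0,j)\to(x,k+1)]\le b\Bigr\}.
\]
This event contains $\{\sup_{[0,1]^2}\mathcal S\le b\}$, so it has probability close to $1$ for large $b$. The paper pairs it with $T^a=\{\mathcal A_1(0)\ge a\}$ for some $a>b+1$, which lifts the bunched top lines at an $\epsilon$-independent cost. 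On $T^a\cap B^\epsilon_k\cap G^b_k$, any switchback path with turnaround index $j>k$ has length at most $b+1/2<a-1/2$. The path staying on line $1$ has length $\mathcal A_1(-x)+\mathcal A_1(y)-\mathcal A_1(0)>a-1/2$, so it beats every such path.

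Your ``good event for the data'' should also be replaced by $F^{\sigma,\delta}_k\cap G^b_k$. As stated, it refers to the top lines at $0$, which are resampled and are not part of the conditioned data. With these changes, your density estimate goes through.
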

		This verifies a conjecture of Hammond \cite{Hammond_2019}, who proved the matching upper bound in \cite[Theorem 1.1]{Hammond_2019} using Gibbs resampling methods developed in \cite{CH, HamBr}. Up to $\epsilon^{o(1)}$ factors, the $k = 2$ case of the above theorem was proven in \cite{bates2019hausdorff}, and the $k = 3$ case can be extracted from \cite{dauvergne202327}. Those proofs are based on the existence of $k$-tuples of disjoint geodesics with the same endpoints. Since no such $k$-tuples exist for $k \ge 4$, the proofs do not extend to such $k$. Note that Hammond's conjecture is for Brownian LPP, rather than the directed landscape, but the theorem above implies that the same bound holds asymptotically for Brownian LPP since the event $\{M(\epsilon) \ge k\}$ is open with respect to a topology where Brownian LPP converges to the directed landscape, see \cite[Section 8]{dauvergne2021scaling}.

	The DtM representation also gives access to quick estimates on the width of geodesic watermelons in the directed landscape. Here recall that a $k$-geodesic watermelon in $\cL$ from $(0,0)$ to $(0, 1)$ is a collection of paths $\pi_1 \le \cdots \le \pi_k$ from $(0,0)$ to $(0, 1)$ which are strictly ordered on $(0, 1)$, and maximize the sum of their path lengths. Geodesic watermelons exist and are almost surely unique, see \cite{dauvergne2021disjoint}. The paper \cite{basu_interlacing} studied the widths of geodesic watermelons in solvable models of LPP, using soft geometric methods. We give a complementary approach to this problem using the DtM representation. 
	
	\begin{thm}
		\label{T:melon_width}
		Let $\pi = (\pi_1 , \ldots, \pi_k)$ be a $k$-geodesic watermelon in the directed landscape from $(0, 0)$ to $(0, 1)$. Then there exist constants $0 < c < C$ and $d > 0$ so that with probability $1 - e^{-dk}$ as $k \to \infty$, we have the bounds
		\begin{align*}
			c k^{1/3} \leq \max_{0 < t < 1} \pi_k(t) & \leq C k^{1/3}.
		\end{align*}
	\end{thm}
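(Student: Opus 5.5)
We plan to prove Theorem \ref{T:melon_width} by passing the $k$-geodesic watermelon through the extended version of the down-the-middle isometry (Theorem \ref{T:shadow-sheet}) and reading its width off the Airy line ensemble. For the extended Airy sheet, the $k$-path last passage value from $(0,0)^k$ to $(0,1)^k$ is the weight of the watermelon. In the DtM coupling this should equal a $k$-path turnaround value in $\cA$. The $k$ paths start near $(0,1),\dots,(0,k)$ on the reflected side, descend to turnaround indices $j_1<\dots<j_k$ at the vertical axis, and climb back. The key structural claim, analogous to the coalescence correspondence in Section \ref{SS:turnaround-geometry}, is that the spatial extent of the melon in $\cL$ is controlled by how far the maximizing $k$-path configuration in $\cA$ travels horizontally, up to a deterministic scaling. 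Equivalently, by the identity $\cS(|x|,0)=\cA_1(x)$ and its higher-line analogue \eqref{E:AiryLE_retrieval}, the relevant melon width equals the location where the optimal multi-path in $\cA$ leaves the top $k$ lines. So the first step is to prove this geometric translation: $\max_t \pi_k(t)$ is comparable to the horizontal position $X_k$ at which the lowest of the $k$ DtM paths reaches line $k$.

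Given the translation, the estimate reduces to the Airy line ensemble. For the upper bound, the lowest path must start from a turnaround index $j\ge k$ and climb to line $1$. We bound the gain from delaying the climb to horizontal distance $x$ using the parabolic curvature $\cA_i(x)\approx -x^2$ together with the known spacing $\cA_k(0)\approx -(3\pi k/2)^{2/3}$. Moving horizontally a distance $x$ costs roughly $x^2$ per line. The only compensating gain comes from the line-$k$ scale $k^{2/3}$ being comparable to $x^2$, which forces $x\lesssim k^{1/3}$. We will quantify this with rigidity for GUE/Airy points: uniform-in-$x$ bounds $|\cA_i(x)+x^2+(3\pi i/2)^{2/3}|\le \delta i^{2/3}$ for $i\le 2k$ and $|x|\le Ck^{1/3}$, holding with probability $1-e^{-dk}$. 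This is where we invoke \cite{wu2024applications}, as in Remark \ref{R:ingredients}.

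For the lower bound, we show that any $k$-path configuration whose lowest path stays within horizontal distance $ck^{1/3}$ loses order $k^{4/3}$ in total weight compared with a spread-out configuration. Under rigidity, packing $k$ disjoint paths into a window of width $ck^{1/3}$ is a deterministic variational problem. Its value should be computable from the limit-shape profile, in the same way the $k$-melon lower growth rate was treated in \cite{basu_interlacing}. The main obstacle is this lower bound: the rigidity error $\delta k^{2/3}$ summed over $k$ lines is $\delta k^{5/3}$, which exceeds the $k^{4/3}$ gap we need. We would therefore first compare increments rather than values, using Brownian Gibbs and the concentration of line increments over windows of length $k^{1/3}$ to get per-line errors of order $o(k^{1/3})$. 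Failing that, we would fall back on the geodesic $k$-melon growth bound of Appendix \ref{sec:shadow_sheet_geomproof}.
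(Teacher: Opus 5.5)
\textbf{There is a genuine gap: the ``geometric translation'' in your first step is false, and the rest of the argument depends on it.} The DtM isometry is an identity between last passage \emph{values}. The horizontal coordinate in $\cA$ records the endpoints $x,y$ of $\cL$-paths, not where those paths sit at intermediate times. For $\bx=\by=0^k$ the Airy optimizer in \eqref{E:DtM_AirySheet} is the trivial configuration $I=(1,\dots,k)$, with no horizontal travel, and $\cS(0^k,0^k)=\sum_{i\le k}\cA_i(0)$ by \eqref{E:AiryLE_retrieval}. So there are no nontrivial turnaround indices $j_1<\dots<j_k$, your $X_k$ is identically $0$, and yet the melon has width of order $k^{1/3}$.

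The idea you are missing is Theorem \ref{T:k_geom}, applied with auxiliary \emph{single} geodesics. The rightmost melon path $\pi_k$ meets the rightmost geodesic $\pi_x^x$ from $(x,0)$ to $(x,1)$ if and only if the single-path turnaround index satisfies $k(x,x)\le k$. Combined with transversal fluctuation bounds for $\pi_x^x$ (Lemma \ref{L:geo-bound} plus translation invariance), this sandwiches the melon, in each case up to an event of probability $e^{-dk}$:
\begin{itemize}
\item for $x=Ck^{1/3}$, on $\{k(x,x)>k\}$ the path $\pi_k$ stays strictly left of $\pi_x^x$, which stays below $2Ck^{1/3}$;
\item for $x=2\epsilon k^{1/3}$, on $\{k(x,x)\le k\}$ the path $\pi_k$ touches $\pi_x^x$, which stays above $\epsilon k^{1/3}$.
\end{itemize}
The correct version of your translation is therefore that the melon width sits, up to single-geodesic fluctuations, where $x\mapsto k(x,x)$ crosses the level $k$.

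After this reduction only one-point estimates are needed, and the obstacle you flag for the lower bound never arises.

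\emph{Upper bound.} On $\{k(x,x)\le k\}$, \eqref{E:DtM_AirySheet_simple} and the crude bound $f[(0,j)\to(x,1)]\le f_1(x)-f_j(0)$ give $\cS(x,x)\le\cA_1(-x)+\cA_1(x)-\cA_k(0)\approx(\kappa-2C^2)k^{2/3}$, with $\kappa=(3\pi/2)^{2/3}$. Since $\cS(x,x)$ is Tracy--Widom and $\cA_k(0)$ is rigid at a single point (Lemma \ref{L:bulk-lemma}), this event has probability at most $ce^{-dk}$ once $2C^2>\kappa$.

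\emph{Lower bound.} There is no packing problem and no summation of rigidity errors over $k$ lines; you only need that no index $j>k$ is a turnaround. Indeed $\cS(x,x)\ge -k^{2/3}$ off an event of probability $e^{-dk^2}$. For each $j>k$, the $j$th term in \eqref{E:DtM_AirySheet_simple} is at most $-2j^{2/3}+2\cdot j^{2/3}/2=-j^{2/3}$ off an event of probability $ce^{-dj}$. This uses Lemma \ref{L:bulk-lemma} (with $\kappa>2$) and Lemma \ref{lemma:AiryLPP_bound}, which shows the Airy last passage value from line $j$ over distance $2\epsilon k^{1/3}$ is at most of order $\sqrt{\epsilon}\,j^{2/3}$. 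Summing over $j>k$ gives $e^{-dk}$.

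Two smaller issues remain. First, the uniform rigidity you propose, for all $i\le 2k$ with probability $1-e^{-dk}$, is false for small $i$; for example $\cA_1(0)$ has order-one Tracy--Widom fluctuations and is not centered at $-\kappa$. Second, your fallback is unavailable: Appendix \ref{sec:shadow_sheet_geomproof} only invokes the qualitative divergence \eqref{E:diverge}, without proof and without a rate.
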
	
	
	By combining the results of geodesic watermelon widths from \cite{basu_interlacing} with the convergence of geodesic watermelons from \cite{dauvergne2021disjoint}, we could alternately obtain Theorem \ref{T:melon_width} with $\pi_k(t)$ replaced by $|\pi_k(t)|$. Note that the lower bound in \cite{basu_interlacing} also holds with a stronger $1 - e^{-d k^2}$ probability.

	\subsection{Related work and future directions}
	
	We give a brief discussion of closely related work and potential future applications of the down-the-middle isometry. For more background on the KPZ, we refer the reader to 
	\cite{quastel2011introduction, corwin2012kardar, romik2015surprising, borodin2016lectures, takeuchi2018appetizer, zygouras2022some, ganguly2021random} and references therein.
	
	Since \cite{dauvergne2022directed}, many other models have been shown to converge to the directed landscape: integrable LPP models \cite{dauvergne2021scaling}, the KPZ equation \cite{wu2023kpz}, the stochastic six vertex model and colored ASEP \cite{ACH}, the log-gamma polymer \cite{zhang2025convergence}, and web distances and various couplings of TASEP \cite{DZ24}. We expect our method can be used to give new convergence proofs for last passage and polymer models where there is an underlying version of RSK, though we have not pursued this here. Note that \cite{virag2025actions} recently gave a unified framework for understanding convergence of last passage and polymer models using the original Busemann description for the Airy sheet, which covers the same suite of models. From the above list, it would also be interesting to understand whether there is a DtM-type theorem related to the Yang-Baxter equation for the coloured stochastic six-vertex model, with the aim of giving an alternate proof of the main results of \cite{ACH}. Indeed, the proof in \cite{ACH} goes through a line ensemble representation (see also \cite{aggarwal2024colored}), and the coloured stochastic six-vertex model satisfies shift invariance \cite{borodin2022shift, galashin2021symmetries}, which are the two key ingredients in the DtM approach.

	Another natural direction of inquiry is to understand the Busemann function limit of down-the-middle isometries. This should exist in all LPP and polymer models, but for concreteness we focus on the directed landscape. The function
	$$
	(x, \theta) \mapsto \cL(x, 0; t \theta, t) - \cL(0, 0; t \theta, t)
	$$
	has a two-parameter scaling limit known as the stationary horizon, e.g.\ see \cite{BuS, BSS24, seppalainen2023global}, and \cite{FPAMJB, FSj} for earlier results on prelimiting models. Alternately, this scaling limit can be viewed by simply rescaling and centering the Airy sheet. The Airy line ensemble-Airy sheet representation from \cite{dauvergne2022directed} has an interesting scaling limit in terms of a family of independent two-sided Brownian motions, identified and studied in \cite{dauvergne2024directed}. The same scaling of the DtM isometry should produce a different representation, which should involve a family of one-sided Brownian motions paired with a process of a different character. 
	
	The directed landscape has also been recently constructed in half-space \cite{dauvergne2026directed}. The scaling limit is parametrized by $\rho \in \R \cup \{-\infty\}$ which controls the boundary strength. In this setting, the original full-space program from \cite{dauvergne2022directed} appears to be difficult to implement (though see \cite{dimitrov2025half, dimitrov2026pinned, das2026pinning} for progress in this direction on constructing half-space line ensembles), and a different technique was used to construct the scaling limit from the half-space KPZ fixed point \cite{Zhang}. In forthcoming work of the second author \cite{Syed2026+}, a version of the DtM Representation is used to construct the half-space directed landscape at the scale-invariant critical point $\rho = 0$. The method therein also has relevance for polymer models and the KPZ equation, which lie beyond the scope of \cite{dauvergne2026directed}.
	
		\subsection{Outline of the paper and a word on the proofs}
	Section \ref{sec:discrete} addresses discrete models. After some preliminary setup, the proof of Theorem \ref{T:DTM_Discrete_Informal} (and its full generalization in Theorem \ref{thm:RSK_Representation_discrete}) is quite short. The key ingredient is Lemma \ref{lemma:discrete_nonintpaths}, which gives a series expansion of a partition function value $Z_M(p; q)$ in terms of certain multi-path partition functions. The lemma is proven through iterated applications of the Desnanot-Jacobi identity. We prove the LPP analogue of Theorem \ref{T:DTM_Discrete_Informal} by exchanging the $(+, \times)$-algebra for the $(\max, +)$-algebra in Corollary \ref{cor:RSK_discrete_LPP}. A final short section (Section \ref{SS:part-tableaux}) sets up a relative of the DtM isometry which will ultimately be more suited for taking a scaling limit to semi-discrete LPP and then to the directed landscape.
	
	Section \ref{sec:DTM-KPZ} proves Theorem \ref{thm:KPZ_Case_simply}, and its generalization, Theorem \ref{thm:KPZ_Case}. We first show that if we truncate the series on the right-hand side of \eqref{E:KPZ-simple-intro} at a level $n \in \N$, then the difference between this truncation and $M^{t}(x, y)$ is given by an explicit remainder term $R^{(n)}(x, y)$. This is done in Lemmas \ref{lemma:KPZSheet_rep1} and \ref{lemma:LPP_values_KPZLE}. While we expect that these lemmas could potentially be proven by taking a scaling limit from a discrete model as in \cite{corwin2017intermediate, Nica_2021}, some of the necessary multi-point partition functions have not been analyzed previously and so this would be a technical undertaking. Rather, we prove these results directly in the continuum, using Karlin-McGregor formulas for the multiplicative stochastic heat equation as a starting point. The proof of Theorem \ref{thm:KPZ_Case} then follows by showing that $R^{(n)}(x, y)$ converges to $0$ with $n$. This turns out to be a fairly difficult problem, and requires non-trivial estimates on the KPZ line ensemble which we import from our prior work \cite{Syed_2025}.
	
	Section \ref{sec:Airy_case} proves Theorem \ref{T:shadow-sheet-simple} and its generalization Theorem \ref{T:shadow-sheet}. As mentioned above, the first step is proving a version of the DtM isometry for semi-discrete LPP, which follows by taking a scaling limit of the results from Section \ref{SS:part-tableaux}. The remaining elements of the proof are discussed in Remark \ref{R:ingredients}. Section \ref{S:applications} proves all applications of our main result to the directed landscape.
	
	Finally, we have included a short appendix, which contains an approach to Theorem \ref{T:shadow-sheet-simple} based on path-switching arguments. We were not able to extend this approach to give a complete proof of the stronger Theorem \ref{T:shadow-sheet}, but have nonetheless included it as we feel it is enlightening.
	
	\textbf{AI usage disclosure.} \qquad AI tools were used for literature review, proofreading, and to check for mathematical errors. They were not used to develop any mathematical ideas or proofs, or in the writing of the manuscript. All suggested changes were reviewed by the authors, who take full responsibility for the final content of the paper.
	
	\section{DtM Representation for Discrete Polymer and Percolation Models} \label{sec:discrete}
	
	In this section, we first prove a version of our RSK formula for discrete positive temperature models/lattices. We then obtain the analogous 0-temperature result by switching the algebra from $(+, \times)$ to $(\max, +)$. We also take a limit to LPP across lines, which we will use later to construct the DtM representation for the Airy sheet.  The main result of this section can be seen as a generalization of Equation 3.94 of \cite{noumi2002}, although our proofs differ. As we will see in Section \ref{sec:KPZ_Sheet}, the main idea of our proof extends to the continuum case of the KPZ sheet.

	\subsection{Discrete Last Passage Percolation and Partition Functions}
	\label{S:disclpp}

	\begin{defn} \label{defn:LPP_Partn}
		Let $M:\Z^2 \to \R, x \mapsto M_x$ be a lattice of weights. As before, we will picture $M$ and $\Z^2$ with the vertical coordinates $\II{1, n}$ increasing as we go down the page. Let $\Z^4_\uparrow = \{u= (p, q) \in \Z^4 : p_1 \le q_1, p_2 \ge q_2 \}$. For any $u = (p, q) \in \Z^4_\uparrow$, we define
		\begin{equation}
			\label{E:ZMLM}
			L_M(u) = L_M(p, q) = \max_{\pi: p \to q} \sum_{x \in \pi} M_x, \qquad Z_M(u) = Z_M(p, q) = \sum_{\pi: p \to q} \prod_{x \in \pi} M_x.
		\end{equation}
		Here, the maximum and the sum in both settings are over all \textbf{up-right paths} $\pi$ from $p$ to $q$, i.e. lattice paths $\pi = (\pi_1, \dots, \pi_\ell)$ with $\pi_1 = p$ and $\pi_\ell = q$ where $\pi_i - \pi_{i-1} \in \{(1, 0), (0, -1)\}$ for all $i \in \II{2, \ell}$. The quantity $L_M$ is called the \textbf{last passage} value in $M$ from $p$ to $q$, and $Z_M$ is the \textbf{(polymer) partition function} in $M$ from $p$ to $q$. We extend these definitions to multi-point last passage values and multi-point partition functions.
		
		Let $\bu = (\bp ; \bq)$ for vectors $\bp, \bq \in (\Z^2)^k$ and set $u_i = (p_i, q_i)$ for $i = 1, \dots, k$. We assume that there is at least one $k$-tuple of \textit{disjoint} up-right lattice paths $\pi = (\pi_1, \dots, \pi_k)$ where each $\pi_i$ is an up-right path from $p_i$ to $q_i$. We call such $\pi$ a \textbf{disjoint $k$-tuple} from $\bp$ to $\bq$. Then define
		\begin{equation}
			L_M(\bu) = \max_{\pi: \bp \to \bq} \sum_{x \in \cup \pi} M_x, \qquad Z_M(\bu) = \sum_{\pi: \bp \to \bq} \prod_{x \in \cup \pi} M_x,
		\end{equation}
		where the maximum/sum is over all disjoint $k$-tuples from $\bp$ to $\bq$, and $\cup \pi \sset \Z^2$ is the set of all vertices in one of the constituent paths of $\pi$.
		
		Certain collections of partition functions and last passage values where points start and end in clusters will be especially relevant to us. Because of this, for $p \in \Z^2$, $k \in \N$, we define the clusters $p^{ k \uparrow}, p^{k \downarrow}, p^{k \rightarrow}, p^{k \leftarrow} \in (\Z^2)^k$ by
		\begin{align*}
			p^{k \uparrow} &\defeq (p - (0, k-1), \dots, p - (0, 1), p), \qquad p^{k \downarrow} \defeq (p, p + (0, 1), \dots, p + (0, k-1)),\\
			p^{k \rightarrow} &\defeq (p, p + (1, 0), \dots, p + (k-1, 0)), \qquad p^{k \leftarrow} \defeq (p - (k-1, 0), \dots, p - (1, 0), p).
		\end{align*}
		In addition, for a vector $u = (p ; q) \in \Z^{4}_{\uparrow}$, we define
		\begin{align*}
			u^k \defeq (p^{k \rightarrow} ; q^{k \leftarrow})
		\end{align*}
		For $o = (1, n)$ and $\hat{o} = (m, 1)$, we often suppress the arrow notation in the superscript as the direction does not affect the partition function/LPP values. For instance, for clusters $\mathbf{p}$, $Z_M(\mathbf{p}, \hat{o}^{k \downarrow}) =Z_M(\mathbf{p}, \hat{o}^{k \leftarrow}) $, $Z_M(o^{k \uparrow}, \mathbf{p} ) =Z_M( o^{k \rightarrow}, \mathbf{p}) $, and similarly for the LPP values. 
	\end{defn}
	
	\subsection{Partition functions vs. Last passage values}
	
	Algebraically, studying the field of partition function values is similar to studying the field of multi-point last passage values, where we exchange the usual $(+, \times)$-algebra for the $(\max, +)$-algebra. Hence, we can often prove identities for last passage values by first proving identities for polymer partition functions and then simply exchanging the algebra. The following lemma allows us to do this.
	
	\begin{lemma}(\cite[Proposition 1.9]{noumi2002})\label{lemma:temp_change}
		We say a rational function $f: \R^n \to \R$ is a subtraction-free rational function if there exist two polynomials with positive coefficients $a(x)$, $b(x)$ so that
		\begin{align*}
			f(x) &= \frac{a(x)}{b(x)}. 
		\end{align*}
		For any subtraction-free rational function $f$, define the piece-wise linear function $M(f): \R^n \to \R$ via the following limit
		\begin{align*}
			(Mf)(x_1, \ldots, x_n) &= \lim_{\beta \to \infty} \frac{1}{\beta} \log f(e^{\beta x_1} , \ldots, e^{\beta x_n}). 
		\end{align*}
		Then for any two subtraction free rational functions $f, g$
		\begin{align*}
			M(fg) &= M(f) + M(g) , \qquad M(f / g) = M(f) - M(g) , \qquad M(f + g) = \max \left( M(f), \, M(g)\right).
		\end{align*}
		That is to say, $(Mf)$ can be equivalently retrieved from $f$ by formally switching the operations of $(+, \times)$ to $(\max, +)$. 
	\end{lemma}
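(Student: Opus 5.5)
The statement is the tropicalization principle of Lemma \ref{lemma:temp_change}. I would prove it directly from the definition of $M$ by reducing everything to the behaviour of a single positive polynomial. The plan has three steps.

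\emph{Step 1: polynomials.} Let $a(x)=\sum_{\alpha\in A} c_\alpha x^\alpha$ have finitely many monomials, with every $c_\alpha>0$. For real $x$, substitute $x_i=e^{\beta x_i}$. Then
\[
a(e^{\beta x}) = \sum_{\alpha} c_\alpha e^{\beta \langle \alpha, x\rangle},
\]
and with $m(x)=\max_{\alpha\in A}\langle \alpha,x\rangle$ we have the sandwich
\[
c_{\min} e^{\beta m(x)} \le a(e^{\beta x}) \le |A|\, c_{\max} e^{\beta m(x)}.
\]
Taking $\frac1\beta\log$ and letting $\beta\to\infty$ gives $(Ma)(x)=m(x)$. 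So the limit exists, is finite, and is piecewise linear. This is the one place positivity of the coefficients is used: it rules out cancellation of the leading exponentials, so the lower bound holds.

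\emph{Step 2: quotients.} For $f=a/b$, write
\[
\frac1\beta\log f = \frac1\beta\log a-\frac1\beta\log b.
\]
Both terms converge by Step 1, so $Mf=Ma-Mb$ exists. Well-definedness needs a small check. If $a/b=a'/b'$ as rational functions, then $ab'=a'b$ as polynomials. Hence $(ab')(e^{\beta x})=(a'b)(e^{\beta x})$ for all $\beta$, and the limits agree. This requires only that each limit exists, not that $M$ is additive on products.

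\emph{Step 3: the three identities.} For subtraction-free $f,g$, the functions $fg$, $f/g$ and $f+g$ are again subtraction-free. For example,
\[
\frac{a}{b}+\frac{c}{d}=\frac{ad+bc}{bd}.
\]
So $M$ is defined on each of them.
\begin{itemize}
\item \emph{Product.} Since $\frac1\beta\log(fg)=\frac1\beta\log f+\frac1\beta\log g$ at every $\beta$, passing to the limit gives $M(fg)=M(f)+M(g)$.
\item \emph{Quotient.} The same argument with a minus sign gives $M(f/g)=M(f)-M(g)$.
\item \emph{Sum.} Let $u=f(e^{\beta x})$ and $v=g(e^{\beta x})$, both positive. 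Then $\max(u,v)\le u+v\le 2\max(u,v)$, so
\[
\tfrac1\beta\log\max(u,v)\le\tfrac1\beta\log(u+v)\le \tfrac{\log 2}{\beta}+\tfrac1\beta\log\max(u,v).
\]
Moreover $\frac1\beta\log\max(u,v)=\max\bigl(\frac1\beta\log u,\frac1\beta\log v\bigr)$, and this converges to $\max(Mf,Mg)$ because $\max$ is continuous. Hence $M(f+g)=\max(M(f),M(g))$.
\end{itemize}

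The final clause of the statement then follows by structural induction over any expression for $f$ built from the variables and positive constants using $+$, $\times$ and $/$. Each positive constant $c$ maps to $0$, since $\frac1\beta\log c\to0$. Each variable $x_i$ maps to $x_i$.

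The only genuinely delicate point is Step 1, where positivity prevents cancellation. Everything after it is routine.
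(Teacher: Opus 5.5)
Your proof is correct and complete. The paper does not prove this lemma itself; it cites it from Noumi--Yamada (Proposition 1.9), so there is no competing argument in the text. What you give is the standard self-contained proof of that cited fact. You sandwich $a(e^{\beta x})$ between $c_{\min}e^{\beta m(x)}$ and $|A|\,c_{\max}e^{\beta m(x)}$, where $m(x)=\max_{\alpha\in A}\langle\alpha,x\rangle$. For sums you use $\max(u,v)\le u+v\le 2\max(u,v)$, and products and quotients are immediate.

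The citation keeps the paper short. Your version has the advantage of showing exactly where positivity enters: the lower bound in Step 1, i.e.\ the dominant exponential cannot be cancelled. This is precisely why the paper, in the opposite-edge ($c=1$) case of the discrete DtM isometry, first moves the alternating signs to the other side of the identity before switching to the $(\max,+)$-algebra.

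Two cosmetic remarks. First, the well-definedness check in Step 2 is unnecessary. $Mf$ is defined from the values of $f$ at points with positive coordinates, where every representation $a/b$ gives the same value, so only existence of the limit needs proof. Second, you implicitly need $a,b\neq 0$, so that the monomial set $A$ is nonempty and $c_{\min}$ makes sense; the statement tacitly assumes this.
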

	
	The advantage of working with partition functions is that the Lindström–Gessel–Viennot (LGV) theorem allows us to express certain multi-point partition functions as determinants of single-point partition functions. An analogous formula does not hold for LPP since the operation $\max$ is not invertible. We have stated this lemma only in the restricted setting in which we use it.
	
	\begin{lemma}[{\bf Lindstrom-Gessel-Viennot}] \label{LGV_thm}
		Let $G = (V, E)$ be a directed, finite, acyclic graph with edge weights $(w_e, e \in E)$. For any directed path $\pi = (e_1 , \ldots , e_m)$, let 
		\begin{align*}
			w(\pi) &= \prod_{e_i \in \pi} w_{e_i}.
		\end{align*}
		Furthermore, for any two vertices $a, b \in V$, define the partition function $Z_{G}(a, b)$ by
		\begin{align*}
			Z_{G}(a, b) &= \sum_{\pi: a \to b} w(\pi).
		\end{align*}
		Consider base vertices $A = \{a_1 , \ldots, a_n\}$ and destination vertices $B = \{ b_1 , \ldots , b_n\}$. Suppose additionally that if $(\pi_1, \dots, \pi_n)$ is any disjoint $n$-tuple of paths such that $\pi_i$ starts at $a_i$ and ends in $B$ for all $i$, then $\pi_i$ ends at $b_i$ for all $i$. 
		Then 
		\begin{align*}
			\det_{\substack{1 \le i ,j \le n}} Z_G(a_i, b_j) &= \sum_{(\pi_1 , \ldots , \pi_n) : A \to B} \prod_{i = 1}^n w(\pi_i),
		\end{align*}
		where the sum is over all disjoint $n$-tuples from $A$ to $B$.
	\end{lemma}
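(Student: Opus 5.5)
The plan is to prove the standard Lindström–Gessel–Viennot statement by a sign-reversing involution on families of paths. First expand the determinant via the Leibniz formula:
\begin{align*}
\det_{1 \le i, j \le n} Z_G(a_i, b_j) = \sum_{\sigma \in S_n} \mathrm{sgn}(\sigma) \prod_{i=1}^n Z_G(a_i, b_{\sigma(i)}) = \sum_{(\sigma, \pi)} \mathrm{sgn}(\sigma) \prod_{i=1}^n w(\pi_i).
\end{align*}
The last sum runs over pairs $(\sigma, \pi)$, where $\pi = (\pi_1, \dots, \pi_n)$ and each $\pi_i$ is a directed path from $a_i$ to $b_{\sigma(i)}$. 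Since $G$ is finite and acyclic, there are finitely many such paths, so all sums are finite and the manipulation is legitimate.

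Split the pairs into those where the $\pi_i$ are pairwise vertex-disjoint and those where some two paths share a vertex.

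\emph{Disjoint pairs.} By the hypothesis, a disjoint family whose paths start at the $a_i$ and end in $B$ must have $\pi_i$ ending at $b_i$. So $\sigma$ is the identity and these terms give exactly $\sum_{(\pi_1, \dots, \pi_n) : A \to B} \prod_{i} w(\pi_i)$ with sign $+1$.

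\emph{Intersecting pairs.} These cancel through a weight-preserving, sign-reversing involution $\iota$. Given an intersecting pair, let $i$ be the smallest index such that $\pi_i$ meets some other path. Let $v$ be the first vertex along $\pi_i$ that lies on another path, and let $j$ be the smallest index $j \neq i$ with $v \in \pi_j$. Swap the tails of $\pi_i$ and $\pi_j$ after $v$, and replace $\sigma$ by $\sigma \circ (i\ j)$.

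The multiset of edges is unchanged, so the weight $\prod w(\pi_k)$ is preserved, and the sign flips. Acyclicity makes each path visit $v$ at most once, so the tail swap is well defined and the new objects are again directed paths.

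The main point to verify carefully is that $\iota$ is an involution, i.e. that the choice of $(i, v, j)$ is recovered after the swap.
\begin{itemize}
\item The set of vertices covered by all paths is unchanged, and paths $\pi_k$ with $k < i$ are untouched. Hence they still meet no other path, and $i$ is still the minimal intersecting index.
\item The initial segment of $\pi_i$ up to $v$ is unchanged, so $v$ is still its first vertex on another path.
\item The set of indices of paths passing through $v$ is unchanged, so $j$ is recovered.
\end{itemize}
Applying the swap twice therefore restores the original pair, and the intersecting terms cancel in pairs. This yields the claimed identity.
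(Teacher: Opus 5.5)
Your proof is correct. It is the classical Gessel--Viennot tail-swapping involution. The paper gives no proof of this lemma to compare against: it imports LGV as a standard result, remarking only that it also accommodates vertex weights.

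Two small points deserve to be made explicit.
\begin{itemize}
\item The chosen $j$ automatically satisfies $j>i$, since otherwise $\pi_j$ would meet $\pi_i$ and contradict the minimality of $i$. This is what guarantees that the paths $\pi_k$ with $k<i$ are untouched by the swap.
\item Call the swapped paths $\pi_i'$ and $\pi_j'$. To check that $v$ is still the first vertex of $\pi_i'$ lying on another path, it is not enough that the initial segment of $\pi_i$ is unchanged. You also need that no vertex of that segment lies on $\pi_j'$, whose tail is now the old tail of $\pi_i$ after $v$. This holds because a directed path in an acyclic graph never revisits a vertex.
\end{itemize}
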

	Note that in the setting of Section \ref{S:disclpp}, our partition functions are defined using vertex weights, rather than edge weights. It is not difficult to check that the LGV lemma also accommodates vertex weights, or that we can equivalently set up our framework using edge weights. 
	
	One of the advantages of working with determinants is access to certain identities. For our purposes, we require the Desnanot-Jacobi determinant identity.
	
	\begin{lemma}[Desnanot-Jacobi Identity]
		\label{lemma:desnanot_jacobi}
		Let $M$ be a $k \times k$ matrix. For $\ell < k$,  $i_1 < \cdots < i_{\ell}  \in \{ 1, \ldots k\}$  and $j_1 < \ldots < j_{\ell} \in \{ 1, \ldots, k\}$, let $M^{i_1 , \ldots, i_{\ell}}_{j_1, \ldots, j_{\ell}}$ be the $(k - \ell) \times (k - \ell)$ matrix obtained by removing rows $i_1, \ldots, i_{\ell}$ and columns $j_1, \ldots, j_{\ell}$. Then,
		\begin{align*}
			\det(M) \det(M^{i_1, i_2}_{j_1, j_2}) = \det(M^{i_1}_{j_1}) \det(M^{i_2}_{j_2}) - \det(M^{i_1}_{j_2}) \det(M^{i_2}_{j_1})
		\end{align*} 
	\end{lemma}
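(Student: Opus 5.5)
We argue with the adjugate matrix and the complementary-minor form of Jacobi's theorem. Both sides of the identity are polynomials in the entries of $M$, so it suffices to prove it for invertible $M$. Invertible matrices are Zariski dense (and dense in $\R^{k\times k}$), so the identity then extends to all $M$ by continuity.

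\textbf{Step 1 (Jacobi's theorem, in its simplest form).} First suppose the deleted rows and columns are the first two, i.e.\ $i_1=j_1=1$ and $i_2=j_2=2$. Write $M$ in block form $\begin{pmatrix} A & B \\ C & D\end{pmatrix}$ with $A$ of size $2\times 2$. Write $M^{-1}=\begin{pmatrix} E & F \\ G & H\end{pmatrix}$ in the same block shape. Then
$$
\begin{pmatrix} A & B \\ C & D\end{pmatrix}\begin{pmatrix} E & 0 \\ G & I\end{pmatrix} = \begin{pmatrix} I & B \\ 0 & D\end{pmatrix}.
$$
Taking determinants gives $\det M \cdot \det E = \det D = \det(M^{1,2}_{1,2})$.

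\textbf{Step 2 (general rows and columns).} Apply permutation matrices $P,Q$ that move rows $i_1,i_2$ to positions $1,2$ and columns $j_1,j_2$ to positions $1,2$, keeping the relative order of all other rows and columns. This multiplies $\det M$ by $\epsilon=(-1)^{i_1+i_2+j_1+j_2}$, since moving $i_1$ to the top costs $i_1-1$ transpositions and then $i_2$ to second place costs $i_2-2$. It leaves the complementary minor $\det(M^{i_1,i_2}_{j_1,j_2})$ unchanged, and it sends the $2\times2$ block of $M^{-1}$ to the submatrix of $M^{-1}$ with rows $j_1,j_2$ and columns $i_1,i_2$. Step 1 then yields
$$
\epsilon\,\det M\cdot \det\big[(M^{-1})_{j_a,i_b}\big]_{a,b=1,2} = \det(M^{i_1,i_2}_{j_1,j_2}).
$$

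\textbf{Step 3 (substituting the adjugate).} By Cramer's rule, $(M^{-1})_{j,i} = (-1)^{i+j}\det(M^{i}_{j})/\det M$. Hence the $2\times 2$ determinant in Step 2 equals
$$
\frac{(-1)^{i_1+j_1+i_2+j_2}}{(\det M)^2}\Big(\det(M^{i_1}_{j_1})\det(M^{i_2}_{j_2}) - \det(M^{i_1}_{j_2})\det(M^{i_2}_{j_1})\Big).
$$
The off-diagonal product $(-1)^{i_1+j_2}(-1)^{i_2+j_1}$ carries the same sign factor as the diagonal product, which is why a single prefactor appears. That prefactor is exactly $\epsilon$, so it cancels against the $\epsilon$ from Step 2. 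Multiplying both sides by $\det M$ gives the stated identity for invertible $M$, and density finishes the proof.

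\textbf{Main obstacle.} The only delicate point is the sign bookkeeping in Steps 2--3. One must confirm that the permutation sign and the cofactor signs agree. One must also confirm that deleting row $i_1$ before row $i_2$ does not shift the indexing of the remaining minors. It does not, because every minor in the identity is indexed by the original labels of the deleted rows and columns.
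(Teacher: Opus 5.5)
Your proof is correct. The block factorization gives $\det M\cdot\det E=\det D$. The row and column moves contribute exactly $\epsilon=(-1)^{i_1+i_2+j_1+j_2}$. They also send the top-left $2\times 2$ block of $(PMQ)^{-1}$ to the submatrix of $M^{-1}$ with rows $j_1,j_2$ and columns $i_1,i_2$. The cofactor formula $(M^{-1})_{j,i}=(-1)^{i+j}\det(M^{i}_{j})/\det M$ puts the same sign $\epsilon$ on both products, so it cancels against the permutation sign. Density of invertible matrices then removes the invertibility assumption.

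The paper does not prove this lemma at all. It quotes it as the classical Desnanot--Jacobi identity and only applies it, so there is no competing argument to compare with. Your route, the $2\times 2$ case of Jacobi's complementary-minor theorem for $M^{-1}$, is the standard complete proof.

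Its main benefit is that it handles arbitrary $i_1<i_2$ and $j_1<j_2$ with the signs actually checked. The paper needs this generality when it deletes a non-corner pair of columns, such as columns $i,i+1$ in the derivative computation for $f^{\eta,x}$. The only degenerate case is $k=2$, where $M^{i_1,i_2}_{j_1,j_2}$ is empty. There your Step 1 still goes through with the convention that the empty determinant is $1$, and the hypothesis $\ell<k$ excludes this case anyway.
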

	
	\subsection{RSK for polymer models} \label{SS:geometric-RSK}
	
	We focus first on discrete polymer models, which correspond to the choice of $(+, \times)$-algebra. We prove analogous results for discrete LPP models in the following section by switching the algebra to $(\max, +)$. 
	
	\begin{defn}[The geometric RSK correspondence]
		For a set $S \subset \Z^2$, let $F(S, T)$ denote the set of functions $M: S \to T$. 
		Also, for $m, n \in \N$, define the set
		\begin{align*}
			\Gamma_{m, n} = \{(x, y) \in \II{1, m} \times \II{1, n} : x \ge y\},
		\end{align*}
		and, setting $u_i = (1, n; i, 1)$, define the map
		$\Phi: F(\II{1, m} \times \II{1, n}, (0, \infty)) \to F(\Gamma_{m, n}, (0, \infty))$ by
		\begin{align*}
			\Phi(M)(i, j) &= 
			\begin{cases}
				\dfrac{Z_M(u_i^{j})}{Z_M(u_{i}^{j-1})}, & j = i, \\[6pt]
				\dfrac{Z_M(u_i^{j}) \, Z_M(u_{i-1}^{j-1})}{Z_M(u_{i-1}^{j}) \, Z_M(u_{i}^{j-1})}, & j < i,
			\end{cases}
		\end{align*}
		where we use the convention that $Z_M(u_i^0) = 1$. The geometric RSK correspondence is the map $M \mapsto (P, Q)$, where
		\begin{align} \label{defn:tropical_tableaux}
			P &= \Phi(M), \qquad Q = \Phi(M^F).
		\end{align}
		We refer to $P$ and $Q$ as \textbf{tableaux}, as they correspond to the Young tableaux that appear in the usual RSK correspondence.
		Here $M^F$ is the array obtained by flipping $M$ across its anti-diagonal, so that  
		$M^F(i, j) = M_{m - j + 1 , n - i + 1}$. 
		Finally, we define `patterns' $\mu = (\mu_{ij})_{1 \leq i \leq m,\,1 \leq j \leq i\wedge n}$ and  
		$\nu = (\nu_{ij})_{1 \leq i \leq n,\,1 \leq j \leq i\wedge m}$ corresponding to $P$ and $Q$, respectively, as
		\begin{align}
			\mu_{ij} &= \prod_{k = j}^i P_{k j}, \qquad \nu_{ij} = \prod_{k = j}^i Q_{k j}.
		\end{align}
		In the zero-temperature limit, $\mu$ and $\nu$ become Gelfand-Tsetlin patterns.
		Since $Z_{M} (\cdot; \hat{o}^{k \leftarrow }) = Z_{M}(\cdot; \hat{o}^{k \downarrow})$, it follows that the bottom rows of the two patterns are equal. We call this bottom row the shape, $\sh (P) = \sh(Q) = (\lambda_1, \ldots, \lambda_{n\wedge m})$ which is given by
		\begin{align}
			\lambda_j &= \prod_{k = j}^m P_{k j} = \prod_{k = j}^n Q_{k j}.
		\end{align}
		
	\end{defn}
	
	Remarkably, the tableaux $P$ and $Q$ share many of the same partition functions as the environment $M$. This was proven in the form below in \cite{dauvergne2022hidden, corwin2020invariance}, though versions of the theorem are essentially contained in \cite{noumi2002}.
	
	\begin{thm}[Theorem 3.3, \cite{dauvergne2022hidden}]
		\label{T:encoding-lpp}
		Fix $m, n \in \N$ and let $M \in F(\II{1, m} \times \II{1, n}, (0, \infty))$. 
		For every point of the form $p = (i, n)$ with $i \in \II{1, m}$, let $\bar{p} = p$ if $i \ge n$ and $\bar{p} = (i, i)$ if $i < n$. When $\bp = (p_1, \dots, p_k)$  we write ${\bar \bp} = (\bar p_1, \dots, \bar p_k)$. 
		Then for any $k$-tuples $(\bp, \bq)$ with each $p_i \in \II{1, m} \times \{n\}$ and each $q_i \in \II{1, m} \times \{1\}$, we have the isometry
		\begin{align*}
			Z_M(\bp, \bq) &= Z_P(\bar{\bp}, \bq).
		\end{align*}
		Similarly, for any $k$-tuples $(\bp, \bq)$ with each $p_i \in \{1\} \times \II{1, n}$ and each $q_i \in \{m\} \times \II{1, n}$,
		\begin{align*}
			Z_{M}(\bp, \bq) &= Z_Q(\bar{\bp}^F, \bq^F).
		\end{align*}
	\end{thm}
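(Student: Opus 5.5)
We would follow the standard route for RSK isometries: reduce to single-path identities using Lemma \ref{LGV_thm}, and obtain those by realizing $\Phi$ as a composition of local moves, each of which preserves boundary-to-boundary partition functions.

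\textbf{Step 1: reduction to single paths.} In both $M$ and $P$ the starting points $p_i$ (or $\bar p_i$) lie on the bottom row/diagonal boundary of a planar domain, and the $q_i$ lie on the top row. Hence the non-crossing hypothesis of Lemma \ref{LGV_thm} holds whenever the $k$-tuples are ordered compatibly, which is exactly when a disjoint $k$-tuple exists; we pass from vertex weights to edge weights as remarked after that lemma. Both sides of the claimed identity are then $k\times k$ determinants of single-path partition functions. So it suffices to prove
\[
Z_M((i,n);(j,1)) = Z_P(\bar{(i,n)};(j,1)) \qquad \text{for all } i\le j,
\]
together with the analogous multi-path statement for the boundary endpoints that we use in the local argument below.

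\textbf{Step 2: a local-move realization of $\Phi$.} We would write $\Phi$ as a composition of elementary local toggles, in the spirit of Noumi--Yamada and O'Connell--Sepp\"al\"ainen--Zygouras. Each toggle replaces one weight $M_{ab}$ by a subtraction-free expression in $M_{ab}$ and its neighbours, roughly
\[
M_{ab}' = \frac{1}{M_{ab}}\,\bigl(M_{a-1,b}+M_{a,b+1}\bigr)\Bigl(\frac{1}{M_{a+1,b}}+\frac{1}{M_{a,b-1}}\Bigr)^{-1},
\]
with the appropriate boundary conventions. The toggles are performed diagonal by diagonal so that the successive output squeezes the array onto $\Gamma_{m,n}$. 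To identify the composition with $\Phi$, we check that after all moves the partition functions from the corner clusters $(1,n)^{j}$ to $(i,1)^{j}$ agree with $Z_M(u_i^j)$. By the defining formula for $\Phi(M)(i,j)$, these corner multi-path values determine $P$ recursively through ratios. Since the identity for corner clusters is exactly the characterizing property of $\Phi$ from the introduction, this step is forced once the invariance in Step 3 is known.

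\textbf{Step 3: invariance under a single toggle.} For a single toggle we check that every partition function from a bottom-boundary point to a top-row point is unchanged. A path either avoids the $2\times 2$ neighbourhood of $(a,b)$ or passes through it in one of finitely many ways. Summing over these ways reduces the check to a one-line rational identity, which is precisely what the toggle formula is designed to satisfy. Bottom-row points $(i,n)$ with $i<n$ are carried along the sequence of toggles onto the diagonal point $(i,i)$. This is where $\bar p$ comes from: the weights strictly below the diagonal in that column are absorbed, and paths from $(i,n)$ become paths from $(i,i)$ in the output.

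\textbf{Step 4: the second isometry.} We obtain it by applying the first statement to $M^F$, since $Q=\Phi(M^F)$ and the flip exchanges the relevant boundaries.

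\textbf{Main obstacle.} We expect the hard part to be the bookkeeping in Steps 2--3: checking that the ordered composition of toggles reproduces $\Phi$ exactly, and tracking the starting points $(i,n)$ with $i<n$ to $(i,i)$ through the boundary moves. The local $2\times 2$ invariance itself is routine. If that bookkeeping becomes unwieldy, we would fall back on induction on $n$, one row at a time, using geometric row insertion. There Step 3 is replaced by a single Desnanot--Jacobi (Lemma \ref{lemma:desnanot_jacobi}) computation relating consecutive rows.
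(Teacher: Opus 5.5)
Your Steps 1 and 4 are fine. By Lemma \ref{LGV_thm}, both sides are determinants of single-path partition functions: the points $\bar p_i$ lie on the lower boundary of the planar region $\Gamma_{m,n}$ and the $q_i$ on its top row. The $Q$-statement follows by applying the $P$-statement to $M^F$. The logic of Step 2 is also sound once an invariance is available, since corner-cluster values pin down $\Phi$ (the corresponding $k$-tuples in $\Gamma_{m,n}$ are forced).

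The gap is in Step 3, which carries all of the content, and as you formulate it Step 3 is false. A move that replaces a single weight $M_{ab}$ cannot leave every bottom-to-top partition function unchanged. The only up-right path from the source in column $a$ to $(a,1)$ is vertical, so for instance $Z_M((a,n);(a,1))=\prod_{b'=1}^n M_{ab'}$ gets multiplied by $M'_{ab}/M_{ab}$.

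Your formula has the form of a geometric Bender--Knuth toggle: it tropicalizes to $\max+\min-x$. Such toggles act on interlacing pattern variables like the $\mu_{ij}$, not on weights whose path sums are to be preserved. They are also invertible, while $\Gamma_{m,n}$ has fewer than $mn$ cells once $n\ge 2$. So the cells off $\Gamma_{m,n}$ must end up carrying the complementary data (essentially $Q$), and you would have to say how paths from $(i,n)$, $i<n$, are routed past them. That network change is hidden in the word ``absorbed''.

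Already for $m=n=2$, the passage from $M$ to $P$ changes three weights at once and removes the cell $(1,2)$:
\[
P(1,1)=M_{11}M_{12},\qquad P(2,1)=\frac{M_{21}(M_{11}+M_{22})}{M_{11}},\qquad P(2,2)=\frac{M_{11}M_{22}}{M_{11}+M_{22}}.
\]
So the local invariance you call routine is exactly what is missing. You never specify the correct local move, the network in which intermediate partition functions are computed, or the identity the move satisfies.

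The paper itself gives no proof; it imports the result from \cite{dauvergne2022hidden, corwin2020invariance}, with versions in \cite{noumi2002}. The cleanest way to see what must be shown uses transfer matrices.
\begin{itemize}
\item Let $T_y$ be the $m\times m$ upper triangular matrix with $T_y(a,b)=\prod_{x=a}^{b}M_{x,y}$ for $a\le b$.
\item Define $T^P_y$ the same way from row $y$ of $P$ when $y\le a\le b$. Set $T^P_y(a,a)=1$ for $a<y$ and all other entries to $0$.
\end{itemize}
Then $Z_M((i,n);(j,1))=(T_n\cdots T_1)_{ij}$ and $Z_P(\overline{(i,n)};(j,1))=(T^P_n\cdots T^P_1)_{ij}$. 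The pass-through entries $T^P_y(a,a)=1$ for $a<y$ are exactly where $\bar p$ comes from. By Lemma \ref{LGV_thm}, multi-path values are minors of these products.

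The theorem is therefore equivalent to the refactorization $T_n\cdots T_1=T^P_n\cdots T^P_1$, which is the matrix form of geometric row insertion. Proving it by induction on $n$ requires an explicit local identity between products of such factors, in which several entries change at once, as in the $2\times 2$ example. Your fallback names this route but does not state that identity or say what the Desnanot--Jacobi computation would establish. As written, neither of your routes yet yields a proof.
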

	
	Another important transform on tableaux is given by the (geometric) Schützenberger involution. 
	The definition we provide of the Schützenberger involution in terms of non-intersecting paths is not the standard definition in terms of jeu-de-taquin, but it is equivalent. This is shown for geometric RSK in Theorems 2.8 and 2.9 of \cite{noumi2002}.

	\begin{figure}[htbp] 
		\centering
		\includegraphics[width=0.4\textwidth]{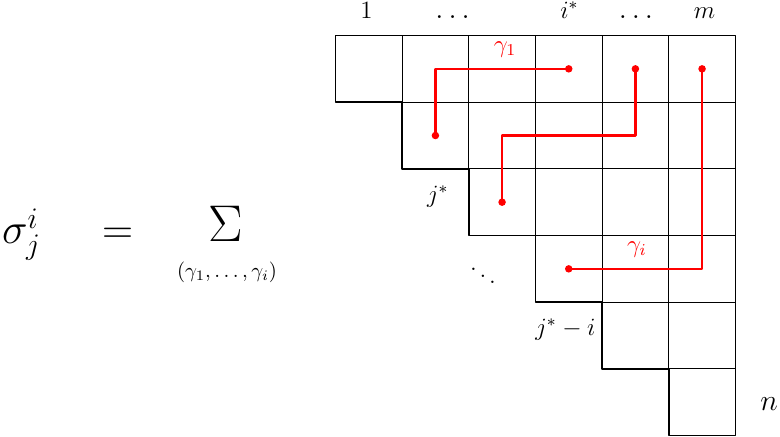} 
		\caption{A graphical illustration of $\sigma^i_j$ from Definition \ref{defn:Schutzenberger}.}
		\label{fig:Schutzenberger}
	\end{figure} 
	
	First, we define the $180^{\circ}$-rotation of a vector. For $p = (i, j) \in \II{1, m} \times \II{1, n}$, let $p^* = (m - i + 1, n - j + 1)$. For a tuple of vectors $\mathbf{p} = (p_1 , \ldots , p_k)$, let $\mathbf{p}^* = (p_k^{*} , \ldots , p_1^*)$. Finally, for a vector $u = (\mathbf{p}, \mathbf{q})$ where $\mathbf{p}, \mathbf{q} \in (\II{1, m} \times \II{1, n})^k$, let $u^* = (\mathbf{q}^*, \mathbf{p}^*)$. Finally, for a matrix $M \in F (\II{1, m} \times \II{1, n} , (0, \infty))$ define its $180^{\circ}$ rotation $RM$ as:
	\begin{align*}
		RM_{i, j} \defeq M_{i^*,j^*} = M_{m - i + 1, n - j + 1}.
	\end{align*}

	\begin{defn} \label{defn:Schutzenberger}
		Fix a tableau $P = (p^i_j)_{1 \leq j \leq m,\,1 \leq i \leq j\wedge n}$. 
		For $i \in \II{1, n}$, $j \in \II{1, m}$ and $i \leq j$, let 
		\begin{align*}
			\sigma^{i}_j &= Z_P \left( \overline{(m - j + 1, n)^{i \rightarrow}} ;  (m, 1)^{i \leftarrow}  \right) .
		\end{align*}
		
		Set $\sigma_j^0=1$ for $0\leq j\leq m$. That is, $\sigma^{i}_j$ is obtained from the partition function illustrated in Figure \ref{fig:Schutzenberger}. 
		Then the geometric Schützenberger involution is the tableau $P^s = (\tilde{p}^i_j)_{1 \leq j \leq m,\,1 \leq i \leq j\wedge n}$, where $\tilde{p}^i_j$ is given by
		\begin{align*}
			\tilde{p}^i_i &= \frac{\sigma^{i}_i}{\sigma^{i-1}_i}, \qquad  
			\tilde{p}^i_j = \frac{\sigma^i_j \sigma^{i-1}_{j-1}}{\sigma^{i}_{j-1} \sigma^{i-1}_j}\quad (i<j). 
		\end{align*}
		Furthermore, this tableau satisfies $\sh(P) = \sh(P^s)$. 
	\end{defn}
	It turns out that if one first rotates the matrix by $180^{\circ}$ and then applies the RSK correspondence, the output is given by Schützenberger-involuted tableaux. 
	That is, if a matrix $M \in F(\II{1, m} \times \II{1, n}, (0, \infty))$ corresponds to a pair of tableaux $(P, Q)$, then by Theorem \ref{T:encoding-lpp}, we have
	\begin{align*}
		\sigma^{i}_j &=  Z_{M} \left( (m - j + 1, n)^{i \rightarrow} ;  (m, 1)^{i \leftarrow}  \right) = Z_{RM} ((1, n)^{i \rightarrow} ; (j, 1)^{i \leftarrow} ),
	\end{align*}
	and hence
	\begin{align*}
		M \mapsto (P, Q) \iff RM \mapsto (P^s, Q^s).
	\end{align*}
	The fact that $\sh(P) = \sh(P^s)$ follows from the observation that $(o^k, \hat{o}^k)^* = (o^k, \hat{o}^k)$.
	
	To state the main theorem of this section, we will first need to define the concept of a \emph{shadow path}.
	
	\begin{defn}
		Consider $p = (m, n)$ and $q = (m', n')$ with $m \le m'$ and $n \le n'$, and assume that $(m' - m) \ge (n' - n)$. 
		A \emph{shadow path} from $p$ to $q$ is a sequence of boxes $\tau = (\tau_0, \dots, \tau_{m' - m})$ where
		\begin{itemize}
			\item $\tau_0 = p$, $\tau_{m' - m} = q$, and for every $i \in \II{1, m' - m}$ we have $\tau_{i} - \tau_{i-1} \in \{(1, 0), (1, 1)\}$.
		\end{itemize}
		For a shadow path $\tau = (\tau_0, \dots, \tau_{m' - m})$, define its \textbf{core} by
		\begin{align*}
			C(\tau) = \{\tau_i : i \in \II{1, m' - m}, \tau_{i} - \tau_{i-1} = (1, 0)\}.
		\end{align*}
		Note that the core always omits the starting point $p$. 
		Moreover, the core contains exactly $m' - m - (n' - n)$ boxes. 
		Observe also that we can reconstruct a shadow path from its core alone, given the starting and ending vertices $p$ and $q$. 
	\end{defn}

	\begin{figure}[htbp] 
		\centering
		\includegraphics[width=\textwidth]{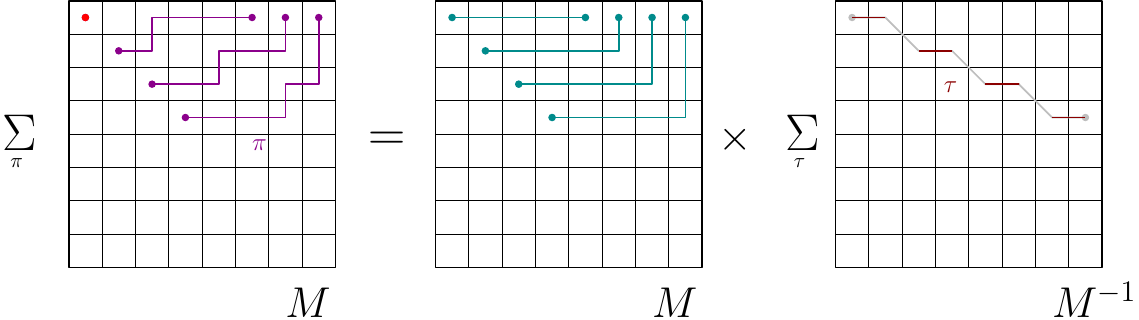} 
		\caption{An illustration of the shadow path identity given in Equation \eqref{E:shadow-identity}}
		\label{fig:shadow_identity}
	\end{figure} 
	
	We define shadow last-passage and shadow partition functions as follows. 
	For $p = (m, n)$ and $q = (m', n')$ with $m \le m'$ and $n \le n'$, let
	\begin{equation}
		\label{E:ZMLMZ}
		\check L_M(p, q) = \max_{\tau: p \to q} \sum_{x \in C(\tau)} M_x, 
		\qquad 
		\check Z_M(p, q) = \sum_{\tau: p \to q} \prod_{x \in C(\tau)} M_x,
	\end{equation}
	where the maximum and sum are taken over all shadow paths from $p$ to $q$. 
	Note that we only take sums and products over vertices in the core. 
	These definitions are set up to satisfy the following relationships:
	\begin{equation}
		\label{E:shadow-identity}
		\begin{split}
			L_M \Big((m, n)^{(n' - n + 1) \searrow}; &((m, n), (m', n)^{(n' - n) \leftarrow} ) \Big)  \\
			&= L_M \left((m, n)^{(n' - n + 1) \searrow}; (m', n)^{(n' - n + 1)\leftarrow } \right) + \check L_{-M} \left(m, n; m', n' \right), \\
			Z_M \Big((m, n)^{(n' - n + 1) \searrow}; & ((m, n), (m', n)^{(n' - n) \leftarrow} ) \Big) \\
			&= Z_M((m, n)^{(n' - n + 1) \searrow}; (m', n)^{(n' - n + 1) \leftarrow} ) \, \check Z_{M^{-1}}(m, n; m', n'),
		\end{split}
	\end{equation}
	where $-M$ and $M^{-1}$ denote the array $M$ with each entry $M_x$ replaced by $-M_x$ or $M_x^{-1}$, respectively, and for $p \in \Z^2$
	\begin{align*}
		p^{k \searrow} \defeq  (p, p + (1, 1), \ldots, p + (k - 1, k - 1)). 
	\end{align*}
	An illustration of this identity is given in Figure \ref{fig:shadow_identity}.

	We can also define multi-point shadow last-passage and multi-point shadow partition functions. 
	For $k$-tuples $\bp, \bq$, define
	\begin{equation}
		\label{E:ZMLMZ-multi}
		\check L_M(\bp, \bq) = \max_{\tau: \bp \to \bq} \sum_{x \in \cup C(\tau)} M_x, 
		\qquad 
		\check Z_M(\bp, \bq) = \sum_{\tau: \bp \to \bq} \prod_{x \in \cup C(\tau)} M_x,
	\end{equation}
	where the maximum and sum are over all disjoint $k$-tuples of shadow paths $\tau = (\tau_1, \dots, \tau_k)$, 
	with each $\tau_i$ a shadow path from $p_i$ to $q_i$. 
	Here, $\cup C(\tau) := \bigcup_{i=1}^k C(\tau_i)$. 
	As with last passage, we only use these definitions if a disjoint $k$-tuple of shadow paths exists. 
	Note that in certain cases, $\cup C(\tau)$ may be empty. 
	In this setting, we use the conventions that the empty sum is $0$ and the empty product is $1$.
	
	To prove the main theorem of this section, we will need a version of the Lindström–Gessel–Viennot (LGV) lemma for shadow partition functions. 
	This is a simple variant of the usual LGV lemma, but we must take a bit of extra care in the proof since we only take products of weights that lie in the cores.
	
	\begin{lemma}
		\label{L:Lindstrom-Gessel-Viennot}
		Let $\bp = ((m_1, n_1), \dots, (m_k, n_k))$ and $\bq = ((m_1', n_1'), \dots, (m_k', n_k')) \in (\Z^2)^k$ satisfy:
		\begin{itemize}[nosep]
			\item There exists at least one disjoint $k$-tuple of shadow paths from $\bp$ to $\bq$.
			\item For all $i \in \II{1, k-1}$, we have $m_i \le m_{i+1}$, $m_i' \le m'_{i+1}$, $n_i \ge n_{i+1}$, and $n_i' \ge n_{i+1}'$.
		\end{itemize}
		Then, for any environment $M:\Z^2 \to \R$, we have
		\begin{equation}
			\label{E:LGV}
			\check Z_M(\bp, \bq) = \det_{1 \le i, j \le k} \check Z_M(p_i, q_j).
		\end{equation}
	\end{lemma}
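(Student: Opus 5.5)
The plan is to follow the classical Lindström–Gessel–Viennot sign-reversing involution, applied to shadow paths, and to handle the fact that weights are only collected on cores. First expand the determinant:
\[
\det_{1\le i,j\le k}\check Z_M(p_i,q_j)=\sum_{\sigma\in S_k}\operatorname{sgn}(\sigma)\sum_{(\tau_1,\dots,\tau_k)}\prod_{i=1}^k\prod_{x\in C(\tau_i)}M_x,
\]
where the inner sum ranges over $k$-tuples with $\tau_i$ a shadow path from $p_i$ to $q_{\sigma(i)}$. Here $\prod_i\prod_{x\in C(\tau_i)}$ counts multiplicity, so for disjoint tuples it agrees with the product over $\cup C(\tau)$. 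The goal is to cancel every non-disjoint term and then show that the only surviving permutation is the identity.

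For the cancellation, take an intersecting tuple. Let $i$ be the smallest index whose path meets another path. Let $v$ be the first vertex along $\tau_i$ that is shared with some other path, and let $j$ be the smallest index $j\neq i$ with $v\in\tau_j$. Swap the tails of $\tau_i$ and $\tau_j$ after $v$. This gives a new tuple for $\sigma\circ(i\,j)$, which has the opposite sign, and the map is an involution by the usual argument.

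The key check, and the main obstacle, is that the weight is preserved. This is where cores matter. A vertex $\tau_l(r)$ lies in $C(\tau_l)$ exactly when the step into it is $(1,0)$, so core membership depends on the incoming step. Every shadow path advances the first coordinate by exactly one per step. Since $v$ has a fixed first coordinate, $v$ sits at the same time index in both paths: for paths starting at $(m_l,n_l)$, the index is $v_1-m_l$. The swap cuts both paths exactly at $v$ and keeps each path's own incoming step into $v$. Every vertex after $v$ keeps its incoming step, because tails are swapped whole. So the multiset union of cores is unchanged, and the swapped tuple is still a valid shadow tuple since both paths pass through $v$. This settles the weight question.

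It remains to show that a disjoint tuple from $\bp$ to $(q_{\sigma(1)},\dots,q_{\sigma(k)})$ forces $\sigma=\mathrm{id}$. This uses the ordering hypotheses. Shadow paths are graphs of nondecreasing, $1$-Lipschitz integer functions of the first coordinate. Take $i<j$ with $\sigma(i)>\sigma(j)$. Path $\tau_i$ starts weakly left of and weakly below $\tau_j$ in the vertical coordinate, since $m_i\le m_j$ and $n_i\ge n_j$, and ends at $q_{\sigma(i)}$, which is weakly right of and above $q_{\sigma(j)}$. On their common time window, the difference of heights changes by at most $1$ per step and must change sign or hit zero. So a discrete intermediate value argument shows that they share a vertex. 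Some care is needed at the boundary of the common window when $m_i<m_j$ or $m'_{\sigma(j)}<m'_{\sigma(i)}$. There one uses the Lipschitz and monotone property to compare the height of $\tau_i$ at time $m_j$ with $n_j$, and the height of $\tau_j$ at time $m'_{\sigma(j)}$ with $n'_{\sigma(j)}$. I expect this to be routine.

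Finally, the disjoint identity-permutation terms sum to $\check Z_M(\bp,\bq)$. The existence hypothesis just guarantees that this is the defined quantity.
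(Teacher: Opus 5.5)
Your proposal is correct, but it takes a different route from the paper. You re-run the Lindstr\"om--Gessel--Viennot sign-reversing involution directly on shadow paths, whereas the paper reduces to the classical lemma.

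\textbf{The paper's route.} It assumes $M$ never vanishes and recovers the general case by continuity. It inserts half-integer vertices $u+(1/2,1/2)$ on diagonal steps. Edges into an integer vertex $v$ get weight $M(v)$, and edges into a half-integer vertex $w$ get weight $M(w+(1/2,1/2))^{-1}$, so a diagonal step has total weight $1$. This gives a weight- and disjointness-preserving bijection between shadow paths and ordinary directed paths, and Lemma \ref{LGV_thm} is then quoted.

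\textbf{Your route.} Your key observation is that a vertex's membership in the core is decided by its incoming step, and swapping tails at a common vertex preserves incoming steps. This is exactly the fact that makes the paper's edge-weight encoding work, so the two arguments are close in spirit. What your version buys:
\begin{enumerate}
\item It works for arbitrary real weights without the nonvanishing assumption or the continuity step.
\item It explicitly checks that a disjoint tuple forces $\sigma=\mathrm{id}$. That is the compatibility hypothesis of Lemma \ref{LGV_thm}, and it is what the ordering assumptions are for. The paper's proof leaves this check implicit.
\end{enumerate}
The paper's route is shorter because it outsources the involution to the classical lemma.

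\textbf{A small slip.} It is in your last step. Write $h_l$ for the height function of $\tau_l$, and take $i<j$ with $\sigma(i)>\sigma(j)$. The common window is $[m_j, m'_{\sigma(j)}]$, which lies in both domains. At the left end, monotonicity gives $h_i(m_j)\ge n_i\ge n_j=h_j(m_j)$. At the right end, the comparison should involve $\tau_i$, not $\tau_j$, since $h_j(m'_{\sigma(j)})=n'_{\sigma(j)}$ trivially. Monotonicity gives $h_i(m'_{\sigma(j)})\le n'_{\sigma(i)}\le n'_{\sigma(j)}=h_j(m'_{\sigma(j)})$. With both endpoint comparisons in place, your discrete intermediate value argument goes through as written.
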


	\begin{proof}
		To simplify the setup, we assume that $M$ is never equal to $0$. 
		The case when $M$ may equal $0$ follows by continuity of both sides of \eqref{E:LGV}.
		
		Consider the directed graph $G = (V := \Z^2 \cup (\Z^2 + (1/2, 1/2)), E)$, 
		where $(u, v) \in E$ whenever $v - u = (1/2, 1/2)$ or $v - u = (1, 0)$ with $u \in \Z^2$. 
		
		Given a collection of nonzero vertex weights $M$ on $\Z^2$, define edge weights $M^*$ on $G$ by
		\begin{align*}
			M^*(u, v) = 
			\begin{cases}
				M(v), & v \in \Z^2, \\
				M(v + (1/2, 1/2))^{-1}, & v \notin \Z^2.
			\end{cases}
		\end{align*}
		
		Let $p, q \in \Z^2$ be such that a shadow path exists from $p$ to $q$. 
		There is a bijection $f$ between shadow paths from $p$ to $q$ and directed paths in $G$ from $p$ to $q$: 
		
		\begin{itemize}
			\item Starting from a directed path, forgetting the vertices in $\Z^2 + (1/2, 1/2)$ yields a shadow path.  
			\item Starting from a shadow path $\tau = (\tau_0, \dots, \tau_\ell)$, adding all vertices $\tau_{i-1} + (1/2, 1/2)$ whenever $\tau_i - \tau_{i-1} = (1, 1)$ yields a directed path.  
		\end{itemize}
		
		These operations are inverses. Moreover, our definitions are set up so that this bijection is weight-preserving, in the sense that
		\begin{align*}
			\prod_{v \in C(\tau)} M_v = \prod_{e \in f(\tau)} M^*_e.
		\end{align*}
		Finally, two shadow paths $\tau, \tau'$ are disjoint if and only if $f(\tau), f(\tau')$ are disjoint. 
		The lemma then follows by applying the usual Lindström–Gessel–Viennot lemma to compute the partition function of all non-intersecting paths in $M^*$ from $\bp$ to $\bq$.
	\end{proof}

	The main theorem of this section is that shadow-paths along tableaux $P^{-1}, Q^{-1}, (P^s)^{-1}, (Q^s)^{-1}$ encode partition functions across $M$ that start on the left/bottom edge and end at the top/right edge.

	\begin{thm}[DtM Isometry] \label{thm:RSK_Representation_discrete}
		Let $M$ be a positive matrix on $\II{1,m}\times\II{1,n}$,
		with tableaux $P=\Phi(M)$, $Q=\Phi(M^F)$ and common shape
		$\lambda=(\lambda_1,\ldots,\lambda_r)$, where $r=m\wedge n$.
		Let $p=(i,n)$ with $1\le i\le m$, or $p=(1,n-i+1)$ with
		$1\le i\le n$. Similarly, let $q=(m,j)$ with $1\le j\le n$, or
		$q=(m-j+1,1)$ with $1\le j\le m$, and 
		suppose that $p$ can reach $q$ by an up-right path. 
		
		Let $c \in \{ 0, 1\}$ be given by 
		\begin{align*}
			c &= \begin{cases}
				0 ,& p =(i,n) \, \& \, q = (m, j) \textrm{ or } p = (1 , n - i + 1) \, \& \, q = (m - j + 1, 1) \\
				1 , & \textrm{otherwise}.
			\end{cases}
		\end{align*}
		In addition, define two tableaux $U,V$ as
		\begin{align*}
			U &= \begin{cases}
				P^s, & p = (i, n) \\
				Q^s, & p = (1, n - i + 1)
			\end{cases}
			, \qquad
			V = \begin{cases}
				Q, & q = (m, j) \\
				P, & q = (m - j + 1, 1)
			\end{cases}
		\end{align*}
		and let $d_U, d_V$ be the number of columns in $U$ and $V$, respectively.
		Then
		\[
		Z_M(p,q)=\sum_{h=1}^{\min(i,j, m , n)}(-1)^{c(h-1)}\lambda_h
		\check Z_{U^{-1}}(d_U-i+1,1;d_U,h)
		\check Z_{V^{-1}}(d_V-j+1,1;d_V,h).
		\]
	\end{thm}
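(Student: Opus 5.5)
The plan is to follow the outline the paper gives: first prove a combinatorial expansion of a single point-to-point partition function $Z_M(p,q)$ in terms of multi-path partition functions that start in the corner cluster $o^{h}$ and end in the corner cluster $\hat o^{h}$, and then translate each factor into tableau quantities using Theorem \ref{T:encoding-lpp}, its Schützenberger analogue, and the shadow identity \eqref{E:shadow-identity}. By the symmetries of the setup (the $180^\circ$ rotation $R$, which swaps $P$ and $P^s$, and the anti-diagonal flip $F$, which swaps $P$ and $Q$), it suffices to treat one representative of each value of $c$. I would do the case $p=(i,n)$, $q=(m,j)$, where $c=0$, in detail, and then indicate the sign change when $q=(m-j+1,1)$.

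\textbf{Step 1 (expansion lemma).} I would prove that
\[
Z_M(p,q)=\sum_{h\ge1}(-1)^{c(h-1)}\frac{Z_M(\mathbf a_h;\hat o^{h})\,Z_M(o^{h};\mathbf b_h)}{Z_M(o^{h};\hat o^{h})}\cdot(\text{correction}),
\]
where $\mathbf a_h$ is the cluster consisting of $p$ together with the $h-1$ points of $o^{h}$ adjacent to the corner, and similarly $\mathbf b_h$ is built from $q$ and $\hat o^h$. To get this, I apply LGV (Lemma \ref{LGV_thm}) to write the $(h+1)$-point partition function with starts $(p,o^{h})$ and ends $(q,\hat o^h)$ as a determinant. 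The Desnanot--Jacobi identity (Lemma \ref{lemma:desnanot_jacobi}), applied to the rows and columns indexed by $p,q$ and the extreme corner points, then expresses a ratio at level $h$ minus the same ratio at level $h-1$ as a product of ratios. Summing this telescoping identity over $h$ gives the expansion. The top term vanishes once $h$ exceeds $\min(i,j,m,n)$, because the disjoint tuple no longer exists and the determinant is $0$. The sign $(-1)^{c(h-1)}$ comes from the relative ordering of $p$ and $q$ within their clusters: when $q$ sits on the top edge rather than the right edge, the permutation needed to bring $q$ to the front of the ending cluster contributes $h-1$ transpositions.

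\textbf{Step 2 (translation to tableaux).} The denominator ratios $Z_M(o^h;\hat o^h)/Z_M(o^{h-1};\hat o^{h-1})$ are the $\lambda_h$, up to the arrangement of the expansion. For the factors involving $q$ with $q$ on the right edge, Theorem \ref{T:encoding-lpp} in its $Q$-form moves the multipath partition function into $Q$, landing on the cluster $(d_V-j+1,1)$-type starting points. There the paths are forced, so the function equals an LPP along a diagonal band with one displaced path, which is exactly the left side of \eqref{E:shadow-identity} on the tableau. Dividing by the corresponding corner quantity, which contributes $\lambda$'s, leaves $\check Z_{Q^{-1}}(d_V-j+1,1;d_V,h)$. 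The factor involving $p$ is handled the same way, after rotating by $R$, which converts $P$ into $P^s$ and so produces $\check Z_{(P^s)^{-1}}$. The products of $\lambda$'s should telescope so that exactly one $\lambda_h$ survives.

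\textbf{Main obstacle.} I expect the bookkeeping in Step 1 to be the hard part. It requires choosing the determinant minors so that Desnanot--Jacobi produces exactly the factorization needed in Step 2, and getting the signs right across the four cases. In Step 2, care is needed with the geometry of the triangular domain $\Gamma$ (the boundary conventions for $\bar p$) so that the shadow identity applies with the correct index range.
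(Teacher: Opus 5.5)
Your proposal follows the paper's proof. Step 1 is Lemma \ref{lemma:discrete_nonintpaths}: LGV plus a Desnanot--Jacobi telescoping in the level, with summand $Z_M(u_h,\hat o^h)Z_M(o^h,v_h)/\bigl(Z_M(o^h,\hat o^h)Z_M(o^{h-1},\hat o^{h-1})\bigr)$, so your ``correction'' is $1/Z_M(o^{h-1},\hat o^{h-1})$ and $\lambda_h$ appears directly. Step 2 is Lemma \ref{lemma:rsk_formulas_discrete}: Theorem \ref{T:encoding-lpp}, rotation to $P^s=\Phi(RM)$, and \eqref{E:shadow-identity}.

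The differences are minor: you reduce cases via the $R$ and $F$ symmetries, and you close the sum by telescoping to level $r_0$, where the $(r_0+1)$-path determinant vanishes, rather than by the paper's observation that $u_{r_0}=o^{r_0}$ or $v_{r_0}=\hat o^{r_0}$. Your closing also covers the case $r_0=\min(m,n)<\min(i,j)$, where neither equality holds and the vanishing instead follows from the hypothesis that $p$ can reach $q$.
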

	\begin{figure}[htbp] 
		\centering
		\includegraphics[width=0.5\textwidth]{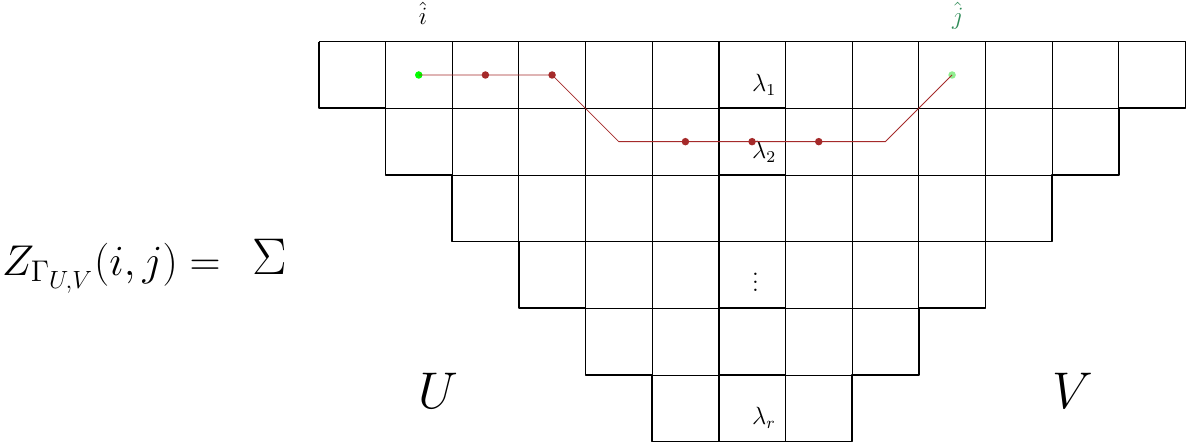} 
		\caption{An illustration of the graph $\Lambda_{U,V}$ and the partition function values $Z_{\Lambda_{U,V}} (i, j)$.}
		\label{fig:directed_graph}
	\end{figure} 
	
	\begin{remark}
		A diagram of this representation is shown in Figure \ref{fig:directed_graph}. It may be interpreted as arising from stitching together two tableaux and computing the associated shadow paths, beginning at the top of each tableau and proceeding downward to a common midpoint. For this reason, we refer to this construction as the down-the-middle (DtM) representation.
	\end{remark}
	
	\begin{remark}
		Since the partition functions from bottom to top (respectively, from left to right) across \( M \) coincide with those across \( P \) (respectively, \( Q \)) by Theorem \ref{T:encoding-lpp}, we obtain the following relation. Applying Theorem \ref{thm:RSK_Representation_discrete} with \( p = (i,n) \) and \( q = (m - j + 1, 1) \), we deduce that
		\begin{align*}
			Z_{P} (\bar p, q) &= \sum_{k = 1}^{\min(i,j,m,n)} (-1)^{(k - 1)}\lambda_{k} \check{Z}_{(P^s)^{-1}} ( m - i + 1 , 1 ; m, k) \check{Z}_{P^{-1}} (m - j + 1 , 1 ; m, k).
		\end{align*}
		Here $\bar p$ is the projection defined in Theorem \ref{T:encoding-lpp}. A similar result also holds for left to right partition functions across $Q$. Hence, in these two cases, this theorem simply corresponds to an alternating series expansion of single-path partition functions across $P$ (respectively, $Q$). 
	\end{remark}
	
	The proof of theorem \ref{thm:RSK_Representation_discrete} follows from the following two lemmas.
	
	\begin{lemma} \label{lemma:discrete_nonintpaths}
		Fix an $n \times m$ matrix of positive entries $M$ and let $i, j \in \N$. Let $p = (i,n)$ or $p = (1,n-i+1)$, and $q = (m-j+1,1)$ or $q = (m,j)$, with both endpoints in $M$, and suppose that $p$ can reach $q$ by an up-right path. For any $k \le \min(i, j, m, n)$, define the $k$-tuples $u_k, v_k \in ( \II{1, m} \times \II{1, n})^k$ and the constant $c \in \{0, 1 \}$ based on the following cases:
		
		\begin{align*}
			u_k &= \begin{cases}
				( o^{k-1}, p), & p = (i, n) \\
				(p, o^{k - 1}), & p = (1,n-i+1)
			\end{cases} , \qquad 
			v_k = \begin{cases}
				(\hat o^{k - 1}, q), & q = (m, j) \\
				(q, \hat o^{k - 1}), & q = (m-j+1,1)
			\end{cases}, 
			\\
			c &= \begin{cases}
				0 ,& p =(i,n) \, \& \, q = (m, j) \textrm{ or } p = (1 , n - i + 1) \, \& \, q = (m - j + 1, 1) \\
				1 , & \textrm{otherwise}
			\end{cases}
		\end{align*}
		
		For all $\ell < r_0 := \min (i, j, m, n)$,
		\begin{align}
			Z_M(p,q) &= \sum_{k=1}^\ell (-1)^{c (k - 1)}
			\frac{ Z_M( u_k, \hat o^k) \, Z_M(o^k, v_k )}{ Z_M(o^k, \hat o^k)^2 } 
			\cdot \frac{Z_M(o^k, \hat o^k)}{Z_M(o^{k-1}, \hat o^{k-1})} 
			+ (-1)^{c \ell}\frac{ Z_M( u_{\ell + 1}, v_{\ell + 1} ) }{ Z_M(o^\ell, \hat o^\ell) }. \label{eqn:alternating_sum}
		\end{align}
		When $\ell = r_0 -1$, the remainder term in \eqref{eqn:alternating_sum} takes on the same form as the  first $r_0 - 1$ terms because either
		$u_{r_0}=o^{r_0}$ or $v_{r_0}=\hat o^{r_0}$. Therefore in this case,
		\begin{align*}
			Z_M(p,q) &= \sum_{k=1}^{\min(i,j, m, n)} (-1)^{c (k - 1)} 
			\frac{ Z_M( u_k, \hat o^k) \, Z_M(o^k, v_k) }{ Z_M(o^k, \hat o^k)^2 } 
			\cdot \frac{Z_M(o^k, \hat o^k)}{Z_M(o^{k-1}, \hat o^{k-1})}.
		\end{align*}
		
	\end{lemma}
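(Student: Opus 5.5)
The proof goes by induction on $\ell$, using the Desnanot--Jacobi identity (Lemma \ref{lemma:desnanot_jacobi}) applied to LGV determinants (Lemma \ref{LGV_thm}). It suffices to prove the one-step recursion
\begin{equation*}
\frac{Z_M(u_{\ell}, v_{\ell})}{Z_M(o^{\ell-1}, \hat o^{\ell-1})}
= \frac{Z_M(u_\ell, \hat o^\ell)\, Z_M(o^\ell, v_\ell)}{Z_M(o^\ell,\hat o^\ell)\, Z_M(o^{\ell-1},\hat o^{\ell-1})}
+ (-1)^{c}\,\frac{Z_M(u_{\ell+1}, v_{\ell+1})}{Z_M(o^\ell,\hat o^\ell)}
\end{equation*}
for $1 \le \ell < r_0$, with the conventions $u_1 = p$, $v_1 = q$ and $Z_M(o^0,\hat o^0)=1$. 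The base case $\ell=0$ of \eqref{eqn:alternating_sum} is the tautology $Z_M(p,q) = Z_M(u_1,v_1)$. Multiplying the recursion by $(-1)^{c(\ell-1)}$ and telescoping then gives \eqref{eqn:alternating_sum} for every $\ell$. The final statement at $\ell = r_0-1$ is immediate: if $r_0 \in \{i,j\}$ then the cluster $u_{r_0}$ or $v_{r_0}$ coincides with $o^{r_0}$ or $\hat o^{r_0}$, so the remainder is literally the $k=r_0$ summand. If instead $r_0 \in \{m,n\}$, the same holds because $o^{r_0}$ or $\hat o^{r_0}$ fills an entire edge row or column.

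To prove the recursion, I would form the $(\ell+1)\times(\ell+1)$ matrix $A$ with rows indexed by the starting points of $u_{\ell+1}$ (namely $o^\ell$ together with $p$) and columns by the endpoints of $v_{\ell+1}$ (namely $\hat o^\ell$ together with $q$), ordered so that the LGV non-permutability hypothesis holds. Its entries are single-path partition functions $Z_M(a,b)$. By LGV, every minor of $A$ is a multi-path partition function:
\begin{itemize}[nosep]
\item deleting the row of $p$ and the column of $q$ gives $Z_M(o^\ell,\hat o^\ell)$;
\item deleting the row of $p$ and the extreme column of $\hat o^\ell$ gives $Z_M(o^\ell, v_\ell)$;
\item deleting the extreme row of $o^\ell$ and the column of $q$ gives $Z_M(u_\ell,\hat o^\ell)$;
\item deleting both extreme rows and columns gives $Z_M(u_\ell,v_\ell)$;
\item deleting the row of $p$, the extreme $o$-row, the column of $q$ and the extreme $\hat o$-column gives $Z_M(o^{\ell-1},\hat o^{\ell-1})$.
\end{itemize}
Here the extreme point of $o^\ell$ is the one farthest from the corner, since $o^{\ell-1}$ must remain a corner cluster. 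Applying Desnanot--Jacobi with $i_1,i_2$ being the $p$-row and the extreme $o$-row, and $j_1,j_2$ being the $q$-column and the extreme $\hat o$-column, gives $\det A \cdot Z(o^{\ell-1},\hat o^{\ell-1}) = Z(o^\ell,\hat o^\ell)Z(u_\ell,v_\ell) - Z(o^\ell,v_\ell)Z(u_\ell,\hat o^\ell)$, up to signs. Dividing by $Z(o^\ell,\hat o^\ell)Z(o^{\ell-1},\hat o^{\ell-1})$ and rearranging yields the recursion.

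The main obstacle is the sign bookkeeping together with verifying the LGV hypothesis in each of the four cases for $(p,q)$. In the natural planar order, $p$ sits at the end of the row list when $p=(i,n)$ (to the right of $o^{k-1}$) and at the start when $p=(1,n-i+1)$ (below it), and similarly for $q$. Desnanot--Jacobi needs the removed indices listed in increasing order, and whether the $p$-row and $q$-column sit on the same or opposite ends determines the sign: aligned ends give $(-1)^0$, mismatched ends give $(-1)^1$. This is exactly the constant $c$. I would handle this by fixing the ordering convention of Definition \ref{defn:LPP_Partn} and checking one representative case carefully, obtaining the others by the symmetries of reflecting $M$ (swapping $p=(i,n)$ with $p=(1,n-i+1)$) and of $M\mapsto M^F$. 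I would also check that the requirement that $p$ reach $q$, together with $k\le r_0$, ensures that disjoint tuples exist and that each minor is correctly matched to the ordered tuples.
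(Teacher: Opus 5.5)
Your induction is the paper's argument. You apply Desnanot--Jacobi to the LGV matrix with rows $o^\ell\cup\{p\}$ and columns $\hat o^\ell\cup\{q\}$, and the sign $(-1)^c$ comes from whether the $p$-row and $q$-column sit at the same end or at opposite ends. The paper starts at $\ell=1$ with a $2\times 2$ LGV determinant, which is just your first step from the tautological $\ell=0$ case. Your identification of the five minors and your sign rule are correct.

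One small correction: no symmetry of the grid changes $c$. The flip $M\mapsto M^F$ moves $p$ and $q$ to the other pair of edges simultaneously, and a $180^\circ$ rotation exchanges the roles of $p$ and $q$. So you need one $c=0$ computation and one $c=1$ computation, as the paper does, not a single representative case.

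The actual gap is the closing claim when $r_0=\min(m,n)<\min(i,j)$, which can only happen for $c=1$.
\begin{itemize}
\item Example: $n=2$, $m=5$, $p=(3,2)$, $q=(3,1)$ gives $r_0=2$, $u_2=((1,2),(3,2))$ and $v_2=((3,1),(5,1))$. Neither is a corner cluster, so the remainder is not literally the $k=r_0$ summand.
\item What you actually need is $Z_M(u_{r_0},v_{r_0})\,Z_M(o^{r_0},\hat o^{r_0})=Z_M(u_{r_0},\hat o^{r_0})\,Z_M(o^{r_0},v_{r_0})$. The remark that $o^{r_0}$ fills an entire edge does not prove this.
\item The paper's one-line reason, that $u_{r_0}=o^{r_0}$ or $v_{r_0}=\hat o^{r_0}$, has the same hole in this subcase.
\end{itemize}

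A clean fix is to run your recursion once more, at $\ell=r_0$, and show the new remainder vanishes.
\begin{itemize}
\item The step is still valid here: the points of $u_{r_0+1}$ and $v_{r_0+1}$ are distinct, and you only divide by $Z_M(o^{r_0},\hat o^{r_0})$ and $Z_M(o^{r_0-1},\hat o^{r_0-1})$, which are positive.
\item Take $r_0=n$ with $p=(i,n)$ and $q=(m-j+1,1)$. Every start in $u_{n+1}$ lies in a column $\le i$. Every end in $v_{n+1}$ lies in a column $\ge m-j+1\ge i$.
\item So each of the $n+1$ paths must use a box of column $i$, but that column has only $n$ boxes.
\item Hence there is no disjoint $(n+1)$-tuple for any pairing, and the LGV determinant $Z_M(u_{n+1},v_{n+1})$ equals $0$.
\item The case $r_0=m$, with $p$ on the left edge and $q$ on the right edge, is identical with row $n-i+1$ in place of column $i$.
\end{itemize}
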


	\begin{proof}
		We first consider the case when $p = (i, n)$, $q = (m, j)$, in which case $c = 0$. We proceed by induction. For the base case, by LGV (Lemma \ref{LGV_thm}) we have
		\begin{align*}
			Z_M(\{ o, p\}, \{ \hat o, q\}) &= Z_M(o, \hat o) Z_M(p, q) - Z_M(o, q) Z_M(p, \hat o)
		\end{align*}
		Rearranging and using that $Z_M(o, \hat o) > 0$ gives the identity for $\ell=1$:
		\begin{align*}
			Z_M(p,q) &= \frac{Z_M(o,q) Z_M(p,\hat o)}{Z_M(o,\hat o)} + \frac{Z_M(\{ o, p\}, \{ \hat o, q\})}{Z_M(o,\hat o)}
		\end{align*}
		
		Assume the result holds for some $\ell < \min(i,j)$. By the Desnanot-Jacobi Identity (Lemma \ref{lemma:desnanot_jacobi}):
		\begin{align*}
			Z_M(\{ o^{\ell+1}, p\}, \{ \hat o^{\ell+1}, q\}) Z_M(o^\ell,\hat o^\ell) 
			&= Z_M(\{ o^\ell, p\}, \{ \hat o^\ell, q\}) Z_M(o^{\ell+1}, \hat o^{\ell+1}) - Z_M(\{ o^\ell, p\}, \hat o^{\ell+1}) Z_M(o^{\ell+1}, \{ \hat o^\ell, q\}).
		\end{align*}
		Rearranging gives:
		\begin{align*}
			\frac{Z_M(\{ o^\ell, p\}, \{ \hat o^\ell, q\})}{Z_M(o^\ell,\hat o^\ell)} 
			&= \frac{ Z_M(\{ o^\ell, p\}, \hat o^{\ell+1}) Z_M(o^{\ell+1}, \{ \hat o^\ell, q\}) }{ Z_M(o^\ell, \hat o^\ell) Z_M(o^{\ell+1}, \hat o^{\ell+1}) }  + \frac{ Z_M(\{ o^{\ell+1}, p\}, \{ \hat o^{\ell+1}, q\})}{ Z_M(o^{\ell+1}, \hat o^{\ell+1}) }
		\end{align*}
		Substituting this expression for the remainder term in the identity for $\ell$ proves the $\ell + 1$ case. 
		
		The proof for the case when $p$ is on the left edge and $q$ is on the top edge is almost identical, except that the ordering of $u_k$ and $v_k$ is altered to ensure that $Z(u_k, \hat{o}^k), Z(o^k, v_k) > 0$, and correspond to matrices of single-point partition functions as per the LGV Theorem. When $p$ and $q$ are on opposite facing edges of $M$, a negative sign is introduced at each induction step. Supposing that $p$ is on the left edge and $q$ is on the right edge, the Desnanot-Jacobi Identity now gives:
		\begin{align*}
			Z_M \left(\{ p, o^{\ell + 1}\}  , \{ \hat{o}^{\ell + 1}, q\} \right) Z_M(o^{\ell} , \hat{o}^{\ell}) &= Z_{M} \left( \{ p, o^{\ell}\} , \hat{o}^{\ell + 1} \right) Z_{M} \left( o^{\ell + 1} , \{ \hat{o}^{\ell}, q \} \right) - Z_{M}(o^{\ell + 1} , \hat{o}^{\ell + 1}) Z_M \left( \{ p, o^{\ell}\}  , \{ \hat{o}^{\ell} , q\} \right). 
		\end{align*}
		Re-arranging now gives
		\begin{align*}
			\frac{Z_{M} (\{p,  o^{\ell}\} , \{ \hat{o}^{\ell} , q \})}{Z_M(o^\ell, \hat{o}^{\ell})} &= \frac{Z_{M} \left( \{ p, o^{\ell}\} , \hat{o}^{\ell + 1} \right) Z_{M} \left( o^{\ell + 1} , \{ \hat{o}^{\ell}, q \} \right)}{Z_M (o^{\ell} , \hat{o}^{\ell}) Z_M (o^{\ell + 1} , \hat{o}^{\ell + 1}) } -  \frac{	Z_M \left(\{ p, o^{\ell + 1}\}  , \{ \hat{o}^{\ell + 1}, q\} \right) }{Z_M(o^{\ell + 1} , \hat{o}^{\ell + 1})}.
		\end{align*}
		The minus sign in the error term now results in the alternating sum. The induction step for when $p$ is at the bottom and $q$ is at the top is identical. 
	\end{proof}
	
	\begin{lemma} \label{lemma:rsk_formulas_discrete}
		Fix an $n \times m$ matrix of positive entries $M$. Define tableaux $P = \Phi(M)$ and $Q = \Phi(M^F)$ with shape $\sh(P) = \sh(Q) = \lambda$. Then $\lambda_k = \frac{Z_M(o^k, \hat o^k)}{Z_M(o^{k-1}, \hat o^{k-1})}$ for $1\leq k\leq m\wedge n$, and the following four identities hold:
		\begin{enumerate}
			\item If $p = (i,n)$ with $1\leq i\leq m$, then for $1\leq k\leq i\wedge n$:
			\begin{align*}
				\frac{Z_M(\{o^{k-1},p\},\hat o^k)}{Z_M(o^k,\hat o^k)} &= \check Z_{(P^s)^{-1}}(m-i+1,1;m,k)
			\end{align*}
			\item If $p = (1,n-i+1)$ with $1\leq i\leq n$, then for $1\leq k\leq i\wedge m$:
			\begin{align*}
				\frac{Z_M(\{p, o^{k-1}\},\hat o^k)}{Z_M(o^k,\hat o^k)} &= \check Z_{(Q^s)^{-1}}(n-i+1,1 ;n,k)
			\end{align*}
			\item If $q = (m,j)$ with $1\leq j\leq n$, then for $1\leq k\leq j\wedge m$:
			\begin{align*}
				\frac{Z_M(o^k, \{\hat o^{k-1}, q\})}{Z_M(o^k,\hat o^k)} &= \check Z_{Q^{-1}}(n-j+1,1; n,k)
			\end{align*}
			\item If $q = (m-j+1, 1)$ with $1\leq j\leq m$, then for $1\leq k\leq j\wedge n$:
			\begin{align*}
				\frac{Z_M(o^k, \{q, \hat o^{k-1}\})}{Z_M(o^k,\hat o^k)} &= \check Z_{P^{-1}}(m-j+1,1; m,k).
			\end{align*}
		\end{enumerate}
	\end{lemma}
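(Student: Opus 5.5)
The shape formula $\lambda_k = Z_M(o^k,\hat o^k)/Z_M(o^{k-1},\hat o^{k-1})$ comes straight from the definitions. By definition, $\lambda_k=\prod_{i=k}^m P_{ik}$. Each $P_{ik}$ with $i>k$ is a ratio of four terms $Z_M(u_i^k)Z_M(u_{i-1}^{k-1})/(Z_M(u_{i-1}^k)Z_M(u_i^{k-1}))$, and $P_{kk}=Z_M(u_k^k)/Z_M(u_k^{k-1})$. The product telescopes in $i$ to $Z_M(u_m^k)/Z_M(u_m^{k-1})$. Since $u_m^k=(o^{k\rightarrow};\hat o^{k\leftarrow})$, this is the claimed formula. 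The $Q$ version follows in the same way, or from the already noted equality of shapes.

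For the four identities, I would prove item 4 in detail and obtain the others by symmetry. Item 1 is item 4 applied to the rotated matrix $RM$, which has tableau $P^s$ and whose partition functions match those of $M$ under $u\mapsto u^*$. Items 2 and 3 follow by the anti-diagonal flip $M\mapsto M^F$, which exchanges $P$ and $Q$.

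For item 4, the plan is to move everything onto the tableau $P$ and then apply a shadow identity.
\begin{itemize}[nosep]
\item \emph{Transfer to $P$.} All starting points ($o^k$ or $o^{k-1}$) lie on the bottom row and all endpoints on the top row. Theorem \ref{T:encoding-lpp} therefore gives
\[
Z_M(o^k,\{q,\hat o^{k-1}\})=Z_P\bigl((1,1)^{k\searrow},\{q,\hat o^{k-1}\}\bigr),\qquad Z_M(o^k,\hat o^k)=Z_P\bigl((1,1)^{k\searrow},\hat o^k\bigr),
\]
since $\bar o^{k}=((1,1),\dots,(k,k))$, with row indices read in $P$ coordinates.
\item \emph{Freeze the paths in the triangle.} Because $P$ lives on the triangle $\Gamma$, $k$ disjoint paths starting on the diagonal $(1,1),\dots,(k,k)$ are heavily constrained. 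The path from $(k,k)$ (in matrix orientation) must run along row $k$ until it can turn up.
\item \emph{Apply the shadow identity.} The endpoint configuration $\{q,\hat o^{k-1}\}=\{(m-j+1,1),(m-k+2,1),\dots,(m,1)\}$ is a "gapped" version of $\hat o^k$. I would use \eqref{E:shadow-identity} in a multi-path form, read in the reflected geometry where the diagonal cluster is $(m,n)^{\searrow}$ and the endpoints are one point followed by a cluster. This should give
\[
Z_P\bigl(\text{diag},\{q,\hat o^{k-1}\}\bigr)=Z_P(\text{diag},\hat o^k)\cdot\check Z_{P^{-1}}(m-j+1,1;m,k).
\]
\end{itemize}

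The heuristic behind the last step is the same as in Figure \ref{fig:shadow_identity}. Compare the disjoint $k$-tuple forced into the triangle, ending at the packed cluster $\hat o^k$, with the one ending at the gapped cluster. The ratio is a product of inverse weights over the boxes vacated when the gap moves. These vacated boxes trace out exactly the core of a shadow path from $(m-j+1,1)$ down to $(m,k)$, and summing over configurations gives the shadow partition function.

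The main obstacle is making this rigorous, since \eqref{E:shadow-identity} is stated only with a single path hitting $(m,n)$ and a corner configuration. My first approach would be a direct bijective or weight argument: inside $\Gamma$, the $k$-tuples from the diagonal cluster to $\hat o^k$ are essentially unique on the part above the diagonal. Their total weight factors as the weight of the tuple to $\hat o^k$ times weights attached to the deviation. I would encode the deviation as a shadow path, with steps $(1,0)$ where a row's cell is dropped, contributing an inverse weight, and steps $(1,1)$ where the path shifts down a row.

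If that bookkeeping becomes messy, the fallback is LGV. I would express both sides as determinants, using Lemma \ref{L:Lindstrom-Gessel-Viennot} for the shadow side, and verify the identity by induction on $k$ with Desnanot–Jacobi (Lemma \ref{lemma:desnanot_jacobi}), mirroring the proof of Lemma \ref{lemma:discrete_nonintpaths}. The base case $k=1$ is the single-path statement $Z_P((1,1),q)=Z_P((1,1),\hat o)\check Z_{P^{-1}}(m-j+1,1;m,1)$. It holds because in $\Gamma$ a path from $(1,1)$ along the bottom row picks up all of row $1$, and the ratio is the inverse of the omitted weights $P_{m-j+2,1}\cdots P_{m,1}$, which is exactly the single shadow path with all steps $(1,0)$.
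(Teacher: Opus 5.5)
Your proposal is correct and is essentially the paper's proof. The paper handles item 1 by rotating to $RM$, transferring to $P^s=\Phi(RM)$ via Theorem \ref{T:encoding-lpp}, and then invoking \eqref{E:shadow-identity}; this is exactly your item 4 argument applied to $RM$, and the $\lambda_k$ formula is obtained by the same telescoping. The obstacle you flag is also smaller than you fear, because \eqref{E:shadow-identity} is already a $k$-path identity. Your freezing step shows that every tuple from $(1,1)^{k\searrow}$ to either endpoint set runs along row $r$ through $(m-j+r,r)$, and these common segments cancel in the ratio, so you can cite \eqref{E:shadow-identity} directly without a separate bijection or Desnanot--Jacobi induction.
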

	
	\begin{proof}
		The identity for $\lambda_k$ is standard, and follows from the definition of $\Phi$, which allows us to identify $\lambda_1 \cdots \lambda_k$ as a telescoping product, equal to $Z_M(o^k, \hat o^k)$.  The proofs of the remaining identities are all similar, so we only handle case $1$. We have
		\begin{align*}
			Z_M(\{ o^{k-1}, p\}, \hat o^{k}) 
			&= Z_{RM}(o^{k}, \{p^*, \hat o^{k-1} \}) 
			= Z_{P^s}(\overline{o^{k}}, \{p^*, \hat o^{k-1} \})
		\end{align*}
		where in the last equality we used Theorem \ref{T:encoding-lpp} and the fact that $P^s = \Phi(RM)$. Here the $R$-notation and $*$-notation for rotations is as in Definition \ref{defn:Schutzenberger}. Now recalling the shadow-path identity \eqref{E:shadow-identity}, we have 
		\begin{align*}
			Z_{P^s}(\overline{o^{k}}, \{ p^*, \hat o^{k-1} \}) &=  Z_{P^s}(\overline{o^k},\hat o^k) \check Z_{(P^s)^{-1}}(p^*; m, k) = Z_{M}(o^k,\hat o^k) \check Z_{(P^s)^{-1}}(m-i+1, 1;m, k).
		\end{align*}
		The proofs of the other three cases are similar. 
	\end{proof}
	
	\begin{proof}[Proof of Theorem \ref{thm:RSK_Representation_discrete}]
		Substituting the relations from Lemma \ref{lemma:rsk_formulas_discrete} into the result of Lemma \ref{lemma:discrete_nonintpaths} proves the theorem. 
	\end{proof}

	\subsection{Extension to the multi-path case}
	
	Using Theorem \ref{thm:RSK_Representation_discrete}, we may write partition functions from the bottom to right edge of $M$, or from the left to top edge of $M$, in terms of a single partition function across a directed graph $\Lambda$. Doing so allows us to apply the LGV theorem and immediately extend Theorem \ref{thm:RSK_Representation_discrete} to the multi-path case.
	\begin{lemma}[DtM Multi-path Partition Functions] \label{lemma:RSK_discrete_multipt}
		Let $\mathbf{i} = (i_1 , \ldots, i_k) \in \N^k_{<}$, $\mathbf{j} = (j_1 , \ldots, j_k) \in \N^k_{<}$, and set $\ell = \min(i_k, j_k,m,n)$. Let $\bp = (\mathbf{i},n)$ or $\bp = (1,n-\mathbf{i}+1)$, and $\bq = (m-\mathbf{j}+1,1)$ or $\bq = (m,\mathbf{j})$, with all endpoints in $M$. Assume that every $p_a$ can reach every $q_b$ by an up-right path. Let $c \in \{ 0, 1\}$ be given by 
		\begin{align*}
			c &= \begin{cases}
				0 ,& \bp =(\mathbf{i},n) \, \& \, \bq = (m, \mathbf{j}) \textrm{ or } \bp = (1 , n - \mathbf{i} + 1) \, \& \, \bq = (m - \mathbf{j} + 1, 1) \\
				1 , & \textrm{otherwise}.
			\end{cases}
		\end{align*}
		Write $\widehat{\bq}=\bq$ when $c=0$, and $\widehat{\bq}=(q_k,\ldots,q_1)$ when $c=1$.
		In addition, define two tableaux $U,V$ as
		\begin{align*}
			U &= \begin{cases}
				P^s, & \bp = (\mathbf{i}, n) \\
				Q^s, & \bp = (1, n - \mathbf{i} + 1)
			\end{cases}
			, \qquad
			V = \begin{cases}
				Q, & \bq = (m, \mathbf{j}) \\
				P, & \bq = (m - \mathbf{j} + 1, 1)
			\end{cases}
		\end{align*}
		and let $d_U, d_V$ be the number of columns in $U$ and $V$ respectively.
		Then
		\begin{align}
			\label{E:multipath-part}
			Z_M( \mathbf{p}, \widehat{\mathbf{q}})
			=(-1)^{c\binom{k}{2}} \sum_{I \in \II{1, \ell}^k_{<} }
			\Big( \prod_{h \in I} (-1)^{c(h-1)}\lambda_h \Big)
			\check{Z}_{U^{-1}} (d_U-\mathbf{i}+1,1;d_U,I)
			\check{Z}_{V^{-1}} (d_V-\mathbf{j}+1,1;d_V,I).
		\end{align}
		On the right hand side above, we use the notation $(d_U-\mathbf{i}+1,1)$ for the vector $((d_U-i_x+1,1),x=1,\dots,k)$, and similarly for $(d_U,I)$. Partition functions of empty path families are taken to be zero.
	\end{lemma}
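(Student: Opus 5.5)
The plan is to realize each single-path partition function appearing in Theorem \ref{thm:RSK_Representation_discrete} as a single-path partition function on one directed acyclic graph $\Lambda = \Lambda_{U,V}$ (as in Figure \ref{fig:directed_graph}), and then apply the Lindstr\"om--Gessel--Viennot lemma (Lemma \ref{LGV_thm}) on both sides.

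\textbf{Step 1: the graph $\Lambda$.} Build $\Lambda$ by gluing two copies of the auxiliary shadow-path graph $G$ from the proof of Lemma \ref{L:Lindstrom-Gessel-Viennot}. The first copy carries edge weights $(U^{-1})^*$. Its sources are the points $a_x=(d_U-i_x+1,1)$, and it runs down to the column-$d_U$ vertices $(d_U,h)$. The second copy carries $(V^{-1})^*$ with all edges reversed, running from $(d_V,h)$ up to the sinks $b_y=(d_V-j_y+1,1)$. Identify $(d_U,h)$ with $(d_V,h)$ via a single connecting edge of weight $(-1)^{c(h-1)}\lambda_h$ for each $h\le \ell$. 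Then Theorem \ref{thm:RSK_Representation_discrete} reads $Z_M(p_x,q_y)=Z_\Lambda(a_x,b_y)$ for all $x,y$. This uses the hypothesis that every $p_x$ reaches every $q_y$, so that each single-path identity applies, and extra indices $h>\min(i_x,j_y)$ contribute zero since no shadow path exists there.

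\textbf{Step 2: the left-hand side.} By LGV in the original lattice, $Z_M(\bp,\widehat\bq)=\det_{x,y} Z_M(p_x,\widehat q_y)$. The reversal in $\widehat\bq$ when $c=1$ is exactly what is needed for the non-crossing hypothesis of Lemma \ref{LGV_thm}: when $p$ and $q$ lie on opposite-facing edges, the planar order of the endpoints is reversed. Writing $\det Z_M(p_x,\widehat q_y)=(-1)^{c\binom k2}\det Z_M(p_x,q_y)$ produces the sign prefactor in \eqref{E:multipath-part}. Combining with Step 1, the left side equals $(-1)^{c\binom k2}\det_{x,y} Z_\Lambda(a_x,b_y)$.

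\textbf{Step 3: the right-hand side, and the main obstacle.} Expand $\det Z_\Lambda(a_x,b_y)$ via Cauchy--Binet over the middle edges: $Z_\Lambda(a_x,b_y)=\sum_h A_{xh}w_h B_{hy}$, with $A_{xh}=\check Z_{U^{-1}}(a_x;d_U,h)$, $w_h=(-1)^{c(h-1)}\lambda_h$ and $B_{hy}=\check Z_{V^{-1}}(d_V,h;b_y)$. This gives
\[\det(AWB)=\sum_{I\in\II{1,\ell}^k_<}\det A_{\cdot I}\Big(\prod_{h\in I}w_h\Big)\det B_{I\cdot}.\]
It remains to identify $\det A_{\cdot I}$ with the multi-path shadow partition function $\check Z_{U^{-1}}(d_U-\mathbf i+1,1;d_U,I)$ using Lemma \ref{L:Lindrom-Gessel-Viennot}, and likewise for $B$. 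This identification is the step I expect to be delicate. Lemma \ref{L:Lindstrom-Gessel-Viennot} requires the ordering conditions: the $m$-coordinates must be nondecreasing and the $n$-coordinates nonincreasing along the tuple. Since $\mathbf i$ is increasing, the starting points $d_U-i_x+1$ are decreasing, so a consistent reversal of both index orders is needed; the two reversals cancel in the determinant. One must also check that when no disjoint shadow $k$-tuple exists, the determinant vanishes, matching the stated convention that such partition functions are zero. I would verify the vanishing by the standard involution argument in $G$, which remains valid even without the existence hypothesis.
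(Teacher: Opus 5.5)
Your proof is correct, but it handles the middle column differently from the paper.

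\textbf{The paper's route.} It applies the Lindstr\"om--Gessel--Viennot lemma directly on the glued graph $\Lambda_{U,V}$, with signed weights on the edges into the middle column. This turns $\det Z_{\Lambda_{U,V}}(-i_x,1;j_y,1)$ into a single $k$-path partition function across $\Lambda_{U,V}$. The paper then decomposes each disjoint $k$-tuple according to its increasing set $I$ of middle vertices, splitting it into disjoint shadow $k$-tuples in $U^{-1}$ and $V^{-1}$.

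\textbf{Your route.} You use nothing about $\Lambda$ beyond the matrix factorization $Z_M(p_x,q_y)=\sum_{h\le \ell} A_{xh}w_hB_{hy}$ given by Theorem \ref{thm:RSK_Representation_discrete}, with vanishing extra terms for $h>\min(i_x,j_y)$. Cauchy--Binet with the diagonal middle matrix produces the sum over $I$. Lemma \ref{L:Lindstrom-Gessel-Viennot} is then applied on each tableau separately.
\begin{itemize}
\item On the $U$ side, the simultaneous reversal of row and column order is needed to meet that lemma's ordering hypotheses, and it costs no sign, as you note.
\item On the $V$ side, read $B_{hy}$ as $\check Z_{V^{-1}}(d_V-j_y+1,1;d_V,h)$. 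Then $\det B_{I\cdot}$ is the transpose of the LGV determinant, which is also harmless.
\end{itemize}

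\textbf{Comparison.} The two arguments are equivalent in substance: LGV on a graph cut through its middle column is Cauchy--Binet plus LGV on the two halves. Yours has the advantage that the only combinatorial inputs are LGV on $M$ and the already-proved shadow LGV. You therefore avoid checking three things the paper must handle on $\Lambda_{U,V}$:
\begin{itemize}
\item the non-permutability hypothesis on the glued graph;
\item LGV with signed weights;
\item the bijection between disjoint tuples in $\Lambda_{U,V}$ and pairs of disjoint shadow tuples sharing their middle vertices.
\end{itemize}
In fact your Step 1 graph is superfluous. The paper's route, in exchange, exhibits the right-hand side of \eqref{E:multipath-part} as one multi-path partition function across $\Lambda_{U,V}$. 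That is the picture in Figure \ref{fig:directed_graph}, and it is the disjoint-switchback-paths form that the paper reuses in the continuum (Airy and KPZ) settings.
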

	
	\begin{proof} Letting $U, V$ be the two tableaux that appear in Theorem \ref{thm:RSK_Representation_discrete}, define the subset $\Gamma = \Gamma_{U,V} \subset \Z^2$ given by
		\begin{align*}
			\Gamma & \defeq \{ (x, y) \in \Z^2 \, : \, y \in \II{1, \ell}, \, -(d_U-y+1) \leq x \leq d_V-y+1 \} ,
		\end{align*} 
		and define $\Lambda_{U,V}: \Gamma \to \R$ by:
		\begin{align*}
			\Lambda_{U,V}(x,y) &= \begin{cases}
				(U(d_U+x+1,y))^{-1} & x \leq -1 \\
				(-1)^{c(y-1)}\lambda_y & x=0 \\
				(V(d_V-x+1,y))^{-1} & x \geq 1
			\end{cases}. 
		\end{align*}
		We consider $\Lambda_{U,V}$ as a directed graph with directed edges from vertex $(x,y)$ to vertex $(x',y')$ in the following cases:
		\begin{itemize}
			\item $x,x' \leq -1$, $x'-x=1$, $y'-y\in\{0,1\}$,
			\item $x,x' \geq 1$, $x'-x=1$, $y'-y\in\{0,-1\}$,
			\item $x' = 0$, $x' - x = 1$, $y = y'$,
			\item $x' = 1$, $x' - x = 1$, $y = y'$.
		\end{itemize}
		For a path $\tau = (\tau_1 , \ldots, \tau_r)$ in $\Lambda$, we extend the concept of its core as:
		\begin{align*}
			C(\tau)
			&= \{\tau_i=(x,y):2\le i\le r,\ x\le-1,\ \tau_i-\tau_{i-1}=(1,0)\}\\
			&\quad{}\cup\{\tau_i=(0,y):1\le i\le r\}\\
			&\quad{}\cup\{\tau_i=(x,y):1\le i<r,\ x\ge1,\ \tau_{i+1}-\tau_i=(1,0)\}.
		\end{align*}
		Thus the core includes the middle vertex, and on the right it collects the departure vertices of horizontal steps, since this side is a reversed shadow path.
		We then define the partition function for single points $p$ to $q$ across $\Lambda_{U,V}$ as:
		\begin{align*}
			Z_{\Lambda_{U,V}}(p, q) &= \sum_{\tau: p \to q} \prod_{s \in C(\tau)} \Lambda_{U,V}(s) .
		\end{align*}
		A diagram of $\Lambda = \Lambda_{U,V}$ is illustrated in Figure \ref{fig:directed_graph}. The partition function $Z_{\Lambda_{U,V}}$ corresponds to going down and right along the left side of $\Lambda$, starting at $(-i, 1)$ and turning around at $(0,h)$ for some $h\in\II{1,\ell}$ and then going up and right along $\Lambda$ to the point $(j, 1)$. Since going down and right along $\Lambda$ corresponds to a shadow path across $U^{-1}$ (and similarly for the right side of $\Lambda$ after reversal), we can also see $Z_{\Lambda}$ as a weighted sum of shadow paths across $U^{-1}$ and $V^{-1}$. More precisely, for $i\in\{i_1,\ldots,i_k\}$ and $j\in\{j_1,\ldots,j_k\}$,
		\begin{align}
			\label{E:LambdaUV}
			Z_{\Lambda_{U,V}}(-i,1;j,1)
			=\sum_{h=1}^{\ell}(-1)^{c(h-1)}\lambda_h
			\check{Z}_{U^{-1}}(d_U-i+1,1;d_U,h)
			\check{Z}_{V^{-1}}(d_V-j+1,1;d_V,h)
			=Z_M(p,q),
		\end{align}
		where the last equality is the result from Theorem \ref{thm:RSK_Representation_discrete}, and $p \in \{(i,n),(1,n-i+1)\}$, $q \in \{(m,j),(m-j+1,1)\}$, with the exact choice of $p,q$ matching the choice of $\bp,\bq$ in the lemma.
		
		Now, arguing as in Lemma \ref{L:Lindstrom-Gessel-Viennot}, the Lindstr\"om-Gessel-Viennot lemma holds for partition functions across $\Lambda$. Indeed, assign each horizontal edge on the left the weight at its arrival vertex, each horizontal edge on the right the weight at its departure vertex, and each edge into $(0,h)$ the weight $(-1)^{c(h-1)}\lambda_h$; assign all other edges weight $1$. This gives the same weights for the paths in question, and LGV applies also to signed edge weights. Thus
		\begin{align}
			\label{E:LGV-shadows}
			Z_{\Lambda_{U,V}}(-\mathbf{i},1;\mathbf{j},1)
			&=\det\left(Z_{\Lambda_{U,V}}(-i_x,1;j_y,1)\right)_{x,y=1}^{k}.
		\end{align}
		Expanding out the multi-path partition function on the left-hand side of \eqref{E:LGV-shadows} according to its increasing tuple of middle vertices gives the sum on the right-hand side of \eqref{E:multipath-part}. On the other hand, plugging \eqref{E:LambdaUV} into the right-hand side of \eqref{E:LGV-shadows} and applying the LGV lemma gives
		\[
		Z_{\Lambda_{U,V}}(-\mathbf{i},1;\mathbf{j},1)
		=(-1)^{c\binom{k}{2}}Z_M(\bp,\widehat{\bq}),
		\]
		since the compatible endpoint order in $M$ is reversed exactly when $c=1$. This proves \eqref{E:multipath-part}.
	\end{proof}
	
	\subsection{Discrete LPP Models}
	
	By formally exchanging the $(+, \times)$-algebra for the $(\max, +)$-algebra, we can deduce that the same results hold for last passage percolation. To state the corollary, we first define the last passage analogue $\Psi$ of the map $\Phi$.
	\begin{defn} (The RSK correspondence)
		\label{D:LPRSK}
		Let $M:\II{1, m} \times \II{1, n} \to \R$, and recall that $u_i = (1, n; i, 1)$. Define the map $\Psi:F(\II{1, m} \times \II{1, n}, \R) \to F(\Gamma_{m, n}, \R)$ by:
		$$
		\Psi(M)(i, j) = \begin{cases}
			L_M(u_i^{j}) - L_M(u_{i}^{j-1}), \qquad &j = i \\
			L_M(u_i^{j}) +  L_M(u_{i-1}^{j-1}) - L_M(u_{i-1}^{j}) - L_M(u_{i}^{j-1}), \qquad &j < i
		\end{cases}
		$$
		where we use the convention that $L_M(u_i^0) = 0$. Set $P=\Psi(M)$ and $Q=\Psi(M^F)$. We define Gelfand–Tsetlin patterns $\mu = (\mu_{ij})_{1 \leq i \leq m,\,1 \leq j \leq i\wedge n}$ and  
		$\nu = (\nu_{ij})_{1 \leq i \leq n,\,1 \leq j \leq i\wedge m}$ corresponding to $P$ and $Q$, respectively, as
		\begin{align}
			\mu_{ij} &= \sum_{k = j}^i P_{k j}, \qquad \nu_{ij} = \sum_{k = j}^i Q_{k j}.
		\end{align}
		Since $L_{M} (\cdot, \hat{o}^{k \leftarrow }) = L_{M}(\cdot, \hat{o}^{k \downarrow})$, it follows that the bottom rows of the two Gelfand-Tsetlin patterns are equal. We call this bottom row the shape, $\sh (P) = \sh(Q) = (\lambda_1, \ldots, \lambda_{n\wedge m})$ which is given by
		\begin{align}
			\lambda_j &= \sum_{k = j}^m P_{k j} = \sum_{k = j}^n Q_{k j}.
		\end{align}
		The RSK correspondence is the map
		\begin{align*}
			\mathrm{RSK}: M \mapsto (P, Q).
		\end{align*}
		Again, we refer to $P, Q$ as \textbf{tableaux}, since when $M$ is positive integer-valued, they are simple transformations of the semi-standard Young tableaux that appear in the usual RSK correspondence.
	\end{defn}
	By exchanging the underlying algebra in Definition \ref{defn:Schutzenberger}, one recovers the classical Schützenberger involution from its geometric counterpart. We use the same notation for both involutions, as the intended meaning will be clear from context.
	
	\begin{defn} \label{defn:Schutzenberger_LPP}
		Fix a tableau $P = (p^i_j)_{1 \leq j \leq m,\,1 \leq i \leq j\wedge n}$. 
		For $i \in \II{1, n}$, $j \in \II{1, m}$ and $i \leq j$, let 
		\begin{align*}
			\sigma^{i}_j &= L_P \left( \overline{(m - j + 1, n)^{i \rightarrow}} ;  (m, 1)^{i \leftarrow}  \right) .
		\end{align*}
		Set $\sigma_j^0=0$ for $0\leq j\leq m$. That is, $\sigma^{i}_j$ is obtained from the multi-path LPP value illustrated in Figure \ref{fig:Schutzenberger}. 
		Then the classical Schützenberger involution is the tableau $P^s = (\tilde{p}^i_j)_{1 \leq j \leq m,\,1 \leq i \leq j\wedge n}$, where $\tilde{p}^i_j$ is given by
		\begin{align*}
			\tilde{p}^i_i &= \sigma^{i}_i - \sigma^{i-1}_i, \qquad  
			\tilde{p}^i_j = \sigma^i_j +  \sigma^{i-1}_{j-1} -\sigma^{i}_{j-1} - \sigma^{i-1}_j\quad (i<j). 
		\end{align*}
		We have $\sh(P) = \sh(P^s)$. 
	\end{defn}
	
	We now exchange the algebras in Theorem \ref{thm:RSK_Representation_discrete}/Lemma \ref{lemma:RSK_discrete_multipt} to obtain a DtM representation for the RSK correspondence. Our first corollary corresponds to the cases in those statements when $c = 0$. It is immediate from Lemma \ref{lemma:RSK_discrete_multipt} and Lemma \ref{lemma:temp_change}.
	
	\begin{cor} \label{cor:RSK_discrete_LPP}
		Let $\mathbf{i} = (i_1 , \ldots, i_k) \in \N^k_{<}$, $\mathbf{j} = (j_1 , \ldots, j_k) \in \N^k_{<}$, and set $\ell = \min(i_k, j_k, m, n)$. Suppose that $\mathbf{p}, \mathbf{q} \in (\II{1, m} \times \II{1, n})^k$ are given by either
		\begin{align*}
			\mathbf{p} &= (\mathbf{i}, n) \,,\, \mathbf{q}  = (m , \mathbf{j}) \quad \textrm{OR} \quad \mathbf{p} = (1, n - \mathbf{i} + 1) \, , \, \mathbf{q} = (m - \mathbf{j} + 1, 1). 
		\end{align*}
		Let $M$ be an $m \times n$ matrix of real entries with corresponding tableaux $P = \Psi(M)$, $Q = \Psi(M^F)$. Define two tableaux $U,V$ as
		\begin{align*}
			U &= \begin{cases}
				P^s, & p = (i, n) \\
				Q^s, & p = (1, n - i + 1)
			\end{cases}
			, \qquad
			V = \begin{cases}
				Q, & q = (m, j) \\
				P, & q = (m - j + 1, 1)
			\end{cases}
		\end{align*}
		where $P^s, Q^s$ correspond to the classical Schützenberger involutions of $P$ and $Q$ respectively. Let $\operatorname{col}(U), \operatorname{col}(V)$ be the number of columns in $U$ and $V$ respectively. 
		Then
		\begin{align*}
			L_M( \mathbf{p}, \mathbf{q}) = \max_{I \in \II{1, \ell}^k_{<} }  ( \sum_{k \in I} \lambda_{k})  + \check{L}_{-U} (\operatorname{col}(U) - \mathbf{i} + 1 , 1 ; \operatorname{col}(U), I)  + \check{L}_{-V} (\operatorname{col}(V) - \mathbf{j} + 1 , 1 ; \operatorname{col}(V) , I).
		\end{align*}
	\end{cor}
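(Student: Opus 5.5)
The plan is to obtain the corollary as the zero-temperature image of Lemma \ref{lemma:RSK_discrete_multipt} in the case $c=0$, using Lemma \ref{lemma:temp_change}. Fix a real matrix $M$ and consider the positive matrix $M^{(\beta)}$ with entries $e^{\beta M_x}$. The left-hand side $Z_{M^{(\beta)}}(\bp,\bq)$ is a polynomial with positive coefficients in the variables $e^{\beta M_x}$. Lemma \ref{lemma:temp_change} then gives
\[
\beta^{-1}\log Z_{M^{(\beta)}}(\bp,\bq)\to L_M(\bp,\bq),
\]
since for $c=0$ we have $\widehat{\bq}=\bq$. So the whole task is to show that the right-hand side of \eqref{E:multipath-part} is a subtraction-free rational function of the weights, and that its tropicalization is the stated maximum.

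\textbf{Step 1: subtraction-freeness.} With $c=0$, every sign $(-1)^{c(h-1)}$ and the prefactor $(-1)^{c\binom k2}$ equals $1$, so the right-hand side of \eqref{E:multipath-part} is a sum over $I$ of products of the following quantities:
\begin{itemize}[nosep]
\item the shape entries $\lambda_h$;
\item the multi-path shadow partition functions $\check Z_{U^{-1}}$ and $\check Z_{V^{-1}}$.
\end{itemize}
Each shadow partition function is a positive sum over disjoint shadow-path tuples of products of reciprocals of tableau entries. By the definition of $\Phi$ and of the geometric Schützenberger involution (Definition \ref{defn:Schutzenberger}), every entry of $P,Q,P^s,Q^s$ and every $\lambda_h$ is a ratio of products of partition functions $Z_M$ or $Z_P$. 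Via Theorem \ref{T:encoding-lpp}, these are multi-path partition functions of $RM$ or $M^F$, hence subtraction-free polynomials in $e^{\beta M_x}$. Sums, products and reciprocals of subtraction-free rational functions remain subtraction-free, so the whole right-hand side is subtraction-free.

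\textbf{Step 2: identifying the tropical limit.} Apply the rules $M(fg)=M(f)+M(g)$, $M(f/g)=M(f)-M(g)$ and $M(f+g)=\max(M(f),M(g))$ entrywise. Under these rules:
\begin{itemize}[nosep]
\item $\Phi$ tropicalizes to $\Psi$ of Definition \ref{D:LPRSK}, since each $Z_M(u_i^j)$ becomes $L_M(u_i^j)$.
\item The geometric Schützenberger involution tropicalizes to the classical one of Definition \ref{defn:Schutzenberger_LPP}, since $\sigma^i_j$ becomes the corresponding multi-path LPP value.
\item The shapes $\lambda_h$ tropicalize to the classical shapes.
\item Inverting entries becomes negation, so $\check Z_{U^{-1}}$ tropicalizes to $\check L_{-U}$, using \eqref{E:ZMLMZ-multi}; the same holds for $V$.
\end{itemize}
The outer sum over $I\in\II{1,\ell}^k_<$ becomes a maximum, and the product $\prod_{h\in I}\lambda_h$ becomes $\sum_{h\in I}\lambda_h$. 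This produces exactly the stated formula, with $d_U=\operatorname{col}(U)$ and $d_V=\operatorname{col}(V)$.

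\textbf{Main obstacle.} The only genuine care is compositional: Lemma \ref{lemma:temp_change} is stated for a single rational function. I would therefore argue that it applies to the composite expression because each intermediate quantity is itself subtraction-free in the original variables, so the tropicalization can be computed step by step through the composition. Concretely, if $f$ is subtraction-free in $g_1,\dots,g_r$ and each $g_i$ is subtraction-free in $x$, then $M(f\circ g)=M(f)\circ (Mg_1,\dots,Mg_r)$. This follows by induction on the arithmetic expression defining $f$, using the three rules of Lemma \ref{lemma:temp_change}.

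A minor point also needs checking: the empty-family convention (zero partition function) corresponds to $-\infty$. Such terms simply drop out of the maximum, so this causes no issue.
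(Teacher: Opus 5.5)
Your proposal is correct and follows the paper's argument: the paper calls the corollary immediate from the $c=0$ case of Lemma \ref{lemma:RSK_discrete_multipt}, where all signs disappear and $\widehat{\bq}=\bq$, combined with the tropicalization Lemma \ref{lemma:temp_change}. Your extra checks make explicit what the paper leaves implicit: subtraction-freeness of every entry of $P,Q,P^s,Q^s$ and of every $\lambda_h$, compatibility of tropicalization with composition of subtraction-free expressions, and the convention that empty path families contribute $-\infty$ to the maximum.
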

	
	For LPP across opposite sides of $M$ (the $c=1$ case), some care must be taken due to the negative signs that appear in the right-hand side of \eqref{eqn:alternating_sum} and \eqref{E:multipath-part}. These can be dealt with by first moving the negative signs to the other side of the equality so that all signs are positive, and then making the algebra switch. Doing so gives a sequence of non-trivial equalities which happen to be equivalent to the RSK Isometry. We record the result for single-path LPP from bottom to top of $M$ below. An analogous, more complicated result can be obtained for the multi-path case.
	
	\begin{cor} Let $M : \II{1, m} \times \II{1, n} \to \R$. For $i, j \in \II{1, m}$ so that $i + j \leq m$, let $p = (i, n)$ and $q = (m - j + 1, 1)$. Let $\ell < \min(i,j)$. If $\ell$ is even,
		\begin{align*}
			&\max \left\{  L_{M}(p,q), \max_{k \leq \ell, k \operatorname{ even}} \left\{ L_{M}(u_k, \hat{o}^k) + L_{M}(o^k, v_k) - L_{M}(o^{k - 1} , \hat{o}^{k - 1}) - L_{M} (o^k , \hat{o}^k)  \right\}    \right\} \\
			&= \max \left\{   L_{M} (u_{\ell + 1} , v_{\ell+  1} ) - L_{M} (o^{\ell} , \hat{o}^{\ell}) , \,   \max_{k \leq \ell, k \operatorname{ odd}} \left\{ L_{M}(u_k, \hat{o}^k) + L_{M}(o^k, v_k) - L_{M}(o^{k - 1} , \hat{o}^{k - 1}) - L_{M} (o^k , \hat{o}^k)  \right\}   \right\}.
		\end{align*}
		If $\ell$ is odd,
		\begin{align*}
			&\max \left\{  L_{M}(p,q), \, L_{M} (u_{\ell + 1} , v_{\ell+  1} ) - L_{M} (o^{\ell}, \hat{o}^{\ell}) ,  \,\max_{k \leq \ell, k \operatorname{ even}} \left\{ L_{M}(u_k, \hat{o}^k) + L_{M}(o^k, v_k) - L_{M}(o^{k - 1} , \hat{o}^{k - 1}) - L_{M} (o^k , \hat{o}^k)  \right\}    \right\} \\
			&=    \max_{k \leq \ell, k \operatorname{ odd}} \left\{ L_{M}(u_k, \hat{o}^k) + L_{M}(o^k, v_k) - L_{M}(o^{k - 1} , \hat{o}^{k - 1}) - L_{M} (o^k , \hat{o}^k)  \right\}.
		\end{align*} 
	\end{cor}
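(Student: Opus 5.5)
The idea is to start from the positive-temperature alternating identity of Lemma \ref{lemma:discrete_nonintpaths}, in the $c=1$ case. There $p=(i,n)$ sits on the bottom edge and $q=(m-j+1,1)$ on the top edge. Rather than tropicalize after substituting tableau quantities (which would require subtraction), we work directly with the $Z_M$ form of \eqref{eqn:alternating_sum}. Write
\[
T_k := \frac{Z_M(u_k,\hat o^k)\,Z_M(o^k,v_k)}{Z_M(o^{k-1},\hat o^{k-1})\,Z_M(o^k,\hat o^k)},\qquad R_\ell := \frac{Z_M(u_{\ell+1},v_{\ell+1})}{Z_M(o^\ell,\hat o^\ell)} .
\]
The identity then reads
\[
Z_M(p,q)=\sum_{k=1}^{\ell}(-1)^{k-1}T_k+(-1)^{\ell}R_\ell .
\]
Every term here is a nonnegative quantity: each $Z_M$ factor is a multi-path partition function with positive weights, hence a polynomial with positive coefficients in the entries of $M$.

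First, move all negatively signed terms to the other side. The terms with negative sign are the $T_k$ with $k$ even, and also $R_\ell$ when $\ell$ is odd. For $\ell$ even this gives
\[
Z_M(p,q)+\sum_{k\le \ell,\,k \text{ even}}T_k \;=\; R_\ell+\sum_{k\le\ell,\,k\text{ odd}}T_k .
\]
For $\ell$ odd it gives
\[
Z_M(p,q)+R_\ell+\sum_{k \text{ even}}T_k \;=\; \sum_{k \text{ odd}}T_k .
\]
Both sides of each equation are now subtraction-free rational functions of the entries $e^{\beta M_x}$. By definition, each multi-path $Z_M$ is a sum over disjoint tuples of products of weights, which is a positive polynomial. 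Each $T_k$ and $R_\ell$ is a ratio of products of such polynomials. To keep the bookkeeping honest, one should check that every tuple appearing ($u_k,\hat o^k$, and so on) admits at least one disjoint path family, so no factor is the empty (zero) partition function. This is where the hypothesis $\ell<\min(i,j)$ and $i+j\le m$ is used: it guarantees that the $k$-path families exist for $k\le \ell+1$.

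Next, apply Lemma \ref{lemma:temp_change} to both sides of each identity with weights $e^{\beta M_x}$. Tropicalization turns sums into maxima, products into sums and quotients into differences. For any tuple we have $M(Z_{e^{\beta\cdot}M})=L_M$ after the $\frac1\beta\log$ limit, since a sum over disjoint tuples of products becomes a max over disjoint tuples of sums. Hence $T_k$ tropicalizes to
\[
L_M(u_k,\hat o^k)+L_M(o^k,v_k)-L_M(o^{k-1},\hat o^{k-1})-L_M(o^k,\hat o^k),
\]
and $R_\ell$ tropicalizes to $L_M(u_{\ell+1},v_{\ell+1})-L_M(o^\ell,\hat o^\ell)$. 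Equal subtraction-free functions have equal tropicalizations, so this yields exactly the two displayed identities, with the convention $L_M(o^0,\hat o^0)=0$.

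The only real obstacle is a rigor point. Lemma \ref{lemma:temp_change} is stated for functions, so the identity must hold as an identity of rational functions and not merely pointwise at one $M$. This is fine, because Lemma \ref{lemma:discrete_nonintpaths} holds for all positive matrices. The rearranged equality therefore holds on an open set, hence as rational functions, and the tropical limit of each side is computed termwise.
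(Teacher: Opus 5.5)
Your proposal is correct and follows the paper's own route: take the $c=1$ case of Lemma \ref{lemma:discrete_nonintpaths}, move the negatively signed terms ($T_k$ for even $k$, and $R_\ell$ for odd $\ell$) to the other side, and tropicalize both subtraction-free sides with Lemma \ref{lemma:temp_change}. One small correction: the hypotheses $\ell<\min(i,j)$ and $i+j\le m$ do not by themselves guarantee that the path families exist, because they say nothing about the number of rows $n$ (for instance $L_M(o^\ell,\hat o^\ell)=-\infty$ when $\ell>n$), so you need $\ell<\min(i,j,m,n)$, as in Lemma \ref{lemma:discrete_nonintpaths}, which the corollary leaves implicit.
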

	
	To help understand the corollary, it helps to look at the simplest case when $\ell = 1$, which reduces to
	$$
	\max \{  L_{M}(p,q), \, L_{M} ((o, p); (q, \hat o)) - L_{M} (o,  \hat{o}) \} = L_M(p, \hat o) + L_M(o, q) - L_M(o,\hat o).
	$$
	It is easy to check this by hand. Indeed, if there exist disjoint geodesics from $o$ to $q$ and $p$ to $\hat o$, then $L_{M} ((o, p); (q, \hat o)) = L_M(o, q) + L_M(p, \hat o)$, and the equality follows from the quadrangle inequality 
	$$
	L_M(p, \hat o) + L_M(o, q) - L_M(o,\hat o) - L_M(p, q) \ge 0.
	$$
	If all pairs of geodesics from $o$ to $q$ and $p$ to $\hat o$ overlap, then the quadrangle inequality above becomes an equality, and we also have the strict inequality $L_{M} ((o, p); (q, \hat o)) < L_M(o, q) + L_M(p, \hat o)$. Together with some algebra, this gives the same identity.

	\subsection{Partition Functions Across the Tableaux}
	\label{SS:part-tableaux}

	In the previous sections, we considered an $n \times m$ matrix $M$ and computed partition functions starting from the bottom/left edge and ending at the right/top edge in terms of shadow paths across tableaux. In this section, we will prove a related identity for partition functions across the tableaux as opposed to the original environment. Namely, for a tableau $P = \Phi M$ (or $P = \Psi M$ in the LPP case) we will consider partition/percolation values starting at a point $\overline{(i, n)}$ on the bottom edge of $P$ and ending at a point $(m, j)$ on the right edge of $P$. This identity will be important for passing to the limit of LPP across lines.
	
	For the case when $j = 1$ we obtain such an identity by two applications of Theorem \ref{T:encoding-lpp}:
	\begin{equation*}
		Z_{\Phi M}(\overline{(i, n)} ; m, 1) = Z_M(i, n ; m, 1) = Z_{RM}(1, n ; m + 1 - i , 1) = Z_{\Phi RM} (1, 1 ; m + 1 - i , 1).
	\end{equation*}
	In anticipation of the generalization of this formula to the $j > 1$ case, we make the following manipulation. Recognizing that for any $i > 1$, there is a unique up-right path from $(1, 1)$ to $(m + 1 - i , 1)$, for $i \ge 2$ we can write
	\begin{align}
		\nonumber
		Z_{\Phi RM} (1, 1 ; m + 1 - i , 1) &= \frac{Z_{\Phi RM} (1, 1 ; m , 1)}{Z_{\Phi RM} (m + 2 - i, 1 ; m , 1)} \\
		\label{E:LP1path}
		&=Z_{(\Phi RM)^{-1}} (m + 2 - i, 1 ; m , 1) Z_{\Phi RM} (1, 1 ; m , 1),
	\end{align}
	where here $(\Phi RM)^{-1}$ denotes the array whose entries are reciprocals of those in $\Phi RM$. When $j > 1$, $Z_{\Phi M}(\overline{(i, n)} ; m, j)$ is equal to an expression similar to \eqref{E:LP1path}, but with the first term in the product replaced by a shadow partition function.

	\begin{thm}
		\label{T:rotation-isometry}
		Fix $m, n \in \N$ and let  $M \in F(\II{1, m} \times \II{1, n}, (0, \infty))$. Let $\bp = (\mathbf{i}, n)$ and $\bq = (m, \mathbf{j})$ be two vectors in $(\Z^2)^k$ such that $1 \le i_1 < \dots < i_k \le m$ and $1 \le j_1 < \dots < j_k \le n$ and $i_\ell \ge j_\ell$ for all $\ell \in \II{1, k}$. Then
		\begin{align*}
			Z_{\Phi M}(\overline{\bp}, \bq) = \check Z_{(\Phi RM)^{-1}}(m + 1 - \mathbf{i}, 1 ; m, \mathbf{j}) \prod_{\ell=1}^k Z_{\Phi RM}(j_\ell, j_\ell ; m, j_\ell).
		\end{align*}
	\end{thm}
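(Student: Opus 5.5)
The plan is to reduce the statement to Theorem \ref{T:encoding-lpp}, applied twice, exactly as in the $j=1$ computation preceding \eqref{E:LP1path}. The idea is to augment the family of endpoints on the right edge of $\Phi M$ into a full cluster $(m,1)^{j_k\downarrow}$ on the top edge. I treat the single-path case $k=1$ first, with $p=(i,n)$, $q=(m,j)$ and $i\ge j$, and then indicate the changes for general $k$.

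\textbf{Step 1 (forced rows in the tableau).} For $\ell\ge 1$, consider disjoint families in $P=\Phi M$ from $\bar o^{\ell}=((1,1),\dots,(\ell,\ell))$ to $(m,1)^{\ell\downarrow}$. The path from $(1,1)$ cannot leave row $1$, since up-right paths only decrease the second coordinate and $\Gamma_{m,n}$ has no row $0$. The path from $(2,2)$ must then avoid row $1$ entirely, so it stays in row $2$, and so on by induction. So every such family consists of the full rows $1,\dots,\ell$. Now add one more path, from $\bar p$ to $(m,\ell+1)$ with $\ell=j-1$. It must avoid rows $1,\dots,j-1$, so it is exactly a path from $\bar p$ to $(m,j)$ inside rows $\ge j$, which is what a general path from $\bar p$ to $(m,j)$ already is. This gives the factorization
\begin{align*}
Z_P\big((\bar o^{j-1},\bar p);(m,1)^{j\downarrow}\big)=Z_P(\bar o^{j-1};(m,1)^{(j-1)\downarrow})\,Z_P(\bar p;(m,j)).
\end{align*}

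\textbf{Step 2 (transfer to $\Phi RM$).} By Theorem \ref{T:encoding-lpp}, both multi-path values in Step 1 equal the corresponding values $Z_M((o^{j-1},p);\hat o^{j})$ and $Z_M(o^{j-1};\hat o^{j-1})$ in $M$. Rotating by $180^\circ$ gives $Z_{RM}(o^{j};(\hat o^{j-1},p^*))$ with $p^*=(m+1-i,1)$. Applying Theorem \ref{T:encoding-lpp} again to $RM$ turns this into $Z_{\Phi RM}(\bar o^{j};(\hat o^{j-1},p^*))$. This is precisely the computation in the proof of Lemma \ref{lemma:rsk_formulas_discrete}, case 1.

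\textbf{Step 3 (shadow identity).} Apply \eqref{E:shadow-identity} in $\Phi RM$. It gives
\begin{align*}
Z_{\Phi RM}(\bar o^{j};(\hat o^{j-1},p^*))=Z_{\Phi RM}(\bar o^{j};\hat o^{j})\,\check Z_{(\Phi RM)^{-1}}(m+1-i,1;m,j).
\end{align*}
By the same forced-row argument as in Step 1, now in $\Phi RM$, we have
\begin{align*}
Z_{\Phi RM}(\bar o^{j};\hat o^{j})=\prod_{\ell\le j}Z_{\Phi RM}(\ell,\ell;m,\ell).
\end{align*}
The denominator $Z_M(o^{j-1};\hat o^{j-1})=Z_{\Phi RM}(\bar o^{j-1};\hat o^{j-1})$ is the same product over $\ell\le j-1$. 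Dividing, everything cancels except the factor $Z_{\Phi RM}(j,j;m,j)$, which yields the claim for $k=1$.

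\textbf{Step 4 (general $k$).} Let $J=\{j_1,\dots,j_k\}$ and $N=j_k$. Fill the rows $\{1,\dots,N\}\setminus J$ with diagonal starting points $(\ell,\ell)$ and route all paths to the full cluster $(m,1)^{N\downarrow}$, with the points $\bar p_a$ inserted in order. The row-by-row forcing argument has to be modified here: a diagonal path from $(\ell,\ell)$ with $\ell\notin J$ need not be confined to its row once lower rows carry paths from $\bar p_a$. Instead I expect to argue by the order constraints, together with $i_\ell\ge j_\ell$ and the shape of $\Gamma_{m,n}$, that disjoint families still correspond bijectively to (diagonal rows) $\times$ (families from $\bar\bp$ to $\bq$ that avoid those rows). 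Failing that, I would fall back on the LGV/Desnanot–Jacobi expansion used in Lemma \ref{lemma:discrete_nonintpaths}. On the rotated side, Steps 2 and 3 go through verbatim using the multi-point shadow identity, which follows from Lemma \ref{L:Lindstrom-Gessel-Viennot} and the cluster form of \eqref{E:shadow-identity}. The result is $\check Z_{(\Phi RM)^{-1}}(m+1-\mathbf{i},1;m,\mathbf{j})\prod_{\ell\le N}Z_{\Phi RM}(\ell,\ell;m,\ell)$. The factors with $\ell\notin J$ cancel against the denominator.

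I expect the main obstacle to be this combinatorial factorization in the multi-path case of Step 4, namely showing that the interleaved diagonal paths are still forced to be rows and that the remaining paths are unconstrained. I would first try to check it directly from the ordering $i_\ell\ge j_\ell$ before resorting to the determinant identities.
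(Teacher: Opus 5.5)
\textbf{Single-path case.} Your Steps 1--3 are essentially the paper's proof of \eqref{E:one-path-form}. The paper pads with the horizontal cluster $(m,1)^{j\leftarrow}$ and reassigns boxes. You pad with the vertical cluster $(m,1)^{j\downarrow}$, where the forced-row structure is more transparent, and then implicitly use the paper's convention that the direction of a corner cluster at $\hat o$ is irrelevant before invoking Theorem \ref{T:encoding-lpp}. The rotation, the use of \eqref{E:shadow-identity} and the cancellation are the same. One small point: \eqref{E:shadow-identity} is stated with the starting cluster equal to the diagonal cluster at $p^*$. So for $i<m$ you must first split off the forced $j$-tuple from $\bar o^{j}$ to $(m-i,1)^{j\searrow}$, as in \eqref{E:Zcalc}.

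\textbf{The gap: Step 4.} The padding bijection you hope for is false in general, and no padding of this kind can work. Take $m=n=7$, $\mathbf i=(5,6)$, $\mathbf j=(1,3)$, so $\bar p_1=(5,5)$, $\bar p_2=(6,6)$ and $\bq=((7,1),(7,3))$.
\begin{itemize}
\item To complete the endpoints to $(7,1)^{3\downarrow}$ you need a path ending at $(7,2)$, sandwiched between the paths $\bar p_1\to(7,1)$ and $\bar p_2\to(7,3)$.
\item By planarity, its start would have to lie strictly between $(5,5)$ and $(6,6)$ on the lower boundary of $\Gamma_{7,7}$, and there is no such point. Starting points off that boundary cannot be transferred to $M$ by Theorem \ref{T:encoding-lpp}.
\item With your filler $(2,2)$, a row-$2$ path to $(7,2)$ blocks $\bar p_1$ from reaching row $1$. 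The only disjoint families use the sorted pairing $(2,2)\to(7,1)$, $\bar p_1\to(7,2)$, $\bar p_2\to(7,3)$, and these do not factor through $Z_{\Phi M}(\overline{\bp},\bq)$.
\end{itemize}

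\textbf{The missing idea.} No multi-path combinatorics is needed.
\begin{itemize}
\item By the ordinary LGV lemma, $Z_{\Phi M}(\overline{\bp},\bq)=\det[Z_{\Phi M}(\bar p_a,q_b)]_{a,b}$.
\item The single-path identity holds for every pair $(a,b)$, not only $a=b$. If $i_a<j_b$, both sides vanish: there is no up-right path from $\bar p_a$ to $q_b$, and no shadow path from $(m+1-i_a,1)$ to $(m,j_b)$.
\item Each entry is therefore $\check Z_{(\Phi RM)^{-1}}(m+1-i_a,1;m,j_b)\,c_b$, where $c_b=Z_{\Phi RM}(j_b,j_b;m,j_b)$ depends only on the column. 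So $\prod_b c_b$ factors out of the determinant.
\item The remaining determinant is the multi-point shadow partition function by Lemma \ref{L:Lindstrom-Gessel-Viennot}, after reversing the index order to meet its monotonicity hypotheses.
\end{itemize}
This is exactly how the paper concludes. The Desnanot--Jacobi expansion you mention as a fallback is not needed.
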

	
	\begin{remark}
		It is easy to check from the definitions that for $i \ge 2$ we have 
		$$
		\check Z_{(\Phi RM)^{-1}}(m + 1 - i, 1 ;m, 1) = Z_{(\Phi RM)^{-1}}(m + 2 - i, 1 ;m, 1),
		$$
		and so this theorem reduces to \eqref{E:LP1path} in the $k = 1, i_1 \ge 2, j_1 = 1$ case.
	\end{remark}
	
	\begin{proof}[Proof of Theorem \ref{T:rotation-isometry}]
		We start by proving the claim for single points. Let $p = (i, n)$ and $q = (m, j)$ for some $i \in \II{1, m}, j \in \II{1, n}$ with $i \ge j$. Define $\bu = ( \tilde \bp,  \tilde \bq) \in (\Z^4_\uparrow)^j$ by letting
		$$
		\tilde \bp = ((1, n)^{ (j-1) \rightarrow}, p), \qquad \tilde \bq = (m, 1)^{j \leftarrow}.
		$$
		Then by Theorem \ref{T:encoding-lpp}, we have that
		\begin{equation}
			\label{E:ZM-rotation}
			Z_M( \tilde \bp;  \tilde \bq) = Z_{\Phi M}(\overline{ \tilde \bp};  \tilde \bq).
		\end{equation}
		
		\begin{figure}[htbp]
			\centering
			\includegraphics[width=0.8\textwidth]{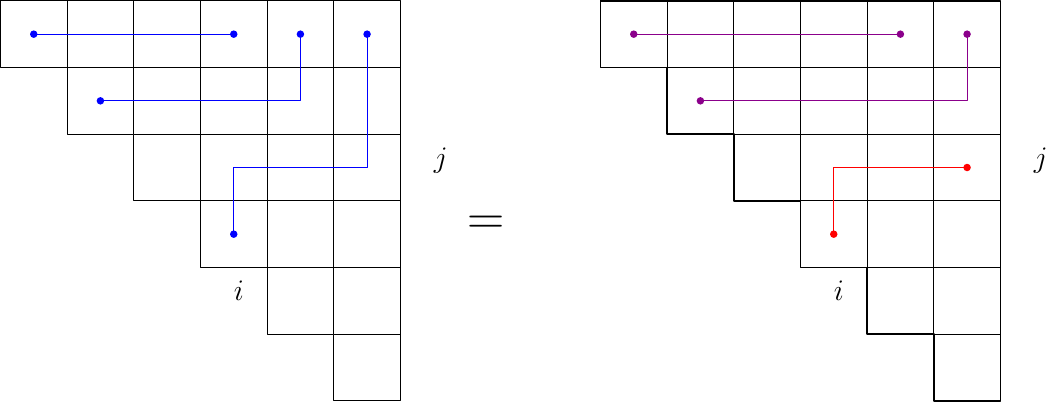} 
			\caption{A sketch of the identity given in \eqref{E:ZPhi-1} for $i = 4$, $j = 3$ and $m = n = 6$.}
			\label{fig:tableaux_paths}
		\end{figure} 
		
		Now, any disjoint $j$-tuple $\tau$ from $\overline{\tilde \bp}$ to $ \tilde\bq$ simply consists of the unique $(j-1)$-tuple going from $\overline{((1, n)^{ (j-1) \rightarrow })}$ to $(m-1, 1)^{(j - 1) \leftarrow}$ and a single path $\tau_j$ going from $\overline{(i, n)}$ to $(m, 1)$ which is forced to use all the boxes $(m, j), (m, j-1), \dots, (m, 1)$. We can reassign the boxes in the constituent paths of $\tau$ to obtain the unique $(j-1)$-tuple from $\overline{((1, n)^{ (j-1 ) \rightarrow})}$ to $(m, 1)^{(j - 1) \leftarrow}$ and a single unconstrained path from $\overline{(i, n)}$ to $(m, j)$, see Figure \ref{fig:tableaux_paths}. Therefore we can factor the partition function as follows,
		\begin{equation}
			\label{E:ZPhi-1}
			\begin{split}
				Z_{\Phi M}(\overline{ \tilde \bp};  \tilde \bq) &= Z_{\Phi M}(\overline{((1, n)^{ (j-1) \rightarrow})}; (m, 1)^{(j - 1) \leftarrow}) Z_{\Phi M}(\overline{p}; q) \\
				&= Z_{\Phi RM}(\overline{((1, n)^{ (j-1 ) \rightarrow })}; (m, 1)^{(j - 1) \leftarrow}) Z_{\Phi M}(\overline{p}, q).
			\end{split}
		\end{equation}
		Here the second equality uses Theorem \ref{T:encoding-lpp} twice, to go from $\Phi M$ to $M$, then from $RM$ to $\Phi RM$.
		
		We perform a similar computation in the environment $\Phi R M$. Again by Theorem \ref{T:encoding-lpp}, we have that
		\begin{equation}
			\label{E:ZM2}
			Z_{\Phi M}(\overline{ \tilde \bp},  \tilde \bq) =Z_M( \tilde \bp,  \tilde \bq) = Z_{RM}( \tilde \bq^*,  \tilde \bp^*) =  Z_{\Phi R M}( \overline{ \tilde \bq^*}, \tilde \bp^*).
		\end{equation}
		
		Now, $\overline{ \tilde \bq^*} = \overline{((1, n)^{j \rightarrow})} = ((1, 1), \dots, (j, j))$ and $\tilde \bp^* = ((m + 1 - i, 1), (m, 1)^{(j - 1) \leftarrow})$. Examining Figure \ref{fig:tableaux_shadowpaths}, for $i < m$ we see that any disjoint $j$-tuple of paths from $\overline{ \tilde \bq^*}$ to $ \tilde \bp^*$ is equivalent to the unique $j$-tuple of paths from $\overline{((1, n)^{j \rightarrow})}$ to $(m - i, 1)^{j \searrow}$ concatenated with any $j$-tuple of paths from $(m + 1 - i, 1)^{j \searrow}$ to $((m + 1 - i, 1), (m, 1)^{(j -1 ) \leftarrow} )$. Therefore using \eqref{E:shadow-identity} we have
		\begin{equation}
			\label{E:Zcalc}
			\begin{split}
				&Z_{\Phi R M}(\overline{ \tilde \bq^*}, \tilde \bp^*) \\
				=\; &Z_{\Phi R M}(\overline{((1, n)^j)} ; (m - i, 1)^{j \searrow}) Z_{\Phi RM}((m - i + 1, 1)^{j \searrow} ; (m + 1 - i, 1), (m, 1)^{(j - 1) \leftarrow} ) \\
				=\; &Z_{\Phi R M}(\overline{((1, n)^j)} ; (m - i, 1)^{j \searrow}) Z_{\Phi RM}((m - i + 1, 1)^{j \searrow} ; (m, 1)^{j \leftarrow}) \\
				&\quad \qquad \times \quad \check Z_{(\Phi RM)^{-1}}(m + 1 - i, 1 ; m, j) \\
				=\; &Z_{\Phi R M}(\overline{((1, n)^{j \rightarrow})} ; (m, 1)^{j \leftarrow}) \check Z_{(\Phi RM)^{-1}}(m + 1 - i, 1 ; m, j).
			\end{split}
		\end{equation}

		Here the third equality follows by noting that the unique $j$-tuple from $\overline{((1, n)^{j \rightarrow})}$ to $(m, 1)^{j \leftarrow}$ covers the exact same set of boxes as the union of the unique $j$-tuples from $\overline{(1, n)^{j \rightarrow}}$ to $(m - i, 1)^{j \searrow}$ and from $(m + 1 - i, 1)^{j \searrow}$ to $(m, 1)^{j \leftarrow}$. Note that when $i = m$, the equality of the first and last lines in \eqref{E:Zcalc} is immediate from \eqref{E:shadow-identity}.
		Now, we also have the immediate computation
		\begin{equation}
			Z_{\Phi RM}(\overline{((1, n)^{j \rightarrow})}, (m, 1)^{j \leftarrow} ) = Z_{\Phi RM}(\overline{((1, n)^{(j-1 ) \rightarrow} )}, (m, 1)^{(j - 1) \leftarrow} ) Z_{\Phi RM}((j, j), (m, j)),
		\end{equation}
		which follows by inspection. 
		\begin{figure}[htbp]
			\centering
			\includegraphics[width=0.5\textwidth]{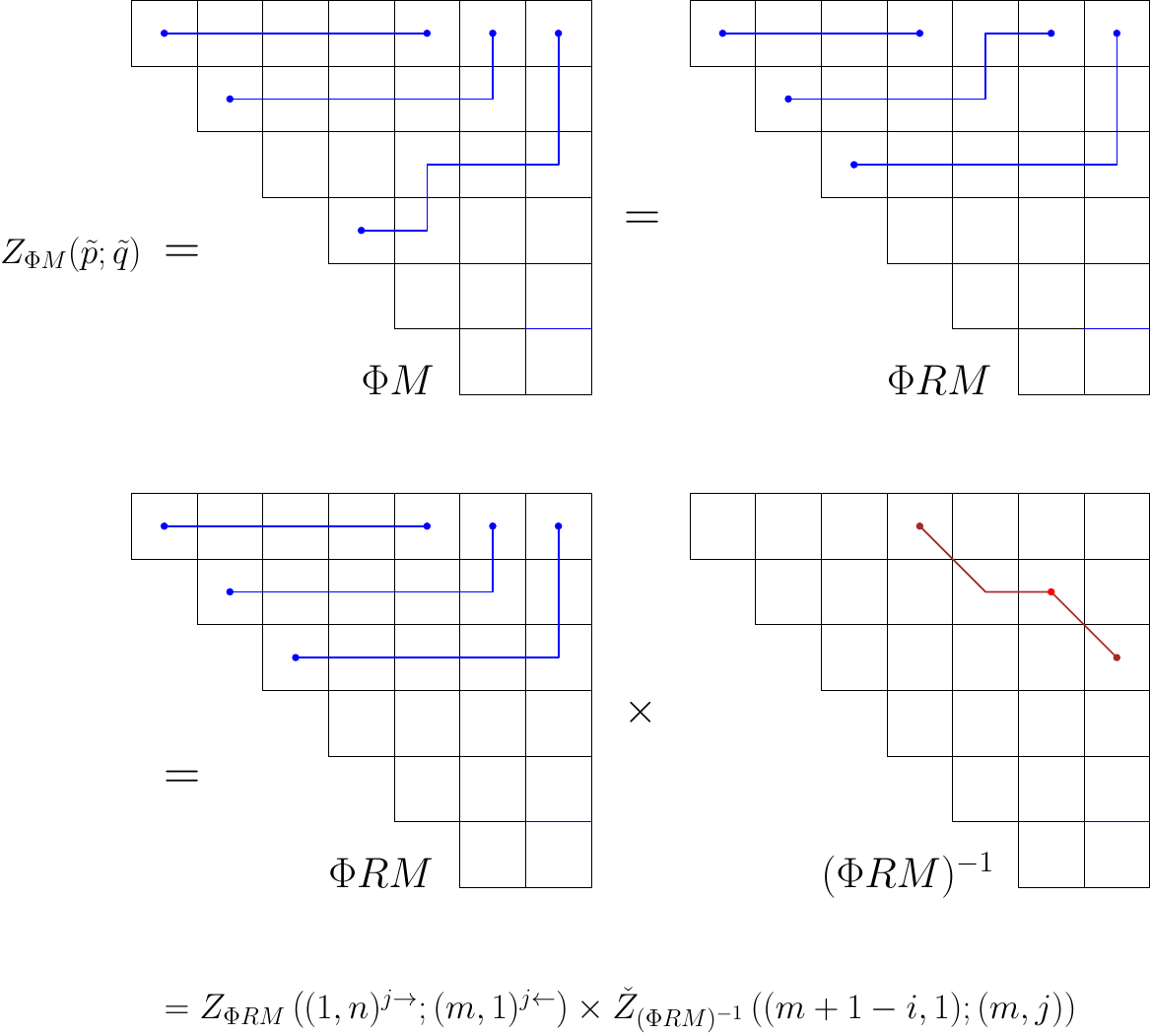}
			\caption{A sketch of the identity given in \eqref{E:Zcalc}}
			\label{fig:tableaux_shadowpaths}
		\end{figure} 
		Putting this together with \eqref{E:ZPhi-1}, \eqref{E:ZM2}, and \eqref{E:Zcalc} gives that
		\begin{align}
			\label{E:one-path-form}
			Z_{\Phi M}(\overline{p}, q) = Z_{\Phi RM}((j, j), (m, j)) \check Z_{(\Phi RM)^{-1}}((m + 1 - i, 1), (m, j)).
		\end{align}
		We now extend this formula to vectors of points $\bp = ((i_1, n), \dots, (i_k, n)), \bq = ((m, j_1), \dots, (m, j_k))$ as in the statement of the lemma. Indeed, by the usual LGV lemma, we have that
		$$
		Z_{\Phi M}(\overline{\bp}, \bq) = \det_{1 \le i , j \le k} Z_{\Phi M}(\overline{p_i}, q_j).
		$$
		On the other hand, taking a determinant of \eqref{E:one-path-form} over indices $i_1 < \dots < i_k$ and $j_1 < \dots < j_k$, we have the following factorization:
		$$
		\left(\det_{1 \le \ell, \ell' \le k} \check Z_{(\Phi RM)^{-1}}((m + 1 - i_\ell, 1), (m, j_{\ell'})) \right) \prod_{\ell=1}^k Z_{\Phi RM}((j_\ell, j_\ell), (m, j_\ell)),
		$$
		which by Lemma \ref{L:Lindstrom-Gessel-Viennot} is equal to
		$$
		\check Z_{(\Phi RM)^{-1}}((m + 1 - \mathbf{i}, 1), (m, \mathbf{j})) \prod_{\ell=1}^k Z_{\Phi RM}((j_\ell, j_\ell), (m, j_\ell)),
		$$
		as desired.
	\end{proof}
	
	By formally exchanging the $(+, \times)$-algebra for the $(\max, +)$-algebra using Lemma \ref{lemma:temp_change}, we can deduce that the same results hold true for last passage percolation. 
	\begin{cor}
		\label{C:LPP-result}
		Fix $m, n \in \N$ and let  $M \in F(\II{1, m} \times \II{1, n}, \R)$. Let $\bp = (\mathbf{i}, n)$ and $\bq = (m, \mathbf{j})$ be two vectors in $(\Z^2)^k$ such that $1 \le i_1 < \dots < i_k \le m$ and $1 \le j_1 < \dots < j_k \le n$ and $i_\ell \ge j_\ell$ for all $\ell$. Then
		$$
		L_{\Psi M}(\overline{\bp}, \bq) = \check L_{-\Psi RM}((m + 1 - \mathbf{i}, 1), (m, \mathbf{j})) + \sum_{\ell=1}^k L_{\Psi RM}((j_\ell, j_\ell), (m, j_\ell)).
		$$ 
	\end{cor}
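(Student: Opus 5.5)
The plan is to deduce Corollary \ref{C:LPP-result} from Theorem \ref{T:rotation-isometry} by tropicalization through Lemma \ref{lemma:temp_change}, in the same way Corollary \ref{cor:RSK_discrete_LPP} was obtained. For an array $M$ with real entries, set $M^\beta = e^{\beta M}$ entrywise and apply Theorem \ref{T:rotation-isometry} to $M^\beta$. Then take $\frac1\beta\log$ of both sides and send $\beta \to \infty$.

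First I would check that each quantity in Theorem \ref{T:rotation-isometry} is a subtraction-free rational function of the entries of $M^\beta$. The map $\Phi$ is built from ratios of products of multi-path partition functions $Z_M(u_i^j)$, and each of these is a polynomial with positive coefficients. The rotation $R$ only permutes entries. Hence every entry of $\Phi M^\beta$ and of $\Phi RM^\beta$ is subtraction-free, and so are the reciprocals of the latter.

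Next, any multi-path partition function (ordinary or shadow) of a subtraction-free array is a sum over a finite set of path families of products of entries, so it is again subtraction-free. Lemma \ref{lemma:temp_change} then turns sums into maxima and products into sums. Under this exchange:
\begin{itemize}
\item the entries of $\Phi$ become the entries of $\Psi$, i.e.\ $M(\Phi(e^{\beta\cdot}))=\Psi(M)$ entrywise, since $\Psi$ is literally the formal tropicalization of $\Phi$;
\item $Z_{\Phi M}$ becomes $L_{\Psi M}$;
\item reciprocal entries become negated entries, so $\check Z_{(\Phi RM)^{-1}}$ becomes $\check L_{-\Psi RM}$;
\item the product over $\ell$ becomes a sum.
\end{itemize}

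Two points need care. The statement of Lemma \ref{lemma:temp_change} applies $M$ to the original function rather than to compositions. However, the identity in Theorem \ref{T:rotation-isometry} is an identity between subtraction-free rational functions of the entries of $M$. We therefore apply $M(\cdot)$ to the composite expressions and use the homomorphism rules repeatedly, unfolding each composite as nested sums, products and quotients.

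The second point is the expected obstacle, though it is mild. The tropicalization of a shadow partition function with the reciprocal array must match $\check L$ with the negated array exactly. By the definitions \eqref{E:ZMLMZ}–\eqref{E:ZMLMZ-multi}, $\check Z_{A^{-1}}$ is a sum over disjoint shadow tuples of $\prod_{x\in\cup C(\tau)} A_x^{-1}$. Its tropicalization is therefore $\max_\tau \sum_{x \in \cup C(\tau)} (-\mathrm{trop}(A_x))$, which is exactly $\check L_{-\mathrm{trop}(A)}$.

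Finally, the two sides of Theorem \ref{T:rotation-isometry} are equal as functions for every positive input $M^\beta$, so their tropical limits agree for every real $M$. This yields the corollary.
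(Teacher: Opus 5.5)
Your proposal is correct and follows the paper's own argument: the paper also obtains this corollary from Theorem \ref{T:rotation-isometry} by exchanging the $(+,\times)$-algebra for the $(\max,+)$-algebra via Lemma \ref{lemma:temp_change}. Your additional checks are exactly the details the paper leaves implicit: that the entries of $\Phi M$ and $\Phi RM$ are subtraction-free, that reciprocals tropicalize to negations (so $\check Z_{(\Phi RM)^{-1}}$ becomes $\check L_{-\Psi RM}$), and that composite expressions can be unfolded through the homomorphism rules.
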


	\section{DtM Representation for the KPZ Sheet} \label{sec:KPZ_Sheet}

	Introduced by Kardar, Parisi and Zhang \cite{PhysRevLett.56.889}, the KPZ Equation corresponds to the following stochastic differential equation
	\begin{align} \label{E:KPZ_Eqn}
		\partial_t \mathcal{H} &= \frac{1}{2} \partial_x^2 \mathcal{H} + \frac{1}{2} (\partial_x \mathcal{H})^2 + \xi.  
	\end{align}
	Here the random function $\mathcal{H} : \R_{+} \times \R \to \R$ corresponds to a model for a randomly growing interface. The driving noise of the system is given by the spacetime white noise $\xi$.  The KPZ equation is often understood through a Cole-Hopf transformation of the multiplicative stochastic
	heat equation (SHE). That is, the solution to the KPZ equation is given by
	$\mathcal{H}(t, x) = \log \mathcal{Z}(t, x)$, where $\mathcal{Z}(t, x)$ satisfies the multiplicative SHE
	\begin{align} \label{E:StochasticHeatEqn}
		\partial_t \mathcal{Z} &= \frac{1}{2} \partial_x^2 \mathcal{Z} + \mathcal{Z} \xi.
	\end{align}
	While it is easy to check the formal relationship between \eqref{E:KPZ_Eqn} and \eqref{E:StochasticHeatEqn}, making this rigorous is
	extremely difficult and was finally understood in \cite{hairer2012solvingkpzequation}. 
	
	The solution to \eqref{E:StochasticHeatEqn} from arbitrary initial data can be defined from a four-parameter Green's function $\mathcal{Z}:\{(x, s; y, t) \in \R^4 : s \le t\} \to \R$, where each of the processes $(y, t) \mapsto \mathcal{Z}(x, s; y, t)$ solves the equation \eqref{E:StochasticHeatEqn} on the time interval $t \in [s, \infty)$ starting from the narrow-wedge initial data $\mathcal{Z}(x, s; \cdot, s) = \delta_x$. Here $\delta_x$ is a Dirac-$\delta$ mass at the point $x$. The field $\mathcal{Z}$ is a continuous positive function when we restrict to the set where $s < t$. Moreover, by linearity, for a (sufficiently nice) function $f:\R \to \R$ and $s \in \R$, we can solve \eqref{E:StochasticHeatEqn} from the initial condition $Z(\cdot) = f$ by
	\begin{equation}
		\label{E:ZintZ}
		\cZ(t, y) = \int \cZ(x, s; y, t) f(x) dx.
	\end{equation}
	See \cite{alberts2014continuum, alberts2022green} for basic properties of this field. 
	
	For $s < r < t$, the independence of $\xi$ implies that the increments $\cZ(\cdot, s; \cdot, r)$ and $\cZ(\cdot, r, \cdot, t)$ are independent, and by the Chapman-Kolmogorov equations, we can build the increment $\cZ(\cdot, s; \cdot, t)$ from $\cZ(\cdot, s; \cdot, r)$ and $\cZ(\cdot, r, \cdot, t)$. Therefore much of the information about the law of the entire process $\mathcal Z$ is contained in the law of the individual increments $\cZ(\cdot, s; \cdot, t)$. By time-stationarity, the law of this increment only depends on $t-s$. Hence, to understand the entire process $\cZ$, we can focus on understanding the family of functions
	$$
	\cZ^t(x, y) := \cZ(x, 0; y, t),
	$$
	known as the \textbf{SH sheet at time $t$}, and its logarithm $\mathcal H^t = \log \cZ^t$ is the \textbf{KPZ sheet}. This is the motivation for constructing a DtM representation in this setting.
	
	The Feynman-Kac formula suggests we may write 
	\begin{align} \label{E:Feynmann_Kacs}
		\mathcal{Z}^t(x ,y) &= p(t, y-x) \E_x^{y} :\exp  : \left( \int_{0}^{t} \xi(r, B_r) \, dr \right). 
	\end{align}
	where in the above $p(t, x) = \frac{1}{\sqrt{2 \pi t}} e^{-x^2/2t}$ is the Brownian Motion transition probability and $\E_x^y$ is the law of a Brownian bridge $(B_s)_{s = 0}^t$ starting at $x$ and ending at $y$. In other words, we may view $\mathcal{Z}^t(x, y)$ as the partition function of a Brownian polymer in the background white noise field $\xi$. Since $\xi$ is distribution-valued, the integral in \eqref{E:Feynmann_Kacs} is only formal, so to make this rigorous, one first starts with an approximation from the discrete polymer models of Section \ref{S:disclpp}. Alternately, the above notation can be viewed as shorthand for the Wiener chaos expansion  \begin{align}\label{E:Weiner_Chaos_single}
		p(t, y-x) \E_{x}^{y}  :\exp: \big( \int_0^t \xi(r, B_r) \, dr \big) \defeq \sum_{n = 0}^{\infty} \int_{0 = t_0 \leq \cdots \leq t_{n + 1} = t} \int_{\R^n}  \prod_{i = 1}^{n+1} p(t_{i}  - t_{i - 1} , z_{i} - z_{i - 1})  \prod_{i = 1}^{n} \xi(dt_i, dz_i),
	\end{align}
	where in the above $z_0 = x$,$z_{n + 1} = y$.

	The SH sheet has a multi-path extension, introduced in \cite{O_Connell_2015}. Let $\Lambda_{k} = \{ (x_1, \ldots, x_k) \in \R^k \, : \, x_1 < x_2 < \ldots < x_k\}$. For $\mathbf{x}, \mathbf{y} \in \Lambda_k$, let
	\begin{align}
		K^t(\x, \y) = p^{*}_k(t, \x, \y)  \mathbb{E}_{\mathbf{x}}^{\mathbf{y}} :\exp :\big( \sum_{i = 1}^k \int_0^t \xi(s, B_s^i) \, ds \big). \label{eqn:K_defn}
	\end{align}
	Here $p^{*}_k$ corresponds to the Karlin-McGregor formula for Brownian Motions killed on intersection,
	\begin{align*}
		p^*_k(t, \x, \y) = \det (p(t, x_i - y_j))_{i, j = 1}^k,
	\end{align*}
	and similarly to \eqref{E:Weiner_Chaos_single}, expression \eqref{eqn:K_defn} is shorthand for the following Wiener chaos expansion:
	\begin{align}\label{E:Weiner_Chaos}
		K^t(\mathbf{x},\mathbf{y}) = p_k^*(t,\mathbf{x},\mathbf{y}) \sum_{n = 0}^{\infty} \int_{0 = t_0 \leq \cdots  \leq t_{n + 1} = t} \int_{\R^n} R^{(k)}_n \big( (t_1, z_1) , \ldots  , (t_n, z_n) \big) \prod_{i = 1}^{n} \xi(dt_i, dz_i)
	\end{align}
	where $R^{(k)}_n$ is the n-point correlation function for $k$ non-intersecting Brownian bridges all starting at $\mathbf{x} = (x_1, \ldots, x_k)$ and ending at $\mathbf{y} = (y_1, \ldots, y_k)$, and $\E^{\mathbf{y}}_{\mathbf{x}}$ is the law of such non-intersecting Brownian bridges. We will not use the above definition directly, but rather a consequence, \cite[Theorem 5.3]{O_Connell_2015}, which can be viewed as a Lindstr\"om-Gessel-Viennot or Karlin-McGregor theorem in this setting:
	\begin{align} \label{E:Karlin_Mcgregor_SHE}
		K^t(\x, \y) = \det \big(  K^t(x_i, y_j )\big)_{i, j = 1}^k
	\end{align}
	Continuity of $\cZ^t$ implies that $K^t$ vanishes along the boundary of $\Lambda_k$. However, one can re-normalize $K^t$ in the following way so we retain non-trivial information at the boundary. Let $\Delta(\bx) = \prod_{1 \le i < j \le k} (x_j - x_i)$ denote the Vandermonde determinant, and define the \textbf{extended SH sheet}
	\begin{align}
		M^t(\x, \y) = \frac{K^t(\mathbf{x} , \mathbf{y})}{\Delta(\mathbf{x}) \Delta (\mathbf{y})}.
	\end{align}

	\begin{lemma}[Theorem 1.3 of \cite{lun2020}] \label{L:M-cty}
		Almost surely, $M^t$ has a continuous, strictly positive extension to $\overline{\Lambda}_n \times \overline{\Lambda}_n$. 
	\end{lemma}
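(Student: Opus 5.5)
The proof rests on the Karlin--McGregor identity \eqref{E:Karlin_Mcgregor_SHE}, which expresses $K^t(\x,\y)$ as a determinant, together with a splitting of time so that the singular division by $\Delta(\x)\Delta(\y)$ falls on an explicitly smooth kernel. There are two parts: first a continuous extension to the closure, then strict positivity on the boundary.

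\textbf{Continuous extension.} I would work chaos by chaos with the expansion \eqref{E:Weiner_Chaos}. For $\x,\y \in \Lambda_k$ it gives
\begin{align*}
M^t(\x,\y) = \frac{p_k^*(t,\x,\y)}{\Delta(\x)\Delta(\y)} \sum_{n \ge 0} \int R^{(k)}_n\big((t_1,z_1),\dots,(t_n,z_n)\big) \prod_{i=1}^n \xi(dt_i,dz_i).
\end{align*}
The prefactor $p_k^*(t,\x,\y)/(\Delta(\x)\Delta(\y))$ is a ratio of a determinant of Gaussian kernels to Vandermondes. Since $p(t,\cdot)$ is entire, this ratio extends real-analytically and positively to $\overline{\Lambda}_k \times \overline{\Lambda}_k$; this is the usual divided-difference argument. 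Next, the correlation functions $R^{(k)}_n$ of $k$ non-intersecting Brownian bridges from $\x$ to $\y$ should extend continuously in $(\x,\y)$ up to the boundary. At coincident points the limit is a Dyson-type bridge, whose transition densities are again given by ratios of the above form, so they are continuous in the endpoints.

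Given this, I would bound $\E|M^t(\x,\y)-M^t(\x',\y')|^2$ by summing the squared $L^2$ norms of the chaos differences, using It\^o isometry term by term. This should yield a Hölder-type bound $C|(\x,\y)-(\x',\y')|^{\alpha}$ uniformly over compacts in $\overline{\Lambda}_k^2$. Hypercontractivity upgrades this to $p$-th moments, and Kolmogorov's criterion then produces the continuous extension. I expect the uniform convergence of the chaos sums near the diagonal to be technical but routine: the required $L^2$ bounds are those for $k$ Brownian bridges, with the non-intersection density controlled by the divided-difference prefactor.

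\textbf{Strict positivity.} Nonnegativity of the extension is immediate, since $K^t>0$ on the interior by the polymer interpretation. For strict positivity at boundary points I would use the multi-path Chapman--Kolmogorov identity. This follows from \eqref{E:Karlin_Mcgregor_SHE}, the scalar Chapman--Kolmogorov relation, and the Andr\'eief/Cauchy--Binet identity. Splitting $[0,t]$ into three independent pieces at times $s<t-s$, it reads
\begin{align*}
M^t(\x,\y) = \int_{\Lambda_k}\int_{\Lambda_k} M^{[0,s]}(\x,\bu)\,\Delta(\bu)^2\, M^{[s,t-s]}(\bu,\bv)\,\Delta(\bv)^2\, M^{[t-s,t]}(\bv,\y)\, d\bu\, d\bv .
\end{align*}
This identity first holds on the interior and then passes to the boundary by continuity together with Fatou and dominated-convergence arguments. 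The middle factor is strictly positive and continuous for interior $\bu,\bv$. Hence $M^t(\x,\y)>0$ as soon as $M^{[0,s]}(\x,\cdot)$ and $M^{[t-s,t]}(\cdot,\y)$ are not identically zero on $\Lambda_k$.

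\textbf{Main obstacle.} The hardest step is the nonvanishing of these outer pieces simultaneously for \emph{all} boundary $\x,\y$, almost surely. My first attempt would be to show that $\x \mapsto \int M^{[0,s]}(\x,\bu)\Delta(\bu)^2 \phi(\bu)\,d\bu$ is strictly positive for a fixed positive test function $\phi$. This quantity solves a multi-path SHE with positive initial data, so a strong comparison/positivity principle for the multiplicative SHE, in the style of Mueller, should give positivity uniformly in $\x$ on the closure. If that principle is hard to obtain in the multi-path setting, I would instead approximate by discrete multi-path polymer partition functions with coinciding starting clusters, which are strictly positive. In that case I would need a quantitative lower-tail bound that is uniform over compact sets in order to pass strict positivity to the limit.
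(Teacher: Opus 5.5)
The paper does not prove this lemma; it imports it verbatim from \cite{lun2020}, so your argument has to stand on its own. Your continuity half is a plausible outline: chaos expansion, It\^o isometry, hypercontractivity, then Kolmogorov. Its substance, though, is a quantitative H\"older-type bound on differences of the $n$-th chaos kernels near coincident endpoints, with constants still summable after the hypercontractive factor $(p-1)^{n/2}$. Continuity of the correlation functions in the endpoints gives no rate, and you only assert this bound as ``routine''.

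The genuine gap is strict positivity, in two places.
\begin{enumerate}
\item You say $K^t>0$ on the open chamber ``by the polymer interpretation''. That does not follow. Discrete LGV approximations, or mollified noise with an honest Feynman--Kac exponential, give strictly positive approximants, but only $K^t\ge 0$ survives the limit. Already for $k=1$, positivity of $\cZ(x,0;y,t)$ simultaneously in $(x,y)$ is a real theorem (Mueller's comparison argument plus extra work for delta initial data). For $k\ge 2$, positivity of $\det[\cZ^t(x_i,y_j)]$ on the open chamber is a strict total positivity statement, and it does not follow from the $k=1$ case. For example, $f(x)g(y)$ with $f,g>0$ is a positive, totally positive kernel all of whose $2\times 2$ minors vanish. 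Your three-piece Chapman--Kolmogorov argument uses this unproven claim for the middle factor $M^{[s,t-s]}$, so the reduction is not yet a reduction.
\item The boundary step you leave open is not a side issue; it is the same problem again. You would need $\x\mapsto\int M^{[0,s]}(\x,\bu)\Delta(\bu)^2\phi(\bu)\,d\bu$ to have no zero on the closed chamber, which is a simultaneous strict-positivity statement. At boundary points it amounts to a Hopf-lemma-type nonvanishing of normal derivatives of $K$, for a $k$-dimensional equation whose potential $\sum_i\xi(\cdot,x_i)$ is a sum of one-dimensional white noises. No Mueller-type principle is available off the shelf for that. Your discrete route needs uniform lower-tail bounds, and those are exactly where the difficulty lies.
\end{enumerate}

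So as written you obtain, modulo the chaos estimate, a continuous nonnegative extension, but not strict positivity. Strict positivity is what the paper actually relies on: to take logarithms when defining $\mathcal{H}^{(t)}_n$, to divide by $M^t(0^n,0^n)$ throughout, and in the proof of Lemma \ref{L:R2}.
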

	
	Now, as mentioned previously, one approach to constructing all of the objects above is by taking scaling limits of directed polymers on $\Z^2$. There is also a scaling limit of the output of the geometric RSK correspondence. The two tableaux that form the output of geometric RSK become a single \emph{line ensemble} in the scaling limit.

	For $x \in \R$, let $x^n = (x, \dots, x) \in \R^n$. We simultaneously define the SHE$^{(t)}$ line ensemble, $\{ \mathcal{Z}^{(t)}_n(x) \}_{n \in \N , \, x \in \R}$ and the KPZ$^{(t)}$ line ensemble, $ \{ \mathcal{H}^{(t)}_n(x) \}_{n \in \N , \, x \in \R}$  by setting
	\begin{align*}
		\mathcal{H}^{(t)}_n(x)  = \log \left( \left( (n - 1)! \right)^2 \frac{M^t(0^n, x^n)}{M^t(0^{n-1}, x^{n-1})} \right) = \log \left( \mathcal{Z}^{(t)}_{n}(x)  \right).
	\end{align*}
	Here and throughout, we adopt the convention that $M^t(0^0, x^0) = 1$.
	Moving forward, we will suppress the superscripts $^t, ^{(t)}$ when they are clear from context. Note that the KPZ$^{(t)}$ line ensemble is often defined as the scaling limit of a line ensemble (tableaux pair) associated to a discrete or semi-discrete polymer. It was proven in \cite{Nica_2021} that the definition we give here is equivalent. 
	
	Note that in many applications, it is easiest to work with the KPZ line ensemble since this ensemble satisfies a natural Gibbs resampling property. However, in what follows we will focus on the SH line ensemble, which is more natural from the point of view of the DtM representation. Of course, all theorems about the SH equation can be transferred to theorems about the KPZ equation by taking logarithms.
	
	\subsection{The DtM representation and a proof strategy}
	\label{sec:DTM-KPZ}

	We can now precisely state the DtM representation for the SH line ensemble. For this, we need to define semi-discrete polymers analogously to how we defined semi-discrete LPP in the introduction. Let $g = (g_i:\R \to (0, \infty), i \in \Z)$ be a sequence of continuous functions. 
	For $x \le y$ and $m \ge n$ define
	$$
	Q[(x, m) \to (y, n)] = \{\pi:[x, y] \to \{n, \dots, m\}, \pi \text{ cadlag, nonincreasing}\},
	$$
	which is the set of all up-right paths from $(x, m)$ to $(y, n)$. We can encode a path $\pi$ by jump times $x = t_m \le \cdots \le t_{n-1} = y$, where $t_i := \inf \{z \in [x, y] : \pi(z) \le i\}$, and we can define its weight with respect to the environment $g$ by
	$$
	g(\pi) = \prod_{i = n}^m \left( g_i(t_{i - 1})  / g_i(t_i) \right). 
	$$
	Next, define the \textbf{partition function} from $(x, m)$ to $(y, n)$ by
	$$
	g\{(x, m) \to (y, n)\} = \int_{Q[(x, m) \to (y, n)]}   g(\pi) d \mu(\pi)
	$$
	Here the measure $\mu$ on $Q[(x, m) \to (y, n)]$ is simply Lebesgue measure on the set of potential jump times in $\R^{m - n}$ of paths $\pi$.
	We also define multi-path partition functions. Suppose that $\bp = ((x_1, m_1), \dots, (x_k, m_k)), \bq = ((y_1, n_1), \dots, (y_k, n_k)) \in (\R \times \Z)^k$ satisfy the ordering constraints $x_i \le y_i, m_i \ge n_i$, and $x_1 < \cdots < x_k$, $y_1 < \cdots < y_k$ and $m_1 < \cdots < m_k$, $n_1 < \cdots < n_k$. We also assume that there is at least one $k$-tuple of paths $\pi = (\pi_1, \dots, \pi_k)$ with $\pi_i \in Q[(x_i, m_i) \to (y_i, n_i)]$ such that $\pi_i < \pi_{i+1}$ on $(x_i, y_i) \cap (x_{i+1}, y_{i+1})$ for all $i = 1, \dots, k-1$, and let $Q[\bp \to \bq]$ be the set of such $k$-tuples. Then set
	$$
	g\{\bp \to \bq\} = \int_{Q[\bp \to \bq]} g(\pi_1) \cdots g(\pi_k) d \mu(\pi),
	$$
	where again, $\mu$ is Lebesgue measure on jump times.

	\begin{thm} \label{thm:KPZ_Case}
		Let $M^t$ be the extended SH sheet, and define
		\begin{align*}
			\mathcal{Z}_n^{(t)}(x) &=  \begin{cases}
				\left( (n - 1)! \right)^2 \frac{M^t (0^n, x^n)}{M^t (0^{n-1} , x^{n-1} )  }, \qquad &x \ge 0, \\
				\left( (n - 1)! \right)^2 \frac{M^t (|x|^n, 0^n)}{M^t (|x|^{n-1}, 0^{n-1})  }, \qquad &x < 0,  
			\end{cases}  
		\end{align*}
		Then $\{\cZ^{(t)}_n : n \in \N\}$ is an SH line ensemble, and for every $(\bx, \by) \in \big( \bigcup_{n \geq 1} \R^n_{<} \times \R^n_{<} \big)$ with $x_1, y_1 \geq 0$ 
		\begin{align}
			\label{E:SHZZZ}
			K^t(\mathbf{x}, \mathbf{y}) &=  \sum_{I = \{ i_1 , \ldots , i_n \} \in  \N^n_{<}}     \Big( \prod_{k = 1}^n \mathcal{Z}^{(t)}_{i_k}(0) \Big) \cdot \tilde{\mathcal{Z}}^{(t)} \{  (0, I) \to (\mathbf{x}, 1)  \}  \cdot \mathcal{Z}^{(t)} \{ (0, I) \to (\mathbf{y}, 1) \} ,
		\end{align}
		where $\tilde{\mathcal{Z}}^{(t)}_n(x) = \mathcal{Z}_{n}^{(t)}(-x)$ and $\mathcal{Z}^{(t)} \{ (\cdots)  \to  ( \cdots )  \}$ denotes the partition function across the SH line ensemble. 
	\end{thm}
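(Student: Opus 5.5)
The plan is to mimic the discrete argument of Lemma \ref{lemma:discrete_nonintpaths} and Lemma \ref{lemma:rsk_formulas_discrete} directly in the continuum. The roles are as follows: $K^t$ plays the part of $Z_M$, the Karlin--McGregor identity \eqref{E:Karlin_Mcgregor_SHE} plays the part of LGV, and the clusters $o^k, \hat o^k$ become the coalesced points $0^k$ on both sides (for the left and right factors respectively). These coalesced points are accessible through the continuous extension of $M^t$ from Lemma \ref{L:M-cty}.

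\textbf{Step 1: truncated expansion for a single path.} Fix $x, y \ge 0$. For each $n$, apply the Desnanot--Jacobi identity (Lemma \ref{lemma:desnanot_jacobi}) to the $(n+1)\times(n+1)$ determinant with rows indexed by $(\epsilon_1,\dots,\epsilon_n,x)$ and columns by $(\delta_1,\dots,\delta_n,y)$. Divide by the appropriate Vandermonde factors and let all $\epsilon_i,\delta_i \to 0$, using Lemma \ref{L:M-cty}. Iterating exactly as in the proof of Lemma \ref{lemma:discrete_nonintpaths} gives
\[
K^t(x,y)=\sum_{k=1}^{n} \frac{M^t((0^{k-1},x),0^k)\,M^t(0^k,(0^{k-1},y))}{M^t(0^k,0^k)\,M^t(0^{k-1},0^{k-1})} + R^{(n)}(x,y),
\]
with remainder $R^{(n)}(x,y)=M^t((0^n,x),(0^n,y))/M^t(0^n,0^n)$, up to constant factorial normalizations. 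The same computation with $\bx,\by$ in place of $x,y$ gives the multi-point version. Alternatively, one can first prove the single-point case and then lift it via \eqref{E:Karlin_Mcgregor_SHE} and an LGV argument on the graph $\Lambda$, as in Lemma \ref{lemma:RSK_discrete_multipt}.

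\textbf{Step 2: identify the terms with the line ensemble.} Show that
\[
\frac{M^t(0^k,(0^{k-1},y))}{M^t(0^k,0^k)} = c_k\, \mathcal{Z}\{(0,k)\to(y,1)\}\big/ \mathcal{Z}\text{-normalization},
\]
and similarly for the $x$-side using $\tilde{\mathcal Z}$. Here the ratio $M^t(0^k,0^k)/M^t(0^{k-1},0^{k-1})$ produces $\mathcal Z_k(0)$. This is the continuum analogue of Lemma \ref{lemma:rsk_formulas_discrete}, and the shadow paths across the inverse tableaux turn into ordinary partition functions across the line ensemble in the semi-discrete limit. My approach would be to differentiate in $y$: show both sides satisfy the same recursion in $k$, namely a partition function from $(0,k)$ to $(y,1)$ decomposed by its last jump, with matching boundary values at $y=0$. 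The recursion on the $M^t$ side would itself come from another Desnanot--Jacobi relation among the $M^t(0^a, (0^{b},y))$. A more robust fallback is to run the discrete identities of Section~\ref{SS:part-tableaux} for the log-gamma or O'Connell--Yor polymer and pass to the intermediate-disorder limit using the known convergence of \cite{Nica_2021}. The paper suggests avoiding this route, however, so I would first attempt the direct Karlin--McGregor computation. That $\{\mathcal Z_n\}$ is an SH line ensemble is by definition for $x\ge0$. For $x<0$, it follows from the symmetry $\mathcal Z(x,0;y,t)\stackrel{d}{=}\mathcal Z(-y,0;-x,t)$ of the Green's function, which gives shift invariance jointly in law.

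\textbf{Step 3: the remainder vanishes.} This is the main obstacle: showing $R^{(n)}(\bx,\by)\to0$ almost surely, so that the series in \eqref{E:SHZZZ} converges to $K^t$. The remainder is positive (it is a ratio of positive $M^t$ values), so the partial sums are increasing and bounded by $K^t$. Hence it suffices to prove $R^{(n)}\to0$ in probability along a subsequence. My plan is to rewrite $R^{(n)}$ as a partition function across the line ensemble, namely the mass of paths that must reach depth greater than $n$. Such a path pays roughly $\prod_{i\le n}\mathcal Z_{i+1}/\mathcal Z_i$ near $0$, which decays super-exponentially because the lower curves of the KPZ line ensemble sit at height of order $-n^{2/3}t^{1/3}$ or lower. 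I would import the lower-curve height estimates for the KPZ line ensemble from \cite{Syed_2025} to bound this.
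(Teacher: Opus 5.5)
There is a genuine gap in Step 3, which you rightly identify as the crux.

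\textbf{Step 3 is circular.} Describing $R^{(n)}$ as ``the mass of line-ensemble paths that must reach depth greater than $n$'' amounts to asserting $R^{(n)}=\sum_{k>n}(\cdots)$, and that is exactly the identity \eqref{E:SHZZZ} you are trying to prove. Before the theorem is known, line-ensemble representations exist only for sheet values whose points on one side are all coalesced at $0$ (Lemma \ref{lemma:LPP_SHE_Sheet}). By contrast, $R^{(n)}(x,y)=x^ny^nM^t(\{0^n,x\},\{0^n,y\})/M^t(0^n,0^n)$ has a free point on each side.

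Your monotonicity remark is correct and useful: the terms are nonnegative, so $R^{(n)}$ decreases to a limit. You still need an independent upper bound, though, and Fischer's inequality alone only gives $R^{(n)}\le M^t(x,y)$.

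The missing idea, which the paper uses, is to split at time $t/2$ with Chapman--Kolmogorov (Lemma \ref{lemma:OW_cor}) and then apply Fischer (Lemma \ref{lemma:K_and_M_basic_ineq}) to each half. This gives $R^{(n)}\le M^t(x,y)\,[R_1^{(n,\eta)}+R_2^{\eta}(x,y)]$: either the top of the $n$ coalesced paths lies below $\eta$ at time $t/2$, or the extra path from $x$ to $y$ lies above $\eta$.
\begin{itemize}
\item $R_2^\eta\to 0$ as $\eta\to\infty$, uniformly on compacts by monotonicity in $x,y$.
\item $R_1^{(n,\eta)}=B_nC_{n,\eta}$, where $\E C_{n,\eta}=\P(W_n(t/2)\le\eta)\le 2e^{-dn^2}$ at $\eta=\sqrt{tn}/2$ by GUE large deviations.
\item The input from \cite{Syed_2025} enters only through $B_n=\E M^t(0^n,0^n)/M^t(0^n,0^n)$.
\end{itemize}
Because $M^t(0^n,0^n)$ sits in the denominator, what is needed is a \emph{lower}-tail bound on $\mathcal H_i(0)-e_i^{(t)}$, i.e.\ that the curves are not too low. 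This is the opposite of your heuristic. Also, in this normalization $\mathcal H^{(t)}_n(0)$ sits near $e^{(t)}_n=\log(t^{1-n}(n-1)!)+t/24$, not near $-n^{2/3}t^{1/3}$.

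\textbf{The line-ensemble claim is not justified.} The fact that $\{\cZ^{(t)}_n\}$ is an SH line ensemble does not follow from the reflection symmetry $\cZ(x,0;y,t)\stackrel{d}{=}\cZ(-y,0;-x,t)$. That symmetry shows the restriction to $x<0$ has the right law, and the restriction to $x\ge 0$ trivially does. But the two halves are built from different regions of the sheet, and the content of the claim is their \emph{joint} law, namely \eqref{E:SH_tower_symmetry}. This is a hidden shift invariance. The paper imports it from \cite[Theorem 4.2, Proposition 4.13]{dauvergne2022hidden} for the inverse-gamma polymer, followed by the long-box and intermediate-disorder limits of \cite{Nica_2021}. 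Lemma \ref{L:An-limit} shows what such a statement requires in the Brownian setting: the Markov property of Dyson Brownian motion, plus the fact that the reversed melon is a measurable function of the forward one.

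\textbf{Two smaller points.}
\begin{itemize}
\item In Step 1 the factors you dropped are the Vandermonde limits $x^{k-1}y^{k-1}$ and $x^ny^n$, not constants.
\item In Step 2, the recursion as stated does not close on the family $\cZ\{(0,k)\to(y,1)\}$: decomposing by the last jump produces partition functions ending on line $2$. The paper instead uses the derivative formula for $g^\eta_n$ (Corollary \ref{cor:g_defn}) together with the Cauchy--Binet identity (Lemma \ref{lemma:cauchy-binet-identity}), iterated to produce a Gelfand--Tsetlin integral.
\end{itemize}
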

	
	Similarly to the proof of Theorem \ref{thm:RSK_Representation_discrete}, the multi-point case follows from the single-point result due to the Lindstr\"om-Gessel-Viennot theorem. The proof essentially follows verbatim the proof of Lemma \ref{lemma:RSK_discrete_multipt}. The one change is that in order to apply the LGV theorem (which holds for graphs), we need to first discretize the environment and then take a limit. This is straightforward and we leave it as an exercise for the reader.
	
	\begin{remark}
		\label{rem:SHZZZ}
		Note that we have stated the multi-point version for $K^t$, rather than $M^t$. Of course, a similar representation holds for $M^t$ after dividing by $\Delta(\bx) \Delta(\by)$ on both sides. It is not difficult to check that after this division-by-Vandermonde, each term on the right-hand side of \eqref{E:SHZZZ} has a continuous extension to all of $\R^n_{\le} \times \R^n_{\le}$, which can be viewed as a partition function with repeated endpoints.
	\end{remark}
	
	The rest of this section is devoted towards the proof of Theorem \ref{thm:KPZ_Case_simply}, which we break up into three steps. The first step is a continuous analogue of Theorem \ref{thm:RSK_Representation_discrete}. Namely, by spacing out the starting and ending points to be a distance $\epsilon > 0$ apart, we may apply the same argument used in the proof of Theorem \ref{thm:RSK_Representation_discrete} involving the Desnanot-Jacobi Identity. Taking $\epsilon  \to 0 $ then gives the following lemma. 
	
	\begin{lemma} \label{lemma:KPZSheet_rep1} Almost surely for all $n \in \N$ and $x, y \geq 0$
		\begin{align*}
			M^t(x, y)
			&= \sum_{k = 1}^{n}  x^{k - 1} y^{k - 1}   \frac{M^t(0^k, 0^k)}{M^t (0^{k - 1}, 0^{k - 1})}  \frac{M^t( \{ 0^{k - 1}, x\}, 0^k)}{M^t(0^{k } , 0^{k })}  \frac{M^t( 0^k, \{ 0^{k - 1} , y\})}{ M^t(0^k, 0^k)} + R^{(n)}(x,y).
		\end{align*}
		where 
		\begin{align}
			R^{(n)}(x,y) = x^n y^n \frac{M(\{ 0^n, x \} , \{0^n, y \})}{M(0^n, 0^n)}.
		\end{align}
	\end{lemma}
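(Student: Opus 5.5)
The plan is to reproduce the Desnanot--Jacobi induction from Lemma \ref{lemma:discrete_nonintpaths} in the continuum. I would work first with the unnormalized kernel $K^t$ at spread-out points, and then pass to the coincident limit using the continuity of $M^t$ (Lemma \ref{L:M-cty}).

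\textbf{Step 1 (setup at separated points).} Fix $\epsilon>0$ small and write $\epsilon^{(k)} = (0, \epsilon, \dots, (k-1)\epsilon)\in \Lambda_k$. For $x,y>(n+1)\epsilon$, define the extended vectors $\{\epsilon^{(k)}, x\}\in\Lambda_{k+1}$ and $\{\epsilon^{(k)}, y\}\in\Lambda_{k+1}$. By the Karlin--McGregor identity \eqref{E:Karlin_Mcgregor_SHE}, each of $K^t(\epsilon^{(k)},\epsilon^{(k)})$, $K^t(\{\epsilon^{(k-1)},x\},\epsilon^{(k)})$, $K^t(\epsilon^{(k)},\{\epsilon^{(k-1)},y\})$ and $K^t(\{\epsilon^{(k)},x\},\{\epsilon^{(k)},y\})$ is a minor of the single $(n+1)\times(n+1)$ matrix $A_{ab}=K^t(a,b)$, with rows indexed by $\epsilon^{(n)}\cup\{x\}$ and columns by $\epsilon^{(n)}\cup\{y\}$. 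Ordering the rows with $x$ last places everything in the $c=0$ configuration.

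\textbf{Step 2 (the algebraic identity).} I apply Lemma \ref{lemma:desnanot_jacobi} to the leading $(k+1)\times(k+1)$ submatrix built from rows $\epsilon^{(k)}\cup\{x\}$ and columns $\epsilon^{(k)}\cup\{y\}$, deleting the row and column at index $k$ and the row and column carrying $x,y$. Exactly as in the proof of Lemma \ref{lemma:discrete_nonintpaths}, this gives the recursion
\[
\frac{K(\{\epsilon^{(k-1)},x\},\{\epsilon^{(k-1)},y\})}{K(\epsilon^{(k-1)},\epsilon^{(k-1)})}
=\frac{K(\{\epsilon^{(k-1)},x\},\epsilon^{(k)})\,K(\epsilon^{(k)},\{\epsilon^{(k-1)},y\})}{K(\epsilon^{(k-1)},\epsilon^{(k-1)})\,K(\epsilon^{(k)},\epsilon^{(k)})}
+\frac{K(\{\epsilon^{(k)},x\},\{\epsilon^{(k)},y\})}{K(\epsilon^{(k)},\epsilon^{(k)})}.
\]
All denominators are a.s.\ positive because $M^t>0$. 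Telescoping this recursion from $k=1$ to $n$ expresses $K^t(x,y)$ as a sum of $n$ product terms plus a remainder.

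\textbf{Step 3 (Vandermonde bookkeeping and $\epsilon\to0$).} I substitute $K=\Delta\Delta M$ throughout. The ratios of Vandermonde factors are explicit. For example,
\[
\Delta(\{\epsilon^{(k-1)},x\}) = \Delta(\epsilon^{(k-1)})\prod_{i<k}(x-i\epsilon),
\]
which tends to $\Delta(\epsilon^{(k-1)})\,x^{k-1}$ once the $\Delta(\epsilon^{(\cdot)})$ factors cancel between numerator and denominator. In the $k$-th term, the powers of $\Delta(\epsilon^{(\cdot)})$ cancel exactly, leaving $\prod_{i<k}(x-i\epsilon)(y-i\epsilon)\to x^{k-1}y^{k-1}$. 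The remainder leaves $\prod_{i<n+1}(x-i\epsilon)(y-i\epsilon)\to x^ny^n$. The left side $K(x,y)=M(x,y)$ needs no correction.

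One bookkeeping check is needed: the first factor $M(0^k,0^k)/M(0^{k-1},0^{k-1})$ must appear with power one. The term contains $M(\epsilon^{(k)},\epsilon^{(k)})$ twice in the denominator and $M(\epsilon^{(k-1)},\epsilon^{(k-1)})$ once, so writing it as in the statement is just a rearrangement.

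Finally, I let $\epsilon\to0$. Lemma \ref{L:M-cty} gives joint continuity and strict positivity of $M^t$ on $\overline\Lambda\times\overline\Lambda$, so every ratio converges to the stated expression for each fixed $x,y>0$. The identity then extends to all $x,y\ge0$, and holds simultaneously for all $n$ and all $x,y$ on one a.s.\ event, since both sides are continuous in $(x,y)$.

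\textbf{Main obstacle.} The main obstacle is the boundary case $x=0$ or $y=0$, together with making sure the limits are legitimate. The Desnanot--Jacobi step itself is purely algebraic. At $x=0$ the point $x$ collides with the cluster, so I would not argue there directly; instead I would obtain it by continuity from $x>0$, using Lemma \ref{L:M-cty} for the extended arguments $\{0^k,x\}$.

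A second care point is that I apply \eqref{E:Karlin_Mcgregor_SHE} simultaneously for all rational $\epsilon$ and rational $x,y$. This gives a countable family of a.s.\ identities, from which continuity extends the statement to all real $x,y$.
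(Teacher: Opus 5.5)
Your proposal is correct and is essentially the paper's proof. The paper first proves the separated-point identity (Lemma \ref{lemma:SHE_finitesum_K}) by the same Karlin--McGregor plus Desnanot--Jacobi induction on the matrix of $K^t$-values over $\epsilon^{(k)}\cup\{x\}$ and $\epsilon^{(k)}\cup\{y\}$; it then divides out the Vandermonde factors, lets $\epsilon\to0$ using continuity of $M^t$ on the closed chamber, and extends to $x,y\ge0$ by continuity.

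Only cosmetic slips need fixing. The surviving products should be $\prod_{i=0}^{k-2}(x-i\epsilon)(y-i\epsilon)$ and $\prod_{i=0}^{n-1}(x-i\epsilon)(y-i\epsilon)$. Also, the $k$-th term carries $M(\epsilon^{(k)},\epsilon^{(k)})$ once, not twice, in its denominator, which is exactly the net power in the statement's rearranged form.
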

	
	\noindent In the next step of the proof, we rewrite the ratios appearing in the above summation in terms of partition functions through the SH line ensemble. 
	
	\begin{lemma} \label{lemma:LPP_values_KPZLE}
		Let $M^t$ be an extended SH Sheet, and define
		\begin{align*}
			\mathcal{Z}_n^{(t)}(x) &=  \begin{cases}
				\left( (n - 1)! \right)^2 \frac{M^t (0^n, x^n)}{M^t (0^{n-1} , x^{n-1} )  }, \qquad x \ge 0, \\
				\left( (n - 1)! \right)^2 \frac{M^t (|x|^n, 0^n)}{M^t (|x|^{n-1}, 0^{n-1})  }, \qquad x < 0,  
			\end{cases}  
		\end{align*}
		Then for all $k \in \N$ and $x \geq 0$,
		\begin{align*}
			\tilde{\mathcal{Z}}^{(t)} \{ (0, k) \to (x, 1) \}  &= \frac{x^{k - 1}}{(k - 1)!} \frac{M^t(\{ 0^{k - 1}, x\} , 0^k)}{M^t(0^k, 0^k)} ,\\
			\mathcal{Z}^{(t)}  \{ (0, k) \to (x, 1) \} &= \frac{x^{k - 1}}{(k - 1)!} \frac{M^t(0^k, \{ 0^{k - 1}, x\})}{M^t(0^k, 0^k)}.
		\end{align*}
	\end{lemma}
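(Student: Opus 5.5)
The plan is to prove both identities by induction on the line index, showing that the two sides satisfy the same first-order recursion in $x$ with the same initial data at $x=0$. Lemma \ref{L:M-cty} lets all the confluent objects be handled as continuous functions, and the needed Wronskian-type identities come from Desnanot–Jacobi applied to Karlin–McGregor determinants. The second identity (paths in $\mathcal Z^{(t)}$, i.e.\ the $x\ge 0$ branch) is the main case. The first follows by the same argument with the roles of the two arguments of $M^t$ swapped, since $\tilde{\mathcal Z}^{(t)}_n(x) = \mathcal Z^{(t)}_n(-x)$ is defined through $M^t(x^n,0^n)$.

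\textbf{Step 1: set up the recursion.} Write $G_{k,j}(x) = \mathcal Z\{(0,k)\to(x,j)\}$ for $j\le k$, with $G_{k,k}(x) = \mathcal Z_k(x)/\mathcal Z_k(0)$. Conditioning on the last jump time from line $j+1$ to line $j$ gives
\[
\frac{d}{dx}\Big(\frac{G_{k,j}(x)}{\mathcal Z_j(x)}\Big) = \frac{G_{k,j+1}(x)}{\mathcal Z_j(x)}, \qquad G_{k,j}(0)=0 \ \ (j<k).
\]
So it suffices to find candidates $F_{k,j}$ that satisfy this system. The natural guess, generalizing the statement (which is the case $j=1$), is a confluent multi-point value $M^t(0^k,\{0^{k-1},x\})$-type object in which the endpoint $x$ is attached at multiplicity level $j$. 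Concretely, I would take
\[
F_{k,j}(x) \propto x^{k-j}\,\frac{M^t(0^k,\{x^{j-1},\ \cdot\ \})}{M^t(0^{j-1},x^{j-1})\,M^t(0^k,0^k)},
\]
with the normalization fixed by comparing at $j=k$. I would pin this formula down by first doing the discrete analogue in Theorem \ref{T:rotation-isometry}, which is exactly a general-$j$ version of this statement, and then reading off its continuum form.

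\textbf{Step 2: verify the recursion for the candidates.} Space the endpoints $\epsilon$ apart, so $0^k\to(0,\epsilon,\dots,(k-1)\epsilon)$, and similarly near $x$. By \eqref{E:Karlin_Mcgregor_SHE}, every quantity is then a determinant of single-point kernels $K^t(a,b)$. The derivative identity should then become an instance of the classical Wronskian form of Desnanot–Jacobi (Lemma \ref{lemma:desnanot_jacobi}):
\[
\frac{d}{dx}\frac{W(S,g)}{W(S,f)} = \frac{W(S)\,W(S,f,g)}{W(S,f)^2},
\]
where $S$ is a family of functions $y\mapsto K^t(\cdot,y)$ at confluent points. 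The difficulty is that $K^t$ is not differentiable in $y$, so these are divided differences rather than derivatives. I would therefore state the identity in integrated form: a Desnanot–Jacobi identity for divided-difference determinants at spacing $\epsilon$, integrated over a discrete mesh in $x$. I would then pass $\epsilon\to0$ using the continuity and positivity in Lemma \ref{L:M-cty}; the Vandermonde normalization is what produces the factors $x^{k-1}/(k-1)!$ and $((n-1)!)^2$. With the initial conditions checked (the $x^{k-j}$ factor vanishes at $0$), uniqueness of solutions to the triangular linear system gives $G_{k,j}=F_{k,j}$, and $j=1$ is the claim.

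\textbf{Main obstacle.} I expect the hard step to be making the derivative recursion rigorous in the continuum, since $\mathcal Z_j$ and $M^t$ are only continuous. The ODE must be read as an integral equation, and the confluent limit of the Desnanot–Jacobi identity has to be justified uniformly in $x$ on compacts. My first attempt would be to prove the integrated identity at fixed $\epsilon$, where the Desnanot–Jacobi algebra is exact. The only analytic input needed for the $\epsilon\to0$ limit is then uniform convergence of the Vandermonde-normalized determinants, which Lemma \ref{L:M-cty} provides.
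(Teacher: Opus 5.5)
Your Step 1 recursion is correct (including $G_{k,k}=\mathcal Z_k(x)/\mathcal Z_k(0)$, $G_{k,j}(0)=0$ for $j<k$, and the uniqueness argument). The gap is that the proof depends entirely on the explicit family $F_{k,j}$ representing $\mathcal Z\{(0,k)\to(x,j)\}$ for $1<j<k$, and the proposal never produces it. Your display has an unspecified slot, and the source you point to does not supply it. Theorem \ref{T:rotation-isometry} relates partition functions across the tableau $\Phi M$ to shadow partition functions across the rotated tableau $\Phi RM$; it is the discrete counterpart of Lemma \ref{C:f-corollary}, not of this lemma. The discrete counterpart here is just Theorem \ref{T:encoding-lpp}, with the corner cluster $o^k$ as starting points and all but one endpoint pushed into the corner.

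Moreover, the form you hint at cannot work. Work in a smooth-noise model with $f_i=\partial_a^{i-1}u(0,\cdot)$ and $W_j=\mathrm{Wr}(f_1,\dots,f_j)$. Then $\mathcal Z_j\propto W_j/W_{j-1}$, and the target says $G_{k,1}\propto h_k$, where $h_k(x)=\det[f(0),f'(0),\dots,f^{(k-2)}(0),f(x)]$ equals $x^{k-1}M^t(0^k,\{0^{k-1},x\})$ up to constants. Since $G_{k,j+1}=\mathcal Z_j\frac{d}{dx}(G_{k,j}/\mathcal Z_j)$, the Wronskian identity you quote forces
\[
F_{k,j}\propto\frac{\mathrm{Wr}(f_1,\dots,f_{j-1},h_k)}{W_{j-1}}.
\]
This is a bordered determinant coupling the $(k-2)$-jet at $0$ with the $(j-1)$-jet at $x$. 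For $1<j<k$ it is not a confluent value $M^t(0^k,\mathbf b)$. Take $k=3$, $j=2$ and the natural reading $\Phi(x)=\det[f(0),f(x),f'(x)]$, which is $M^t(0^3,\{0,x,x\})$ up to a power of $x$. The Jacobi identity gives $\Phi'W_2-\Phi W_2'=W_3(x)\bigl(f_1(0)f_2(x)-f_2(0)f_1(x)\bigr)$, but your recursion needs a constant multiple of $W_3(x)f_1(x)$, and no power of $x$ in front repairs this. So Step 2 has no correct candidate to verify. The objects you would actually have to control are also not, as they stand, the $M^t$-values covered by Lemma \ref{L:M-cty}.

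The scheme can be repaired, but the repair essentially rebuilds the paper's ingredients. Keep the starting points spaced at $\epsilon_k$. Then each $\mathrm{Wr}(f_1,\dots,f_{j-1},f_i)/W_j$ becomes a constant multiple of $g_j^{(i-j)\epsilon}$, so $F_{k,j}/\mathcal Z_j$ is a finite combination of the ratios in Corollary \ref{cor:g_defn}. Those ratios are genuinely $C^1$ with an explicit derivative, so no mesh or Riemann-sum argument is needed. You would still have to check the initial conditions and pass $\epsilon\to0$.

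The paper never uses paths ending on intermediate lines. It first proves the full corner-cluster identity $M^t(0^n,\mathbf y)=(\prod_{j=0}^{n-1}j!)\,M^t(0^n,0^n)\,\Delta(\mathbf y)^{-1}\,\mathcal Z\{(0,(1,\dots,n))\to(\mathbf y,1)\}$ (Lemma \ref{lemma:LPP_SHE_Sheet}). It does this by iterating the Cauchy--Binet identity of Lemma \ref{lemma:cauchy-binet-identity} together with Corollary \ref{cor:g_defn}, which yields a Gelfand--Tsetlin integral. It then obtains the statement by setting $\mathbf y=(\epsilon_{n-1},y)$ and letting $\epsilon\to0$, using the Gelfand--Tsetlin volume formula to collapse the first $n-1$ paths.
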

	
	In proving Lemma \ref{lemma:LPP_values_KPZLE}, the only properties of the SH Sheet used are the Karlin-McGregor formula in \eqref{E:Karlin_Mcgregor_SHE} and continuity of $M^t$ on $\R^n_{\leq} \times \R^n_{\leq}$. Consequently, Lemma \ref{lemma:LPP_values_KPZLE} (together with Lemma \ref{lemma:LPP_SHE_Sheet}) extends \cite[Theorem 3.4]{O_Connell_2015}, which establishes the corresponding result only in the case of smooth background noise.

	The final step of the proof is to show that the remainder term vanishes as one takes $n \to \infty$. 
	
	\begin{lemma} \label{lemma:KPZ_remainderterm} Almost surely, $R^{(n)} \to 0$, uniformly on compact subsets of $[0, \infty)^2$.
	\end{lemma}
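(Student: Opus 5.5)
Each summand in Lemma \ref{lemma:KPZSheet_rep1} is nonnegative: by Lemma \ref{lemma:LPP_values_KPZLE} it equals $((k-1)!)^2$ times a product of $\mathcal Z^{(t)}_k(0)>0$ and two positive partition functions across the SH line ensemble. Likewise $R^{(n)}(x,y)\ge 0$, since $K^t\ge 0$ on $\Lambda_{n+1}$ and so its continuous extension $M^t$ is positive by Lemma \ref{L:M-cty}. Consequently $R^{(n)}(x,y) = M^t(x,y) - S_n(x,y)$, where $S_n$ is the $n$-th partial sum. This is pointwise nonincreasing in $n$ and continuous in $(x,y)$. By Dini's theorem it therefore suffices to prove, almost surely, that $R^{(n)}(x,y)\to 0$ for every fixed $(x,y)\in[0,\infty)^2$. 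Indeed, a monotone sequence of continuous functions converging pointwise to the continuous limit $0$ converges uniformly on compacts. By monotonicity it is even enough to show $\liminf_n R^{(n)}(x,y)=0$, and by a countable dense set plus monotonicity in $n$ we only need this on rational $(x,y)$.

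\textbf{Step 1: reduce to the diagonal.} I would first reduce to $x=y$ via a Cauchy--Schwarz-type bound
\[
M^t(\{0^n,x\},\{0^n,y\})^2 \le M^t(\{0^n,x\},\{0^n,x\})\,M^t(\{0^n,y\},\{0^n,y\}).
\]
The route is to write $K^t(\bx,\by)=\int K^{t/2}(\bx,\mathbf z)K^{t/2}(\mathbf z,\by)\,d\mathbf z$ by the Chapman--Kolmogorov/Cauchy--Binet structure of \eqref{E:Karlin_Mcgregor_SHE}, and then apply Cauchy--Schwarz. The catch is that this produces two independent half-time sheets rather than the same sheet. If that is an obstruction, I would instead bound $M^t(\{0^n,x\},\{0^n,y\})$ directly by its limit point $M^t(0^{n+1},0^{n+1})$ times a factor controlled by a modulus of continuity.

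\textbf{Step 2: express the remainder through the line ensemble.} Next I rewrite the diagonal remainder in line-ensemble terms. At $x=y=0$, continuity gives
\[
R^{(n)}(x,y)\approx (xy)^n \frac{M^t(0^{n+1},0^{n+1})}{M^t(0^n,0^n)} = \frac{(xy)^n}{(n!)^2}\,\mathcal Z^{(t)}_{n+1}(0).
\]
More generally, running the Desnanot--Jacobi argument of Lemma \ref{lemma:KPZSheet_rep1} one more level expresses $R^{(n)}(x,y)$ through $(n+1)$-path partition functions across $\mathcal Z^{(t)}$ that start at $(0,\II{1,n+1})$ and end at $(x,1)$. I would then bound such a partition function above by the volume factor $x^{n}/n!$ times $\sup_{|z|\le x}\mathcal Z^{(t)}_{n+1}(z)$, times the ratios $\sup_{z}\mathcal Z_i(z)/\inf_z\mathcal Z_i(z)$ over $[-x,x]$ for the lines the path visits. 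These ratios are controlled by the Brownian-Gibbs regularity of the KPZ line ensemble.

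\textbf{Step 3: line-ensemble estimates (main obstacle).} This is the step I expect to be hardest: we need quantitative almost-sure control of the lower curves of the KPZ$^{(t)}$ line ensemble, uniformly in the index. Concretely, I need $\mathcal H^{(t)}_{n}(z)\le -c\,n\log n$, or at least $\mathcal H_n(z) \le C n$, uniformly for $|z|\le x$ and all large $n$, together with modulus-of-continuity bounds growing at most polynomially in $n$. I would import these tail estimates from \cite{Syed_2025}, using them with Borel--Cantelli to get almost sure statements for all large $n$. With such bounds, the prefactor $(xy)^n/(n!)^2$ dominates any $e^{Cn}$ growth, so $R^{(n)}(x,y)\to0$, and Dini upgrades this to uniform convergence on compacts.
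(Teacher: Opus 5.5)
There is a genuine gap: your plan never produces an actual upper bound on $R^{(n)}$. The quantity $R^{(n)}(x,y)=x^ny^nM^t(\{0^n,x\},\{0^n,y\})/M^t(0^n,0^n)$ has non-clustered endpoints on \emph{both} sides. Such values are not encoded by the SH line ensemble, which only sees $M^t(0^k,\cdot)$ and $M^t(\cdot,0^k)$.

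Running Desnanot--Jacobi one more level just gives $R^{(n)}=T_{n+1}+R^{(n+1)}$, where $T_k=\cZ_k(0)\,\tilde{\cZ}\{(0,k)\to(x,1)\}\,\cZ\{(0,k)\to(y,1)\}$ is the $k$-th summand. So the remainder simply reappears. The natural line-ensemble expression for $R^{(n)}$, namely $\sum_{k>n}T_k$, presupposes the lemma. Consequently your Step 3 estimates, which control the individual $T_k$, only show that $\sum_k T_k$ converges. They say nothing about the monotone limit $\lim_n R^{(n)}=M^t-\sum_kT_k\ge 0$.

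Step 1 does not go through, for the reason you suspected: $K^t$ is not symmetric, and splitting at $t/2$ produces two independent kernels. The fallback is circular. Indeed $M^t(\{0^n,x\},\{0^n,y\})/M^t(0^{n+1},0^{n+1})=(n!)^2R^{(n)}(x,y)/((xy)^n\cZ_{n+1}(0))$, so bounding this ratio by $e^{Cn}$ is equivalent to the estimate you want. Moreover, Lemma~\ref{L:M-cty} gives continuity only in fixed dimension, with nothing uniform in $n$. Your displayed approximation is valid as $(x,y)\to 0$ for fixed $n$, not as $n\to\infty$ for fixed $(x,y)$.

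The missing idea is to decouple the extra path from the $n$-path cluster by inserting an intermediate time. The paper applies the semigroup identity (Lemma~\ref{lemma:OW_cor}) at time $t/2$, then the Fischer inequality (Lemma~\ref{lemma:K_and_M_basic_ineq}) to both half-time kernels. The factors $x^ny^n$ and $\prod_i(z_{n+1}-z_i)^2$ cancel exactly. Splitting according to whether $z_n\le\eta$ or $z_{n+1}>\eta$ gives $R^{(n)}\le M^t(x,y)[R_1^{(n,\eta)}+R_2^\eta(x,y)]$ (Lemma~\ref{lemma:remainder_firstestimate}). Without the intermediate time, Fischer alone yields only the useless bound $R^{(n)}\le M^t(x,y)$.

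In this bound, $R_2^\eta$ is the mass of the $x\to y$ polymer with midpoint above $\eta$. It tends to $0$ uniformly on compacts by monotonicity in $(x,y)$ (Lemma~\ref{L:R2}). The other term, $R_1^{(n,\eta)}=B_nC_{n,\eta}$, is the mass of $n$-path configurations from $0^n$ to $0^n$ whose top path at time $t/2$ lies below $\eta=\sqrt{tn}/2$. This is a GUE large deviation, with $\E C_{n,\eta}\le 2e^{-dn^2}$ (Lemma~\ref{L:Cneta}). The $n^2$ rate is what beats the random normalization $B_n=\E M^t(0^n,0^n)/M^t(0^n,0^n)$, which is controlled by \emph{lower}-tail bounds on $\mathcal H_i(0)-e_i^{(t)}$, $i\le n$, from \cite{Syed_2025}.

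Two smaller points. First, $e_n^{(t)}=\log((n-1)!\,t^{1-n})+t/24$ grows like $n\log n$, so the bounds $\mathcal H_n\le -cn\log n$ or $\mathcal H_n\le Cn$ requested in your Step 3 are false as stated. Second, your Dini reduction is fine, but restricting to rational $(x,y)$ is not. Decreasing continuous functions can tend to $0$ on a dense set but not everywhere, e.g.\ $\max(0,1-n|x-\sqrt{2}|)$. The paper avoids this because its bound is uniform on compacts.
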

	
	This is the key technical step of this section and is handled in Section \ref{subsec:KPZ_remainderterm}. The key idea is to use the relationship between the SH line ensemble and SH sheet to bound the remainder term by the height of the $n$th line in the KPZ line ensemble at the origin, $\mathcal{H}_n(0)$. The necessary estimate on $\mathcal{H}_n(0)$ is then supplied by \cite{Syed_2025}.
	
	\begin{proof}[Proof of Theorem \ref{thm:KPZ_Case_simply}]
		The fact that $\cZ^{(t)}$, as defined in Theorem \ref{thm:KPZ_Case}, is an SH line ensemble follows from results of \cite{dauvergne2022hidden}. We give a sketch here.
		
		First, working in the fully discrete setting of Section \ref{sec:discrete}, let $M:\Z^2 \to \R$ be an environment of i.i.d.\ inverse-gamma random variables. Fix $n, m \in \N$, let $o = (1, n)$ and $\hat o = (m, 1)$, and define the two processes
		\begin{align*}
			L(k, i) &= Z_M((o; i, 1)^{k \wedge i}), \qquad \qquad \qquad k \in  \II{1, n}, i \in \N, \\
			\hat L(k, i) &= \begin{cases}
				Z_M((m + 1 - i, n; \hat o)^{k \wedge i}), \quad  &k\in  \II{1, n}, i \in \II{1, m}, \\ 
				Z_M((o; i, 1)^{k \wedge i}), \qquad &k \in  \II{1, n}, i \ge m + 1.
			\end{cases} 
		\end{align*}
		By \cite[Theorem 4.2 and Proposition 4.13(1)]{dauvergne2022hidden}, the processes $L$ and $\hat L$ are equal in law, jointly in all $i, k$. 
		
		Next, we can take two limit transitions. First, in the `long box' limit, where we fix $n$ and take $m \to \infty$, and diffusively scale the i.i.d.\ environment so that we obtain partition functions across Brownian motions, we recover a similar identity for the O'Connell-Yor polymer. Finally, we take the intermediate-disorder scaling limit of the O'Connell-Yor polymer where $n$ and $m$ tend to infinity, as in  \cite[Theorem 1.2, Corollary 1.7, and Sections 2.4--2.6]{Nica_2021}. The identity above then finally scales to the following. 
		Write $x_-=\max\{-x,0\}$ and $x_+=\max\{x,0\}$. Then
		\begin{equation}\label{E:SH_tower_symmetry}
			\bigl(M^t(x_-^k,x_+^k):x\in\R,\ k\geq1\bigr)
			\stackrel{d}{=}
			\bigl(M^t(0^k,x^k):x\in\R,\ k\geq1\bigr).
		\end{equation}
		This implies that $\cZ^{(t)}$ is an SH line ensemble.
		
		For the identity \eqref{E:SHZZZ}, substitute the expressions for the ratio of partition functions in Lemma \ref{lemma:LPP_values_KPZLE} into the summation in Lemma \ref{lemma:KPZSheet_rep1}. The theorem then follows by taking $n \to \infty$ and applying Lemma \ref{lemma:KPZ_remainderterm}.
	\end{proof}

	\subsection{Proof of Lemma \ref{lemma:KPZSheet_rep1}} \label{subsec:lemma_KPZSheet_rep1}
	
	We start with a version of Lemma \ref{lemma:KPZSheet_rep1} with the endpoints spaced apart.
	
	\begin{lemma} \label{lemma:SHE_finitesum_K}
		Let $n \in \N$. For any $x, y > 0$ and $0 < \epsilon < \frac{1}{n} \min (x, y)$, the following identity holds:
		\begin{align} \label{E:induction_step}
			M^t(x, y) = \sum_{k = 1}^{n}  \frac{K^t(  \{ \epsilon_{k - 1}, x  \} , \epsilon_k) K^t(\epsilon_k, \{ \epsilon_{k - 1}, y \})}{K^t(\epsilon_{k - 1}, \epsilon_{k - 1}) K^t (\epsilon_k, \epsilon_k)} + \frac{K^t(\{ \epsilon_n, x\} , \{ \epsilon_n, y\})}{K^t(\epsilon_n, \epsilon_{n})},
		\end{align}
		where $\epsilon_k = (0, \epsilon, \ldots , (k - 1) \epsilon)$.
	\end{lemma}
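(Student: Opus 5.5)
The identity will follow by telescoping a one-step recursion, in the same way as the proof of Lemma \ref{lemma:discrete_nonintpaths}, with the LGV lemma replaced by the Karlin--McGregor formula \eqref{E:Karlin_Mcgregor_SHE}. For $k \ge 0$ define
\[
R_k = \frac{K^t(\{\epsilon_k, x\}, \{\epsilon_k, y\})}{K^t(\epsilon_k, \epsilon_k)},
\]
with the convention $K^t(\epsilon_0,\epsilon_0)=1$. Then $R_0 = K^t(x,y)$, and since the Vandermonde factors are trivial for a single point, $R_0 = M^t(x,y)$. The remainder term in \eqref{E:induction_step} is exactly $R_n$. So it suffices to prove, for each $k \in \II{1,n}$, the recursion
\[
R_{k-1} = \frac{K^t(\{\epsilon_{k-1},x\},\epsilon_k)\, K^t(\epsilon_k,\{\epsilon_{k-1},y\})}{K^t(\epsilon_{k-1},\epsilon_{k-1})\, K^t(\epsilon_k,\epsilon_k)} + R_k,
\]
and then sum over $k = 1, \dots, n$.

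\textbf{Preliminary checks.} First, the hypothesis $\epsilon < \min(x,y)/n$ gives $(k-1)\epsilon < x$ and $(k-1)\epsilon < y$ for all $k \le n$. Hence $(\epsilon_k, x)$ and $(\epsilon_k, y)$ are strictly increasing $(k+1)$-tuples, so they lie in $\Lambda_{k+1}$. The same holds for $(\epsilon_{k-1}, x)$, $(\epsilon_{k-1}, y)$ and $\epsilon_k$ itself. Therefore every multi-point $K^t$ appearing above is given by \eqref{E:Karlin_Mcgregor_SHE} as the determinant of the corresponding matrix of single-point values $K^t(a_i, b_j)$, with rows and columns in increasing order. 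Second, the denominators are nonzero. Indeed $K^t(\epsilon_k,\epsilon_k) = \Delta(\epsilon_k)^2 M^t(\epsilon_k,\epsilon_k)$, and this is strictly positive by Lemma \ref{L:M-cty} and because the points of $\epsilon_k$ are distinct.

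\textbf{The recursion.} Fix $k$ and let $A = \bigl(K^t(a_i,b_j)\bigr)_{i,j=1}^{k+1}$, where $a = (\epsilon_k, x)$ and $b = (\epsilon_k, y)$. Rows and columns $k$ and $k+1$ correspond to the points $(k-1)\epsilon$ and $x$ (respectively $y$). Apply the Desnanot--Jacobi identity (Lemma \ref{lemma:desnanot_jacobi}) with $i_1 = j_1 = k$ and $i_2 = j_2 = k+1$. Reading off each minor via \eqref{E:Karlin_Mcgregor_SHE}:
\begin{itemize}[nosep]
\item $\det A = K^t(\{\epsilon_k,x\},\{\epsilon_k,y\})$;
\item $\det A^{k,k+1}_{k,k+1} = K^t(\epsilon_{k-1},\epsilon_{k-1})$;
\item $\det A^{k}_{k} = K^t(\{\epsilon_{k-1},x\},\{\epsilon_{k-1},y\})$;
\item $\det A^{k+1}_{k+1} = K^t(\epsilon_k,\epsilon_k)$;
\item $\det A^{k}_{k+1} = K^t(\{\epsilon_{k-1},x\},\epsilon_k)$;
\item $\det A^{k+1}_{k} = K^t(\epsilon_k,\{\epsilon_{k-1},y\})$.
\end{itemize}
In every minor the surviving rows and columns stay in increasing order, so no extra signs arise. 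Dividing the Desnanot--Jacobi identity by $K^t(\epsilon_{k-1},\epsilon_{k-1})\,K^t(\epsilon_k,\epsilon_k)$ gives exactly $R_k = R_{k-1} - (\text{$k$-th term})$, which is the recursion. Summing it over $k = 1,\dots,n$ telescopes to \eqref{E:induction_step}.

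\textbf{Main point of care.} There is no real obstacle here; the only thing to get right is bookkeeping. One must check that each minor is indexed by an increasing tuple, so that \eqref{E:Karlin_Mcgregor_SHE} applies with a positive sign. This is precisely where $\epsilon < \min(x,y)/n$ is used: it keeps $x$ and $y$ to the right of all grid points $(k-1)\epsilon$. One must also check that the denominators are strictly positive, which uses Lemma \ref{L:M-cty}.
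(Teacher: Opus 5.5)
Your proposal is correct and matches the paper's proof. The paper also applies the Desnanot--Jacobi identity to the Karlin--McGregor matrix indexed by $(\epsilon_{k},x)$ and $(\epsilon_{k},y)$, deleting the rows and columns of the last grid point and of $x,y$, which yields exactly your one-step recursion for the remainder; it then iterates this by induction on $n$, treating the $2\times 2$ base case directly via \eqref{E:Karlin_Mcgregor_SHE}, instead of by telescoping. Your explicit checks that each minor is indexed by increasing tuples and that $K^t(\epsilon_k,\epsilon_k)>0$ (via Lemma \ref{L:M-cty}) are details the paper leaves implicit.
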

	
	\begin{proof}
		We apply an induction argument on $n$. First, let us prove the base case. By the Karlin-McGregor formula \eqref{E:Karlin_Mcgregor_SHE}, for all $x, y > 0$:
		\begin{align*}
			K^t(\{ 0, x\}, \{ 0, y \}) &= K^t(0, 0) K^t(x, y) - K^t(0, y) K^t(x, 0)
		\end{align*}
		For $n = 1$, we have that $K^t = M^t$. Re-arranging gives us:
		\begin{align*}
			M^{0,t}(x, y) &=  \frac{K^{0,t}(x, 0) K^{0,t}(0, y)}{K^{0,t}(0, 0)}  + \frac{K^{0,t}(\{ 0, x\} , \{ 0, y\})}{K^{0,t}(0, 0)} 
			=  \frac{K^{0,t}(x, \epsilon_1) K^{0,t}(\epsilon_1, y)}{K^{0,t}(\epsilon_1, \epsilon_1)}  + \frac{K^{0,t}(\{ \epsilon_1, x\} , \{ \epsilon_1, y\})}{K^{0,t}(\epsilon_1, \epsilon_1)}.
		\end{align*}
		Suppose now we have proven the induction step \eqref{E:induction_step} for $n$, and consider $0 < \epsilon < \frac{\min (x, y)}{n + 1}$. Define an $(n + 2) \times (n + 2)$ matrix
		\begin{align*}
			A &=	\begin{pmatrix}
				K(0, 0) & \cdots & K(0, (n - 1)\epsilon) & K(0, n\epsilon) & K(0, y) \\
				\vdots & \ddots & \vdots & \vdots & \vdots \\
				K((n - 1)\epsilon, 0) & \cdots & K((n - 1) \epsilon, (n - 1) \epsilon) & K((n - 1)\epsilon, n \epsilon) & K((n - 1)\epsilon, y) \\
				K(n \epsilon, 0) & \cdots & K(n \epsilon, (n - 1) \epsilon) & K(n \epsilon, n \epsilon) & K(n \epsilon, y) \\
				K(x, 0) & \cdots & K(x, (n - 1) \epsilon) & K(x, n \epsilon) & K(x, y) \\
			\end{pmatrix} 
		\end{align*}
		By the Karlin-McGregor formula \eqref{E:Karlin_Mcgregor_SHE} we have that
		\begin{align*}
			K(\{ \epsilon_{n + 1}, x\} , \{ \epsilon_{n + 1}, y\}) &= \det(A) , \qquad K(\epsilon_n, \epsilon_n)  = \det(A^{(n+1, n+2)}_{(n+1, n + 2)})
		\end{align*}
		where $A^{(n+1, n+2)}_{(n+1, n + 2)}$ is the $n \times n$ submatrix obtained from $A$ by removing the last two rows and columns. By the Desnanot-Jacobi identity,
		\begin{align} \label{E:det_equality}
			K(\epsilon_n, \epsilon_n) K(\{ \epsilon_{n + 1}, x\} , \{ \epsilon_{n + 1}, y\}) &= \det(A^{(n+1, n+2)}_{(n+1, n + 2)})  \det(A) 
			= \det \begin{pmatrix}
				\det (A^{(n + 1)}_{(n+ 1)}) & \det (A^{(n + 2)}_{(n+ 1)}) \\ \det (A^{(n + 1)}_{(n+ 2)}) & \det (A^{(n + 2)}_{(n+ 2)})
			\end{pmatrix}.
		\end{align}
		Applying the Karlin-McGregor formula again, we obtain the following set of identities:
		\begin{align*}
			K^{0, t} (\{ \epsilon_n, x\} , \{ \epsilon_n, y \}) &= \det ( A^{(n+1)}_{(n+1)}) \\
			K^{0, t} (\{ \epsilon_n, x\} ,  \epsilon_{n + 1}) &=  \det(A^{(n+1)}_{(n+2)} ) \\ 
			K^{0, t} ( \epsilon_{n+1} ,  \{ \epsilon_{n}, y\}) &=  \det( A^{(n+2)}_{(n+1)} ) \\
			K^{0, t} (\epsilon_{n + 1}, \epsilon_{n + 1}) &= \det( A^{(n + 2)}_{(n + 2)} )
		\end{align*}
		Substituting these identities into \eqref{E:det_equality} and re-arranging proves that 
		\begin{align} \label{E:remainder_term}
			K^{0,t}(\{ \epsilon_n, x\}, \{ \epsilon_n, y\}) &= \frac{K^{0,t}(\epsilon_n, \epsilon_n) K^{0,t}( \{\epsilon_{n + 1}, x \} , \{ \epsilon_{n + 1}, y\})}{K^{0,t}(\epsilon_{n + 1}, \epsilon_{n + 1})} + \frac{K^{0,t}(\{ \epsilon_n, x\}, \epsilon_{n + 1}) K^{0,t}(\epsilon_{n + 1}, \{ \epsilon_n, y\})}{K^{0,t}(\epsilon_{n + 1}, \epsilon_{n + 1})}
		\end{align}
		Plugging \eqref{E:remainder_term} into the remainder term of \eqref{E:induction_step} concludes the induction step. 
	\end{proof}
	
	\begin{proof}[Proof of Lemma \ref{lemma:KPZSheet_rep1}]
		We now take $\epsilon \to 0$ in each of the terms in Lemma \ref{lemma:SHE_finitesum_K}. First on the remainder term,
		\begin{align*}
			\frac{K^t(  \{ \epsilon_{n} , x \}, \{ \epsilon_{n},  y\})}{K^t(\epsilon_{n}  , \epsilon_{n})} &= \frac{K^t(  \{ \epsilon_{n} , x \}, \{ \epsilon_{n},  y\}) / \Delta(\epsilon_{n})^2}{K^t(\epsilon_{n}  , \epsilon_{n}) / \Delta(\epsilon_{n})^2} \frac{\prod_{j = 0}^{n-1} (x - j \epsilon) (y - j \epsilon)}{\prod_{j = 0}^{n-1} (x - j \epsilon) (y - j \epsilon)} \\
			&= \frac{K^t(  \{ \epsilon_{n} , x \}, \{ \epsilon_{n},  y\}) / (\Delta(\epsilon_{n} , x) \Delta(\epsilon_{n} , y)  )}{K^t(\epsilon_{n}  , \epsilon_{n}) / \Delta(\epsilon_{n})^2}   \prod_{j = 0}^{n-1} (x - j \epsilon) (y - j \epsilon) \\
			& \xto{\epsilon \to 0} x^{n} y^{n} \frac{M^t( \{ 0^{n}, x \}  , \{ 0^{n} , y\})  }{M^t(0^{n} , 0^{n})}
		\end{align*}
		Now for the terms in the sum,
		\begin{align*}
			\frac{K^t (\{ \epsilon_{k - 1} , x\}  , \epsilon_k)  K^t(\epsilon_{k} , \{ \epsilon_{k - 1}, y\})}{K^t(\epsilon_{k - 1} , \epsilon_{k - 1}) K^t(\epsilon_k, \epsilon_k)} &= \frac{K^t (\{ \epsilon_{k - 1} , x\}  , \epsilon_k)  K^t(\epsilon_{k} , \{ \epsilon_{k - 1}, y\})}{K^t(\epsilon_{k - 1} , \epsilon_{k - 1}) K^t(\epsilon_k, \epsilon_k)}  \cdot  \frac{\Delta(\epsilon_k)^2}{\Delta(\epsilon_k)^2} \frac{\Delta(\epsilon_{k - 1})^2}{\Delta(\epsilon_{k - 1})^2} \frac{\prod_{j = 0}^{k - 2} (x - j \epsilon) (y - j \epsilon)}{\prod_{j = 0}^{k - 2} (x - j \epsilon) (y - j \epsilon)} \\
			&= \frac{M^t(\{ \epsilon_{k - 1}, x\} , \epsilon_k) M^t(\epsilon_k, \{ \epsilon_{k - 1}, y\})}{M^t(\epsilon_{k - 1} , \epsilon_{k - 1})  M^t(\epsilon_k , \epsilon_k)} \cdot \prod_{j = 0}^{k - 2} (x - j \epsilon) (y - j \epsilon) \\
			& \xto{\epsilon \to 0} x^{k - 1} y^{k - 1} \frac{M^t( \{ 0^{k - 1}, x \} , 0^k) M^t(0^k, \{ 0^{k - 1} , y\})}{M^t(0^{k - 1} , 0^{k - 1}) M^t(0^k, 0^k)}
		\end{align*}
		Therefore, we conclude for all $n \in \N$ and $x, y > 0$
		\begin{align*}
			M^t(x, y) &= \sum_{k = 1}^n  x^{k - 1} y^{k - 1} \frac{M^t( \{ 0^{k - 1}, x \} , 0^k) M^t(0^k, \{ 0^{k - 1} , y\})}{M^t(0^{k - 1} , 0^{k - 1}) M^t(0^k, 0^k)} + x^{n} y^{n} \frac{M^t( \{ 0^{n}, x \}  , \{ 0^{n } , y\})  }{M^t(0^{n} , 0^{n})} 
		\end{align*}
		Since both sides above are continuous for all $x, y \ge 0$, the identity extends to all $x, y \ge 0$, as desired.
	\end{proof}

	\subsection{Partition Functions Across the SH line ensemble and the proof of Lemma \ref{lemma:LPP_SHE_Sheet}} \label{subsec:lemma_LPP_values_KPZLE}

	Note that since the spatial fluctuations of the KPZ equation are locally Brownian, almost surely $M^t$ is nowhere partially differentiable. Despite this, surprisingly, certain ratios of $M$ are differentiable. The next lemma identifies the derivative of one such ratio.
	
	\begin{lemma} Fix $t > 0$, and for $\eta \geq 0$ and $x \in \R^n_{<}$, define the function $f^{\eta, x} : \R^n_{<} \to \R$ by:
		\begin{align*}
			f^{\eta, x}(z) = \frac{M \big( x + \eta e_n  ,  z  \big)}{M \big(  x   , z    \big)}
		\end{align*}
		where $e_i$ is the $i$th standard unit vector in $\R^n$. Then $f^{\eta, x}(z)$ is a continuously differentiable function of $z$ whose partial derivatives are given by:
		\begin{align*}
			\partial_i f^{\eta, x} (z) &= \eta \frac{M(\hat{x}_n, \hat{z}_i)  M\big( (x_1 , \ldots , x_n , x_n + \eta), (z_1, \ldots, z_{i}, z_{i}, z_{i + 1} \ldots , z_n ) \big)    }{M(x, z)^2} 
		\end{align*}
		where $\hat{x}_n = (x_1 , \ldots , x_{n - 1})$ and $\hat{z}_i = (z_1 , \ldots , z_{i - 1}, z_{i + 1} , \ldots , z_{n})$.
	\end{lemma}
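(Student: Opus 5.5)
The plan is to compute the difference quotient of $f^{\eta,x}$ in the direction $e_i$ exactly, using the Karlin--McGregor formula \eqref{E:Karlin_Mcgregor_SHE} together with the Desnanot--Jacobi identity (Lemma \ref{lemma:desnanot_jacobi}). We then pass to the limit using the continuity of $M$ on the closed chamber (Lemma \ref{L:M-cty}).

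\textbf{Setup.} First assume $\eta>0$ and that $x$ and $z$ have distinct coordinates. The case $\eta=0$ is trivial, since then $f\equiv 1$ and the claimed derivative vanishes. Write $x'=x+\eta e_n$. Fix $h>0$ small enough that $z_i+h<z_{i+1}$, and set $z^h=z+he_i$.

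\textbf{Step 1: an exact identity for the difference quotient.} Form the $(n+1)\times(n+1)$ matrix
\[
A=\big(K(a_r,b_s)\big),\qquad (a_r)=(x_1,\dots,x_n,x_n+\eta),\qquad (b_s)=(z_1,\dots,z_i,z_i+h,z_{i+1},\dots,z_n).
\]
Both lists are increasing, so $\det A=K\big((x,x_n+\eta),(z_1,\dots,z_i,z_i+h,\dots,z_n)\big)$. Apply Desnanot--Jacobi with deleted rows $n,n+1$ and deleted columns $i,i+1$.
\begin{itemize}[nosep]
\item Removing row $n+1$ leaves the points $x$; removing row $n$ leaves $x'$.
\item Removing column $i+1$ leaves $z$; removing column $i$ leaves $z^h$.
\item Removing both rows and both columns leaves $(\hat x_n,\hat z_i)$.
\end{itemize}
This gives
\[
K(x,z)K(x',z^h)-K(x,z^h)K(x',z)=\det A\cdot K(\hat x_n,\hat z_i).
\]
No reordering signs arise, because $z_i+h$ occupies the same slot as $z_i$.

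\textbf{Step 2: converting to $M$.} Divide through by the Vandermonde factors. On the $x$ side the factors cancel exactly:
\[
\frac{\Delta(x_1,\dots,x_n,x_n+\eta)\,\Delta(\hat x_n)}{\Delta(x)\,\Delta(x')}=\eta,
\]
since $\Delta(x_1,\dots,x_n,x_n+\eta)=\Delta(x)\,\eta\prod_{j<n}(x_n+\eta-x_j)$ and $\Delta(x')=\Delta(\hat x_n)\prod_{j<n}(x_n+\eta-x_j)$. The same computation on the $z$ side produces the factor $h$. The $M$ factors are shared between the two cross-products on the left, so all remaining Vandermonde factors cancel. The result is the exact formula
\[
\frac{f^{\eta,x}(z^h)-f^{\eta,x}(z)}{h}=\eta\,\frac{M(\hat x_n,\hat z_i)\,M\big((x,x_n+\eta),(z_1,\dots,z_i,z_i+h,\dots,z_n)\big)}{M(x,z)\,M(x,z^h)}.
\]

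\textbf{Step 3: passing to the limit.} By Lemma \ref{L:M-cty}, $M$ is continuous and strictly positive on the closed chamber. Letting $h\to0^{\pm}$ (for $h<0$ use the symmetric matrix with $z_i+h$ placed before $z_i$) gives the stated partial derivative. The right-hand side is continuous in $z$, so $f^{\eta,x}$ is $C^1$.

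\textbf{Main obstacle.} The delicate point is extending to $x$ or $z$ with coinciding coordinates, which is where $M$, rather than $K$, is needed. Here I would use the integrated form
\[
f(z^h)-f(z)=\int_0^h(\text{RHS at } z+se_i)\,ds,
\]
valid for distinct coordinates. Both sides are continuous on the closed chamber by Lemma \ref{L:M-cty}, so the identity extends to the boundary. Differentiating the extended identity in $h$ then gives the formula everywhere.
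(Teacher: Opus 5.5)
Your proposal is correct and follows the paper's proof essentially step for step. Both use the same $(n+1)\times(n+1)$ Karlin--McGregor matrix, apply Desnanot--Jacobi with rows $n,n+1$ and columns $i,i+1$, do the Vandermonde bookkeeping that yields the factors $\eta$ and $h$, and invoke continuity of $M$ from Lemma \ref{L:M-cty} to pass to the limit. Your separate treatment of $\eta=0$ and $h<0$, and your boundary extension, are refinements the paper leaves implicit. The boundary extension is not needed for the statement as posed on $\R^n_<$, but it is what Corollary \ref{cor:g_defn} uses at $z^n$.
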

	
	\begin{remark}
		The analogous statement for the Airy sheet $\cS$ is that the difference profile $y \mapsto \cS(z, y) - \cS(x, y)$ is monotone, and constant off of a measure $0$ set, see \cite{basu2019fractal}. In fact, this process is locally absolutely continuous with respect to the running maximum of a  Brownian motion, see \cite{ganguly2021local, dauvergne2021isometries}.
	\end{remark}
	
	\begin{proof} The lemma follows from a simple application of the Karlin-McGregor formula and the Desnanot-Jacobi identity. We have that
		\begin{align*}
			\partial_i f^{\eta,x}(z)
			&= \lim_{\delta \to 0} \frac{1}{\delta} \Big( \frac{M \big( x + \eta e_n  ,  z + \delta e_i \big)}{M \big( x  , z+ \delta e_i    \big)}  - \frac{M \big( x + \eta e_n  ,  z  \big)}{M \big( x  , z    \big)}
			\Big)  \\
			&= \lim_{\delta \to 0} \frac{1}{\delta} \frac{M(x , z ) M(x + \eta e_n , z + \delta e_i) - M(x + \eta e_n , z) M(x, z + \delta e_i) }{M(x , z) M(x, z + \delta e_i)}  \\
			&= \lim_{\delta \to 0} \frac{1}{\delta}  \frac{1}{\Delta (x) \Delta(z) \Delta (x +  \eta e_n) \Delta(z + \delta e_i)}\frac{K(x , z ) K(x + \eta e_n , z + \delta e_i) - K(x + \eta e_n , z) K(x, z + \delta e_i) }{M(x , z) M(x, z + \delta e_i) } 
		\end{align*}
		Now consider the following $(n + 1) \times (n + 1)$ matrix:
		{\small\begin{align*}
				A &= \begin{pmatrix}
					K(x_1, z_1) & \cdots & K( x_1, z_i) & K (x_1, z_{i} + \delta) & K(x_1, z_{i + 1}) & \cdots & K (x_1, z_n) \\
					K(x_2, z_1) & \cdots & K( x_2, z_i) & K (x_2, z_{i} + \delta) & K(x_2, z_{i + 1}) & \cdots & K (x_2, z_n) \\
					\vdots & \cdots & \vdots & \vdots & \vdots & \cdots & \vdots \\
					K(x_n, z_1) & \cdots & K( x_n, z_i) & K (x_n , z_{i} + \delta) & K(x_n, z_{i + 1}) & \cdots & K (x_n, z_n) \\
					K(x_n + \eta , z_1) & \cdots & K( x_n + \eta, z_i) & K (x_n + \eta , z_{i} + \delta) & K(x_n + \eta , z_{i + 1}) & \cdots & K (x_n + \eta, z_n)
				\end{pmatrix}
		\end{align*}}

		Notice that the numerator in the limit above can be written in terms of the minor determinants of $A$ in the following way. 
		\begin{align*}
			K(x , z ) K(x + \eta e_n , z + \delta e_i) - K(x + \eta e_n , z) K(x, z + \delta e_i)  &= \det( A^{n+1}_{i+1}) \cdot \det(  A^n_i)  - \det( A^n_{i + 1} ) \cdot \det  (A^{n+1}_i) \\
			&= \det \begin{pmatrix} \det (A_i^n) & \det (A_i^{n + 1})  \\ \det (A_{i + 1}^n ) & \det (A_{i + 1}^{n+1}) \end{pmatrix} \\
			&= \det(A) \det (A^{(n, n + 1)}_{(i , i+ 1)})
		\end{align*}
		where in the last line we used the Desnanot-Jacobi identity. Hence, letting $\hat{z}_i = (z_1 , \ldots, z_{ i - 1} , z_{i + 1} , \ldots, z_n)$ and $\hat{x}_n = (x_1 , \ldots, x_{n - 1})$ we have:
		\begin{align*}
			\partial_i f^{\eta,x}(z) 
			&= \lim_{\delta \to 0} \frac{1}{\delta}  \frac{ K\big( \hat{x}_n, \hat{z}_i \big)    K\big( (x_1 , \ldots, x_n, x_n + \eta), (z_1, \ldots, z_{i}, z_{i} + \delta, z_{i + 1} \ldots , z_n ) \big)  }{\Delta (x) \Delta(z) \Delta (x +  \eta e_n) \Delta(z + \delta e_i) M(x , z) M(x, z + \delta e_i)}   \\
			&=  \frac{M(\hat{x}_n, \hat{z}_i)  M\big( (x_1 , \ldots , x_n , x_n + \eta), (z_1, \ldots, z_{i}, z_{i}, z_{i + 1} \ldots , z_n ) \big)    }{M(x, z)^2} \\
			& \quad \cdots \times  \frac{\Delta (\hat{x}_{n })  \Delta (x_1 , \ldots , x_n , x_n + \eta  )}{\Delta (x)  \Delta (x +  \eta e_n)  } \times \lim_{\delta \to 0 } \frac{1}{\delta} \frac{ \Delta (\hat{z}_i) \Delta (z_1 , \ldots, z_i, z_i + \delta, z_{i + 1}, \ldots, z_n)}{\Delta(z)\Delta(z + \delta e_i)  }.
		\end{align*}
		It can be easily verified that the above limit in $\delta$ is exactly equal to one, and that
		\begin{align*}
			\frac{\Delta (\hat{x}_{n })  \Delta (x_1 , \ldots , x_n , x_n + \eta  )}{\Delta (x)  \Delta (x +  \eta e_n)  } = \eta.
		\end{align*}
		This gives the desired derivative identity. The fact that $\partial_i f^{\eta, x}$ is continuous follows from Lemma \ref{L:M-cty}.
	\end{proof}
	
	\noindent Now setting $x=\epsilon_n$ and taking the directional derivative in the direction $(1, \ldots, 1)$ yields the following corollary. 
	
	\begin{cor} \label{cor:g_defn} Fix $\epsilon>0$. For $\eta \geq 0$, define the functions $g_n^{\eta}: \R_{+} \to \R_{+}$ by:
		\begin{align*}
			g_n^{\eta}(z) &= \frac{M ( \epsilon_n + \eta e_n, z^n) }{M (\epsilon_n,  z^n)}.
		\end{align*}
		where $\epsilon_n = (0, \epsilon, \ldots, (n - 1) \epsilon)$. In particular, $g_n^0\equiv1$. Then $g^\eta_n$ is differentiable on $(0, \infty)$ and
		\begin{align*}
			(g_n^{\eta})'(z) =  n \eta \frac{ M(\epsilon_{n - 1} , z^{n - 1}) M(\epsilon_{n + 1} + (\eta - \epsilon) e_{n + 1} ,z^{n + 1}  ) }{M ( \epsilon_n  , z^n)^2}.
		\end{align*}
	\end{cor}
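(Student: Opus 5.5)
The plan is to read off the derivative from the preceding lemma by taking $x=\epsilon_n$ and summing the partial derivatives of $f^{\eta,\epsilon_n}$ along the diagonal direction $(1,\dots,1)$. Formally,
$$
g_n^\eta(z)=f^{\eta,\epsilon_n}(z^n), \qquad (g_n^\eta)'(z)=\sum_{i=1}^n \partial_i f^{\eta,\epsilon_n}(z^n).
$$
The work is then to evaluate each summand at the diagonal point $z^n$ and to justify the chain rule there, since $z^n$ lies on the boundary of $\R^n_<$.

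\emph{Evaluating the summands.} Here $x=\epsilon_n=(0,\epsilon,\dots,(n-1)\epsilon)$ is strictly increasing for $\epsilon>0$, so the lemma applies in the $x$-variable. We have $\hat x_n=\epsilon_{n-1}$, and
$$
(x_1,\dots,x_n,x_n+\eta)=(0,\epsilon,\dots,(n-1)\epsilon,(n-1)\epsilon+\eta)=\epsilon_{n+1}+(\eta-\epsilon)e_{n+1}.
$$
At the diagonal point $z^n$, deleting the $i$th coordinate gives $\hat z_i=z^{n-1}$, and doubling the $i$th coordinate gives $z^{n+1}$, for every $i$. So all $n$ summands equal
$$
\eta\,\frac{M(\epsilon_{n-1},z^{n-1})\,M(\epsilon_{n+1}+(\eta-\epsilon)e_{n+1},z^{n+1})}{M(\epsilon_n,z^n)^2}.
$$
Summing them produces the factor $n$ in the stated formula. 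The claim $g_n^0\equiv1$ is immediate from the definition.

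\emph{Justifying the chain rule at the boundary.} This is the only non-formal point. The lemma gives differentiability of $f^{\eta,x}$ only on the open set $\R^n_<$, while $z^n$ lies on its boundary. I would argue as follows.

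1. The right-hand side of the partial derivative formula extends continuously to $\overline{\Lambda}_n$. This holds because $M$ is continuous and strictly positive on closures by Lemma \ref{L:M-cty}, so the denominator $M(x,z)^2$ never vanishes.
2. For $w\in\R^n_<$ and small $h$, the whole segment $w+s(1,\dots,1)$, $s\in[0,h]$, stays in $\R^n_<$. There the fundamental theorem of calculus gives
$$
f(w+h\mathbf 1)-f(w)=\int_0^h\sum_i\partial_i f(w+s\mathbf 1)\,ds .
$$
3. Let $w\to z^n$ from inside $\R^n_<$. The left side converges by continuity of $M$. The integrand converges uniformly by step 1.
4. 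This yields $g_n^\eta(z+h)-g_n^\eta(z)=\int_0^h G(z+s)\,ds$, where $G$ is the continuous function displayed above (with the factor $n$). Hence $g_n^\eta$ is (continuously) differentiable on $(0,\infty)$ with derivative $G$.
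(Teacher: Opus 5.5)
Your proposal is correct and takes the same route as the paper: set $x=\epsilon_n$ in the preceding lemma and take the directional derivative along $(1,\dots,1)$, noting that all $n$ partial derivatives coincide at the diagonal point $z^n$. Your fundamental-theorem-of-calculus argument from interior points, which uses Lemma \ref{L:M-cty} to pass to the limit at the boundary, soundly justifies the chain rule at $z^n \in \partial \R^n_<$, a step the paper leaves implicit.
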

	\noindent In particular, for any $\ell \in \N$, we have:
	\begin{align*}
		\frac{(g^{\ell \epsilon}_n  )' }{(g^{\epsilon}_n )'} &=  \ell g^{(\ell - 1) \epsilon}_{n + 1}.
	\end{align*}
	The next lemma is proven in \cite{O_Connell_2015} using the Cauchy-Binet formula.
	\begin{lemma}[Lemma 3.5 of \cite{O_Connell_2015}] \label{lemma:cauchy-binet-identity} Suppose $y \in \R^n_<$.
		If $f_1, f_2, \ldots $ is a sequence of continuously differentiable functions with $f_1 \equiv 1$, then,
		\begin{align*}
			\det [f_i(y_j)]_{i,j = 1}^n = \int_{z \prec y} \det [f_{i + 1}' (z_j)]_{i,j = 1}^{n - 1} dz_1 \cdots dz_{n-1}.
		\end{align*}
		Here the integral is over the set of all $z \in \R^{n-1}_<$ which interlace with $y$: $y_1 < z_1 < y_2 <\cdots < y_{n-1} < z_{n-1} < y_n$. Moving forward we use the interlacing notation $z \prec y$.
	\end{lemma}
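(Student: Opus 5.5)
The plan is to reduce the $n \times n$ determinant to an $(n-1)\times(n-1)$ determinant of increments using column operations, and then write each increment as an integral of a derivative. The key input is $f_1 \equiv 1$, which kills all but one entry of the first row once we take differences of columns.

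\emph{Step 1: column differences.} Starting from the matrix $A = [f_i(y_j)]_{i,j=1}^n$, for $j = n, n-1, \dots, 2$ (in that order) replace column $j$ by column $j$ minus column $j-1$. This does not change the determinant. The new entries are $f_i(y_j) - f_i(y_{j-1})$ for $j \ge 2$. Because $f_1 \equiv 1$, the first row becomes $(1, 0, \dots, 0)$. Expanding along the first row, whose only nonzero entry is the $(1,1)$ entry with sign $+$, gives
\[
\det [f_i(y_j)]_{i,j=1}^n = \det\big[f_{i+1}(y_{j+1}) - f_{i+1}(y_j)\big]_{i,j=1}^{n-1}.
\]

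\emph{Step 2: fundamental theorem of calculus.} Since each $f_i$ is continuously differentiable, we have $f_{i+1}(y_{j+1}) - f_{i+1}(y_j) = \int_{y_j}^{y_{j+1}} f_{i+1}'(z_j)\,dz_j$. The integration variable $z_j$ depends only on the column index $j$, not on $i$. Hence multilinearity of the determinant in its columns lets us pull each integral outside:
\[
\det\Big[\int_{y_j}^{y_{j+1}} f_{i+1}'(z_j)\,dz_j\Big] = \int_{y_1}^{y_2}\!\!\cdots\!\int_{y_{n-1}}^{y_n} \det[f_{i+1}'(z_j)]_{i,j=1}^{n-1}\, dz_1\cdots dz_{n-1}.
\]
This step is justified by expanding the determinant as a finite sum over permutations and applying Fubini to each term. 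Every term is a product of continuous functions on a compact box, so the integrals are finite.

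\emph{Step 3: identify the domain.} The product of intervals $\prod_j [y_j, y_{j+1}]$ coincides with the interlacing set $\{z : z \prec y\}$ up to a boundary set of Lebesgue measure zero. Since $y \in \R^n_<$, the intervals are nondegenerate and disjoint apart from endpoints, so every $z$ in their interior lies in $\R^{n-1}_<$ automatically. This yields the claimed identity.

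I do not expect a genuine obstacle here. The only points needing care are the sign in the first-row expansion, which is $+$ because the surviving entry sits in position $(1,1)$, and the index shift $f_i \mapsto f_{i+1}$ in the reduced matrix. The Cauchy–Binet route used in \cite{O_Connell_2015} is not needed for this argument.
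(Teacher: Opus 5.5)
Your proof is correct. Processing the columns in the order $j=n,\dots,2$ ensures each subtracted column is still the original one. Since $f_1\equiv 1$, the first row becomes $(1,0,\dots,0)$ with a $+$ cofactor sign. Each column of the reduced matrix is an integral over its own variable $z_j\in[y_j,y_{j+1}]$, so multilinearity in the columns (equivalently, the permutation expansion plus Fubini) moves all the integrals outside. Finally, the box $\prod_j[y_j,y_{j+1}]$ agrees with $\{z\prec y\}$ up to a null set.

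The paper gives no argument of its own; it only remarks that O'Connell--Warren derive the lemma from the Cauchy--Binet formula. A Cauchy--Binet version of the argument writes the reduced entries as $\int_{\R}\mathbf{1}_{(y_j,y_{j+1})}(z)\,f_{i+1}'(z)\,dz$. It then applies the continuous Cauchy--Binet (Andr\'eief) identity, obtaining $\frac{1}{(n-1)!}\int_{\R^{n-1}}\det[\mathbf{1}_{(y_j,y_{j+1})}(z_k)]\,\det[f_{i+1}'(z_k)]\,dz$. The indicator determinant is $\pm1$ exactly when $z$ is a permutation of an interlacing configuration. Its sign cancels against that of the second determinant, and the $(n-1)!$ permutations cancel the prefactor.

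Because the intervals $(y_j,y_{j+1})$ are disjoint, this collapses to precisely your computation. So your route is the same calculation with the Cauchy--Binet machinery removed, and it is shorter and entirely self-contained. The Cauchy--Binet framing is what one needs when the kernels in such a factorization do not have disjoint supports, and it matches the determinantal toolkit used elsewhere in that paper. None of that extra generality is needed for this statement.
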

	
	We are now ready to prove the following analogue of \cite[Theorem 3.4]{O_Connell_2015} for the SH case. 
	
	\begin{lemma} \label{lemma:LPP_SHE_Sheet} For $y \in \R^{n}_{<}$ with $y_1\geq0$,
		\begin{align*}
			M^t (0^n , y) 
			&= (\prod_{j = 0}^{n - 1}j! )\frac{M^t(0^n , 0^n) }{\Delta (y) } Z  \{ (0, (1, \dots, n)) \to (y, 1) \}.
		\end{align*}
	\end{lemma}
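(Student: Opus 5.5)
We argue by induction on $n$, following the scheme of \cite[Theorem 3.4]{O_Connell_2015}. The two inputs are the Karlin--McGregor formula \eqref{E:Karlin_Mcgregor_SHE} with spaced-out starting points $\epsilon_n$, and the Cauchy--Binet identity of Lemma \ref{lemma:cauchy-binet-identity}.

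Fix $\epsilon>0$ and work first with starting points $\epsilon_n=(0,\epsilon,\dots,(n-1)\epsilon)$. By \eqref{E:Karlin_Mcgregor_SHE},
\[
K(\epsilon_n,y)=\det[K((i-1)\epsilon,y_j)]_{i,j=1}^n .
\]
We divide row $i$ by a suitable normalization. The aim is to recognize the rows as functions $f_i(y)=\det$-type ratios such as
\[
f_i(z)=\frac{M(\epsilon_{i}\text{-type}, z^{i}) \text{ ratios}}{\cdots},
\]
with $f_1\equiv 1$, and with $f_{i+1}'$ expressible through the functions $g^\eta_n$ of Corollary \ref{cor:g_defn}.

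A cleaner route is an induction on $n$ that uses Corollary \ref{cor:g_defn} directly. Write $F_n(y):=M(\epsilon_n,y)\Delta(y)$. By Karlin--McGregor this equals $\det[K((i-1)\epsilon,y_j)]/\Delta(\epsilon_n)$. We factor out the column-independent pieces and apply Lemma \ref{lemma:cauchy-binet-identity} to the matrix whose rows are $K((i-1)\epsilon,\cdot)/K(0,\cdot)$. Here the hypothesis $f_1\equiv1$ requires first pulling out $\prod_j K(0,y_j)$, so the line-one weights $\cZ_1(y_j)=M(0,y_j)$ appear as the $g$-weights of the jump times on line $1$.

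The derivative formula of Corollary \ref{cor:g_defn} (with $n=1$) turns the $(n-1)\times(n-1)$ derivative determinant back into a Karlin--McGregor determinant. Its prefactor is $M(\epsilon_0,z^0)/M(\epsilon_1,z)^2$, which supplies exactly the ratio $\cZ_2/\cZ_1$ at each interlacing point $z_j$. After one more factoring of constants, the remaining determinant has the same structure one level down (the ratios $g^{\ell\epsilon}_{k}$ together with the identity $(g^{\ell\epsilon}_n)'/(g^\epsilon_n)'=\ell g^{(\ell-1)\epsilon}_{n+1}$ give the recursion). Iterating $n-1$ times produces an integral over interlacing arrays
\[
z^{(n-1)}\prec z^{(n-2)}\prec\cdots\prec y .
\]
The integrand is a product of ratios $\cZ_{k}(z)/\cZ_{k-1}(z)$-type quantities, evaluated at intermediate $\epsilon$, with factorial constants $\ell$ accumulating. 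Recognizing this interlacing integral as the semi-discrete partition function $Z\{(0,(1,\dots,n))\to(y,1)\}$ is the standard Gelfand--Tsetlin pattern to non-intersecting paths bijection. The paths start at time $0$ on lines $1,\dots,n$, and the jump times on line $k$ are the level-$k$ interlacing coordinates.

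Finally we let $\epsilon\to0$. Lemma \ref{L:M-cty} gives continuity of $M$ on $\overline\Lambda$, so $M(\epsilon_k,z^k)\to M(0^k,z^k)$ and the intermediate ratios converge to the line ensemble values $\cZ_k=((k-1)!)^2M(0^k,z^k)/M(0^{k-1},z^{k-1})$. The accumulated constants $\ell$, the powers of $\epsilon$ from $\Delta(\epsilon_k)$, and the factorials $((k-1)!)^2$ should combine to give $\prod_{j<n}j!$ and the prefactor $M(0^n,0^n)/\Delta(y)$. Checking this bookkeeping is routine but delicate.

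The main obstacle is justifying the $\epsilon\to0$ limit inside the integral. We will use uniform continuity of $M$ on compacts together with strict positivity of $M$ to get uniform convergence of the integrand over the compact interlacing domain, which lets us pass the limit through by dominated convergence. The points with $z_j=0$ (where $y_1=0$) are then handled by continuity in $y$.
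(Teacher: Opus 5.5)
Your proposal follows the paper's proof essentially step for step. It uses Karlin--McGregor at the spread-out points $\epsilon_n$, pulls $\prod_j K(0,y_j)$ out of the columns, iterates Lemma~\ref{lemma:cauchy-binet-identity} with Corollary~\ref{cor:g_defn} and the ratio identity $(g^{\ell\epsilon}_k)'/(g^{\epsilon}_k)'=\ell\, g^{(\ell-1)\epsilon}_{k+1}$, lets $\epsilon\to0$ by continuity of $M$, and reads the Gelfand--Tsetlin integral as a multi-path partition function.

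The bookkeeping you defer is short:
\begin{itemize}
\item The full column factor is $(g^\epsilon_k)'(z)=k\epsilon\, M(\epsilon_{k-1},z^{k-1})M(\epsilon_{k+1},z^{k+1})/M(\epsilon_k,z^k)^2$, not just $M(\epsilon_{k-1},z^{k-1})/M(\epsilon_k,z^k)^2$ as you wrote.
\item The factors $k\epsilon$ multiply to $\prod_{k=1}^{n-1}(k\epsilon)^{n-k}=\Delta(\epsilon_n)$, which cancels the denominator and keeps the $\epsilon\to0$ limit nondegenerate.
\item The factors $\ell$ from the ratio identity give $\prod_{j<n}j!$.
\item The factorials $((k-1)!)^2$ in $\cZ_k$ drop out, because the partition function is invariant under multiplying individual lines by constants. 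So you may take $\cZ_k=M(0^k,\cdot^k)/M(0^{k-1},\cdot^{k-1})$, for which $\prod_{m\le n}\cZ_m(0)$ telescopes to $M(0^n,0^n)$.
\end{itemize}
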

	
	\begin{remark} This formula actually disagrees with Theorem 3.4 of \cite{O_Connell_2015}, which has a normalization constant of $(\prod_{j = 0}^{n - 1} j! )^2$ instead. This discrepancy is due to a typo in equation (39) of \cite{O_Connell_2015}, where for smooth background noise, the authors claim that $M_{n}(t, x^n, \mathbf{y}) = \Delta (\mathbf{y})^{-1} \det [\partial_x^{i - 1} u(t, x , y_j)]_{i, j = 1}^{n}$. However, the normalization is slightly incorrect, and instead should be given by
		$$M_{n}(t, x^n, \mathbf{y}) = (\prod_{j = 0}^{n - 1} j! )^{-1} \Delta (\mathbf{y})^{-1} \det [\partial_x^{i - 1} u(t, x , y_j)]_{i, j = 1}^{n}$$
		Using this normalization makes the result of \cite{O_Connell_2015} consistent with Lemma \ref{lemma:LPP_SHE_Sheet}. 
	\end{remark}
	
	\begin{proof} First, for any $y \in \R^n_{<}$, we have by definition:
		\begin{align*}
			M^t \big( \epsilon_n, y \big) &= \frac{K (\epsilon_n, y)}{\Delta (\epsilon_n) \Delta(y)} \\
			&= \frac{1}{\Delta (\epsilon_n) \Delta(y)} \det [K(\epsilon (i-1), y_j)]_{i, j = 1}^n \\
			&= \frac{\prod_{j = 1}^n K(0, y_j) }{\Delta (\epsilon_n) \Delta(y)}
			\det \left[g^{\epsilon(i-1)}_1(y_j)\right]_{i, j = 1}^n 
		\end{align*}
		where in the last step we substituted in the definition of $g$ from Corollary \ref{cor:g_defn}, and used the fact that $M^t = K$ for $n = 1$. Now by Lemma \ref{lemma:cauchy-binet-identity},
		\begin{align*}
			M^t(\epsilon_n, y) &= \frac{\prod_{j = 1}^n K(0, y_j) }{\Delta (\epsilon_n) \Delta(y)}  \int_{z \prec y} \det \left[(g^{\epsilon i}_1)'(z_j)\right]_{i, j = 1}^{n-1}  dz_1 \cdots dz_{n-1} \\
			&=  \frac{\prod_{j = 1}^n K(0, y_j) }{\Delta (\epsilon_n) \Delta(y)}  \int_{z \prec y}
			\prod_{i = 1}^{n - 1} (g_1^{\epsilon})'(z_i)  \det \left[\frac{(g^{\epsilon i}_1)'}{(g^{\epsilon}_1)'}(z_j)\right]_{i, j = 1}^{n-1}  dz_1 \cdots dz_{n-1}
			\\
			&=  \frac{\prod_{j = 1}^n K(0, y_j) }{\Delta (\epsilon_n) \Delta(y)}  ( n - 1)! \int_{z \prec y} \prod_{i = 1}^{n - 1} (g_1^{\epsilon})'(z_i)  \det \left[g_2^{(i-1)\epsilon}(z_j)\right]_{i, j = 1}^{n-1}  dz_1 \cdots dz_{n-1},
		\end{align*}
		where in the final equality we have used Corollary \ref{cor:g_defn}. Applying Lemma \ref{lemma:cauchy-binet-identity} and Corollary \ref{cor:g_defn} again yields
		\begin{align*}
			M^t(\epsilon_n, y)	&=  \frac{\prod_{j = 1}^n K(0, y_j) }{\Delta (\epsilon_n) \Delta(y)}  ( n - 1)! (n-2)! \int_{z^2 \prec z^1 \prec y}  \left(\prod_{i = 1}^{n - 1} (g_1^{\epsilon})'(z_i^1) \right) \left(\prod_{i=1}^{n-2} (g_2^{\epsilon})'(z_i^2) \right)  \det \left[g_3^{(i-1) \epsilon}(z_j^2)\right]_{i, j = 1}^{n-2} \prod_{i, j} d z^i_j.
		\end{align*}
		Repeating this procedure another $n-3$ times removes the determinant term and gives
		\begin{align*}
			M^t (\epsilon_n, y) &= \frac{\prod_{j = 1}^n K(0, y_j)}{\Delta (\epsilon_n)  \Delta(y)} (\prod_{j = 0}^{n - 1} j! ) \int_{z \in \operatorname{GT(y)}} \prod_{k = 1}^{n - 1} \prod_{i = 1}^{n - k} (g^{\epsilon}_k)'(z^k_i) dz,
		\end{align*}
		where $\operatorname{GT(y)}$ is the set of all Gelfand-Tsetlin patterns $z$ of depth $n$ whose top row is equal to $y$, i.e. $z = (z^{n-1} \prec z^{n-2} \prec \cdots \prec z^1 \prec y)$, with $z^i \in \R^{n-i}$, and the integral is over Lebesgue measure on this set. We can use Corollary \ref{cor:g_defn} to write this more explicitly as
		\begin{align*}
			\frac{\prod_{j = 1}^n K(0, y_j)}{\Delta (\epsilon_n)  \Delta(y)} (\prod_{j = 1}^{n - 1} j! )  \int_{z \in \operatorname{GT}(y)} \prod_{k = 1}^{n - 1} \prod_{j = 1}^{n - k} \big( \frac{k \epsilon  M^t(\epsilon_{k - 1}, (z^k_j)^{k - 1})  M^t(\epsilon_{k + 1} , (z^k_j)^{k + 1} ) }{M^t(\epsilon_k, (z^k_j)^k)^2 }  \big) \, d z.
		\end{align*}
		where $(z^i_j)^k = (z^i_j, \dots, z^i_j) \in \R^k$.
		Now since $\Delta (\epsilon_n) = \epsilon^{n(n - 1)/2} \prod_{j = 1}^{n - 1} j!$, we can further simplify to get
		\begin{align*}
			M^t (\epsilon_n, y) &=  \frac{\prod_{j = 1}^n K(0, y_j)}{  \Delta(y)} (\prod_{j = 1}^{n - 1} j! )  \int_{z \in \operatorname{GT}(y)} \prod_{k = 1}^{n - 1} \prod_{j = 1}^{n - k} \big( \frac{M^t(\epsilon_{k - 1}, (z^k_j)^{k - 1})  M^t(\epsilon_{k + 1} , (z^k_j)^{k + 1} ) }{M^t(\epsilon_k, (z^k_j)^k)^2 }  \big) \, dz
		\end{align*}
		Next, taking $\epsilon \to 0$ and using continuity of $M^t$ on its boundary gives:
		\begin{align}
			M^t (0^n , y)
			&= \frac{\prod_{j = 0}^{n - 1} j!}{\Delta (y)} \prod_{j = 1}^n K(0, y_j )  \int_{z^k_j \in \operatorname{GT}(y)} \prod_{k = 1}^{n - 1} \prod_{j = 1}^{n - k} \big( \frac{M^t(0^{k - 1}, (z^k_j)^{k-1})  M^t(0^{k + 1} , (z^k_j)^{k+1}) }{M^t(0^k, (z^k_j)^k)^2}  \big) \, d z. \label{E:M_identity_intmstep}
		\end{align}
		The lemma is concluded by observing that the RHS may be written as a partition function over the SH line ensemble, $\{ \mathcal{Z}_n \}_{n \in \N}$, where $\mathcal{Z}_n(x) = \frac{M^t(0^n , x^n)}{M^t(0^{n - 1}, x^{n - 1})}$. Indeed, each term inside the double product equals $\cZ_{k+1}(z^k_j)/\cZ_k(z^k_j)$, and the set $\operatorname{GT}(y)$ encodes the set of all $n$-tuples of disjoint paths from $(0, (1, \dots, n))$ to $(y, 1)$ in terms of their jump times: for an $n$-tuple of disjoint paths $\pi = (\pi_1, \dots, \pi_n)$ from $(0, (1, \dots, n))$ to $(y, 1)$, letting 
		$$
		z^i_j = \inf \{t \in [0, y_{i + j}] : \pi_{i + j}(t) = i\}
		$$
		gives the encoding.
	\end{proof}
	
	\begin{proof}[Proof of Lemma \ref{lemma:LPP_values_KPZLE}]
		We prove the second statement of the lemma, as the first can be proven via a symmetric argument. For $y > 0$, let $y_{\epsilon} = (\epsilon_{n-1}, y)$. Then by Lemma \ref{lemma:LPP_SHE_Sheet}, we have
		\begin{align*}
			\frac{M^t(0^n, y_{\epsilon})}{M^t(0^n, 0^n)} &= (\prod_{j = 0}^{n - 1}j!) \frac{\mathcal{Z} \{ (0, (1, \dots, n)) \to  (y_{\epsilon} , 1) \}  }{\Delta (y_{\epsilon})}
		\end{align*}
		Taking $\epsilon \to 0$ gives:
		\begin{align*}
			\frac{M^t(0^n, \{ 0^{n-1}, y\})}{M^t(0^n, 0^n)} &= \lim_{\epsilon \to 0} (\prod_{j = 0}^{n - 1}j!) \frac{\textrm{Vol} (\operatorname{GT}(\epsilon_{n - 1})) \mathcal{Z} \{ (0, n ) \to (y, 1) \} }{ \Delta (\epsilon_{n - 1} )y^{n - 1}} 
			= \frac{(n - 1)!}{y^{n - 1}} \mathcal{Z} \{ (0 ,  n) \to (y , 1) \} 
		\end{align*}
		where in the last step we used that $\textrm{Vol} (\operatorname{GT}(z)) = \Delta (z) / (\prod_{j = 0}^{n - 1}j!)$. Rearranging concludes the proof. 
	\end{proof}

	\subsection{The remainder term: proof of Lemma \ref{lemma:KPZ_remainderterm}} \label{subsec:KPZ_remainderterm}
	
	For $n \in \N$ and $x, y > 0$, recall that the remainder term $R^{(n)}(x, y)$ is given by
	\begin{align*}
		R^{(n)}(x, y) =  x^{n} y^{n} \frac{M^{t}( \{ 0^{n}, x \}  , \{ 0^{n } , y\})  }{M^{t}(0^{n} , 0^{n})} 
	\end{align*}
	where we suppress the superscript when the index $n$ is clear. We will start by splitting $R^{(n)}(x, y)$ into two pieces by using the Chapman-Kolmogorov equation for $M^t$. For this lemma, we define $M^{s, t}$ in the same way as we defined $M^t := M^{0, t}$ (i.e. using the Karlin-McGregor formula \eqref{E:Karlin_Mcgregor_SHE}), but instead of starting with the SH sheet $Z^t(x, y) := Z(x, 0; y, t)$, we start with the shifted sheet $Z^{s, t}(x, y) := Z(x, s; y, t)$.

	\begin{lemma}[Corollary 5.4 of \cite{O_Connell_2015}] \label{lemma:OW_cor}
		Almost surely, for all $\mathbf{x} , \mathbf{y} \in \R^n_{\leq}$, and $0 < s < t$,
		\begin{align*}
			M^{0, t}(\mathbf{x}, \mathbf{y}) &= \int_{\Lambda_n} M^{0, s}(\mathbf{x}, \mathbf{z}) M^{s, t}(\mathbf{z}, \mathbf{y}) \Delta(\mathbf{z})^2 \, d \mathbf{z}. 
		\end{align*}
	\end{lemma}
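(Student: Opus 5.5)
The plan is to prove the identity first for $K$ at strictly ordered points and then pass to $M$ by dividing by Vandermonde determinants and taking limits. For $\mathbf{x},\mathbf{y}\in\Lambda_n$ strictly ordered, the Chapman--Kolmogorov equation for the SH sheet reads
\[
\cZ^{0,t}(x,y)=\int_\R \cZ^{0,s}(x,z)\,\cZ^{s,t}(z,y)\,dz,
\]
which holds almost surely for all $x,y$ simultaneously, by continuity and the standard properties of the Green's function in \cite{alberts2014continuum}. I would combine this with the Karlin--McGregor formula \eqref{E:Karlin_Mcgregor_SHE} and the Cauchy--Binet (Andr\'eief) identity:
\[
\det\big(K^{0,t}(x_i,y_j)\big)=\frac{1}{n!}\int_{\R^n}\det\big(K^{0,s}(x_i,z_k)\big)\det\big(K^{s,t}(z_k,y_j)\big)\,d\mathbf z=\int_{\Lambda_n}K^{0,s}(\mathbf x,\mathbf z)\,K^{s,t}(\mathbf z,\mathbf y)\,d\mathbf z .
\]
The second equality uses that the integrand is symmetric in $\mathbf z$: permuting the $z_k$ changes the sign of both determinants. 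It also uses that the integrand vanishes on the boundary of $\Lambda_n$, so the integral over $\R^n$ is $n!$ times the integral over $\Lambda_n$. Writing $K^{0,s}(\mathbf x,\mathbf z)=M^{0,s}(\mathbf x,\mathbf z)\Delta(\mathbf x)\Delta(\mathbf z)$, and similarly for the other factors, then dividing by $\Delta(\mathbf x)\Delta(\mathbf y)$, gives the claim for strictly ordered $\mathbf x,\mathbf y$.

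To apply Fubini and interchange the determinant expansion with the integral, I need integrability of products such as $\cZ^{0,s}(x,z)\cZ^{s,t}(z,y)$ in $z$. This follows from the Gaussian-type decay of $\cZ^{0,s}(x,\cdot)$, which is what makes the scalar Chapman--Kolmogorov integral finite. Since every term in the Leibniz expansion is a product of $n$ such factors in distinct variables, each term is integrable.

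Next I would extend from $\Lambda_n\times\Lambda_n$ to $\R^n_{\le}\times\R^n_{\le}$. The left side is continuous there by Lemma \ref{L:M-cty}, so it suffices to show that the right side is continuous at boundary points. For $\mathbf x^{(m)}\to\mathbf x$ and $\mathbf y^{(m)}\to\mathbf y$, the integrand converges pointwise, again by Lemma \ref{L:M-cty} applied to $M^{0,s}$ and $M^{s,t}$. I would then use dominated convergence.

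\textbf{Main obstacle.} The hard step is finding the dominating function, uniformly over $\mathbf x,\mathbf y$ in a compact set. Local continuity only gives control of $M^{0,s}(\mathbf x,\mathbf z)$ for $\mathbf z$ in compacts; it says nothing as $|\mathbf z|\to\infty$. My approach would be to first bound $\int_{\Lambda_n\setminus[-L,L]^n}$ directly in terms of $K$ rather than $M$. Away from coincident $z$'s, dividing by $\Delta(\mathbf z)$ is harmless. Near the diagonal, the factor $\Delta(\mathbf z)^2$ exactly cancels the two divided Vandermondes. Concretely, $M^{0,s}(\mathbf x,\mathbf z)\Delta(\mathbf z)$ can be written via a mean-value/Cauchy--Binet representation, as in Lemma \ref{lemma:cauchy-binet-identity}, as an average of $x$-differences of $K^{0,s}$. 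Its tail can then be controlled by the sup of $\cZ^{0,s}(x',z_k)$ over $x'$ in a compact, times polynomial factors. The pointwise tail bound I would use is the almost-sure estimate
\[
\sup_{|x|\le C}\cZ^{0,s}(x,z)\le C'e^{-z^2/(4s)},
\]
which follows from the known parabolic curvature of the KPZ sheet, and similarly for $\cZ^{s,t}$. This makes the tails uniformly small, and on $[-L,L]^n$ joint continuity of $M$ on compacts, from Lemma \ref{L:M-cty}, finishes the argument. The fact that the identity holds almost surely for all $\mathbf x,\mathbf y$ simultaneously follows from the continuity established here together with the strict-order identity on a countable dense set.
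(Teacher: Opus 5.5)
The paper gives no proof of this lemma; it imports it from \cite{O_Connell_2015}. Your interior argument is correct and is the natural route. Scalar Chapman--Kolmogorov, the Karlin--McGregor formula \eqref{E:Karlin_Mcgregor_SHE}, and Andr\'eief's identity (each Leibniz term is integrable by Tonelli) give the identity for $K$. Dividing by $\Delta(\mathbf x)\Delta(\mathbf y)$ then gives it on $\Lambda_n\times\Lambda_n$. One small correction: the passage from $\R^n$ to $\Lambda_n$ uses only the symmetry of the integrand, since the coincidence set is Lebesgue-null. The gap is the extension to the boundary. This is not optional: the lemma is applied at $(\{0^n,x\},\{0^n,y\})$ in Lemma \ref{lemma:remainder_firstestimate}. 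It is needed there in the direction $M^{0,t}\le\int(\cdots)$, which is exactly the direction Fatou's lemma does not give.

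\textbf{The dominating function you propose is not justified.} Written in terms of $K$, the integrand is $K^{0,s}(\mathbf x,\mathbf z)K^{s,t}(\mathbf z,\mathbf y)/(\Delta(\mathbf x)\Delta(\mathbf y))$. The $\Delta(\mathbf z)$ factors cancel identically, so the whole difficulty is that $\Delta(\mathbf x)\Delta(\mathbf y)\to 0$. Now $M^{0,s}(\mathbf x,\mathbf z)\Delta(\mathbf z)=K^{0,s}(\mathbf x,\mathbf z)/\Delta(\mathbf x)$ is a divided difference of order up to $n-1$, in the $x$-variables, of the functions $x\mapsto\cZ^{0,s}(x,z_k)$. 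These functions are only H\"older-$(\tfrac12)^-$, so no mean-value representation exists. Lemma \ref{lemma:cauchy-binet-identity} needs $C^1$ functions. The only differentiable objects available in Section 3 are ratios of $M$ in the \emph{second} variable, and using them reintroduces $M$ at boundary points, which is circular. Already for $n=2$, the ratio $\bigl|\cZ(x_1,z_1)\cZ(x_2,z_2)-\cZ(x_1,z_2)\cZ(x_2,z_1)\bigr|/(x_2-x_1)$ cannot be controlled by suprema of $\cZ$ via any differencing argument. Its boundedness as $x_2\downarrow x_1$ comes from cancellation inside the determinant, which is exactly the content of Lemma \ref{L:M-cty} \cite{lun2020}. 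That result is qualitative (continuity on compacts) and gives no decay in $\mathbf z$ uniform in $\mathbf x$ near the boundary. So the scalar Gaussian bound on $\cZ^{0,s}$ does not yield the uniform tail bound, and your dominated convergence step has no support.

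\textbf{What you can get cheaply.} Fatou along interior sequences gives $\int_{\Lambda_n}M^{0,s}(\mathbf x,\mathbf z)M^{s,t}(\mathbf z,\mathbf y)\Delta(\mathbf z)^2\,d\mathbf z\le M^{0,t}(\mathbf x,\mathbf y)$ at every boundary point. At a \emph{fixed} boundary point, equality a.s.\ follows by taking expectations:
\begin{itemize}
\item By independence of $M^{0,s}$ and $M^{s,t}$ and by Tonelli, the expected right side is $\int q_s(\mathbf x,\mathbf z)q_{t-s}(\mathbf z,\mathbf y)\Delta(\mathbf z)^2\,d\mathbf z$. Here $q_r(\mathbf u,\mathbf v)=\det[p(r,u_i-v_j)]/(\Delta(\mathbf u)\Delta(\mathbf v))$, extended continuously to the boundary using the $L^1$ convergence from \cite[Lemma 6.1]{O_Connell_2015}.
\item This equals $q_t(\mathbf x,\mathbf y)=\E M^{0,t}(\mathbf x,\mathbf y)$ by Chapman--Kolmogorov for Dyson Brownian motion.
\item A nonnegative difference with zero mean vanishes a.s.
\end{itemize}
This gives the identity a.s.\ on any countable set of boundary points. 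That already suffices for Lemma \ref{lemma:remainder_firstestimate}, because both sides of its conclusion are continuous in $(x,y)$. The statement for all boundary points simultaneously still requires a genuine uniform-in-$\mathbf x$ tail estimate in $\mathbf z$, which your sketch does not supply.
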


	We also need a simple inequality.
	
	\begin{lemma}\label{lemma:K_and_M_basic_ineq} Almost surely, for any $s < t$, $\mathbf{x}, \mathbf{y} \in  \R^n_{\le}$ and $x_{n + 1}  > x_n$, $y_{n + 1} > y_n$, the following inequalities hold:
		\begin{align*}
			M^{s , t}(\{ \mathbf{x} , x_{n + 1}\} , \{ \mathbf{y}, y_{n + 1}\}) & \leq M^{s, t} (\mathbf{x} , \mathbf{y}) \frac{M^{s, t} (x_{n + 1}, y_{n + 1})}{\prod_{i = 1}^{n} (x_{n + 1} - x_i )   (y_{n + 1} - y_i )  }.
		\end{align*}
	\end{lemma}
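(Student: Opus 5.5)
The plan is to reduce the inequality to a statement about $K^{s,t}$ at strictly ordered points, prove it for approximating polymer models where it is a direct path-counting fact, and then pass to the limit and extend by continuity.

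\emph{Reduction to $K^{s,t}$.} Suppose first that $\mathbf{x},\mathbf{y}\in\R^n_<$. Then $\Delta(\mathbf{x},x_{n+1})=\Delta(\mathbf{x})\prod_{i=1}^n(x_{n+1}-x_i)$, and similarly for $\mathbf{y}$. Since $M^{s,t}=K^{s,t}$ for one-point arguments, the claimed inequality is equivalent, after multiplying through by $\Delta(\mathbf{x},x_{n+1})\Delta(\mathbf{y},y_{n+1})$, to the Fischer/Hadamard-type bound
\[
K^{s,t}(\{\mathbf{x},x_{n+1}\},\{\mathbf{y},y_{n+1}\})\le K^{s,t}(\mathbf{x},\mathbf{y})\,K^{s,t}(x_{n+1},y_{n+1}).
\]
In path language this reads: the partition function of $n+1$ non-intersecting paths is at most the partition function of $n$ non-intersecting paths times that of one unconstrained path.

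\emph{The prelimit inequality.} In a discrete polymer with positive weights, as in Section \ref{S:disclpp}, this bound is immediate. Sending a disjoint $(n+1)$-tuple $(\pi_1,\dots,\pi_{n+1})$ to the pair $\bigl((\pi_1,\dots,\pi_n),\pi_{n+1}\bigr)$ is injective. Its image lies in the set of pairs consisting of a disjoint $n$-tuple and an arbitrary single path, and the weight is preserved. All weights are positive, so the sum over the image is at most the full product sum; that is, $Z_M(\bp,\bq)\le Z_M(\bp',\bq')Z_M(p_{n+1},q_{n+1})$. The same argument works for Brownian paths with smoothed noise: there $K$ is an expectation of a positive Feynman--Kac weight restricted to the non-intersection event. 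Dropping the non-intersection constraint between the last path and the others only enlarges the integral, and the remaining expectation factorizes by independence of the Brownian bridges.

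\emph{Passing to the limit.} I would approximate $K^{s,t}$ either by rescaled inverse-gamma or O'Connell--Yor multi-path partition functions in the intermediate disorder regime, or by mollified-noise Feynman--Kac multi-path functions. In both cases the multi-point fields converge jointly to $K^{s,t}$; this is the framework of \cite{O_Connell_2015, Nica_2021}, the same one used in the proof of Theorem \ref{thm:KPZ_Case_simply}. This gives the inequality almost surely at any fixed finite collection of strictly ordered rational points and rational $s<t$. Continuity of $M^{s,t}$ (Lemma \ref{L:M-cty}) then upgrades it to all strictly ordered points simultaneously. It also extends to $\mathbf{x},\mathbf{y}\in\R^n_\le$ with coincident coordinates: the right-hand denominator stays positive because $x_{n+1}>x_n$ and $y_{n+1}>y_n$. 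To get all $s<t$ simultaneously, I would use continuity in $(s,t)$ of the Green's function, or else simply fix $s,t$, since that is all the later argument needs.

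The step I expect to be the main obstacle is justifying joint convergence of the multi-path approximations to $K^{s,t}$ in a form strong enough to carry a pointwise inequality. If citing that joint convergence is delicate, my first fallback would be to argue directly on the chaos expansion \eqref{E:Weiner_Chaos} with mollified noise $\xi_\delta$, where positivity holds pathwise, and to use $L^2$ convergence of the chaos series as $\delta\to0$ at fixed points to obtain the inequality almost surely.
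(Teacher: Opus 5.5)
Your reduction to the bound $K^{s,t}(\{\x,x_{n+1}\},\{\y,y_{n+1}\})\le K^{s,t}(\x,\y)\,K^{s,t}(x_{n+1},y_{n+1})$ at strictly ordered points, followed by continuity to reach $\R^n_{\le}$, is exactly the paper's reduction. The difference is in how the $K$-bound is proved.

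The paper uses no approximation. By the Karlin--McGregor identity \eqref{E:Karlin_Mcgregor_SHE}, every minor of the $(n+1)\times(n+1)$ matrix $A=(K^{s,t}(x_i,y_j))_{i,j}$ with increasing row and column indices is itself a value $K^{s,t}(\x_I,\y_J)\ge 0$ (even $>0$, by Lemma \ref{L:M-cty}). So $A$ is totally nonnegative. Fischer's inequality for such matrices, $\det A\le \det A[\alpha]\det A[\alpha^c]$ with $\alpha=\{1,\dots,n\}$, is then precisely the desired bound. It holds deterministically on the almost sure event where \eqref{E:Karlin_Mcgregor_SHE} holds.

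Your path-injection argument is essentially the combinatorial proof of that Fischer inequality for LGV-type matrices, run in a regularized model and then transported to the limit. Its merit is that it makes the inequality transparent and does not need any determinantal structure. Its cost is the one genuinely nontrivial step, which you correctly flag and leave uncited: joint convergence of the $(n+1)$-, $n$- and $1$-path functions with separated endpoints. The cleanest way to supply that step is itself \eqref{E:Karlin_Mcgregor_SHE}. With product-form Feynman--Kac weights, the regularized multi-path functions are determinants of single-path ones, so joint convergence reduces to joint $L^2$ convergence of finitely many single-point values. But once you invoke \eqref{E:Karlin_Mcgregor_SHE} in the limit, you may as well apply the linear-algebra argument directly, and the approximation becomes superfluous.

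If you keep your route, note that the factorization by independence needs product-form weights. Wick-ordering $\sum_i\int\xi_\delta(s,B^i_s)\,ds$ jointly produces an extra factor $\exp(-\sum_{i<j}\int\rho_\delta(B^i_s-B^j_s)\,ds)$, where $\rho_\delta$ is the spatial covariance of the mollified noise. This factor is harmless only because it is at most $1$ when $\rho_\delta\ge 0$.
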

	
	\begin{proof}
		It is enough to show that for $\bx, \by \in \Lambda_n$, we have
		$$
		K^{s, t} (\{ \mathbf{x} , x_{n + 1}\} , \{ \mathbf{y}, y_{n + 1}\}) \leq K^{s,t} (\mathbf{x} , \mathbf{y}) K^{s, t}(x_{n + 1}, y_{n+1}).
		$$
		Indeed, this immediately implies the desired formula when $\bx, \by \in \Lambda_n$. The extension to the boundary of $\Lambda_n$ follows by continuity. The inequality above is a standard Fischer determinant inequality for totally nonnegative matrices, e.g. see \cite[equation (1.5)]{skandera2022barrett}.
	\end{proof}

	Using these two lemmas, we obtain the following upper bound on $R^{(n)}(x,y)$.
	
	\begin{lemma} \label{lemma:remainder_firstestimate}
		For all $n \in \N, \eta \in \R$ and $x, y \geq 0$, 
		\begin{align*}
			R^{(n)}(x,y) &\leq M^{0,t}(x, y) [R_1^{(n, \eta)}+  R_2^{\eta} (x, y)],
		\end{align*}
		where
		\begin{align}
			R_1^{(n, \eta)}(x, y) &= \frac{1}{M^{0, t} (0^n, 0^n)} \int_{\Lambda_n \cap \{ z_n \leq \eta \}} M^{0, \frac{t}{2}} (0^n, \mathbf{z}) M^{\frac{t}{2}, t} (\mathbf{z}, 0^n) \Delta(\mathbf{z})^2 \, d \mathbf{z}, \label{eqn:R_1_defn}\\
			R_2^{\eta}(x,y) &=  \frac{1}{M^t(x, y)}\int_\eta^\infty M^{0 , \frac{t}{2}} (x, z) M^{\frac{t}{2}, t} (z, y) \, d z. \label{eqn:R_2_defn}
		\end{align}
	\end{lemma}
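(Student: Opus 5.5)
We start from the definition $R^{(n)}(x,y) = x^n y^n M^t(\{0^n,x\},\{0^n,y\})/M^t(0^n,0^n)$ and expand the numerator with the Chapman--Kolmogorov identity of Lemma \ref{lemma:OW_cor} at time $s = t/2$, in dimension $n+1$:
\begin{align*}
M^{0,t}(\{0^n,x\},\{0^n,y\}) = \int_{\Lambda_{n+1}} M^{0,\frac t2}(\{0^n,x\},\mathbf{w})\, M^{\frac t2,t}(\mathbf{w},\{0^n,y\})\,\Delta(\mathbf{w})^2\, d\mathbf{w}.
\end{align*}
We write $\mathbf{w} = (\mathbf{z}, z)$ with $\mathbf{z}\in\Lambda_n$ and $z > z_n$. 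Lemma \ref{lemma:K_and_M_basic_ineq} bounds each factor:
\begin{align*}
M^{0,\frac t2}(\{0^n,x\},(\mathbf{z},z)) \le M^{0,\frac t2}(0^n,\mathbf{z})\frac{M^{0,\frac t2}(x,z)}{x^n\prod_i (z-z_i)},
\end{align*}
and similarly for the second factor with $y$. The prefactors $x^n$ and $y^n$ then cancel. Also $\Delta(\mathbf{w})^2 = \Delta(\mathbf{z})^2\prod_i(z-z_i)^2$, so the two products $\prod_i(z-z_i)$ cancel as well. This yields
\begin{align*}
R^{(n)}(x,y) \le \frac{1}{M^{0,t}(0^n,0^n)}\int_{\mathbf{z}\in\Lambda_n,\ z>z_n} M^{0,\frac t2}(0^n,\mathbf{z})M^{\frac t2,t}(\mathbf{z},0^n)\Delta(\mathbf{z})^2\, M^{0,\frac t2}(x,z)M^{\frac t2,t}(z,y)\,d\mathbf{z}\,dz.
\end{align*}

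Next we split the domain according to whether $z_n \le \eta$ or $z_n > \eta$. On $\{z_n\le\eta\}$ we drop the constraint $z>z_n$ and integrate $z$ over all of $\R$. Using Lemma \ref{lemma:OW_cor} with $n=1$, the $z$-integral equals $M^{0,t}(x,y)$, and what remains is exactly $M^{0,t}(x,y) R_1^{(n,\eta)}$. On $\{z_n>\eta\}$ we have $z>\eta$, so we drop the constraint on $\mathbf{z}$ other than $\mathbf{z}\in\Lambda_n$. The $\mathbf{z}$-integral then equals $M^{0,t}(0^n,0^n)$ by Lemma \ref{lemma:OW_cor}, which cancels the denominator. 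The $z$-integral over $(\eta,\infty)$ is $M^{0,t}(x,y)R_2^\eta(x,y)$. Summing the two pieces gives the claim.

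The main care point is applying Lemma \ref{lemma:K_and_M_basic_ineq} on the boundary points $0^n$. Those entries have repeated coordinates, so the lemma is used in its continuously extended form, with $\mathbf{x}=0^n\in\R^n_{\le}$ and $x_{n+1}=x>0$. The degenerate cases $x=0$ or $y=0$ are trivial, since then $R^{(n)}=0$. We also need the ordering $z>z_n$ for the lemma to apply, which holds on $\Lambda_{n+1}$. Positivity of every integrand (Lemma \ref{L:M-cty}) justifies enlarging the domains in the two pieces.
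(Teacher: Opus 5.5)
Your proposal is correct and matches the paper's proof essentially step for step. Both arguments:
\begin{itemize}
\item expand the numerator with Lemma \ref{lemma:OW_cor} in dimension $n+1$ at time $t/2$;
\item bound both kernels with Lemma \ref{lemma:K_and_M_basic_ineq}, so that $x^n y^n$ and $\prod_i (z-z_i)^2$ cancel;
\item enlarge the domain via the covering of $\Lambda_{n+1}$ by $(\Lambda_n\cap\{z_n\le\eta\})\times\R$ and $\Lambda_n\times(\eta,\infty)$, evaluating each piece with Chapman--Kolmogorov;
\item dispose of the cases $x=0$ or $y=0$ using $R^{(n)}=0$.
\end{itemize}
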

	
	\begin{proof}
		First suppose $x,y>0$. By Lemma \ref{lemma:OW_cor},
		\begin{align*}
			x^n y^n M^{0,t}(\{0^n,x\},\{0^n,y\})
			&=x^n y^n\int_{\Lambda_{n+1}}
			M^{0,t/2}(\{0^n,x\},\{\mathbf z,z_{n+1}\})
			M^{t/2,t}(\{\mathbf z,z_{n+1}\},\{0^n,y\})
			\Delta(\mathbf z,z_{n+1})^2\,d\mathbf z\,dz_{n+1}.
		\end{align*}
		On this domain $z_{n+1}>z_n$, so Lemma \ref{lemma:K_and_M_basic_ineq} applies to both kernels. After cancellation of $x^n y^n$ and the factors $\prod_{i=1}^n(z_{n+1}-z_i)^2$, the integrand is bounded by
		\[
		F(\mathbf z,z_{n+1})=
		M^{0,t/2}(0^n,\mathbf z)M^{0,t/2}(x,z_{n+1})
		M^{t/2,t}(\mathbf z,0^n)M^{t/2,t}(z_{n+1},y)
		\Delta(\mathbf z)^2.
		\]
		This product is defined and nonnegative on $\Lambda_n\times\R$. Since
		$\Lambda_{n+1}\subset[(\Lambda_n\cap\{z_n\leq\eta\})\times\R]\cup[\Lambda_n\times(\eta,\infty)]$, we may now enlarge the integration domains to obtain
		\begin{align*}
			x^n y^n M^{0,t}(\{0^n,x\},\{0^n,y\})
			&\leq\left(\int_{\Lambda_n\cap\{z_n\leq\eta\}}\int_{\R}
			+\int_{\Lambda_n}\int_{\eta}^{\infty}\right)
			F(\mathbf z,z_{n+1})\,dz_{n+1}\,d\mathbf z\\
			&=M^{0,t}(x,y)\int_{\Lambda_n\cap\{z_n\leq\eta\}}
			M^{0,t/2}(0^n,\mathbf z)M^{t/2,t}(\mathbf z,0^n)\Delta(\mathbf z)^2\,d\mathbf z\\
			&\quad+M^{0,t}(0^n,0^n)\int_{\eta}^{\infty}
			M^{0,t/2}(x,z)M^{t/2,t}(z,y)\,dz.
		\end{align*}
		Dividing by $M^{0,t}(0^n,0^n)$ gives the claim. If $x=0$ or $y=0$, then $R^{(n)}(x,y)=0$, so the inequality follows from nonnegativity.
	\end{proof}
	
	\begin{lemma}
		\label{L:R2}
		The term $R_2^\eta$ converges to $0$ as $\eta \to \infty$, uniformly on compact subsets of $[0, \infty)^2$.
	\end{lemma}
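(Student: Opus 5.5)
The plan is a Dini/monotone-convergence argument applied to the Chapman--Kolmogorov splitting of $M^{0,t}(x,y)$ at time $t/2$. By Lemma \ref{lemma:OW_cor} with $n=1$ (where $M = K = \cZ$ and $\Delta \equiv 1$), almost surely for all $x,y$,
\begin{align*}
	M^{0,t}(x,y) = \int_{\R} M^{0,t/2}(x,z)\, M^{t/2,t}(z,y)\, dz .
\end{align*}
Define
\begin{align*}
	F_\eta(x,y) := \int_\eta^\infty M^{0,t/2}(x,z)\, M^{t/2,t}(z,y)\, dz ,
\end{align*}
so that $R_2^\eta = F_\eta / M^{0,t}$. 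The denominator $M^{0,t}$ is continuous and strictly positive by Lemma \ref{L:M-cty}, hence bounded below on any compact set $C \subset [0,\infty)^2$. It therefore suffices to show that $F_\eta \to 0$ uniformly on $C$.

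The key steps, in order, are as follows.

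\begin{enumerate}
	\item \textbf{Pointwise convergence.} For each fixed $(x,y)$, the integrand is nonnegative and integrable over $\R$, since its total integral is $M^{0,t}(x,y)<\infty$. Hence $F_\eta(x,y) \downarrow 0$ as $\eta \to \infty$, and the decrease is monotone in $\eta$.

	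\item \textbf{Continuity of $F_\eta$.} I will write
	\begin{align*}
		F_\eta = M^{0,t} - \int_{-\infty}^{\eta} M^{0,t/2}(x,z)\, M^{t/2,t}(z,y)\, dz .
	\end{align*}
	The function $M^{0,t}$ is continuous. For the remaining term, I would split it into a compact piece $\int_{-A}^{\eta}$, which is continuous in $(x,y)$ because the integrand is jointly continuous, plus a tail $\int_{-\infty}^{-A}$. Alternatively, I would argue directly that the tails $\int_{|z|>A}$ are small uniformly for $(x,y)\in C$.

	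\item \textbf{Dini's theorem.} Given continuity of each $F_\eta$, monotone pointwise convergence to the continuous limit $0$ on the compact set $C$ upgrades to uniform convergence.
\end{enumerate}

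The main obstacle is the uniform tail control in step 2, namely showing that $\sup_{(x,y)\in C}\int_{|z|>A} M^{0,t/2}(x,z)M^{t/2,t}(z,y)\,dz \to 0$ as $A\to\infty$ almost surely. I would try the following chain of arguments, in order of preference.

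\begin{itemize}
	\item \textbf{A priori growth bounds.} The SH sheet satisfies almost sure Gaussian-type decay, $\sup_{x \in [0,a]} \cZ(x,s;z,r) \le C_\omega e^{-(z-x)^2/(2(r-s)) + c|z|}$ for some $c$. Such a bound gives a $z$-integrable envelope that is uniform over $(x,y)\in C$, and dominated convergence then yields both the continuity needed in step 2 and, in fact, the uniform convergence directly, without invoking Dini. This kind of bound follows from known almost sure bounds on the KPZ sheet (for instance the parabolic-curvature bounds available via the KPZ line ensemble, or via comparison with stationary initial data).

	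\item \textbf{Fallback via monotonicity.} If such an envelope is not readily citable, I would use the monotonicity of the ratio profiles. By the positivity of the corresponding $2\times2$ Karlin--McGregor determinant, $z\mapsto \cZ^{0,t/2}(x',z)/\cZ^{0,t/2}(x,z)$ is nondecreasing for $x<x'$. Consequently, over $x\in[0,a]$, the integrand is dominated near $z\to+\infty$ by its value at $x=a$, up to a factor at a fixed $z_0$, and similarly for $y$ and for $z\to-\infty$ using $x=0$. These dominations give tail integrals controlled by finitely many integrable functions, which is enough to run the argument above.
\end{itemize}
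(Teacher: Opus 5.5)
Your proof is correct, but it obtains uniformity by a different mechanism than the paper.

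The paper works with the normalized quantity directly. It uses the same total-positivity fact as your fallback: for $x<x'$, the ratio $z\mapsto M^{0,t/2}(x',z)/M^{0,t/2}(x,z)$ is increasing. It combines this with the Chebyshev--Harris inequality for two increasing functions under the probability density $M^{0,t/2}(x,z)M^{t/2,t}(z,y)/M^{0,t}(x,y)$. This shows that $R_2^\eta$ is nondecreasing in $x$ and in $y$. Hence $\sup_{[0,a]^2}R_2^\eta=R_2^\eta(a,a)\to 0$ by pointwise convergence alone, with no envelope, no lower bound on $M^{0,t}$, and no Dini.

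You instead use the ratio monotonicity only as a domination. For $z\ge z_0$ and $(x,y)\in[0,a]^2$ you get $M^{0,t/2}(x,z)M^{t/2,t}(z,y)\le C_\omega M^{0,t/2}(a,z)M^{t/2,t}(z,a)$, and the same with $0$ in place of $a$ for $z\le z_0$. This is valid and already gives $\sup_{[0,a]^2}F_\eta\to 0$ directly. The cost is a random constant coming from the ratios at $z_0$, plus division by $\inf M^{0,t}$ over the compact set. Your first route rests on a priori sheet bounds you did not pin down, so route 2 is the one to rely on.

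The step you flag as the main obstacle is easier than you think. Each integrand is nonnegative and continuous in $(x,y)$. By Fatou, both $F_\eta$ and $\int_{-\infty}^{\eta}M^{0,t/2}(x,z)M^{t/2,t}(z,y)\,dz$ are therefore lower semicontinuous. Their sum is the continuous function $M^{0,t}$ (Lemma \ref{lemma:OW_cor} with $n=1$), so $F_\eta$ is also upper semicontinuous, hence continuous. Dini then finishes the argument using only Chapman--Kolmogorov and continuity, with no tail estimates and no total positivity. That makes it the most elementary of the available routes, while the paper's route yields the extra structural fact that $R_2^\eta$ is coordinatewise monotone.
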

	
	\begin{proof}
		For fixed $x, y$, pointwise convergence of $R_2^\eta$ as $\eta \to \infty$ follows from the dominated convergence theorem. To upgrade to uniform-on-compact convergence, we show that $R_2^\eta$ is monotone increasing in $x, y$. We will check monotonicity in $x$, as monotonicity in $y$ is similar. For this, first observe that for $x < x' \in \R$ and $z < z' \in \R$, we have that
		$$
		M^{0, t/2}(x, z) M^{0, t/2}(x', z') - M^{0, t/2}(x, z') M^{0, t/2}(x', z) > 0.
		$$
		Indeed, the left-hand side above is the (positive) $2 \times 2$ determinant $K^{0, t/2}((x, x'), (z,z'))$. Therefore we can write $M^{0, t/2}(x', z) = f(z) M^{0, t/2}(x, z)$, where $f$ is an increasing function. Now, let $\mu$ be the probability measure on $\R$ with density $M^{0, t/2}(x, z) M^{t/2, t}(z, y)/M^t(x, y)$. The inequality $R_2^\eta(x, y) \le R_2^\eta(x', y)$ is equivalent to the statement that
		$$
		\int f(z) \mathbf{1}(z > \eta) d\mu(z) \ge \int f(z) d\mu(z) \int \mathbf{1}(z > \eta) d\mu(z),
		$$
		which follows since both $f, \mathbf{1}(z > \eta)$ are increasing.
	\end{proof}
	
	To set up the estimate on $R_1^{(n, \eta)}$, let us first write $R_1^{(n, \eta)} = B_n C_{n, \eta}$, where
	$$
	B_n = \frac{\E M^t(0^n, 0^n)}{M^t(0^n, 0^n)}, \qquad C_{n, \eta} = \int_{\Lambda_n \cap \{ z_n \leq \eta \}} \frac{M^{0, \frac{t}{2}} (0^n, z) M^{\frac{t}{2}, t} (z, 0^n) \Delta(z)^2}{\E M^t(0^n, 0^n)} \, dz.
	$$
	The term $C_{n, \eta}$ can be easily controlled. 
	\begin{lemma}
		\label{L:Cneta}
		Let $W = (W_1 \le \dots \le W_n)$ be a collection of $n$ non-intersecting Brownian bridges on $[0, t]$ with $W_i(0) = W_i(t) = 0$ for all $i$. Then for all $n \in \N, \eta > 0$,
		$$
		\E C_{n, \eta} = \mathbb P(W_n(t/2) \le \eta). 
		$$
		In particular, $\E C_{n, \sqrt{tn}/2} \le 2 \exp(-d n^2)$.
	\end{lemma}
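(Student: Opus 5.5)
We compute $\E C_{n,\eta}$ by first taking the expectation inside the integral, using independence of the two halves of the noise. The increments $M^{0,t/2}$ and $M^{t/2,t}$ are driven by white noise on disjoint time strips, so they are independent. By the Wiener chaos expansion \eqref{E:Weiner_Chaos}, the mean of $K^{s,t}(\bx,\by)$ is the zeroth chaos term. This gives $\E K^{s,t}(\bx,\by)=p_k^*(t-s,\bx,\by)$, the Karlin--McGregor density for $n$ Brownian motions killed on collision. Dividing by Vandermondes and passing to the boundary $0^n$ by continuity (Lemma \ref{L:M-cty}, with Fatou/monotone convergence or uniform integrability to justify exchanging limit and expectation), we get
\begin{align*}
\E M^{0,t/2}(0^n,\mathbf z)=q_{t/2}(\mathbf z), \qquad q_s(\mathbf z):=\lim_{\bx\to 0^n}\frac{p_n^*(s,\bx,\mathbf z)}{\Delta(\bx)\Delta(\mathbf z)}.
\end{align*}
Here $q_s$ is explicit: up to a constant, $q_s(\mathbf z)\propto\Delta(\mathbf z)\prod_i e^{-z_i^2/2s}$ divided by $\Delta(\mathbf z)$, i.e.\ essentially the GUE density.

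With this, Fubini (everything is nonnegative) and independence give
\begin{align*}
\E C_{n,\eta}=\frac{1}{\E M^t(0^n,0^n)}\int_{\Lambda_n\cap\{z_n\le\eta\}} q_{t/2}(\mathbf z)\,q_{t/2}(\mathbf z)\,\Delta(\mathbf z)^2\,d\mathbf z .
\end{align*}
The same computation with $\eta=\infty$ applies to the Chapman--Kolmogorov identity of Lemma \ref{lemma:OW_cor}; taking expectations there gives
\begin{align*}
\E M^t(0^n,0^n)=\int_{\Lambda_n} q_{t/2}(\mathbf z)^2\Delta(\mathbf z)^2\,d\mathbf z .
\end{align*}
Hence the integrand $q_{t/2}(\mathbf z)^2\Delta(\mathbf z)^2/\E M^t(0^n,0^n)$ is a probability density on $\Lambda_n$. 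It is exactly the density at time $t/2$ of $n$ non-intersecting Brownian bridges started and ended at $0$: the limiting form of the Karlin--McGregor/Doob-transform bridge density $p^*(t/2,\bx,\mathbf z)p^*(t/2,\mathbf z,\by)/p^*(t,\bx,\by)$ as $\bx,\by\to0^n$. This identifies $\E C_{n,\eta}=\P(W_n(t/2)\le\eta)$.

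For the tail bound, $W(t/2)$ is distributed as $\sqrt{t/4}$ times GUE eigenvalues, i.e.\ density $\propto\Delta(\mathbf z)^2 e^{-2\sum z_i^2/t}$. Rescaling, $W_n(t/2)=\tfrac{\sqrt t}{2}\lambda_{\max}$, where $\lambda_{\max}$ is the top eigenvalue of a GUE matrix with semicircle edge at $2\sqrt n$. Thus
\begin{align*}
\P\bigl(W_n(t/2)\le \sqrt{tn}/2\bigr)=\P\bigl(\lambda_{\max}\le \sqrt n\bigr).
\end{align*}
This lies a macroscopic distance below the edge, and it is a standard large-deviation estimate that it has probability $\le 2e^{-dn^2}$. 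One route: the event forces all $n$ eigenvalues below $\sqrt n$, so the empirical measure deviates from the semicircle. The Ben Arous--Guionnet LDP at speed $n^2$ gives the bound, or one can do a direct Coulomb gas estimate comparing the partition function restricted to $[-\infty,\sqrt n]^n$ with the full one.

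The main obstacle is the justification step in the first paragraph: that expectations of the boundary-extended $M$ equal the limits of the $p^*$ ratios, and that Fubini commutes with these limits. I would handle it by proving the identity for spread-out points $\epsilon_n$ (where the chaos expansion applies directly) and then letting $\epsilon\to0$ using continuity and nonnegativity, with Fatou's lemma supplying one inequality. If necessary, the equality of total masses (both sides integrate to $1$ after normalization) upgrades Fatou to equality.
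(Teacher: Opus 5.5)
Your overall route is the paper's. You pull the expectation inside using independence of $M^{0,t/2}$ and $M^{t/2,t}$, and use $\E M^{s,t}(\bx,\mathbf z)=p^*_n(t-s,\bx,\mathbf z)/(\Delta(\bx)\Delta(\mathbf z))$ away from the boundary. You then identify the normalized integrand with the time-$t/2$ density of non-intersecting bridges by sending $\epsilon_n\to 0^n$. Finally you use $W_n(t/2)=\tfrac{\sqrt t}{2}\lambda_{\max}$ and a speed-$n^2$ GUE lower-tail bound. Those steps are fine. The gap is the step you yourself flag as the main obstacle, and the fix you propose does not close it.

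For $\epsilon\ge 0$ let $N^\epsilon_\eta=\int_{\Lambda_n\cap\{z_n\le\eta\}}M^{0,t/2}(\epsilon_n,\mathbf z)M^{t/2,t}(\mathbf z,\epsilon_n)\Delta(\mathbf z)^2\,d\mathbf z$, where $\epsilon_n=0^n$ when $\epsilon=0$. Let $N^\epsilon_{>\eta}$ be the same integral over $\{z_n>\eta\}$. Put $D^*=\lim_{\epsilon\to0}\E M^t(\epsilon_n,\epsilon_n)$ and $F(\eta)=\P(W_n(t/2)\le\eta)$.

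Fatou, applied jointly in $(\omega,\mathbf z)$, gives $\E N^0_\eta\le D^*F(\eta)$ and $\E N^0_{>\eta}\le D^*(1-F(\eta))$. Both inequalities point the same way. Summing them, using Lemma \ref{lemma:OW_cor} at $0^n$, only yields $\E M^t(0^n,0^n)\le D^*$.

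Your ``equality of total masses'' would force equality in each inequality only if you already knew $\E M^t(0^n,0^n)=D^*$. Saying that both sides integrate to $1$ after normalization is automatic when each side is divided by its own constant, so it carries no information. Likewise, your formula $\E M^t(0^n,0^n)=\int q_{t/2}^2\Delta^2$ presupposes $\E M^{0,t/2}(0^n,\mathbf z)=q_{t/2}(\mathbf z)$, which Fatou gives only as an upper bound. Almost sure convergence plus nonnegativity cannot rule out loss of mass as $\bx\to 0^n$.

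The missing ingredient is a uniform integrability statement at the boundary of the Weyl chamber. Concretely, you need $M^t(\epsilon_n,\epsilon_n)\to M^t(0^n,0^n)$ in $L^1$; a uniform-in-$\epsilon$ bound on $\E M^t(\epsilon_n,\epsilon_n)^2$ would suffice. This is exactly what the paper imports from \cite[Lemma 6.1]{O_Connell_2015}.

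With that input, $\E M^t(0^n,0^n)=D^*$, so your two Fatou inequalities become equalities. This gives $\E C_{n,\eta}=\lim_{\epsilon\to0}\P(W^\epsilon_n(t/2)\le\eta)=F(\eta)$, where $W^\epsilon$ denotes the bridges from $\epsilon_n$ to $\epsilon_n$.

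A small point: $q_s(\mathbf z)$ is a constant multiple of $\prod_i e^{-z_i^2/2s}$, since the Vandermonde cancels. It is $q_{t/2}^2\Delta^2$, not $q_s$, that is the GUE density. Your later formula $\Delta(\mathbf z)^2e^{-2\sum_i z_i^2/t}$ is correct.
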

	
	\begin{proof}
		By linearity of expectation and independence of $M^{0, t/2}$ and $M^{t/2, t}$, we have that
		$$
		\E C_{n, \eta} = \int_{\Lambda_n \cap \{ z_n \leq \eta \}} \frac{ \E [M^{0, \frac{t}{2}} (0^n, z) ]  \E [M^{\frac{t}{2}, t} (z, 0^n)]  \Delta(z)^2 }{\E M^t(0^n, 0^n)}\, dz.
		$$
		To evaluate the ratio of expectations above, we separate the endpoints at $0$. Indeed, for $x, z \in \Lambda_n$, 
		$$
		\E M^{s, t}(x, z) = \frac{\det[p(t-s, x_i - z_j)]_{{i, j}=1}^n}{\Delta(x) \Delta(z)}.
		$$
		Therefore letting $\epsilon_n = (0, \epsilon, \dots, \epsilon (n-1))$, we have 
		$$
		\frac{ \E [M^{0, \frac{t}{2}} (\epsilon_n, z) ]  \E [M^{\frac{t}{2}, t} (z, \epsilon_n)]  \Delta(z)^2 }{\E M^t(\epsilon_n, \epsilon_n)} = \frac{\det[p(t/2, \epsilon (i-1) - z_j)]_{{i, j}=1}^n \det[p(t/2, \epsilon (i-1) - z_j)]_{{i, j}=1}^n}{\det[p(t, \epsilon (i-j))]_{{i, j}=1}^n}.
		$$
		This is exactly the density at $z$ at time $t/2$ for $n$ non-intersecting Brownian bridges on $[0, t]$ starting and ending at $\epsilon_n$. Taking $\epsilon \to 0$, we recover the density for $W$. Noting also that $M^t$ converges in $L^1$ at the boundary of $\Lambda_n$ (see \cite[Lemma 6.1]{O_Connell_2015}), this allows us to evaluate the integral $\E C_{n, \eta}$, as desired. For the bound on $\E C_{n, \sqrt{t n}/2}$, note that by transforming non-intersecting Brownian bridges to Brownian motions, we have that 
		$\mathbb P(W_n(t/2) \le \eta) = \mathbb P(B_n(1) \le 2\eta/\sqrt{t})$, where $B_n$ is the top path in a family of $n$ non-intersecting Brownian motions, or equivalently, Dyson's Brownian motion with $\beta = 2$. The bound is then simply a large deviation estimate for the Gaussian Unitary Ensemble, e.g. see \cite[Theorem 1]{ledoux2010small}.
	\end{proof}
	
	On the other hand, the term $B_n$ is more difficult to understand. Bounding the probability that $B_n$ is large is essentially equivalent to controlling the heights in the KPZ line ensemble. We tackled this problem in the paper \cite{Syed_2025}.
	
	\begin{prop}[Theorem 1.1 of \cite{Syed_2025}] \label{prop:KPZ_blkheight}  
		Let $\{ \mathcal{H}^{(t)}_n \}_{n \in \N}$ be the KPZ$_t$ line ensemble and
		\begin{align*}
			e_n^{(t)} &= \log (t^{1 - n} (n-1)!) + \frac{t}{24}.
		\end{align*}
		Fix $t_0 > 0, \epsilon > 0$.  There exist constants $d, C > 0$ so that for all $n \in \N, t > t_0$ and $a \ge C (t n^{2\epsilon} + t^{1/4} n^{3/4 + 2\epsilon})$ we have
		\begin{align*}
			\prob \left(  | \mathcal{H}^{(t)}_n(0) - e_n^{(t)} | > a \right) & \leq 2 \exp(- d a n^{\epsilon}  \max(1, t^{-1/4} n^{1/4})).
		\end{align*}
	\end{prop}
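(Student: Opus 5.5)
The plan is to treat the $n$-th line through the partial sums
$$
\Sigma_n := \sum_{k=1}^n \mathcal H^{(t)}_k(0) = \log\Big( \Big(\prod_{k=0}^{n-1} k!\Big)^2 M^t(0^n,0^n)\Big),
$$
which telescopes from the definition of the KPZ line ensemble. Then $\mathcal H_n(0)=\Sigma_n-\Sigma_{n-1}$, and the proof splits into two parts: (a) concentration of $\Sigma_n$ around an explicit centering $E_n:=\sum_{k\le n} e^{(t)}_k$, and (b) a transfer step from partial sums to the individual line, using the $\mathcal H$-Brownian Gibbs property.

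\textbf{Step 1: the centering.} I would compute $\E M^t(0^n,0^n)$ exactly. Separate the endpoints to $\epsilon_n$ as in Lemma \ref{L:Cneta}, so that
$$
\E M^t(\epsilon_n,\epsilon_n)=\det[p(t,\epsilon(i-j))]/\Delta(\epsilon_n)^2,
$$
and let $\epsilon\to0$. This gives a Wronskian of the heat kernel, of order $t^{-n^2/2}\big/\prod_j j!$. Combined with the factors $((k-1)!)^2$, this should reproduce $\log(t^{1-n}(n-1)!)$ for each increment. The additional shift $t/24$ is the usual gap between $\log \E\mathcal Z$ and the typical size of $\log \mathcal Z$ for the narrow-wedge solution.

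\textbf{Step 2: upper tail of $\Sigma_n$.} Markov's inequality applied to $M^t(0^n,0^n)$ gives the upper tail immediately. For sharper rates I would use higher moments, which are available through the Karlin--McGregor structure \eqref{E:Karlin_Mcgregor_SHE} and the Fischer-type inequality of Lemma \ref{lemma:K_and_M_basic_ineq}, and which control $\E M^t(0^n,0^n)^p$ through delta-Bose gas/replica bounds. For the lower tail of $\Sigma_n$, I would combine the Chapman--Kolmogorov identity of Lemma \ref{lemma:OW_cor} with positivity. Restricting the $\mathbf z$-integral to a window where both half-time sheets are typical gives a lower bound by a product of two independent quantities.

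\textbf{Step 3: transfer to $\mathcal H_n(0)$.} Here I would use the Gibbs property on an interval $[-T,T]$, with $T$ chosen depending on $n$ and $t$ (I expect $T\sim t^{1/2}n^{-1/4}$ or similar, which would produce the two regimes $tn^{2\epsilon}$ and $t^{1/4}n^{3/4+2\epsilon}$ in the threshold). Two cases:
\begin{itemize}
\item If $\mathcal H_n(0)$ is too high, then the ordering and the soft interaction force $\mathcal H_1,\dots,\mathcal H_{n-1}$ to be high near $0$. This contradicts the upper tail of $\Sigma_{n-1}$ averaged over the window, which follows from Step 2 together with stationarity of $\mathcal H_k(x)+x^2/2t$.
\item If $\mathcal H_n(0)$ is too low, resample line $n$ on $[-T,T]$. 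Conditional on the boundary data and on $\mathcal H_{n-1}$, it is a Brownian bridge tilted by $\exp(-e^{\mathcal H_{n+1}-\mathcal H_n}-e^{\mathcal H_n-\mathcal H_{n-1}})$. A monotone coupling then shows that a low value at $0$ forces a low value of the sum over a window, which is excluded by the lower tail of $\Sigma_n-\Sigma_{n-1}$.
\end{itemize}

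\textbf{Main obstacle.} I expect the main difficulty to be the lower tail in Step 3, that is, preventing $\mathcal H_n(0)$ from dipping far below $e_n$. The Gibbs property only pushes line $n$ down from above, and lines below $n$ are not a priori controlled. My first attempt would be an induction on $n$ with stochastic domination: the bottom curve is replaced by $-\infty$, a free Brownian bridge is compared, and the boundary values at $\pm T$ are estimated from the window-averaged partial-sum bounds. Balancing $T$ against the Brownian fluctuation $\sqrt T$ and the drift scale then yields the stated thresholds and the exponent $a n^{\epsilon}\max(1,t^{-1/4}n^{1/4})$.
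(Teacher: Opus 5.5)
\textbf{The paper does not prove this statement.} It is imported as Theorem~1.1 of \cite{Syed_2025} and used only through Corollary~\ref{cor:KPZ_LE_input}. Judged on its own, your outline has a genuine gap: the two inputs that carry all the content, lower tails and the enhanced rate, are asserted rather than derived.

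\textbf{Lower tails.} Markov's inequality gives you only the upper tail of $\Sigma_n$, at rate $e^{-s}$ in the deviation $s$ of the sum. Since $\mathcal H_n(0)=\Sigma_n-\Sigma_{n-1}$, both tails of $\mathcal H_n(0)$ need the lower tail of some partial sum:
\begin{itemize}[nosep]
\item bounding $\P(\mathcal H_n(0)-e_n>a)$ this way needs the lower tail of $\Sigma_{n-1}$;
\item bounding $\P(\mathcal H_n(0)-e_n<-a)$ needs the lower tail of $\Sigma_n$.
\end{itemize}
Your only source for these is Lemma~\ref{lemma:OW_cor} plus positivity, and that is circular. After restricting the $\mathbf z$-integral to a window, a lower bound requires lower bounds on $M^{0,t/2}(0^n,\mathbf z)$ and $M^{t/2,t}(\mathbf z,0^n)$, uniformly on the window and with quantified probability. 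That is the same lower-tail problem, now at general endpoints. Likewise, ``excluded by the lower tail of $\Sigma_n-\Sigma_{n-1}$'' in Step~3 is literally the lower tail of $\mathcal H_n$, i.e.\ the claim itself.

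\textbf{The rate.} Suppose you had two-sided tails $e^{-cs}$ for the partial sums. A deviation $a$ of one line moves $\Sigma_n$ by only $a$, so the transfer gives at best $e^{-ca}$, with no threshold. The factor $n^{\epsilon}\max(1,t^{-1/4}n^{1/4})$ in the exponent and the thresholds $tn^{2\epsilon}$, $t^{1/4}n^{3/4+2\epsilon}$ can only come from a quantitative statement that a deviation of line $n$ at $0$ propagates to many lines, or over a long window. You replace exactly this by ``I expect $T\sim t^{1/2}n^{-1/4}$'' and ``balancing \dots\ yields the stated thresholds''.

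\textbf{The ordering premise is false.} The mechanism you invoke for the upper-tail case assumes the lines are ordered, and in this normalization they are not. Since $e_{k+1}-e_k=\log(k/t)>0$ for $k>t$, the statement itself puts $\mathcal H_n(0)$ far above $\mathcal H_1(0)$ for fixed $t$ and large $n$. In the Gibbs property it is the soft repulsion, with $e^{\mathcal H_{k+1}-\mathcal H_k}\approx k/t$, that balances the curvature of $-x^2/2t$. So a high $\mathcal H_n(0)$ does not force $\mathcal H_1,\dots,\mathcal H_{n-1}$ above it. At best the interaction pushes nearby lines up relative to their centerings $e_k$, with losses that accumulate in $k$. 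Meanwhile the remaining lines can be low and offset $\Sigma_{n-1}$, so even your upper-tail case reduces to the missing lower-tail control.
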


	\begin{cor} \label{cor:KPZ_LE_input} Fix $t_0, \epsilon > 0$. There are $(t_0,\epsilon)$-dependent constants $c, d , C > 0$ such that for all $t > t_0$ and $a > C (t n^{2 \epsilon} + t^{1/4} n^{3/4 + 2 \epsilon})$ we have
		\begin{align*}
			\prob \Big( B_n \geq e^{ an} \Big) 	& \leq  e^{- da}. 
		\end{align*}
	\end{cor}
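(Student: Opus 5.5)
The plan is to turn $B_n$ into a sum of centered heights of the first $n$ curves of the KPZ line ensemble at the origin, and then apply Proposition \ref{prop:KPZ_blkheight} curve by curve with a union bound.

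\textbf{Step 1: telescope the random denominator.} By the definition of the SH line ensemble, $\mathcal{Z}_k(0) = ((k-1)!)^2 M^t(0^k,0^k)/M^t(0^{k-1},0^{k-1})$. The product over $k \le n$ telescopes, using the convention $M^t(0^0,0^0)=1$, to give
\begin{align*}
\log M^t(0^n,0^n) = \sum_{k=1}^n \big(\mathcal{H}_k(0) - 2\log (k-1)!\big).
\end{align*}

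\textbf{Step 2: compute the deterministic numerator.} I would use $\E M^t(\bx,\bz) = \det[p(t,x_i-z_j)]/(\Delta(\bx)\Delta(\bz))$, exactly as in the proof of Lemma \ref{L:Cneta}, and let the endpoints coalesce at $0$ via $\epsilon_n$. Taylor expanding the Gaussian kernel, the determinant becomes a Wronskian-type determinant of derivatives of $p(t,\cdot)$ at $0$. Dividing by $\Delta(\epsilon_n)^2$, I expect
\begin{align*}
\log \E M^t(0^n,0^n) = \sum_{k=1}^n \Big(-\tfrac12 \log(2\pi t) - (k-1)\log t - \log (k-1)!\Big).
\end{align*}
This is the one genuine computation in the argument. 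A sanity check is the $n=1$ case, $p(t,0)$. Any bounded-per-$k$ error in the constants is harmless, because it can be absorbed in Step 3.

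\textbf{Step 3: compare with $e_k$.} Since $e_k = (1-k)\log t + \log(k-1)! + t/24$, the factorial and $\log t$ terms cancel and
\begin{align*}
\log B_n = \sum_{k=1}^n \Big(-\tfrac12\log(2\pi t) - \tfrac{t}{24} - \big(\mathcal{H}_k(0)-e_k\big)\Big),
\end{align*}
so that
\begin{align*}
\log B_n \le n C_1 t + \sum_{k=1}^n |\mathcal{H}_k(0) - e_k|.
\end{align*}
Here $C_1$ depends only on $t_0$, using $|\log(2\pi t)| \le C t$ for $t > t_0$. By enlarging $C$ in the hypothesis $a > C(t n^{2\epsilon} + \dots)$ we may assume $C_1 t \le a/2$. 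Then on the event $B_n \ge e^{an}$ some $k \le n$ must satisfy $|\mathcal{H}_k(0) - e_k| \ge a/2$.

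\textbf{Step 4: union bound.} For every $k \le n$, the threshold $a/2$ exceeds $C(t k^{2\epsilon} + t^{1/4}k^{3/4+2\epsilon})$, since these thresholds increase in $k$. Proposition \ref{prop:KPZ_blkheight} then gives
\begin{align*}
\prob(B_n \ge e^{an}) \le \sum_{k=1}^n 2\exp(-d a k^{\epsilon}/2).
\end{align*}
Because $a$ is bounded below (by $C t_0$), this sum is at most $C' e^{-da/2}$. Indeed, $\sum_k e^{-d a (k^\epsilon - 1)/2}$ is bounded uniformly in $a \ge C t_0$, and the constant $C'$ can be absorbed into the exponent after shrinking $d$. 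The main obstacle is getting the exact normalization in Step 2 right. Everything else is bookkeeping, and constant-per-curve discrepancies only cost an $O(n t)$ term, which the hypothesis on $a$ already dominates.
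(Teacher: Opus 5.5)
Your proposal is correct and follows essentially the same route as the paper. The paper likewise computes $\E M^t(0^n,0^n)=p(t,0)^n t^{-n(n-1)/2}/\prod_{j=1}^{n-1} j!$ by separating the endpoints and evaluating a Vandermonde determinant, telescopes $M^t(0^n,0^n)$ into the product of the $\mathcal{Z}_k(0)$, absorbs the per-curve term $\tfrac{t}{24}+\tfrac12\log(2\pi t)$ into $a$ by enlarging $C$, and takes a union bound with Proposition \ref{prop:KPZ_blkheight}. The differences are cosmetic: the paper uses only the lower tail and the crude bound $2ne^{-d_0a/2}\le e^{-da}$, absorbing $\log n$ via $a\gtrsim n^{2\epsilon}$, whereas you use two-sided deviations and the $k^{\epsilon}$ factor in the tail to sum over $k$ without paying a factor of $n$.
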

	
	\begin{proof} First, we can evaluate
		\begin{align*}
			\E M^t(\epsilon_n, \epsilon_n) &= \frac{\det [p(t, \epsilon(i-j))]_{i, j = 0}^{n-1}}{\Delta(\epsilon_n)^2} \\
			&= p(t, 0)^n \exp\left(-\sum_{i=1}^n \frac{\epsilon^2 (i-1)^2}{t} \right) \frac{\det [\exp(\epsilon^2 i j/t)]_{i, j = 0}^{n-1}}{\Delta(\epsilon_n)^2} \\
			&= p(t, 0)^n \exp\left(-\sum_{i=1}^n \frac{\epsilon^2 (i-1)^2}{t} \right) \frac{\Delta(1, \exp(\epsilon^2/t), \dots, \exp((n-1) \epsilon^2/t))}{\Delta(\epsilon_n)^2}
		\end{align*}
		where in the final line we have evaluated the Vandermonde determinant. At this point, it is straightforward to find $\E M^t(0^n,0^n)$ by taking $\epsilon \to 0$. We get
		$$
		\E M^t(0^n, 0^n) = \frac{p(t, 0)^n t^{-n (n - 1)/2}}{ \prod_{j = 1}^{ n- 1} j!}.
		$$
		Now, let $\mathcal H$ be a KPZ$_t$ line ensemble, coupled to $M^t$ as introduced at the beginning of Section \ref{sec:KPZ_Sheet}. That is, 
		$$
		M^t(0^n, 0^n) = \frac{\exp \left(\sum_{i=1}^n \mathcal H_i(0)\right)}{ \left( \prod_{i=1}^{n-1} i! \right)^2}.
		$$
		The preceding identities and the definition of $e_i^{(t)}$ give
		\[
		\log\frac{M^t(0^n,0^n)}{\E M^t(0^n,0^n)}
		=\sum_{i=1}^{n}\bigl(\mathcal H_i(0)-e_i^{(t)}\bigr)
		+\frac{nt}{24}-n\log p(t,0).
		\]
		By increasing the constant in the lower bound on $a$, we may assume
		$a+t/24-\log p(t,0)\ge a/2$. Therefore
		\begin{align*}
			\P(B_n\ge e^{an})
			&\le\P\left(\sum_{i=1}^{n}
			(\mathcal H_i(0)-e_i^{(t)})\le-an/2\right)\\
			&\le\sum_{i=1}^{n}
			\P(\mathcal H_i(0)-e_i^{(t)}\le-a/2)
			\le 2n e^{-d_0a/2}\le e^{-da},
		\end{align*}
		where in the last step we used Proposition \ref{prop:KPZ_blkheight} with the assumption that $a > C (t n^{2 \epsilon} + t^{1/4} n^{3/4 + 2 \epsilon})$.
	\end{proof}
	
	We now put everything together to bound $R_1$.
	
	\begin{lemma} \label{lemma:R_1_tails} Fix $t_0 > 0, \epsilon > 0$. There exists a constant $d > 0$  such that for all $t > t_0, n \ge t^{1 + \epsilon}$ we have:
		\begin{align*} 
			\prob \big( R_1^{(n, \sqrt{tn}/2)} \geq  e^{- dn^2}  \big) &\leq e^{-d n}. 
		\end{align*}
		In particular, $R_1^{(n, \sqrt{tn}/2)} \to 0$ almost surely as $n \to \infty$ by the Borel-Cantelli lemma.
	\end{lemma}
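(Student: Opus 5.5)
We split $R_1^{(n,\eta)} = B_n C_{n,\eta}$ with $\eta = \sqrt{tn}/2$ and control the two factors separately: $C_{n,\eta}$ through its expectation and Markov's inequality, and $B_n$ through Corollary \ref{cor:KPZ_LE_input}. A union bound then gives
\[
\P\big(R_1^{(n,\eta)} \ge e^{-dn^2}\big) \le \P\big(C_{n,\eta} \ge e^{-2dn^2}\big) + \P\big(B_n \ge e^{dn^2}\big).
\]
We then choose $d$ small enough that both terms are at most $e^{-dn}/2$, absorbing the factor $2$ by shrinking $d$ once more.

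For the first term, Lemma \ref{L:Cneta} gives $\E C_{n,\eta} \le 2e^{-d_0 n^2}$. Since $C_{n,\eta} \ge 0$, Markov's inequality yields
\[
\P\big(C_{n,\eta} \ge e^{-2dn^2}\big) \le 2 e^{-(d_0-2d)n^2}.
\]
For $d < d_0/4$ this is at most $2e^{-d_0 n^2/2}$, which is far smaller than $e^{-dn}$ for all large $n$. Finitely many small $n$ are absorbed by shrinking $d$, since each probability is strictly less than $1$.

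For the second term, we apply Corollary \ref{cor:KPZ_LE_input} with $a = dn$, so that $e^{an} = e^{dn^2}$. The corollary needs
\[
dn > C\big(t n^{2\epsilon'} + t^{1/4} n^{3/4+2\epsilon'}\big)
\]
for some auxiliary $\epsilon'$. Here the hypothesis $n \ge t^{1+\epsilon}$ is used: it gives $t \le n^{1/(1+\epsilon)}$. We pick $\epsilon' > 0$ small, depending on $\epsilon$, so that
\[
t n^{2\epsilon'} \le n^{1/(1+\epsilon) + 2\epsilon'} = o(n) \quad\text{and}\quad t^{1/4}n^{3/4+2\epsilon'} \le n^{1/(4(1+\epsilon)) + 3/4 + 2\epsilon'} = o(n).
\]
Both hold once $2\epsilon' < 1 - 1/(1+\epsilon)$ and $2\epsilon' < \tfrac14\big(1 - 1/(1+\epsilon)\big)$. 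Hence for $n$ beyond a threshold depending only on $(t_0,\epsilon,d)$ the condition holds, and the corollary gives $\P(B_n \ge e^{dn^2}) \le e^{-d' dn}$. Renaming $\min(d, d'd)$ as $d$ gives the exponential bound. The thresholds are uniform in $t > t_0$ because the corollary's constants depend only on $(t_0,\epsilon')$.

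The main obstacle is this bookkeeping. The single constant $d$ must serve three roles at once: the target level $e^{-dn^2}$, the exponent in $e^{-dn}$, and the choice $a = dn$. The requirement on $a$ from Corollary \ref{cor:KPZ_LE_input} must be checked against the growth condition $n \ge t^{1+\epsilon}$, uniformly in $t$. There is also a small-$n$ range, where the conditions fail, that must be handled by shrinking $d$. That is legitimate only if $n \ge t^{1+\epsilon} > t_0^{1+\epsilon}$ leaves a range of $n$ depending only on $t_0,\epsilon$; alternatively we state the bound for $n \ge n_0(t_0,\epsilon)$.

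The almost sure convergence then follows. Since $\sum_n e^{-dn} < \infty$, Borel–Cantelli shows that almost surely $R_1^{(n,\sqrt{tn}/2)} < e^{-dn^2}$ for all large $n$, so $R_1^{(n,\sqrt{tn}/2)} \to 0$.
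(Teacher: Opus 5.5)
Your proposal is correct and follows the paper's proof. Both use the factorization $R_1^{(n,\eta)} = B_n C_{n,\eta}$, a union bound, Markov's inequality with Lemma \ref{L:Cneta} for $C_{n,\eta}$, and Corollary \ref{cor:KPZ_LE_input} with $a$ of order $n$ for $B_n$. Invoking the corollary with a separate, smaller $\epsilon'$ is what makes its hypothesis check work: with $\epsilon'=\epsilon$ the term $tn^{2\epsilon}$ is not $o(n)$ when $t \approx n^{1/(1+\epsilon)}$, a detail the paper leaves implicit. Your fallback of stating the bound only for $n \ge n_0(t_0,\epsilon)$ is all the Borel--Cantelli application needs.
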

	
	\begin{proof}
		Throughout the proof we let all constants $d, d'$ depend on $t_0, \epsilon$. We have 
		$$
		\mathbb P(R_1^{(n, \sqrt{tn}/2)} \ge e^{-d n^2}) \le \mathbb P(B_n > \exp(d n^2)) + \mathbb P(C_{n, \sqrt{tn}/2} > \exp(-2d n^2)).
		$$
		The second term above is bounded by $\exp(-d' n^2)$ by Lemma \ref{L:Cneta} and Markov's inequality, as long as $d$ is sufficiently small. The term $\mathbb P(B_n > \exp(d n^2/2))$ is bounded by $\exp(-d'n)$ by applying Corollary \ref{cor:KPZ_LE_input} with $a = d n/2$. This choice of $a$ satisfies the conditions of that corollary by the bound $n \ge t^{1 + \epsilon}$. This yields the inequality in the lemma. 
	\end{proof}
	
	\begin{proof}[Proof of Lemma \ref{lemma:KPZ_remainderterm}]
		By Lemma \ref{lemma:remainder_firstestimate}, we have $R^{(n)}(x, y) \le M^{0,t}(x, y)[R_1^{(n, \sqrt{tn}/2)} + R_2^{\sqrt{tn}/2}(x, y)]$. The $M^{0, t}$ term is continuous, and so uniformly bounded on compact sets. The term $R_1^{(n, \sqrt{tn}/2)}$ converges to $0$ almost surely by Lemma \ref{lemma:R_1_tails} and the $R_2$ term converges to $0$ uniformly on compact sets by Lemma \ref{L:R2}.
	\end{proof}
	
	\begin{remark}
		While we have not done so explicitly here, it is not difficult to combine the quantitative estimate in Lemma \ref{lemma:R_1_tails} with a similar quantitative estimate on $R_2$ to show that for fixed $x, y, t$, as $n \to \infty$ the remainder $R^{(n)}(x, y)$ is of order $\exp(-d n^2)$ off of an exponentially rare event.    
	\end{remark}

	\section{DtM Representation for the Airy Sheet} \label{sec:Airy_case}
	
	In this section, we establish Theorem \ref{T:shadow-sheet-simple}, which shows that the Airy sheet and the Airy line ensemble can be related by a DtM isometry. Our proof also gives an alternate construction of the Airy sheet from Brownian LPP. We will similarly give a new construction of the extended (multi-path) Airy sheet.
	
	To set up the main theorem, we must define last passage percolation across lines. The setup here is essentially the same as in Section \ref{sec:DTM-KPZ}, with the $(+, \times)$-algebra replaced by the $(\max, +)$-algebra. We use notation from the beginning of that section.
	Consider a sequence of continuous functions $f = (f_i:\R \to \R, i \in \Z)$. For $x \le y$ and $m \ge n$, and an up-right path $\pi \in Q[(x, m) \to (y, n)]$, we define 
	$$
	\|\pi\|_f = \sum_{i=n}^m f_i(t_{i-1}) - f_i(t_i), 
	$$
	where the jump times $x = t_m \le \cdots \le t_{n-1} = y$ are defined by $t_i = \inf \{z \in [x, y] : \pi(z) \le i\}$. Then the \textbf{last passage value} from $(x, m)$ to $(y, n)$ is given by
	$$
	f[(x, m) \to (y, n)] = \sup_{\pi \in Q[(x, m) \to (y, n)]} \|\pi\|_f.
	$$
	Extending this definition to multiple paths, suppose $\bp = ((x_1, m_1), \dots, (x_k, m_k)), \bq = ((y_1, n_1), \dots, (y_k, n_k)) \in (\R \times \Z)^k$ satisfy the ordering constraints $x_i \le y_i, m_i \ge n_i$, and $x_1 \le \cdots \le x_k$, $y_1 \le \cdots \le y_k$ and $m_1 \le \cdots \le m_k$, $n_1 \le \cdots \le n_k$, and that $Q[\bp \to \bq]$ is non-empty. Then we set
	$$
	f[\bp \to \bq] = \sup_{\pi \in Q[\bp \to \bq]} \sum_{i=1}^k \|\pi_i\|_f.
	$$
	Of course, all definitions make sense when the domain of $f$ is restricted to a subset of $\R \times \Z$.
	
	\begin{thm} \label{T:shadow-sheet}
		Let $\{ B_n \}_{n \in \N}$ be a family of independent standard Brownian motions and let $\mathfrak{X} = \bigcup_{m \in \N} \R^m_{\leq} \times \R^m_{\leq}$. For $n \in \N$, define the prelimiting extended Airy sheet $\mathcal{S}_n : \mathfrak{X} \to \R$ so that for all $(\mathbf{x}, \mathbf{y}) \in \R^k_{\leq } \times \R^k_{\leq}$
		\begin{align} \label{E:prelim_AirySheet}
			\mathcal{S}_n(\bx, \by) = n^{-1/3} \lf(B[(2\bx n^{2/3}, n) \to (n + 2 \by n^{2/3}, 1)] - 2k n - 2 n^{2/3} \sum_{i=1}^k (y_i - x_i) \rg) .
		\end{align}
		\begin{enumerate}[label=(\roman*)]
			\item (\cite[Theorem 3.1]{dauvergne2021disjoint}) The sequence of functions $\mathcal{S}_n$ is tight in the topology of uniform convergence on compact sets.
			\item Let $T_c:\mathfrak X \to \mathfrak X$ be the translation operator: $T_c(\bx, \by) = (\bx + c^k, \by + c^k)$ for $\bx, \by\in \R^k$. Then any subsequential limit $\mathcal{S}$ of $\cS_n$ is translation invariant: $\cS \stackrel{d}{=} \cS \circ T_c$.
			
			\item For any subsequential limit $\cS$ of $\cS_n$, there exists a coupling between the parabolic Airy line ensemble $\cA = \{ \mathcal{A}_n \}_{n \in \N}$ and $\mathcal{S}$ so that
			\begin{align} \label{E:AiryLE_retrieval}
				\sum_{i = 1}^k \mathcal{A}_i(x) &= \begin{cases}
					\mathcal{S}(0^k, x^k), & x \geq 0 \\
					\mathcal{S}((-x)^k, 0^k) & x < 0 
				\end{cases} , 
				\qquad \forall x \in \R, k \in \N,
			\end{align}
			and for all $\mathbf{x}, \mathbf{y} \in \R^{k}_{\leq}$ with $x_1, y_1 \geq 0$	
			\begin{align} \label{E:DtM_AirySheet}
				\mathcal{S}(\bx, \by) 
				&=\max_{I \in \N^k_<}   \tilde{\mathcal{A}}[(0,I) \to (\bx, 1)] + \mathcal{A}[(0, I) \to (\by, 1)] + \sum_{i \in I} \mathcal{A}_i(0),
			\end{align}
			where $\tilde{\mathcal{A}}_i(x) = \mathcal{A}_i(-x)$. 
			
			\item The distribution of $\mathcal{S}$ is uniquely determined by (ii) and (iii). In particular, combining (i)-(iii), we can conclude that the sequence $\cS_n$ converges in law to a limit $\cS$, called the \textbf{extended Airy sheet}.
		\end{enumerate}
	\end{thm}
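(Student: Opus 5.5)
Part (i) is quoted from \cite{dauvergne2021disjoint}, so the work is in (ii)--(iv). For (ii), Brownian LPP is invariant under spatial shifts of the driving Brownian motions. Shifting $\bx,\by$ by $c$ shifts both endpoints by $2cn^{2/3}$ on the time axis. Since $B$ is a family of two-sided Brownian motions, the increments $B_i(\cdot+2cn^{2/3})-B_i(2cn^{2/3})$ again form i.i.d.\ two-sided Brownian motions, and the centering $\sum (y_i-x_i)$ is unchanged. Hence $\cS_n\circ T_c\stackrel{d}{=}\cS_n$ exactly for every $n$, and the identity passes to any subsequential limit.

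For (iii), I work at the prelimit. Restrict $B$ to the windows $[2\bx n^{2/3}, n+2\by n^{2/3}]$ with $x_1,y_1\ge0$, and split time at $0$ and at $n$. I apply the semi-discrete version of Corollary~\ref{C:LPP-result} (equivalently Corollary~\ref{cor:RSK_discrete_LPP} with $c=0$), obtained from a diffusive limit of i.i.d.\ lattice weights. Here $P$ becomes the RSK image $W$ of $B|_{[0,n]}$, i.e.\ $n$ non-intersecting Brownian motions, and $\Psi RM$ becomes the RSK image of the time-reversed environment. This gives, for each $n$, an exact identity
\[
B[(2\bx n^{2/3},n)\to(n+2\by n^{2/3},1)]=\max_{I}\; \tilde W^{(n)}[(0,I)\to(\bx,1)]+W^{(n)}[(0,I)\to(\by,1)]+\textstyle\sum_{i\in I}W^{(n)}_i(0).
\]
Here $W^{(n)}$ is a line ensemble built from both RSK images around the time $n$, with the left half $\tilde W^{(n)}$ coming from the reversed side. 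The shift-invariance input (Lemma~\ref{L:An-limit}, from \cite{dauvergne2022hidden}) should show that after Airy rescaling, the glued two-sided ensemble $W^{(n)}$ is a Dyson Brownian motion edge. It then converges by \cite{CH} to the parabolic Airy line ensemble. The same shift invariance, applied at $I=\II{1,k}$, gives \eqref{E:AiryLE_retrieval} at the prelimit: $\cS_n(0^k,x^k)$ for $x\ge0$ and $\cS_n((-x)^k,0^k)$ for $x<0$ are the sums of the top $k$ rescaled lines. I then take a joint subsequential limit of $(\cS_n, W^{(n)})$ using (i) and \cite{CH}, with Skorokhod coupling. Each term in the max converges, because semi-discrete LPP across finitely many lines is continuous in the uniform-on-compact topology.

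The main obstacle is passing the maximum over $I$ to the limit, which requires tightness of the maximizing index set. Concretely, I need: for each compact set of $(\bx,\by)$ and each $\delta>0$ there is $K$ such that, with probability at least $1-\delta$ uniformly in $n$, every $I$ with $\max I>K$ is suboptimal. My first attempt bounds the contribution of $I$ with $\max I=j$ by $\sum_{i\in I} W_i(0)$ plus two LPP values from level $j$ to level $1$ over a distance $O(1)$. The $j$-th line sits near $-c j^{2/3}$ (GUE rigidity / \cite{wu2024applications}). The LPP across $j$ lines over a distance $x$ should be at most about $C\sqrt{jx}+C$, again by comparing with GUE eigenvalues of Brownian LPP. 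The total therefore decays like $-cj^{2/3}$, which is eventually below the value at $I=\II{1,k}$. Getting this uniformly in $n$ is the delicate part; the geometric melon-width bound of \cite{basu_interlacing} (Appendix) is a fallback.

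For (iv), \eqref{E:DtM_AirySheet} determines $\cS$ on $\{x_1,y_1\ge0\}$ as a measurable function of $\cA$, whose law is fixed. For general $(\bx,\by)$, I pick $c$ large so that $T_c(\bx,\by)$ lies in the nonnegative region. Stationarity (ii) then identifies the finite-dimensional laws of $\cS$ on each translate of that region. Since these translates exhaust $\mathfrak X$ and increase in $c$, consistency fixes all finite-dimensional distributions. So the subsequential limit is unique, and tightness from (i) gives convergence.
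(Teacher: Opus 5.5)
Your proposal is correct and follows the paper's proof essentially step for step: the prelimiting identity \eqref{E:shadow-sheet-pre} comes from the RSK isometry, metric composition at time $n$, and the semi-discrete rotation identity (Lemmas~\ref{L:mc-consequence} and~\ref{C:f-corollary}, derived from Corollary~\ref{C:LPP-result}); the glued ensemble is identified with a rescaled Dyson ensemble via Lemma~\ref{L:An-limit}; and the limit transition rests on tightness of the maximizing index, obtained from GUE rigidity plus a sub-$j^{2/3}$ bound on the LPP terms. For that last bound the paper uses the modulus of continuity from \cite{wu2024applications}, whereas you compare with Brownian LPP, which also works if made rigorous through the convex comparison Lemma~\ref{lemma:wu_convex} as in Lemma~\ref{lemma:AiryLPP_bound}, with the mean ensemble handled separately. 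Two small slips: \eqref{E:AiryLE_retrieval} at the prelimit follows from the melon definition and \eqref{E:tildeWf}, not from shift invariance; and Corollary~\ref{cor:RSK_discrete_LPP} is not interchangeable with Corollary~\ref{C:LPP-result} here, since it computes LPP in $M$ rather than in the tableau, and the tableau version is what the split at time $n$ requires.
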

	
	\begin{remark} As mentioned previously, for single points $x, y$, the Airy sheet was constructed in \cite{dauvergne2022directed}. The extension to multiple paths was shown in \cite{dauvergne2021disjoint}. The construction of the directed landscape from the Airy sheet is straightforward and axiomatic. It is analogous to the construction of Brownian motion from the normal distribution, see \cite[Section 10]{dauvergne2022directed} for details.
	\end{remark}
	
	Note that part (ii) of Theorem \ref{T:shadow-sheet} is immediate, and part (iv) is immediate from parts (i)-(iii). 
	Therefore given part (i), the only part of Theorem \ref{T:shadow-sheet} that requires proof is part (iii). This is the goal of the remainder of this section. We start by introducing RSK across semi-discrete environments.

	\subsection{Semi-discrete RSK} 
	\label{SS:semi-discrete-RSK}
	
	\begin{defn}[The melon map]
		Let $f = (f_1, \dots, f_n)$ be a sequence of continuous functions $f_i:[0, \infty) \to \R$. Define the \textbf{$n$-melon} $Wf = (Wf_1, \dots, Wf_n)$ where  $Wf_i:[0, \infty) \to \R$ is a continuous function by the equations
		$$
		\sum_{i=1}^k Wf_i(x) = f[(0^k, n) \to (x^k, 1)], \qquad x \ge 0, k = 1, \dots, n.
		$$
		We have the ordering $Wf_1 \ge \cdots \ge Wf_n$ and $Wf_i(0) = 0$ for all $i$. 
	\end{defn}
	The melon map is a semi-discrete limit of the RSK correspondence (see Section \ref{sec:discrete}). In this limit, only one of the two tableaux remains, and this tableau becomes the melon $Wf$. As with the classical RSK correspondence, the melon map satisfies an isometry.
	
	\begin{lemma}[\cite{dauvergne2022directed}] \label{lemma:RSK_Isometry_semidiscrete}
		Let $f = (f_1, \dots, f_n)$ be a sequence of continuous functions $f_i:[0, \infty) \to \R$.
		Then for all $k = 1,\ldots, n$ and $\mathbf{x}, \mathbf{y} \in \R^k_{\leq}$ with $x_1, y_1 \geq 0$,
		\begin{align*}
			Wf [(\mathbf{x}, n) \to (\mathbf{y}, 1)] = f [(\mathbf{x}, n) \to (\mathbf{y}, 1)].
		\end{align*}
	\end{lemma}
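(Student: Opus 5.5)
We will deduce the semi-discrete isometry of Lemma~\ref{lemma:RSK_Isometry_semidiscrete} as a limit of the discrete RSK isometry, Theorem~\ref{T:encoding-lpp}, after switching to the $(\max,+)$-algebra via Lemma~\ref{lemma:temp_change}. Fix $n$, a large integer $N$, and a mesh $\delta = T/N$ on $[0,T]$ with $T \ge y_k$. Define the $N \times n$ array $M^{\delta}$ by setting the entry in row $i$ (reading $f_n$ at the bottom, so that the semi-discrete row $i$ sits at lattice height $i$) and column $a \in \II{1,N}$ equal to the increment $f_i(a\delta) - f_i((a-1)\delta)$. The standard observation is that discrete last passage values $L_{M^\delta}$ between boundary points of this array, with the rows reindexed so that paths must stay a bounded number of steps on each row, converge to the semi-discrete values $f[\cdots]$ as $\delta \to 0$, by uniform continuity of the $f_i$ on $[0,T]$. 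The same holds for multi-path values with starting points $(\lceil x_j/\delta\rceil, n)$ and endpoints $(\lceil y_j/\delta \rceil,1)$. However, a single lattice path occupies a vertex of weight $M_u$ on each column it visits, so a lattice path picks up increments on every column, not only the ones it passes on a single row.

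To avoid this double-counting, the cleanest route is the long-box embedding. Interleave zero columns: replace each increment column by a block in which row $i$ carries the increment $f_i(a\delta)-f_i((a-1)\delta)$ and all other entries are $0$. This is the usual construction by which Brownian LPP arises from discrete LPP. Lattice paths in this array then correspond, up to $O(n)$ boundary columns, to semi-discrete paths, with length equal to the sum of increments collected. This gives
$$
L_{M^\delta}(\bp^\delta,\bq^\delta) \to f[(\bx,n)\to(\by,1)].
$$

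Next we apply Theorem~\ref{T:encoding-lpp} in its $(\max,+)$ form, $L_M(\bp,\bq)=L_P(\bar\bp,\bq)$ with $P=\Psi(M)$. The points $\bp$ lie on the bottom row and $\bq$ on the top row, which is exactly the geometry of the lemma. The main step, and the expected obstacle, is identifying the limit of the tableau $P=\Psi(M^\delta)$ with the melon $Wf$. By Definition~\ref{D:LPRSK}, the row-$j$ partial sums $\mu_{ij}$ of $P$ are differences of the values $L_M(u_i^{j})$, which are discrete versions of $f[(0^j,n)\to(x^j,1)]$ at $x = i\delta$. Hence $\mu_{\cdot j}$ converges uniformly to $Wf_j$ on $[0,T]$, since $\mu_{ij} = L_M(u_i^j)-L_M(u_i^{j-1})$ telescopes to $\sum_{l\le j}$.

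We then need last passage values across $P$ from $\bar\bp$ to $\bq$ to converge to $Wf[(\bx,n)\to(\by,1)]$. The difficulty is that in $P$ the projection $\bar p$ moves starting points with $i<n$ onto the diagonal. In the limit these points all collapse to $x=0$, and paths are confined to the triangular region $\Gamma$, which matches the fact that $Wf_j(0)=0$ and $Wf_j$ is only defined on $[0,\infty)$. Here the entries $P_{kj}$ play the role of increments of the $j$th melon line, and last passage across $P$ is again a discretization of semi-discrete LPP across $Wf$. Uniform convergence of the partial sums then gives convergence of LPP values, since LPP is $1$-Lipschitz in sup-norm on bounded numbers of lines.

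Passing $\delta\to0$ in the discrete identity yields the lemma for $x_1>0$. The case with coincident or zero coordinates follows by continuity of both sides in $(\bx,\by)$ on $\R^k_\le$. If the tableau bookkeeping in the collapse at $x=0$ proves messy, we will fall back on verifying the $k=1$ case directly and extending to general $k$ via a $(\max,+)$ LGV-type argument, which is the route taken for the multi-path case in Lemma~\ref{lemma:RSK_discrete_multipt}.
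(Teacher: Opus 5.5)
The paper gives no proof of this lemma; it quotes it from \cite{dauvergne2022directed}. Your argument is therefore a genuine alternative, and it is sound. Tropicalizing Theorem~\ref{T:encoding-lpp} via Lemma~\ref{lemma:temp_change} gives $L_M(\bp,\bq)=L_{\Psi M}(\bar\bp,\bq)$ for $k$-tuples. The telescoping identity $\mu_{ij}=L_M(u_i^j)-L_M(u_i^{j-1})$ identifies the lines of $\Psi M^\delta$ with discretizations of $Wf_j$. Passing to the limit, and then extending to $x_1=0$ or coincident coordinates by continuity, gives the lemma.

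This is the template the paper itself uses for the companion Lemma~\ref{C:f-corollary}, with Corollary~\ref{C:LPP-result} in place of Theorem~\ref{T:encoding-lpp} and the same convergence $g^m\to Wf$. Your route makes the semi-discrete isometry a corollary of the discrete RSK isometry already in the paper, using nothing about $Wf$ beyond its defining LPP formula. The price is a set of routine but fiddly discrete-to-continuum estimates for disjoint path families, which you sketch rather than carry out, as the paper does.

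There are three points to fix.

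\emph{The zero-column interleaving.} It is unnecessary. If you keep it, you must redo the time scaling in your identification step, since column $a$ no longer corresponds to time $a\delta$. With the plain array $M^\delta_{(a,i)}=f_i(a\delta)-f_i((a-1)\delta)$, a lattice path double-counts one mesh increment per vertical step. A $k$-tuple therefore incurs an error of $O(kn\,\omega(\delta))$, where $\omega$ is the modulus of continuity of $f$ on $[0,T]$; this is how the paper argues in Lemma~\ref{C:f-corollary}.

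The same one-entry-per-vertical-step bound is what you need on the tableau side. There it requires $\max_{a,j}|(\Psi M^\delta)_{(a,j)}|\to 0$, which follows from uniform convergence of $\mu_{\cdot j}$ to the continuous $Wf_j$. Sup-norm Lipschitz continuity of semi-discrete LPP (with constant $2kn$, not $1$) only handles the comparison between the interpolated tableau lines and $Wf$.

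\emph{The projection issue.} It is moot for $x_1>0$. Once $\delta<x_1/n$, every starting column $\lceil x_j/\delta\rceil$ exceeds $n$, so $\bar\bp=\bp$.

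\emph{Your fallback.} It would not work as stated, because there is no LGV formula in the $(\max,+)$ algebra; the paper remarks on this right after Lemma~\ref{lemma:temp_change}. Lemma~\ref{lemma:RSK_discrete_multipt} applies LGV at positive temperature and tropicalizes only afterwards. You do not need the fallback, since Theorem~\ref{T:encoding-lpp} is already a $k$-path statement.
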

	
	Our key deterministic input in the proof of Theorem \ref{T:shadow-sheet}(iii) concerns the interplay between the RSK isometry and rotations of the environment. Equivalently, this is related to the interplay between the RSK isometry and the Sch\"utzenberger involution. To state the next lemma, for an environment $f = (f_1, \dots, f_n), f_i:[0, t] \to \R$, let $R^t f = ((R^tf)_i:[0, t] \to \R, i = 1, \dots, n)$ denote the rotation of the environment by $\pi$. More precisely, $$
	(R^tf)_i(s) = - f_{n + 1 - i}(t-s) + f_{n + 1 - i}(t). 
	$$
	We also define \textbf{backwards first passage} across a collection of continuous functions in the restricted context in which we use it. Let $f:\II{1, n} \times [0, t] \to \R$, and let $\bx \in [0, t]^k_\ge, I \in \II{1, n}^k_\le$. Then
	$$
	f[(\bx, 1) \to_f (t, I)] = \inf_\pi \sum_{i=1}^k \|\pi_i\|_f = - (\sup_\pi \sum_{i=1}^k \|\pi_i\|_{-f}).
	$$
	Here both the supremum and the infimum above are taken over $k$-tuples of non-decreasing cadlag functions $\pi = (\pi_1, \dots, \pi_k)$ where $\pi_i:[x_i, t] \to \II{1, I_i}$ for all $i$ and, for $i<j$, $\pi_i < \pi_j$ on their common open time interval $(\max\{x_i,x_j\},t)=(x_i,t)$. As with non-increasing cadlag functions, $\|\pi_i\|_f$ is defined as a sum of increments.
	
	\begin{lemma}
		\label{C:f-corollary}
		Let $f = (f_1, \dots, f_n)$ be an environment of continuous lines defined on $[0, t]$. Then for any vectors $\bx \in \R^k_\le, I \in \{  1, \ldots, n\}^k_{<}$ such that there is at least one disjoint $k$-tuple from $(\bx, n)$ to $(t, I)$ we can write
		\begin{equation}
			\label{E:Wfxn}
			\begin{split}
				Wf[(\bx, n) \to (t, I)] &= -WR^tf[(t\mathbf{1} - \bx, 1) \to_f (t, I)] + \sum_{\ell=1}^k  W R^tf_{I_\ell}(t),  \\
				&= \tilde W^tf[(0, I) \to (\bx, 1)] + \sum_{\ell=1}^k  \tilde W^tf_{I_\ell}(0), 
			\end{split}
		\end{equation}
		where $\tilde W^tf(s) = WR^tf(t - s).$
	\end{lemma}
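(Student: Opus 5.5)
The plan is to obtain Lemma \ref{C:f-corollary} as a semi-discrete limit of the discrete LPP identity in Corollary \ref{C:LPP-result}, which reads
$$
L_{\Psi M}(\overline{\bp}, \bq) = \check L_{-\Psi RM}((m + 1 - \mathbf{i}, 1), (m, \mathbf{j})) + \sum_{\ell=1}^k L_{\Psi RM}((j_\ell, j_\ell), (m, j_\ell)).
$$
The dictionary is as follows. The tableau $\Psi M$ becomes the melon $Wf$, and its bottom-edge points $\overline{(i, n)}$ become the points $(x, n)$. The rotation $RM$ becomes $R^t f$, and $\Psi RM$ becomes $WR^tf$. The shadow path in $-\Psi RM$ from $(m+1-i, 1)$ to $(m, j)$ becomes a backwards passage value in $WR^tf$ from $(t - x, 1)$ to $(t, j)$. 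This backwards value is an infimum because of the sign $-\Psi RM$, and shadow paths only move forward or diagonally down, so they are nondecreasing in the line index. Finally, the term $L_{\Psi RM}((j,j),(m,j))$ is the sum of the $j$th row of the tableau, i.e. $\mu_{mj} - \mu_{j-1,j}$-type increments, which in the melon limit become $WR^tf_j(t) - WR^tf_j(0) = WR^tf_j(t)$.

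Concretely, I would discretize. Take $M$ an $m \times n$ array with $M_{a, i} = f_{n+1-i}(a t/m) - f_{n+1-i}((a-1)t/m)$, in the row orientation matching the semi-discrete convention. Then $L_M$ between bottom and top edges approximates $f[\cdot \to \cdot]$ as $m \to \infty$, uniformly over endpoints, for continuous $f$; this is the standard approximation of semi-discrete LPP by discrete LPP. By Theorem \ref{T:encoding-lpp}, together with the definition of $W$ through multipath values $f[(0^k,n)\to(x^k,1)]$, the Gelfand–Tsetlin pattern of $\Psi M$ converges to $Wf$ sampled at times $at/m$. Likewise, rotating $M$ by $180^\circ$ corresponds exactly to $R^t f$. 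In this way each side of Corollary \ref{C:LPP-result}, with $i_\ell = \lceil x_\ell m/t\rceil$ and $\mathbf{j} = I$, converges to the corresponding term in the first line of \eqref{E:Wfxn}.

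The main obstacle is justifying that the discrete shadow last passage values $\check L_{-\Psi RM}$ converge to the backwards passage value in $WR^tf$. This requires two things. First, the core weights along a shadow path must telescope to increments of the melon lines: each horizontal core step on row $h$ contributes the tableau entry, and summing consecutive entries gives differences of $\mu$, hence of $WR^tf_h$. Second, the diagonal steps, which contribute nothing, must correspond to jumps between lines. I would verify this telescoping directly, and then argue convergence of the optimum by uniform convergence of the lines together with a compactness of jump-time vectors, as in the convergence of discrete LPP. The disjointness constraint must also be matched: the strict ordering of shadow paths passes to the constraint $\pi_i<\pi_j$, with boundary cases handled by continuity in $\bx$.

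The second line of \eqref{E:Wfxn} is then a change of variables $s \mapsto t - s$. A nondecreasing path in $WR^tf$ from $(t-x,1)$ to $(t,I)$ becomes a nonincreasing path in $\tilde W^t f$ from $(0,I)$ to $(x,1)$. Under this change each increment $WR^tf_i(b)-WR^tf_i(a)$ turns into $-( \tilde W^tf_i(t-a)-\tilde W^tf_i(t-b))$, so $-\inf$ becomes $\sup$. Also $WR^tf_{I_\ell}(t)=\tilde W^tf_{I_\ell}(0)$.
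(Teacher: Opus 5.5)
Your proposal matches the paper's proof: discretize $f$ into arrays $M^m$, apply Corollary \ref{C:LPP-result}, identify row sums of $\Psi M^m$ and $\Psi RM^m$ with approximations of $Wf$ and $WR^tf$, note that the row-sum term $L_{\Psi RM}((j,j),(m,j))$ matches $WR^tf_j(t)$ exactly, and pass the discrete (shadow) last passage values to their semi-discrete limits, with the second line following directly from the definition of $\tilde W^t f$. One small index slip: in the paper's convention row $j$ of the array corresponds to line $j$, so take $M_{(a,j)} = f_j(at/m) - f_j((a-1)t/m)$, without the reversal $f_{n+1-i}$.
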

	
	We remark for later use that $\tilde W^t f$ can be described by a formula which is as simple as that of $Wf$:
	\begin{equation}
		\label{E:tildeWf}
		\sum_{i=1}^\ell \tilde W^tf_i(s) = f[(s^\ell, n) \to (t^\ell, 1)]. 
	\end{equation}
	
	When $\bx, \by$ are singletons, Lemma \ref{C:f-corollary} was shown in \cite[Lemma 5.3]{dauvergne2022directed}. As done in previous sections, the easiest way to pass from a single-path LPP identity to a multi-path LPP identity is to first establish the single-path identity in a related polymer model, then pass to the multi-path identity by LGV, and then take a zero temperature limit. In anticipation of Lemma \ref{C:f-corollary}, we proved a polymer version in Theorem \ref{T:rotation-isometry} for fully discrete models, and passed to an LPP limit in the fully discrete setting in Corollary \ref{C:LPP-result}. From here, taking a second limit to the semi-discrete setting is straightforward.

	\begin{proof}
		The second equality in \eqref{E:Wfxn} is immediate from the definition.
		For the first equality, by continuity of both sides of \eqref{E:Wfxn}, it suffices to prove the lemma for $\bx$ with $0 < x_1 < \dots < x_k < t$ and $x_i/t \in \Q$ for all $i$. Fix such an $\bx$ and any $I$.
		
		For every $m$, define the array $M^m: \II{1, m} \times \II{1, n} \to \R$ by letting $M^m_{(i, j)} = f_j(it/m) - f_j((i-1)t/m)$. From the arrays $\Psi M^m$ and $\Psi R M^m$, we define $n$-tuples of cadlag functions $g^m = (g^m_1, \dots, g^m_n), h^m = (h^m_1, \dots, h^m_n)$ by
		\begin{align*}
			g^m_j(x) = \sum_{i \in \II{j, \fl{m x/t}}} (\Psi M^m)_{(i, j)}, \qquad h^m_j(x) = \sum_{i \in \II{j, \fl{m x/t}}} (\Psi R M^m)_{(i, j)},
		\end{align*}
		for $x \in \{0, t/m, \dots, t\}$, with empty sums equal to zero, and by linear interpolation at other times.
		The definition of $\Psi$ implies that for $x \in \{0, t/m, \dots, t\}$ we have
		\begin{align*}
			\sum_{j=1}^k g^m_j(x) &= L_{M^m}((1, n ; m x/t, 1)^{k \wedge (m x/t)}), \qquad \\
			\sum_{j=1}^k h^m_j(x) &= L_{M^m}((m + 1 - m x/t, n ; m, 1)^{k \wedge (m x/t)}),
		\end{align*}
		where $L(u^0) := 0$. Since the functions $f_i$ are uniformly continuous on $[0, t]$, from these formulas we can see that $g^m \to W f$ and $h^m \to W Rf$ uniformly as $m \to \infty$. Now choose a subsequence $Y \sset \N$ so that for all $m \in Y, i \in \II{1, k}$ we have $mx_i/t \in \II{n, m}$. By appealing to Corollary \ref{C:LPP-result} and using the uniform convergence of $g^m \to W f$ and $h^m \to W Rf$, it is enough to show that along this subsequence all three of the differences
		\begin{align*}
			&g^m[(\bx, n) \to (t, I)] - L_{\Psi M^m}(m\bx/t, n; m, I) \\
			&-h^m[(t\mathbf{1} - \bx, 1) \to_f (t, I)] - \check L_{-\Psi RM^m}((m + 1 - m\bx/t, 1), (m, I)) \\
			& \sum_{\ell=1}^k \left(h^m_{I_\ell}(t) - L_{\Psi R M^m}(I_\ell, I_\ell; m, I_\ell)\right)
		\end{align*}
		converge to $0$ with $m$. The third of these differences equals $0$ for all $m$ by definition. Understanding the first and second differences amounts to analyzing the difference between taking a supremum over weights of discrete paths and continuous paths. The difference here is negligible in the limit since the size of our discrete mesh is converging to $0$, and $g^m, h^m$ are uniformly converging sequences of functions.
	\end{proof}
	
	The next lemma is immediate from the metric composition law and Corollary \ref{C:f-corollary} and will be the key step in proving the DtM Representation for the Airy sheet (Theorem \ref{T:shadow-sheet}). It can be viewed as a kind of DtM representation for semi-discrete LPP.
	
	\begin{lemma}
		\label{L:mc-consequence}
		Let $f:\II{1, n} \times [0, \infty) \to \R$ be continuous, and fix $b > 0$. Then for all $k \in \II{1, n}, \bx, \by \in [0, \infty)^k_\le$, with $x_k \le b \le y_1$ we have
		\begin{equation}
			\label{E:MC-Law}
			\begin{split}
				&f[(\bx, n) \to (\by, 1)] \\
				&=\max_{I \in \II{1, n}^k_<} Wf[(b, I) \to (\by, 1)] + Wf[(\bx, n) \to (b, I)]\\
				&= \max_{I \in \II{1, n}^k_<} Wf[(b, I) \to (\by, 1)] + \tilde W^b f[(0, I) \to (\bx, 1)] + \sum_{j \in I} \tilde W^bf_j(0),
			\end{split}
		\end{equation}
		and the set of $I \in \N^k_\le$ that achieve the maximum is the same on the two right-hand sides above.
	\end{lemma}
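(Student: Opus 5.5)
The plan is to prove the first equality from the metric composition law for semi-discrete last passage, and then to rewrite each summand using the RSK isometry (Lemma \ref{lemma:RSK_Isometry_semidiscrete}) and Lemma \ref{C:f-corollary}.

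\textbf{Step 1: split at time $b$.} Since $x_k \le b \le y_1$, every disjoint $k$-tuple $\pi$ from $(\bx, n)$ to $(\by, 1)$ crosses the vertical line at time $b$. Record the levels $I = (\pi_1(b), \dots, \pi_k(b))$. Because the paths are strictly ordered on the open time intervals and $b$ lies in their common interior (up to boundary cases handled by continuity), these levels form an element of $\II{1,n}^k_<$.

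Cutting each $\pi_i$ at $b$ splits it into a piece from $(x_i, n)$ to $(b, I_i)$ and a piece from $(b, I_i)$ to $(y_i, 1)$. Path length is additive under this cut: the increments of $f_{I_i}$ telescope across $b$. Conversely, any pair of disjoint $k$-tuples, one ending at $(b, I)$ and one starting there, concatenates into a disjoint $k$-tuple from $(\bx,n)$ to $(\by,1)$. This gives the metric composition law
\[
f[(\bx, n) \to (\by, 1)] = \max_I \Big( f[(\bx, n) \to (b, I)] + f[(b, I) \to (\by, 1)] \Big).
\]

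\textbf{Step 2: pass from $f$ to $Wf$.} Applying the same splitting argument to $Wf$ gives the analogous identity with $Wf$ on both sides. The left-hand side $Wf[(\bx,n) \to (\by,1)]$ equals $f[(\bx,n)\to(\by,1)]$ by Lemma \ref{lemma:RSK_Isometry_semidiscrete}, which yields the first displayed equality.

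\textbf{Step 3: rewrite the first-half term.} By Lemma \ref{C:f-corollary} with $t = b$, applied to $f$ restricted to $[0,b]$,
\[
Wf[(\bx,n) \to (b,I)] = \tilde W^b f[(0,I) \to (\bx,1)] + \sum_{j \in I} \tilde W^b f_j(0).
\]
Here $W$ of the restriction agrees with $Wf$ on $[0,b]$, since $Wf(s)$ depends only on $f|_{[0,s]}$. Substituting this termwise gives the second displayed equality. Since the two right-hand sides agree term by term in $I$, their sets of maximizers coincide.

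\textbf{Main obstacle.} The delicate step is the existence and regularity of the split in Step 1. One has to justify that the maximum over $I$ is attained, which is immediate since $I$ ranges over a finite set. One also has to handle degenerate cases where $x_k = b$ or $y_1 = b$, or where a path jumps exactly at time $b$. For these, I would first prove the identity for $x_k < b < y_1$, where cadlag paths have well-defined values at $b$ and the cut is clean. I would then extend to the boundary cases by continuity of both sides in $\bx$ and $\by$.
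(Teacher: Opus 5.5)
Your proposal is correct and matches the paper's short argument: apply the metric composition law at time $b$ to the melon $Wf$, and use the RSK isometry (Lemma~\ref{lemma:RSK_Isometry_semidiscrete}) to identify $Wf[(\bx,n)\to(\by,1)]$ with $f[(\bx,n)\to(\by,1)]$. Then rewrite the first-half term with Lemma~\ref{C:f-corollary} at $t=b$; your remark that $W(f|_{[0,b]})=(Wf)|_{[0,b]}$ justifies this. The two maximands then agree term by term in $I$, so their maximizer sets coincide. Your Step 1 composition law for $f$ itself is never used (only the one for $Wf$ is), but it does no harm.
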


	\subsection{Brownian LPP}
	
	We now move on to the setting of Theorem \ref{T:shadow-sheet}, when our semi-discrete environment consists of independent Brownian motions. The starting point for studying this model is the observation that the output of the melon map is a system of $n$ non-intersecting Brownian motions.
	
	\begin{thm}[\cite{DNV2}, \cite{OConnell2002}] \label{lemma:Brownian_Burke}
		Let $B^n = (B_1^n , \ldots, B_n^n)$ be $n$ independent standard Brownian motions all started at $0$, and let $WB = WB^n$ be the melon map applied to $B^n$. Then $WB$ is a collection of $n$ independent Brownian motions $(WB_1, \dots, WB_n)$, conditioned on the non-crossing event
		\begin{equation}
			\label{E:WB}
			WB_1 \ge \cdots \ge W B_n.
		\end{equation}
		In other words, $WB$ is an $n$-level Dyson's Brownian motion with $\beta = 2$.
	\end{thm}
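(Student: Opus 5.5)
The plan is to obtain the statement as a scaling limit of the fully discrete RSK correspondence of Section \ref{sec:discrete}, applied to an environment with i.i.d.\ geometric weights. The zero-temperature map $\Psi$ of Definition \ref{D:LPRSK} is used throughout. Fix $n$ and let $M^m:\II{1, m} \times \II{1, n} \to \Z_{\ge 0}$ have i.i.d.\ geometric$(q)$ entries. The classical input I would invoke is due to O'Connell and Johansson. Consider the shape process $i \mapsto \lambda(i) = \sh(\Psi(M^m|_{\II{1, i} \times \II{1, n}}))$. It is a Markov chain whose law is the Doob $h$-transform of $n$ independent geometric random walks killed on leaving the Weyl chamber $\{\lambda_1 \ge \cdots \ge \lambda_n\}$, with harmonic function $h(\lambda) = \Delta(\lambda_1 + n - 1, \dots, \lambda_n)$. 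This is proved through Schur function identities, or equivalently Burke-type output theorems for queues in tandem. At fixed times its marginal is the Meixner ensemble.

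The next step is to identify the melon map as a continuous limit of this chain. For each $m$, define continuous functions $f^m_j$ by centering and diffusively rescaling the partial sums of column $j$ of $M^m$, so that by Donsker's theorem $f^m \to B^n$ in law, uniformly on compacts. By Greene's theorem, realised as the definition of $\Psi$ through multi-path last passage values, $\sum_{j \le k} \lambda_j(i)$ is exactly $L_{M^m}$ of the $k$-path problem from $(1,n)$ to $(i,1)$. Hence, after the same rescaling and up to a negligible interpolation error, $\sum_{j\le k}\lambda_j$ equals $f^m[(0^k, n) \to (x^k, 1)]$. This is the same discrete-to-continuum comparison already used in the proof of Lemma \ref{C:f-corollary}. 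The melon map $f \mapsto Wf$ is continuous in the uniform topology, since each multi-path value is a supremum of sums of at most $2kn$ increments and is therefore Lipschitz in $\|f\|_\infty$. The continuous mapping theorem then gives that the rescaled shape process converges in law to $WB^n$.

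It remains to show that the rescaled shape process converges to Dyson Brownian motion, meaning $n$ Brownian motions conditioned via the Vandermonde $h$-transform never to collide. Away from time $0$ the argument is soft. The Karlin--McGregor determinant for killed geometric walks converges, by the local CLT, to the determinant of Gaussian kernels, and $h$ rescales to the Vandermonde $\Delta$, which is harmonic for Brownian motion killed at the chamber boundary. Consequently the finite-dimensional transition kernels at positive times converge.

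I expect the main obstacle to be the entrance law from the boundary point $0$, where the $h$-transform is singular. There I would use the explicit Meixner marginal at a fixed small time $\delta$ and check directly that its rescaling converges to the GUE density $\propto \Delta(x)^2 e^{-|x|^2/2\delta}$, which is the Dyson entrance law. Combining this with the Markov property and the kernel convergence above identifies all finite-dimensional limits. Tightness, and continuity at $0$, come for free from the already established convergence to $WB^n$, so the two limits coincide.
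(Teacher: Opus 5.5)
The paper gives no proof of this theorem; it imports it from \cite{DNV2}, \cite{OConnell2002}, so there is no in-paper argument to compare against. Your outline is a self-contained version of the standard discrete-to-continuum argument, and its architecture is sound. Greene's theorem, which is literally the multi-path definition of $\Psi$, identifies the geometric-RSK shape with a discretized melon. The discrete versus semi-discrete multi-path LPP comparison is of the same kind as in Lemma \ref{C:f-corollary}, with errors $O(kn\max|M_x|)=O(\log m)$ against a $\sqrt m$ scale. The map $f\mapsto Wf$ is $2kn$-Lipschitz in sup norm. Since convergence to $WB^n$ is therefore already known, identifying finite-dimensional laws at positive times suffices. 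Compared with citing, this costs you a discrete Doob-transform theorem and a local limit theorem, but the only LPP input is the elementary discretization step.

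One claim needs correcting. Geometric walks can jump over one another, so the shape chain is not the $h$-transform of independent geometric walks killed on leaving $\{\lambda_1\ge\cdots\ge\lambda_n\}$. Already for $n=2$, $h(\lambda)=\lambda_1-\lambda_2+1$ is not harmonic for that naively killed walk: from $\lambda=(0,0)$ one gets $\E[(X-Y+1)\mathbf{1}\{X\ge Y\}]>1$. The correct classical fact is that the one-step kernel is $(1-q)^n q^{|\mu|-|\lambda|}\mathbf{1}\{\mu_1\ge\lambda_1\ge\mu_2\ge\cdots\ge\mu_n\ge\lambda_n\}\,h(\mu)/h(\lambda)$. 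By Jacobi--Trudi, the $N$-step kernel before the transform is $\det[g^{*N}(\mu_i-i-\lambda_j+j)]$, with $g^{*N}$ the negative binomial law. This is a Karlin--McGregor (LGV) determinant in the shifted coordinates $\lambda_i+n-i$, not a killed-walk kernel. Since that determinant is what you actually feed into the local CLT, nothing downstream changes.

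To make the `soft' step rigorous, avoid dividing by $h$ at points near the chamber boundary. Write the joint law at times $N_1<\cdots<N_r$ as a product of three pieces: the Meixner weight at $N_1$, which carries $h(\lambda^1)^2$ and so cancels the $1/h(\lambda^1)$; the determinants; and $h(\lambda^r)$. Every factor then converges locally uniformly after rescaling, and Scheff\'e's lemma gives the finite-dimensional convergence.
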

	
	The conditioning in Theorem \ref{lemma:Brownian_Burke} needs to be understood in a limiting sense, e.g. we can condition on the ordering \eqref{E:WB} on an interval $[\epsilon, \epsilon^{-1}]$ and then take $\epsilon \to 0$. This description implies that $WB$ has tractable exact formulas and we can take asymptotics. 
	
	\begin{thm}[\cite{CH}]  \label{lemma:AiryLE_defn}
		Let $WB^n$  be as in Theorem \ref{lemma:Brownian_Burke}. Define the rescaled ensemble $A^n = (A^n_1,\ldots, A^n_n)$ as
		\begin{align*}
			A^n_i(y) &= n^{-1/3} \left(  (WB^n)_i (n + 2 y n^{2/3}) - 2n - 2yn^{2/3}  \right).
		\end{align*}
		Then $A^n$ converges in law with respect to the product of uniform-on-compact topologies to a random sequence of continuous functions $\mathcal A = \{ \mathcal{A}_n:\R \to \R \}_{n \in \N}$ known as the \textbf{parabolic Airy line ensemble}. 
	\end{thm}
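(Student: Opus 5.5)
This is the Corwin--Hammond edge limit theorem, which the paper imports as a black box. If I had to prove it, I would follow the standard two-step route: first establish convergence of finite-dimensional distributions using the determinantal structure of Dyson's Brownian motion, then upgrade to uniform-on-compact convergence of the whole line ensemble via tightness coming from the Brownian Gibbs property.

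\textbf{Step 1: finite-dimensional convergence.} By Theorem \ref{lemma:Brownian_Burke}, $WB^n$ is $\beta=2$ Dyson Brownian motion started from $0$. At any finite collection of times $t_1<\dots<t_r$, the point process $\{(t_a, WB^n_i(t_a))\}$ is determinantal with the extended Hermite kernel. This follows from the Karlin--McGregor formula and the Eynard--Mehta theorem, with the initial condition at $0$ handled as a limit of separated starting points, as in the $\epsilon\to0$ limits of Section \ref{sec:KPZ_Sheet}.

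Under the scaling $t = n+2yn^{2/3}$ and $u = 2n+2yn^{2/3}+n^{1/3}a$, I would do a steepest-descent analysis of the double contour integral representation of this kernel. After conjugation by a gauge factor, it should converge to the extended Airy kernel $K^{\mathrm{ext}}(y,a;y',a')$, locally uniformly and with exponential decay in $a,a'\to+\infty$. That decay gives convergence of Fredholm determinants, hence of gap probabilities. Since the top line is a.s.\ bounded above, this yields finite-dimensional convergence of $(A^n_i(y_a))_{i\le k,\,a\le r}$ to the Airy line ensemble's marginals. In this step I would also note the ordering $A^n_1>A^n_2>\cdots$ and the standard one-point tail bounds for the top GUE eigenvalue.

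\textbf{Step 2: tightness in the uniform-on-compact topology.} This is the main obstacle. Each $A^n$ has the Brownian Gibbs property, rescaled: conditional on everything outside $\II{1,k}\times[a,b]$, the top $k$ curves are Brownian bridges (diffusion rate $2$ in these units) conditioned not to intersect each other or curve $k+1$.

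I would prove, by induction on $k$, three estimates uniform in $n$: (a) one-point upper and lower tail bounds for $A^n_k(y)+y^2$ on compacts, using Step 1 and a union bound over a mesh; (b) that on a high-probability event the $(k+1)$st curve stays a definite distance below the endpoints of curve $k$; and (c) that the acceptance probability, i.e.\ the probability that free bridges avoid the lower boundary, is bounded below. The key tool for (b) and (c) is stochastic monotonicity of the Gibbs measure in its boundary data, which lets me compare to free Brownian bridges with controlled endpoints.

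Given these, the law of the top $k$ curves on $[a,b]$ is absolutely continuous with respect to Brownian bridges with bounded Radon--Nikodym derivative on a high-probability event. Brownian bridge moduli of continuity then give equicontinuity, and Arzel\`a--Ascoli yields tightness. Combined with Step 1, every subsequential limit has the Airy finite-dimensional laws, which determine the law of a continuous line ensemble. This gives convergence in law.

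The delicate point I expect is the lower bound on the acceptance probability: it must be uniform in $n$ even though the number of curves grows. I would handle it by working only with the top $k+1$ curves and conditioning on the lower ones through monotonicity.
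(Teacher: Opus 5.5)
The paper does not prove this statement. It imports it from \cite{CH}, and your architecture is the Corwin--Hammond one: extended Hermite to extended Airy kernel for the finite-dimensional laws, then tightness from the Brownian Gibbs property via a lower bound on the acceptance probability. Step 1 is fine. The gap is in Step 2, at exactly the point you flag, and the tools you name do not close it.

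A lower bound on the acceptance probability $Z$ for the top $k$ curves on $[a,b]$ requires an $n$-uniform upper bound on $\sup_{s\in[a,b]}A^n_{k+1}(s)$ over the whole interval. If the floor reaches height $M$ somewhere, the free bridges must clear it, and $Z$ is at most of order $e^{-cM^2}$. Neither of your tools supplies this bound:
\begin{itemize}
\item A union bound over a finite mesh says nothing between mesh points.
\item Monotonicity in the boundary data gives upper bounds on the top curves only in terms of an upper bound on the floor, which is exactly what is missing. Comparing with floor $-\infty$ gives lower bounds only.
\item Induction on $k$ does not help. $A^n_{k+1}\le A^n_1$ reduces everything to $\sup_{[a,b]}A^n_1$, and at $k=1$ the floor is $A^n_2\le A^n_1$ again, which is circular.
\end{itemize}
The growing number of curves is not the real difficulty, since the Gibbs property of the top $k$ curves given curve $k+1$ is already blind to it.

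Corwin and Hammond get the missing bound from the strong Gibbs property on stopping domains. Let $\chi$ be the first point of $[a,b]$ where $A^n_1$ exceeds $M$. On $[\chi,b']$ the top curve is a bridge from height at least $M$ conditioned to stay above $A^n_2$, so it dominates a free bridge. Hence its value at a fixed point of $(b,b')$ is of order $M$ with conditional probability bounded below, contradicting one-point tightness.

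For this particular ensemble there is also an elementary substitute. Since $WB^n_1(t)=B[(0,n)\to(t,1)]$, extending paths along line $1$ gives $WB^n_1(t)\le WB^n_1(t_1)+B_1(t)-B_1(t_1)$ for $t\le t_1$. After rescaling, this bounds $\sup_{[a,b]}A^n_1$ by $A^n_1(b)+2(b-a)$ plus a Brownian supremum.

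There is a second problem. Even with the floor bounded above, your (b), separation of curves $k$ and $k+1$ at the endpoints $a$ and $b$, does not bound $Z$ below. $A^n_{k+1}$ may climb steeply just inside $[a,b]$, and at this stage you have no regularity for it, since that is what is being proved. If instead (b) is meant over the whole interval, then for fixed $[a,b]$ and margin its probability is bounded away from $1$, so it is not a high-probability event.

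This needs one more argument. For instance, resample on a larger interval $[a',b']$ and average over the values at $a$ and $b$ instead of conditioning on them. $Z_{[a,b]}$ is bounded below on configurations that are separated and lie well above $\sup_{[a,b]}A^n_{k+1}$. Showing such configurations carry probability bounded below gives $\mathbb{P}(Z_{[a,b]}<\delta)\to0$ as $\delta\to0$, uniformly in $n$.

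With these two inputs added, the rest of your outline goes through: bounded Radon--Nikodym derivative, Brownian bridge moduli, Arzel\`a--Ascoli, and identification of limits via Step 1.
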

	
	Now, let us return to \eqref{E:prelim_AirySheet} and suppose that $x_1 \ge 0, y_1 \ge 0$. By Lemma \ref{L:mc-consequence}, we can rewrite the right-hand side of \eqref{E:prelim_AirySheet} as 
	\begin{align*} 
		\max_{I \in \II{1, n}^k_<} n^{-1/3} \lf(W B[(n, I) \to (n + 2 \mathbf{y} n^{2/3}, 1)] + \tilde W^n B[(0, I) \to (2\bx n^{2/3}, 1)] + \sum_{j \in I} \tilde W^n B_j(0) - 2k n - 2 n^{2/3} \sum_{i=1}^k (y_i - x_i) \rg),
	\end{align*}
	where here all melon maps $W, \tilde W^n$ are $n$-line maps. Next, letting $A^n$ be as in Theorem \ref{lemma:AiryLE_defn} with $B^n = (B_1, \dots, B_n)$, and defining $\tilde A^n$ by
	$$
	\tilde A^n_i(x) = n^{-1/3}((\tilde W^n B)_i(2 x n^{2/3}) - 2n + 2 x n^{2/3}), 
	$$
	we can rewrite the above display as
	\begin{align} 
		\label{E:shadow-sheet-pre}
		\cS_n(\bx, \by) = \max_{I \in \II{1, n}^k_<} A^n[(0, I) \to (\mathbf{y}, 1)] + \tilde A^n[(0, I) \to (\mathbf{x}, 1)] + \sum_{j \in I} \tilde A^n_j(0).
	\end{align}
	This is now a prelimiting version of the formula in Theorem \ref{T:shadow-sheet}(iii). To prove that this passes to the limit, the main technical point is to show that the maximizing index above is tight.

	\begin{lemma} \label{lemma:k_tightness}
		For $n \in \N$,  let $I^n = I^n(\mathbf{x}, \mathbf{y})$ denote the largest $I \in \{ 1, \ldots, n\}^k_{<}$ (chosen in the lexicographic order) that attains the maximum in \eqref{E:shadow-sheet-pre}.
		
		Then for any $k \in \N$, and any compact set $K \subset [0, \infty)^k_\le \times [0, \infty)^k_\le$, the random variable $\sup_{(\mathbf{x}, \mathbf{y}) \in K} I^n_k(\mathbf{x}, \mathbf{y})$ is tight. 
	\end{lemma}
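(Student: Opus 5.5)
We prove tightness of the maximizing index by comparing the optimal value $\cS_n(\bx,\by)$, which is tight, with the best value attainable using any $I$ whose top index $I_k$ is at least $N$. We show the latter goes to $-\infty$ in probability as $N\to\infty$, uniformly in $n$ and $(\bx,\by)\in K$.

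By the remark after Theorem \ref{T:shadow-sheet}, it suffices to treat $K\subset[0,b]^k_\le\times[0,b]^k_\le$ for some $b$. Since $\cS_n$ is tight in the uniform-on-compact topology (Theorem \ref{T:shadow-sheet}(i)), $\inf_K \cS_n$ is tight from below. So it is enough to prove the following: for every $M$ there is $N$ with
$$
\limsup_{n}\ \P\Big(\sup_{(\bx,\by)\in K}\ \max_{I:\,I_k\ge N}\big(A^n[(0,I)\to(\by,1)]+\tilde A^n[(0,I)\to(\bx,1)]+\textstyle\sum_{j\in I}\tilde A^n_j(0)\big) > -M\Big)
$$
is small. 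Because all three terms are monotone in the endpoints, we may replace $\bx,\by$ by $b^k$ in the multi-path values. Doing so gives an upper bound for the sup over $K$.

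We bound the three terms separately.

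\emph{First term.} The multi-path value $A^n[(0,I)\to(b^k,1)]$ is at most the $k$-path value from $(0,(1,\dots,k))$-type starting clusters. Concretely, we bound it by $k$ times the LPP value from $(0,n)$ to $(b,1)$ across the $A^n$ ensemble. By the metric composition identity $\sum_{i\le k}A^n_i(b)$, this is $\le \sum_{i\le k}(A^n_i(b)-A^n_{i}(0))+ \sum_{i\le k}A^n_i(0) - \sum_{j\in I} A^n_j(0)$. More cleanly, the value is at most $\sum_{i=1}^k A^n_i(b)-\sum_{j\in I}A^n_j(0)$, since a path from $(0,I_\ell)$ to $(b,1)$ collects at most the increments of the top-$k$ melon. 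All quantities here are tight (top $k$ lines at $0$ and $b$, by Theorem \ref{lemma:AiryLE_defn}) except $-\sum_{j\in I}A^n_j(0)$, which is large and positive when $I_k$ is large. The same bound applies to $\tilde A^n$, with $\tilde A^n_j(0)=A'^n_j$ the lines of the rotated melon.

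\emph{Combining the terms.} Adding everything, the objective is at most
$$
\text{tight} - \sum_{j\in I}A^n_j(0) - \sum_{j\in I}\tilde A^n_j(0) + \sum_{j\in I}\tilde A^n_j(0),
$$
and after cancellation this is $\text{tight}-\sum_{j\in I}A^n_j(0)$.

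This alone does not give $-\infty$: the lower lines $A^n_j(0)\approx -c j^{2/3}$ make $-A^n_j(0)$ large and positive. The naive bound therefore goes the wrong way, and this is the main obstacle. The crude bound discards the real gain, which is that a path starting deep at level $I_k$ must climb through lines whose increments over $[0,b]$ are nearly deterministic parabolic curvature. Its length is roughly $A^n_1(b)-A^n_{I_k}(0)$ minus a climbing cost. The sum $\tilde A^n_{I_k}(0)+\big(\text{path from }I_k\big)+\big(\text{path from }I_k\big)$ should behave like $-(\text{const})\,I_k^{2/3}+O(b\,I_k^{1/3})$, which does tend to $-\infty$.

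\emph{Main step.} To make this rigorous we use the strong results of \cite{wu2024applications}. These give uniform (in $n$ and in $j\le n^{c}$) control of the lines of the scaled non-intersecting Brownian motions, $|A^n_j(x)+ (3\pi j/2)^{2/3}+x^2|\le \epsilon j^{2/3}$ on $[0,b]$, together with a Brownian-type modulus of continuity for each line. With these we bound each LPP value from $(0,j)$ to $(b,1)$ by $-A^n_j(0)-\tfrac12 (3\pi j/2)^{2/3}$ plus tight terms. The point is that crossing from level $j$ to level $1$ within horizontal distance $b$ gains at most $A^n_1(b)-A^n_j(0)$ minus the gap structure, and only the final line contributes order $b\,j^{1/3}$ increments. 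The result is an objective $\le \text{tight}-c\,I_k^{2/3}$. For indices $j\ge n^{c}$ we use the crude GUE eigenvalue estimates (edge rigidity of $WB^n(n)$ and its rotated counterpart, which are GUE-distributed spectra at time $n$) to show that those levels are even more costly. Taking a union bound over $j$ completes the proof.
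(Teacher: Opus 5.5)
Your outer frame matches the paper's: since $\inf_K \cS_n$ is tight, it suffices to show that the best value over $I$ with $I_k \ge m$ tends to $-\infty$ in probability, uniformly in $n$ and in $(\bx,\by)\in K$. However, two steps fail as written.

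\emph{The endpoint reduction.} Last passage values are not monotone in their endpoints. Already $A^n[(0,1)\to(y,1)] = A^n_1(y)-A^n_1(0)$ is a Brownian-like function of $y$. So you cannot replace $\bx,\by$ by $b^k$; you need estimates that hold simultaneously for all endpoints in $[0,b]$. You also never reduce the multi-path problem to single paths. Write $X_n(I,\bx,\by)$ for the objective in \eqref{E:shadow-sheet-pre}. The paper uses $X_n(I,\bx,\by)\le\sum_\ell X_n(I_\ell,x_\ell,y_\ell)\le\sum_{\ell<k}\cS_n(x_\ell,y_\ell)+X_n(I_k,x_k,y_k)$, after which only the single index $I_k=m'\ge m$ matters.

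\emph{The main step does not close.} This is the more serious problem. Take your bound $A^n[(0,j)\to(b,1)]\le -A^n_j(0)-\tfrac12\kappa j^{2/3}+T$, with $\kappa=(3\pi/2)^{2/3}$ and $T$ tight. Inserting it into both last passage terms bounds the objective by $-A^n_j(0)-\kappa j^{2/3}+2T$. By rigidity of $A^n_j(0)$ this is only $O(\epsilon j^{2/3})+2T$, not $-c\,j^{2/3}$.

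What you need is that each last passage value from level $j$ over a window of length at most $b$ is $o(j^{2/3})$. Provably it is $O(\sqrt{jb})$ up to logarithms (cf.\ Lemma \ref{lemma:AiryLPP_bound}). Line-by-line rigidity at scale $\epsilon j^{2/3}$ cannot give this, because the errors accumulate to order $\epsilon j^{5/3}$ along a path crossing $j$ lines.

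The missing idea is to write $A^n=\E A^n+\hat A^n$ and use $(f+g)[p\to q]\le f[p\to q]+g[p\to q]$.
\begin{itemize}
\item The mean environment is explicit: $\E WB^n_i(t)=c_{n,i}\sqrt t$ with $c_{n,1}\ge\cdots\ge c_{n,n}$. So the optimal path jumps to line $1$ immediately for $A^n$, and stays on line $m'$ for the reflected ensemble. By the mean estimate in Lemma \ref{L:GUE-tail}, this contributes $O(a\,n^{-1/3}(m')^{2/3})$.
\item For the fluctuations, apply Theorem \ref{T:mod-cont} to every line with $K=\log(m+m')$, and use Cauchy--Schwarz over the jump increments. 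This gives $\hat A^n[(0,m')\to(y,1)]\le 4\log(m')^2\sqrt{m'a}$ for all $y\in[0,a]$ and all $m\le m'\le n$ at once, off an event of probability $C_1 m e^{-C_2\log^2 m}$.
\item Combined with $\tilde A^n_{m'}(0)\le-\epsilon_0(m')^{2/3}$ from Lemma \ref{L:GUE-tail}, this yields $X_n(m',x,y)\le C-\tfrac{\epsilon_0}{2}(m')^{2/3}$.
\end{itemize}

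Both inputs are uniform in $1\le j\le n$, so no split at $j=n^c$ is needed. As written, that split is also unsupported: the only bound you give for deep levels is the crude one you yourself note goes the wrong way.
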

	
	We postpone the proof of Lemma \ref{lemma:k_tightness} until the next section, and proceed with the proof of Theorem \ref{T:shadow-sheet}(iii). The main remaining step is to identify the joint limit of $(A^n, \tilde A^n)$ on $[0, \infty)$. For this, define
	$$
	\cA^{n}_i(x) = \begin{cases}
		A^n_i(x), \quad &x \ge 0, \\
		\tilde A^n_i(-x), \quad &x \le 0.
	\end{cases}
	$$
	\begin{lemma}
		\label{L:An-limit}
		The random sequences of functions $\cA^n = (\cA^n_1, \dots, \cA^n_n):[-n^{1/3}/2, \infty) \to \R^n$ and $A^n = (A^n_1, \dots, A^n_n):[-n^{1/3}/2, \infty) \to \R^n$ are equal in law. In particular, $\cA^n$ converges in law to an Airy line ensemble.
	\end{lemma}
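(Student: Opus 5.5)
The plan is to undo the scaling and prove an exact distributional identity for the unscaled processes. Both processes are then written as the same deterministic functional applied to two inputs: the melon of an environment on $[0,n]$, and the Brownian increments after time $n$. Set $u = n + 2xn^{2/3}$, so $x \ge -n^{1/3}/2$ corresponds to $u \ge 0$; the centerings of $A^n$ and $\tilde A^n$ agree under this substitution.

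\textbf{Unscaled form of the two processes.} By the definition of the melon map, for $u \ge 0$ and $\ell \le n$ we have $\sum_{i \le \ell} WB_i(u) = B[(0^\ell,n) \to (u^\ell,1)]$. For the glued process $\cA^n$, the part with $u \ge n$ is the same as for $A^n$. For $u \le n$, \eqref{E:tildeWf} gives that the partial sums of $\tilde W^n B$ at $s = n-u$ equal
$$B[((n-u)^\ell,n) \to (n^\ell,1)] = R^nB[(0^\ell,n) \to (u^\ell,1)] = \sum_{i \le \ell} WR^nB_i(u).$$
The middle equality is just the rotation of the environment. So on $[0,n]$ the glued process is $WR^nB$, and the original process is $WB$.

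\textbf{Both processes after time $n$ are one common functional.} Fix $u \ge n$ and a $k$-tuple. Apply Lemma \ref{L:mc-consequence} with $b = n$ to the environment $B$ on $[0,u]$. This gives
$$B[(0^k,n) \to (u^k,1)] = \max_I \Big( B|_{[n,u]}\text{-part}[(n,I) \to (u^k,1)] + \tilde W^n B[(0,I) \to (0^k,1)] + \sum_{j\in I} \tilde W^n B_j(0) \Big).$$
I would write the first term using LPP directly in the increments of $B$ after time $n$, via the metric composition law, rather than via $Wf$. The middle term is an LPP value over a zero-length time interval, so it vanishes. Moreover $\tilde W^n B(0) = WR^nB(n)$, and by the shape identity $WR^nB(n) = WB(n)$: both have top-$\ell$ partial sums equal to $B[(0^\ell,n)\to(n^\ell,1)]$.

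Hence there is a deterministic map $G$ such that, on $[n,\infty)$, the common value of both processes is $G(WB(n), (B(n+\cdot)-B(n)))$. Using $WB(n) = WR^nB(n)$, the two full processes can be written as
$$\big(WB|_{[0,n]},\ G(WB(n), \theta_n B)\big), \qquad \big(WR^nB|_{[0,n]},\ G(WR^nB(n), \theta_n B)\big).$$
Here $\theta_n B$ denotes the increments of $B$ after time $n$.

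\textbf{Equality in law and the limit.} Brownian motion is invariant under time reversal of increments, so $R^nB|_{[0,n]} \stackrel{d}{=} B|_{[0,n]}$. Both are independent of $\theta_n B$. The two displays are therefore equal in law jointly in all lines and times, and rescaling gives $\cA^n \stackrel{d}{=} A^n$ on $[-n^{1/3}/2,\infty)$. Since $-n^{1/3}/2 \to -\infty$, Theorem \ref{lemma:AiryLE_defn} then gives convergence of $\cA^n$ to a parabolic Airy line ensemble in the uniform-on-compact topology.

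\textbf{Main obstacle.} The delicate step is the second one. One must check carefully that the multi-path value for $u \ge n$ depends on the $[0,n]$ portion only through $\tilde W^nB(0)$. This means verifying that the degenerate LPP $\tilde W^n B[(0,I)\to(0^k,1)]$ contributes zero. It also means checking that the maximizing structure in Lemma \ref{L:mc-consequence} is applied with the part after $n$ expressed in terms of the raw increments $\theta_n B$, which are shared by both processes.
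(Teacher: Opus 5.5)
\textbf{There is a genuine gap in your second step.} No deterministic map $G$ with $WB(u) = G(WB(n), \theta_n B)$ for $u \ge n$ exists.

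In Lemma \ref{L:mc-consequence} the first term is $WB[(n,I)\to(u^k,1)]$. This is a last passage value across the \emph{melon} on $[n,u]$, which is the very object you are trying to describe, and you cannot swap it for LPP in the raw increments $\theta_n B$. Suppose you instead apply metric composition to $B$ itself. Then you get $B[(0^k,n)\to(u^k,1)] = \max_I B[(0^k,n)\to(n,I)] + \theta_n B[(0,I)\to((u-n)^k,1)]$. Now the first term is last passage to the right edge of $[0,n]$, which is semi-discrete $P$-tableau (Gelfand--Tsetlin) data. This is not $\sum_{j\in I} WB_j(n)$: for instance $B[(0,n)\to(n,n)] = B_n(n) \ge WB_n(n)$, typically strictly. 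It is also not determined by $WB(n)$.

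Here is a two-line counterexample. We have $Wf_1(x) = f_1(x) + \sup_{s\le x}(f_2-f_1)(s)$ and $Wf_2 = f_1+f_2-Wf_1$. On $[0,t]$, the environments $(f_1,f_2)(s) = (s/t,-s/t)$ and $(f_1,f_2)(s) = (-s/t,s/t)$ both give $Wf(t) = (1,-1)$. Now give both the same increments afterwards: $f_1$ stays constant and $f_2$ increases with slope $1$ for three units of time. Then $Wf_1(t+3)$ equals $2$ in the first case and $4$ in the second. The melon is Markov only in law, in its own filtration. Pathwise, continuing RSK past time $n$ needs the whole $P$-tableau, not just the shape $WB(n)$.

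\textbf{How to repair it.} The paper's proof uses exactly that in-law Markov property. Your observations $WR^nB|_{[0,n]} \stackrel{d}{=} WB|_{[0,n]}$ and $WR^nB(n) = WB(n)$ are correct and needed. Add two facts:
\begin{enumerate}
\item Dyson Brownian motion (Theorem \ref{lemma:Brownian_Burke}) is Markov in its own filtration. So the conditional law of $WB|_{[n,\infty)}$ given $WB|_{[0,n]}$ depends only on $WB(n)$.
\item $WB|_{[0,n]}$ and $WR^nB|_{[0,n]}$ are measurable functions of each other. The partial sums of $WR^nB(u)$ are $B[((n-u)^\ell,n)\to(n^\ell,1)]$. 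By the RSK isometry (Lemma \ref{lemma:RSK_Isometry_semidiscrete}) these equal LPP values across $WB|_{[0,n]}$, and symmetrically since $R^nR^nB = B$.
\end{enumerate}
By the second fact, conditioning the common future $WB|_{[n,\infty)}$ on $WR^nB|_{[0,n]}$ is the same as conditioning on $WB|_{[0,n]}$. By the first fact, this gives the Dyson kernel started from $WB(n) = WR^nB(n)$. Together with equality in law on $[0,n]$, this yields $\cA^n \stackrel{d}{=} A^n$.

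To keep your functional framework instead, you would have to carry the full right-edge data of $B|_{[0,n]}$ in place of $WB(n)$. You would then need an extra input: that conditionally on this data, $WB|_{[0,n]}$ and $WR^nB|_{[0,n]}$ have the same law. That is a harder statement than the $\sigma$-algebra argument above needs.
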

	
	\begin{proof}
		This is essentially contained in \cite{dauvergne2022hidden}, but we include a proof for completeness. First, we have that
		\begin{equation*}
			(\cA^n|_{[-n^{1/3}/2, 0]}, \cA^n(0), \cA^n|_{[0, \infty)}) = (\cA^n|_{[-n^{1/3}/2, 0]}, A^n(0), A^n|_{[0, \infty)}).
		\end{equation*}
		Next, $\cA^n|_{[-n^{1/3}/2, 0]} \stackrel{d}{=} A^n|_{[-n^{1/3}/2, 0]}$. This is an immediate consequence of the fact that Brownian motion has time-stationary increments and is invariant under time reversal. Moreover, by the Markov property for Dyson's Brownian motion, $A^n|_{[-n^{1/3}/2, 0]}$ and $A^n|_{[0, \infty)}$ are conditionally independent given $A^n(0)$. Therefore to complete the proof, we just need to show that $\cA^n|_{[-n^{1/3}/2, 0]}$ and $A^n|_{[0, \infty)}$ are conditionally independent given $A^n(0)$. Equivalently, it is enough to show that the conditional distribution of $A^n|_{[0, \infty)}$ given $\cA^n|_{[-n^{1/3}/2, 0]}$ is the same as the conditional distribution of $A^n|_{[0, \infty)}$ given $A^n|_{[-n^{1/3}/2, 0]}$, since the latter equals the conditional distribution of $A^n|_{[0, \infty)}$ given only $A^n(0)$.

		This follows from the stronger property that $\cA^n|_{[-n^{1/3}/2, 0]}$ and $A^n|_{[-n^{1/3}/2, 0]}$ are actually measurable functions of each other (this is a limiting version of the Sch\"utzenberger involution). Indeed, $A^n|_{[-n^{1/3}/2, 0]}$ encodes the melon $W B$ on the interval $[0, n]$ and the $\cA^n|_{[-n^{1/3}/2, 0]}$ encodes the melon $W R^n B$ on the same interval. Both of these objects can be built from LPP values of the form
		$$
		f[(x^k, n) \to (y^k, 1)],
		$$
		where $x, y \in [0, n]$, and therefore by the RSK isometry (Lemma \ref{lemma:RSK_Isometry_semidiscrete}), each of these objects can be constructed from the other. 
	\end{proof}
	
	\begin{proof}[Proof of Theorem \ref{T:shadow-sheet}(iii)]
		Let $\cA^n$ be as in Lemma \ref{L:An-limit}, and let $(\cS, \cA)$ be a joint subsequential limit (in law) of $(\cS_n, \cA^n)$. Here $\cA$ is a parabolic Airy line ensemble by Lemma \ref{L:An-limit}, and tightness of $\cS_n$ is provided by Theorem \ref{T:shadow-sheet}(i).
		
		Next, for $x, y \ge 0$, we have
		\[
		\mathcal S_n(0^k,y^k)=\sum_{i=1}^{k}A_i^n(y),\qquad
		\mathcal S_n(x^k,0^k)=\sum_{i=1}^{k}\tilde A_i^n(x).
		\]
		Passing to the joint subsequential limit gives
		\eqref{E:AiryLE_retrieval}. Finally, \eqref{E:shadow-sheet-pre} becomes \eqref{E:DtM_AirySheet} in the limit. The only thing to check is tightness of the maximizing location $I$, which is provided by Lemma \ref{lemma:k_tightness}.
	\end{proof}

	\subsection{Proof of Lemma \ref{lemma:k_tightness}} \label{subsec:k_tightness}
	
	The proof of Lemma \ref{lemma:k_tightness} follows from combining tail bounds on the random variables $\tilde A^n_j(0), j \in \II{1, n}$ with a modulus of continuity on $A^n, \tilde A^n$ which allows us to control the last passage values. The sequence $(\tilde A^n_j(0), j \in \II{1, n})$ is simply a rescaling of the eigenvalue process for the Gaussian Unitary Ensemble, for which strong tail bounds are classical. The following bound will suffice for us.
	
	\begin{lemma}
		\label{L:GUE-tail}
		There exist absolute constants $c, c', \epsilon_0, \beta_0 > 0$ such that $c - \beta_0 j^{2/3}\le \E \tilde A^n_j(0) \le c - \epsilon_0 j^{2/3}$ for all $1 \le j \le n, n \in \N$, and
		$$
		\mathbb P(\tilde A^n_j(0) \ge - \epsilon_0 j^{2/3}) \le 2 e^{-c' j}.
		$$
	\end{lemma}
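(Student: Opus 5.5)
The plan is to reduce everything to the edge of the Gaussian Unitary Ensemble and then use the determinantal structure. By Theorem \ref{lemma:Brownian_Burke} applied to the rotated environment $R^nB$, the vector $(\tilde W^n B_j(0))_{j} = (WR^nB_j(n))_j$ is the time-$n$ marginal of $n$-level Dyson Brownian motion. It is therefore equal in law to $\sqrt n\,(\lambda_1 \ge \cdots \ge \lambda_n)$, where the $\lambda_j$ are the ordered eigenvalues of an $n\times n$ GUE matrix whose spectrum concentrates on $[-2\sqrt n, 2\sqrt n]$. Writing $\mu_j = \lambda_j/\sqrt n \in [-2,2]$ approximately, we get
$$\tilde A^n_j(0) = n^{2/3}(\mu_j - 2).$$
Let $\gamma_j$ be the classical location, defined by $n\int_{\gamma_j}^2 \rho_{sc} = j$. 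Since the semicircle density vanishes like a square root at the edge, $2-\gamma_j \asymp (j/n)^{2/3}$ uniformly in $1\le j\le n$. Hence the natural scale of $\tilde A^n_j(0)$ is $-j^{2/3}$, including in the bulk, where both sides are of order $n^{2/3}$.

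For the tail bound I will use counting rather than rigidity. Rigidity estimates only give polynomial probabilities, and we want $e^{-c'j}$. Let $N(s)=\#\{i : \mu_i \ge 2 - s n^{-2/3}\}$. Then
$$\{\tilde A^n_j(0)\ge -\epsilon_0 j^{2/3}\} = \{N(\epsilon_0 j^{2/3}) \ge j\}.$$
Since the GUE eigenvalues form a determinantal process with a Hermitian projection kernel, $N(s)$ is a sum of independent Bernoulli variables (Hough--Krishnapur--Peres--Vir\'ag). A Chernoff bound then gives $\mathbb P(N(s)\ge j)\le e^{-c'j}$ as soon as $\E N(s)\le j/2e$, say. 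It therefore suffices to prove a uniform mean-count estimate
$$\E N(s) \le C s^{3/2} + C, \qquad \text{for all } 0\le s \le 4n^{2/3},\ n\in\N. \qquad (\ast)$$
Choosing $\epsilon_0$ small handles all $j$ with $C\epsilon_0^{3/2}j \le j/4$. The finitely many small $j$, for which the additive constant $C$ matters, are absorbed by enlarging $c$ in the statement, or alternatively by the Ledoux--Rider upper tail bound for $\mu_1$. I expect $(\ast)$ to be the main obstacle. I would get it from the known uniform bound on the one-point density of GUE,
$$\rho_n(x) \le C n\sqrt{(2-x)_+} + C n^{1/3}\, e^{-c n^{2/3}(x-2)_+},$$
for example via the Plancherel--Rotach asymptotics of Hermite functions, or as in Ledoux--Rider / Götze--Tikhomirov. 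Integrating this bound from $2-sn^{-2/3}$ to $\infty$ gives $(\ast)$.

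For the expectation bounds, the upper bound $\E\tilde A^n_j(0)\le c-\epsilon_0 j^{2/3}$ follows from the tail bound above combined with an integrable upper tail. That upper tail comes from $\mu_j\le\mu_1$ and the Ledoux--Rider bound $\mathbb P(n^{2/3}(\mu_1-2)\ge t)\le Ce^{-ct^{3/2}}$. For the lower bound $\E\tilde A^n_j(0)\ge c-\beta_0 j^{2/3}$, I argue symmetrically. For large $\beta$, a matching lower bound $\E N(\beta j^{2/3}) \ge 2j$ holds, again from a density lower bound $\rho_n(x)\ge c n\sqrt{2-x}$ on, say, $[0,2-n^{-2/3}]$. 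The variance of a determinantal count is at most its mean, so $\mathbb P(N(\beta j^{2/3})<j)\le Ce^{-cj}$ by the same Bernoulli Chernoff bound. On this bad event we bound $\tilde A^n_j(0)$ below using $\tilde A^n_j(0)\ge \tilde A^n_n(0)$. The latter equals $n^{2/3}(\mu_n-2)$, which is $\ge -5n^{2/3}$ except with probability $e^{-cn}$, and in all cases has Gaussian-type lower tails. Since $e^{-cj}n^{2/3}$ is not automatically $O(j^{2/3})$ for small $j$, I will use instead the Ledoux--Rider lower tail for fixed-index edge eigenvalues when $j\le (\log n)^2$. For larger $j$, the factor $e^{-cj}$ kills the $n^{2/3}$. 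These pieces together give the lower expectation bound with some $\beta_0$.
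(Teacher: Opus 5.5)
Your route is essentially the paper's. You identify $\tilde A^n_j(0)=n^{2/3}(\mu_j-2)$ with a rescaled GUE eigenvalue, use that counts of a Hermitian determinantal process are sums of independent Bernoullis to get $e^{-c'j}$ tails by Chernoff/Bernstein, and derive the expectation bounds from these tails plus the Ledoux--Rider bounds and $\lambda_j\ge\lambda_n\stackrel{d}{=}-\lambda_1$.

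The main difference is the input for the mean count. The paper quotes G\"otze--Tikhomirov, whose uniform $O(1/n)$ Kolmogorov-distance bound for the expected spectral distribution gives $\E N=n\int\rho_{sc}+O(1)$ uniformly in the threshold. That yields the upper and lower count estimates at once. You instead propose separate pointwise upper and lower bounds on the one-point density, which need uniform Plancherel--Rotach asymptotics through the edge transition. If you go that way, your displayed upper bound is wrong at the edge. Near $x=2$ the density is $n^{2/3}$ times the Airy-kernel diagonal, so $\rho_n(2)\approx n^{2/3}\mathrm{Ai}'(0)^2$, and the second term must be of size $n^{2/3}$, not $n^{1/3}$. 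After that correction it still integrates to $(\ast)$.

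Two smaller repairs are needed.

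\textbf{Small $j$ in the tail bound.} Handle these by taking $c'$ small enough that $2e^{-c'j}\ge 1$ there. Enlarging $c$ does nothing, since $c$ does not appear in the tail bound. The upper tail of $\mu_1$ does not help either, because the threshold $-\epsilon_0 j^{2/3}$ is negative.

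\textbf{The lower expectation bound for $j\le(\log n)^2$.} Your appeal to a ``fixed-index'' Ledoux--Rider lower tail is not adequate. Ledoux--Rider concerns the extremal eigenvalue, and any fixed-index version would carry $j$-dependent constants, whereas you need uniformity over a range of $j$ growing with $n$. The fix, which is what the paper does, is to run your Bernoulli/Chernoff argument at every scale, not only at $t=\beta j^{2/3}$. For all $t$ between $\beta j^{2/3}$ and order $n^{2/3}$ it gives $\mathbb P(\tilde A^n_j(0)<-t)\le 2e^{-ct^{3/2}}$. Beyond that scale, monotonicity of the tail together with $\tilde A^n_j(0)\ge\tilde A^n_n(0)$ and the Ledoux--Rider upper tail for $\lambda_1$ takes over. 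Integrating gives $\E\tilde A^n_j(0)\ge-\beta j^{2/3}-O(1)$ with no case split at $(\log n)^2$. The same comparison with $\tilde A^n_n(0)$, via $\E\tilde A^n_n(0)\ge -Cn^{2/3}$, also covers $j$ of order $n$, where $\beta j^{2/3}$ leaves the range of your density lower bound.
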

	
	This can be extracted from \cite{gustavsson2005gaussian}; we use a bound from \cite{gotze2005rate} for a cleaner argument. 
	
	We will also use the moderate deviation bounds on the top eigenvalue, which we state here since they will also be of future use.
	
	\begin{lemma}[\cite{ledoux2010small}] \label{lemma:ledoux_rider_GUE_small} Let $\lambda_1 = WB^n_1(1)$ be the top eigenvalue of the GUE. There exist constants $c, d > 0$ so that for all $n \geq 1$ and $0 < \epsilon \leq 1$,
		\begin{align*}
			\prob \left( \lambda_1 \leq 2 \sqrt{n} (1 - \epsilon) \right) \leq c e^{-d n^2 \epsilon^{3}}, \qquad \prob \left( \lambda_1 \geq 2 \sqrt{n} (1 + \epsilon) \right) \leq c e^{-d n \epsilon^{3/2}}.
		\end{align*}
		The second bound also holds when $\epsilon > 1$ with the $\epsilon^{3/2}$ replaced by $\epsilon^2$ by a large deviation estimate, see \cite[equation (1.4)]{ledoux2010small}.
	\end{lemma}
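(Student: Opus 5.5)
Since this is the Ledoux--Rider bound, I would reprove it with the standard random-matrix tools. Throughout I use the identification from Theorem \ref{lemma:Brownian_Burke}: $WB^n(1)$ has the law of the ordered GUE spectrum, normalized so that the off-diagonal entries have unit variance. The spectral edge is then at $2\sqrt n$. I treat the two tails separately, since they have different speeds.

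\emph{Upper tail.} I would use a moment method.
\begin{itemize}[nosep]
	\item The key input is Ledoux's bound on even moments of the normalized GUE, uniform for $p \le n^{2/3}$ and beyond:
	\[
	\E \operatorname{tr}\bigl(H/2\sqrt n\bigr)^{2p} \le C\, n\, p^{-3/2} e^{C p^3/n^2}.
	\]
	It can be obtained from the Harer--Zagier recursion.
	\item Markov's inequality then gives
	\[
	\prob\bigl(\lambda_1 \ge 2\sqrt n(1+\epsilon)\bigr) \le C n p^{-3/2}\exp\bigl(Cp^3/n^2 - 2p\log(1+\epsilon)\bigr).
	\]
	\item I would choose $p \asymp n\epsilon^{1/2}$ with a small constant. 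For $\epsilon \ge n^{-2/3}$ this yields $\exp(-c n \epsilon^{3/2})$, and the polynomial prefactor is absorbed. For $\epsilon < n^{-2/3}$ the bound is trivial after enlarging $c$.
	\item For $\epsilon > 1$ I would use Gaussian concentration instead. The map $H \mapsto \lambda_1(H)$ is $1$-Lipschitz in Hilbert--Schmidt norm, and $\E\lambda_1 \le 2\sqrt n$ (for instance from the moment bound). Hence $\prob(\lambda_1 \ge 2\sqrt n + t) \le e^{-t^2/2}$, and taking $t = 2\sqrt n\,\epsilon$ gives the $e^{-dn\epsilon^2}$ rate.
\end{itemize}

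\emph{Lower tail.} This is the step I expect to be the main obstacle, because the speed $n^2\epsilon^3$ reflects a collective event: every eigenvalue must be pushed inside $2\sqrt n(1-\epsilon)$. I would pass to the Dumitriu--Edelman tridiagonal model $T$, which has Gaussian diagonal and independent $\chi_{\beta(n-i)}$ off-diagonals with $\beta = 2$. Then I would lower-bound $\lambda_1 \ge v^{\mathsf T}Tv/|v|^2$ for a deterministic test vector $v$.

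\begin{itemize}[nosep]
	\item Take $v$ supported on the first $m \asymp n\epsilon$ coordinates, with a smooth (say sine-shaped) profile.
	\item The quadratic form $v^{\mathsf T}Tv$ is a weighted sum of independent Gaussian and chi variables. Its mean is at least $2\sqrt n\,(1 - c_1\epsilon)$ with $c_1 < 1$; here the chi means $\sqrt{2(n-i)}$ lose only $O(\epsilon)$ over the support, and the profile's curvature costs $O(m^{-2})$.
	\item The event $\lambda_1 \le 2\sqrt n(1-\epsilon)$ then forces a downward deviation of order $\sqrt n\,\epsilon$ of this sum.
	\item With weights of size $1/m$ over $m$ terms, the fluctuations are subgaussian with variance proxy about $1/m$. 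A Bernstein/Chernoff bound gives probability $\exp(-c\,n\epsilon^2 m) = \exp(-c\,n^2\epsilon^3)$.
\end{itemize}

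The delicate points are balancing $m$ so that the deterministic mean loss stays below $\epsilon$ while the concentration exponent reaches $n^2\epsilon^3$, and handling the lower tails of the chi variables. For the latter I would use the uniform bound $\prob(\chi_k \le \sqrt k - s) \le e^{-s^2}$.
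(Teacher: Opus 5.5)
The paper gives no proof of this lemma; it imports both tails from Ledoux--Rider, and their equation (1.4) for $\epsilon>1$. Your argument is therefore a reconstruction, and it is correct.

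\textbf{Lower tail.} This is essentially the Ledoux--Rider argument: the Dumitriu--Edelman tridiagonal model, a sine test vector on the first $m\asymp n\epsilon$ coordinates, and a mean loss of $O(m/n)+O(m^{-2})$ balanced against fluctuations of order $m^{-1/2}$.

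\textbf{Upper tail.} Here your route differs. You use Ledoux's earlier Harer--Zagier moment bound, which depends on the exact GUE recursion. Ledoux--Rider instead run the upper tail through the tridiagonal model as well, and that is what lets their result cover all $\beta>0$. The paper only uses $\beta=2$, so either route suffices. Your bookkeeping checks out:
\begin{itemize}
\item With $p\asymp n\epsilon^{1/2}$, the prefactor is $Cnp^{-3/2}\asymp(n^2\epsilon^3)^{-1/4}$, which is bounded once $\epsilon\ge n^{-2/3}$.
\item For $\epsilon>1$, Gaussian concentration works. The moment bound actually gives $\E\lambda_1\le 2\sqrt n+O(n^{-1/6})$ rather than exactly $2\sqrt n$, but that is harmless after adjusting constants.
\end{itemize}

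\textbf{Two details to fix when writing it out.}

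First, normalization. To put the edge at $2\sqrt n$, the tridiagonal model needs its $1/\sqrt2$ prefactor. That means diagonal entries $N(0,1)$ and off-diagonals $\chi_{2(n-i)}/\sqrt2$, with means $\sqrt{n-i}+O((n-i)^{-1/2})$. With raw $\chi_{2(n-i)}$ entries, whose means are $\sqrt{2(n-i)}$, the edge sits at $2\sqrt{2n}$.

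Second, the fluctuation bound. The cleanest approach is to view $v^{\mathsf T}Tv/|v|^2$ as a Lipschitz function of the underlying standard Gaussians, with constant $O(m^{-1/2})$ (each chi variable is the norm of a Gaussian block). Then apply Gaussian concentration around its mean. The tail bound $\prob(\chi_k\le\sqrt k-s)\le e^{-s^2}$ is true (by Chernoff), but converting it crudely into per-variable moment generating function bounds costs an $O(1)$ shift in the centering. This happens because the off-diagonal weights $v_iv_{i+1}/|v|^2$ sum to order one. Such a shift would swamp the required deviation $\asymp\sqrt n\,\epsilon$ in the regime $n^{-2/3}\lesssim\epsilon\lesssim n^{-1/2}$. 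Centering at the true mean costs only $O(n^{-1/2})$.
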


	\begin{proof}[Proof of Lemma \ref{L:GUE-tail}]
		Let $(\lambda_1 \ge \cdots \ge \lambda_n)$ be the eigenvalues of an $n \times n$ GUE, i.e. in the notation of Theorem \ref{lemma:Brownian_Burke}, $\lambda_i = WB_i(1)$. Set $N_{s, t} = \# \{i : s \sqrt{n} \le \lambda_i \le t \sqrt{n}\}$. By Theorem 1.1 in \cite{gotze2005rate},
		$$
		\sup_{t \in [-2, 2], n \in \N} \left|\E N_{t, \infty} - n \int_{t}^2 \frac{1}{2\pi} \sqrt{4 - u^2} du \right| \le c,
		$$
		for an absolute constant $c$. Now, since $(\lambda_1, \dots, \lambda_n)$ is a determinantal point process with a Hermitian kernel, each of the random variables $N_{t, s}$ is a sum of independent Bernoulli random variables. In particular, we have the estimate $\operatorname{Var}(N_{t, s}) \le \E N_{t, s}$, and we may also apply Bernstein's inequality to get tail bounds.
		
		Now, we can choose $C, d > 0$ so that
		$$
		d z \le n \int_{2 - z^{2/3} n^{-2/3}}^2 \frac{1}{2\pi} \sqrt{4 - u^2} du \le C z
		$$
		for all $n \in \N, z \in [0, n]$. Then for small enough $\epsilon_0 > 0$,
		\begin{align*}
			\P( \frac{\lambda_k}{\sqrt{n}} > 2 - \epsilon_0 k^{2/3} n^{-2/3}) &= \P(N_{2-\epsilon_0 k^{2/3} n^{-2/3}, \infty} \geq k) \\
			&\le \P(N_{2-\epsilon_0 k^{2/3} n^{-2/3}, \infty} > 2 \E N_{2-\epsilon_0 k^{2/3} n^{-2/3}, \infty}) \\
			&\le 2\exp(-c' \E N_{2-\epsilon_0 k^{2/3} n^{-2/3}, \infty}) \\
			&\le 2\exp(-c'k), 
		\end{align*}
		where we have used Bernstein's inequality, and the constants may change line by line. The expectation upper bound follows by combining this tail bound with the upper bound on $\lambda_1$ in Lemma \ref{lemma:ledoux_rider_GUE_small}. Similarly, for large enough $\alpha_0 > 0$, for all $z \in [\alpha_0 k, n]$, we have
		\begin{align*}
			\P(\frac{\lambda_k}{\sqrt{n}} < 2 - z^{2/3} n^{-2/3})
			&= \P(N_{2-z^{2/3}n^{-2/3},\infty} \le k-1)
			\le 2\exp(-c'z)
		\end{align*}
		and the lower bound on the expectation follows by combining this with the fact that $\lambda_k \ge \lambda_n \stackrel{d}{=} - \lambda_1$, and another application of the upper tail in Lemma \ref{lemma:ledoux_rider_GUE_small}.
	\end{proof}
	
	For the modulus of continuity, we use the following theorem from \cite{wu2024applications}.
	
	\begin{thm}[one case of Theorem 1.1(ii), \cite{wu2024applications}]
		\label{T:mod-cont}
		Let $W = WB$ be an $n$-level Dyson's Brownian motion with $\beta = 2$, as in Theorem \ref{lemma:Brownian_Burke}, and let $\hat W_i = W_i - \E W_i$. There exist universal constants $C_1, C_2 > 0$ such that for all $0\leq a < b$, $1 \le j \le n$, and $K \ge 0$,
		$$
		\P\left(\sup_{t, s \in [a, b] , \, t\neq s} \frac{|\hat W_j(t) - \hat W_{j}(s)|}{\sqrt{|t-s| \log (\frac{2 (b-a)}{|t - s|})}}  \ge K \right) \le C_1 \exp(-C_2 K^2).
		$$
	\end{thm}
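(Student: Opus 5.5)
This result is imported from \cite{wu2024applications}, and we only use it as a black box. If one wanted a self-contained argument, I would prove it in two stages. First comes a one-increment sub-Gaussian bound: for fixed $s<t$ in $[a,b]$,
$$
\P\bigl(|\hat W_j(t)-\hat W_j(s)|\ge K\sqrt{t-s}\bigr)\le C e^{-cK^2},
$$
uniformly in $n$, $j$ and $s,t$. The second stage is a standard chaining argument (Garsia--Rodemich--Rumsey, or Dudley applied to a dyadic net of $[a,b]$). This upgrades the one-increment bound to the uniform modulus with the $\sqrt{|t-s|\log(2(b-a)/|t-s|)}$ normalization and a Gaussian tail in $K$. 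The logarithm is exactly the price of a union bound over the roughly $2^m$ dyadic pairs at scale $2^{-m}(b-a)$, and by Brownian scaling we may take $b-a=1$.

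For the one-increment bound I would realize $W(t)$ as the ordered eigenvalues of a Hermitian matrix Brownian motion $H(t)$. Write $H(t)=H(s)+G$, where $G$ is an independent GUE matrix with entry variance $t-s$. By Weyl's inequality, $G\mapsto\lambda_j(H(s)+G)$ is $1$-Lipschitz in Hilbert--Schmidt norm. Gaussian concentration, applied conditionally on $H(s)$, therefore gives that $W_j(t)-W_j(s)$ is within $K\sqrt{t-s}$ of its conditional mean
$$
W_j(s)+m_j(H(s)), \qquad m_j(H):=\E\bigl[\lambda_j(H+G)\bigr]-\lambda_j(H),
$$
except on an event of probability at most $2e^{-cK^2}$. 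It then remains to show that $m_j(H(s))$ is within $O(\sqrt{t-s})$ of its mean $\E W_j(t)-\E W_j(s)$, again with sub-Gaussian tails at scale $\sqrt{t-s}$.

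This last comparison is where I expect the main difficulty. Heuristically $m_j(H)\approx (t-s)\sum_{i\ne j}(\lambda_j-\lambda_i)^{-1}$, the Dyson drift. This quantity can blow up when there are small gaps, and it is of order $\sqrt n$ for bulk $j$ unless one uses cancellation against the mean. My first attempt would be a monotone coupling: Dyson Brownian motion started from ordered data is stochastically monotone in its initial condition, and by interlacing $\lambda_j(H+G)$ is sandwiched between $\lambda_j(H)+\lambda_{\min}(G)$ and $\lambda_j(H)+\lambda_{\max}(G)$. Those crude bounds are only $O(\sqrt{n(t-s)})$, though. So I would instead combine rigidity of GUE eigenvalues at time $s$ (as in Lemma \ref{L:GUE-tail}, via \cite{gotze2005rate}) with a short-time comparison. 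If $t-s$ is smaller than the local gap scale near index $j$, the drift is controlled by rigidity. If $t-s$ is larger, the Lipschitz concentration for the whole map $(H(s),G)\mapsto\lambda_j(H(t))$, together with scaling, already gives fluctuations of order $\sqrt{t}\lesssim\sqrt{t-s}$ on the relevant event. Making this dichotomy uniform in $j$ and $n$ is the delicate point, and it is presumably where \cite{wu2024applications} uses finer Gibbs or coupling inputs.
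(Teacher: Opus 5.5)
Deferring to \cite{wu2024applications} is exactly what the paper does: it states the result as an imported black box and gives no proof, so on that level your proposal matches. Your optional self-contained sketch, however, does not close as written. Stage 2 (chaining from a uniform one-increment sub-Gaussian bound) is standard, and the conditional Gaussian-concentration half of Stage 1 is fine. The real hole is controlling $m_j(H(s))$ at scale $\sqrt{t-s}$, because your dichotomy is not exhaustive.

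The ``large'' branch bounds the increment only at scale $\sqrt{t}$, and $\sqrt{t}\lesssim\sqrt{t-s}$ holds only when $t-s\gtrsim t$. The ``small'' branch needs $t-s$ below the squared local spacing at time $s$, which is of order $s/n$ in the bulk. For $s/n\ll t-s\ll s$ neither branch applies, and nothing in the sketch controls $m_j(H(s))-\E m_j(H(s))$ there. Moreover, rigidity controls eigenvalue locations, not small gaps, so the linearized drift $(t-s)\sum_{i\neq j}(\lambda_j-\lambda_i)^{-1}$ has only polynomial tails.

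The missing input is already in the paper: the convex-order comparison of Lemma \ref{lemma:wu_convex} (Proposition 2.9 of the same reference). You transfer it from $\cA^n$ to $W$ using $\cA^n\stackrel{d}{=}A^n$ (Lemma \ref{L:An-limit}) and the affine Brownian rescaling noted right after the statement. For finite $\Pi\subset[a,b]$ and $\lambda\ge 0$, consider
\[
G_\Pi(f)=\exp\Bigl(\lambda\max_{s\neq t\in\Pi}\frac{|f_j(t)-f_j(s)|}{\sqrt{|t-s|\log(2(b-a)/|t-s|)}}\Bigr).
\]
This functional is convex (an increasing convex function of a maximum of absolute values of linear functionals). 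It depends on finitely many times and is invariant under adding constants to lines. Hence $\E G_\Pi(\hat W)\le \E G_\Pi(B)$ for a standard Brownian motion $B$.

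Now let $\Pi$ increase to a countable dense set. By monotone convergence and continuity, the exponential moments of the normalized modulus of $\hat W_j$ are dominated by those of $B$. By stationarity of increments and Brownian scaling, the latter is the normalized modulus of $B$ on $[0,1]$. That quantity is finite by L\'evy's modulus theorem and is a supremum of centered Gaussians with variance at most $1/\log 2$, so it has Gaussian tails by Borell--TIS. A Chernoff bound then finishes, with constants independent of $n,j,a,b$.

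Alternatively, using $G(f)=e^{\pm\lambda(f_j(t)-f_j(s))}$ gives exactly your Stage 1 bound, after which your Stage 2 chaining applies unchanged.
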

	
	Observe that in Theorem \ref{T:mod-cont}, we may replace $W$ with $A^n$ or $\tilde A^n$. Indeed, $A^n$ can be constructed from $W$ by:
	\begin{itemize}[nosep]
		\item First translating and subtracting an affine function. This does not affect the theorem, which already removes the mean,
		\item Then applying Brownian scaling, under which the theorem is invariant.  
	\end{itemize} 
	\begin{proof}[Proof of Lemma \ref{lemma:k_tightness}]
		Without loss of generality, we will consider compact sets $K$ of the form $K= [0, a]^k_\le \times [0, a]^k_\le$. By Theorem \ref{T:shadow-sheet}(i), $\inf_K \cS_n$ is tight. Therefore for $I = (I_1, \dots, I_k)$, if we let
		$$
		X_n(I, \bx, \by) = A^n[(0, I) \to (\mathbf{y}, 1)] + \tilde A^n[(0, I) \to (\mathbf{x}, 1)] + \sum_{j \in I} \tilde A^n_j(0),
		$$
		it is enough to show that for any $b \in \R$: 
		\begin{equation}
			\label{E:minfity}
			\lim_{m \to \infty} \limsup_{n \to \infty} \P\left(\sup_{(\bx, \by) \in K} \max_{I : I_k \ge m} X_n(I, \bx, \by) \ge   b\right) = 0. 
		\end{equation}
		Here, to simplify matters note we may replace $\tilde A^n(x)$ with $\underline{A}(x) :=A^n(-x)$ without changing the law by Lemma \ref{L:An-limit}. Next, 
		$$
		X_n(I, \bx, \by) \le \sum_{j=1}^k X_n(I_j, x_j, y_j) \le \sum_{j=1}^{k-1} \cS_n(x_j, y_j) + X_n(I_k, x_k, y_k),
		$$
		and so 
		$$
		\sup_{(\bx, \by) \in K} \max_{I : I_k \ge m} X_n(I, \bx, \by) \le (k-1)\sup_{x, y \in [0, a]} \cS_n(x, y) +  \max_{m' \ge m, x, y \in [0, a]} X_n(m', x, y).
		$$
		By Theorem \ref{T:shadow-sheet}(i), the first term on the right-hand side above is tight, and so it suffices to focus on the second term. Equivalently, it is enough to prove \eqref{E:minfity} with $k = 1$. For this, writing $A^n = \hat A^n + \E A^n$, we have the bound
		$$
		X_n(m', x, y) \le A^n_{m'}(0) + \hat A^n[(0, m') \to (y, 1)] + \underline{\hat A}^n[(0, m') \to (x, 1)] + \E A^n[(0, m') \to (y, 1)] + \E \underline{A}^n[(0, m') \to (x, 1)].
		$$
		Now, by applying Theorem \ref{T:mod-cont} to all lines $\hat A^n_{m'}$ for $1 \le m' \le n$ with $K = \log (m + m')$, we have that with probability $1 - C_1 m \exp(-C_2 \log^2 m)$, for all $m \le m' \le n$ and all $0 \le y \le a$:
		\begin{equation*}
			\hat A^n[(0, m') \to (y, 1)] \le \sup_{0 = t_{m'} \le t_{m' - 1} \le \cdots \le t_1 \le t_0 = y} \log (m + m') \sum_{i=1}^{m'} \sqrt{|t_i - t_{i-1}| \log (\frac{2 a}{|t_i - t_{i-1}|})} \le 4 \log(m')^2 \sqrt{m'a}.
		\end{equation*}
		An identical bound holds for $\underline{\hat A}^n[(0, m') \to (y, 1)]$. Combining this with Lemma \ref{L:GUE-tail}, we have that there exist absolute constants $C_1, C_2, C_3, \epsilon_0 > 0$ such that with probability at least $1 - C_1 m \exp(-C_2 \log^2 m)$, for all large enough $n$ and all $m' \ge m$, we have 
		\begin{equation}
			\label{E:supxy}
			\sup_{0 \le x, y \le a} X_n(m', x, y) \le C_3 - \epsilon_0 (m')^{2/3}.  
		\end{equation}
		Therefore to complete the proof, we just need to bound the deterministic term
		$$
		\E A^n[(0, m') \to (y, 1)] + \E \underline{A}^n[(0, m') \to (x, 1)].
		$$
		By Brownian scaling $\E WB^n_i(t) = c_{n, i} \sqrt{t}$, where $c_{n, 1} \ge \cdots \ge c_{n, n}$ are the expected values of the eigenvalues in an $n \times n$ GUE. Using this and translating to expectations for $A^n$, we have:
		\begin{align*}
			&\E A^n[(0, m') \to (y, 1)] + \E \underline{A}^n[(0, m') \to (x, 1)] \\
			= &n^{-1/3}[c_{n, 1}(\sqrt{n + 2 n^{2/3}y} - \sqrt{n}) - 2 y n^{2/3} +  c_{n, m'}(\sqrt{n- 2 n^{2/3}x}- \sqrt{n}) + 2 x n^{2/3}]. 
		\end{align*}
		Now, $\sqrt{n + 2 y n^{2/3}} - \sqrt{n} \le n^{1/6} y$ for all $y$. Using this bound, the above display is bounded above by
		$$
		|c_{n, 1} - 2\sqrt{n}| n^{-1/6} y + 2 |2\sqrt{n} - c_{n, m'}| n^{-1/6} x.
		$$ 
		The lemma then follows from \eqref{E:supxy} and the mean estimate $c_{n, m'} = 2\sqrt{n} - O(n^{-1/6} (m')^{2/3})$ from Lemma \ref{L:GUE-tail}, where the constant in the big-$O$ term is uniform in $n, m'$.
	\end{proof}

	\section{Applications to the directed landscape}
	\label{S:applications}
	
	In this section, we prove our two applications of the DtM isometry in the directed landscape. We start by characterizing the maximizing index $k$ in \eqref{E:DtM_AirySheet_simple}.
	
	\subsection{Geometric Characterization of the Turnaround Index} \label{SS:turnaround-geometry}
	
	In the single-path case of Theorem \ref{T:shadow-sheet}, the index set $I$ in Theorem \ref{T:shadow-sheet-simple} will be a singleton $\{ k\}$ for some $k \in \N$. We refer to this index $k$ as the \textbf{turnaround index}. The turnaround index can be understood via coalescence in the directed landscape, which we now introduce formally.
	
	The directed landscape $\cL$ is a random continuous function from the parameter space 
	$$
	\R^4_\uparrow = \{u = (p; q) = (x, s; y, t) \in \R^4 : s < t\}
	$$
	to $\R$. 
	As with LPP, the value $\cL(p; q) = \cL(x, s; y, t)$ is best thought of as a distance between two points $p$ and $q$, and similar to LPP, we only assign distances between points in the correct order (in this case, requiring ordering on the time coordinates). Again similarly to LPP, the directed landscape satisfies a reverse triangle inequality,
	\begin{equation}
		\label{E:triangle}
		\cL(p; r) \ge \cL(p; q) + \cL(q; r) \qquad \mbox{ for all }(p; r), (p; q), (q; r) \in \mathbb R^4_\uparrow,
	\end{equation}
	which allows us to define path length by subdivision. For a function $\pi: [s, t] \to \R$, henceforth a \textbf{path}, define its length
	\begin{align*}
		|| \pi ||_{\Lag} &= \inf_{s = t_0 < t_1 < t_2 < \ldots < t_n = t}\sum_{i = 1}^{n} \Lag(\pi(t_{i - 1}) , t_{i - 1} ; \pi(t_i), t_i). 
	\end{align*}
	Note that almost surely any deterministic path satisfies $\|\pi \|_{\Lag} = - \infty$, and therefore paths of finite length depend on the random geometry $\Lag$. For $x, y \in \R$ and $s < t$, we say a path $\pi: [s, t] \to \R$ with endpoints $\pi(s) = x$, $\pi(t) = y$ is a \textbf{geodesic} if $\|\pi\|_\cL = \cL(x, s; y, t)$. Equivalently, for any other path $\tilde{\pi} : [s, t] \to \R$ with the same endpoints,
	\begin{align*}
		\|\pi \|_{\Lag} \geq  \|\tilde{\pi} \|_{\Lag}.
	\end{align*}
	Next, for $\mathbf{x}, \mathbf{y} \in \R^k_\le$, we say a $k$-tuple of paths $\mathbf{\pi} = (\pi_1, \ldots, \pi_k)$ is a \textbf{(disjoint) $k$-tuple} from $(\mathbf{x}, s)$ to $(\mathbf{y}, t)$ if $\pi_i(s) = x_i$, $\pi_i(t) = y_i$ and   $\pi_{i}(u) < \pi_{i + 1}(u)$ for all $u \in (s, t)$. We define the extended landscape value
	\begin{equation}
		\label{E:Lxy-11}
		\cL(\bx, s; \by, t) := \sup_{\pi} \sum_{i=1}^k \| \pi_i\|_{\Lag},
	\end{equation}
	where the supremum is over all disjoint $k$-tuples $\pi$ from $(\mathbf{x}, s)$ to $(\mathbf{y}, t)$. We say that $\pi$ is a \textbf{(disjoint) optimizer} if it attains this supremum. The structure of disjoint optimizers in the directed landscape was investigated thoroughly in \cite{dauvergne2021disjoint}. The following theorem summarizes the results from that paper that we will need.
	\begin{thm}
		\label{T:dauvergne2021disjoint}
		\begin{enumerate}
			\item Almost surely, for all $k \in \N, s < t \in \R$ and $\bx, \by \in \R^k_\le$, there exists an optimizer in $\cL$ from $(\bx, s)$ to $(\by, t)$. Moreover, among the set of optimizers from $(\bx, s)$ to $(\by, t)$, there is a rightmost one.
			\item For fixed $k \in \N, s < t \in \R$ and $\bx, \by \in \R^k_\le$, almost surely there exists a unique disjoint optimizer  from $(\bx, s)$ to $(\by, t)$.
			\item The process $\cS(\bx, \by) :=\cL(\bx, 0; \by, 1)$ is an extended Airy sheet, in the sense that it is a limit-in-law of the extended Brownian LPP sheet $(\bx, \by)  \mapsto \cS_n(\bx, \by)$.
		\end{enumerate}
	\end{thm}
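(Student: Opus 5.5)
This theorem collects results of \cite{dauvergne2021disjoint}, and in the paper it is imported rather than reproved. If I were to establish it directly, I would treat the three parts separately and use only standard regularity properties of $\cL$: continuity, the reverse triangle inequality \eqref{E:triangle}, and the modulus of continuity of geodesics.

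For part 1, I would use a compactness argument. First, fix a compact window of endpoints and times. By the known H\"older-$2/3^-$ modulus for paths of finite $\cL$-length, any near-optimal $k$-tuple from $(\bx, s)$ to $(\by, t)$ lies in an equicontinuous family. I would then take a maximizing sequence of disjoint $k$-tuples and extract a uniformly convergent subsequence by Arzel\`a--Ascoli. Path length $\|\cdot\|_\cL$ is upper semicontinuous under uniform convergence, since it is an infimum of continuous functions of finitely many path values. Hence the limit attains the supremum in \eqref{E:Lxy-11}, provided it is still a disjoint tuple.

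The limit is a priori only weakly ordered, $\pi_i \le \pi_{i+1}$, and I expect showing strict ordering on $(s,t)$ to be the main obstacle. The approach I would try first is to compare against the weakly-ordered supremum. If two limiting paths touch at an interior time $u$, then splitting at $u$ and using \eqref{E:triangle} expresses the weak supremum as a sum of single-geodesic pieces sharing the point $(\pi_i(u), u)$. I would then show that a small local perturbation into a strictly ordered pair strictly increases total length, using that Airy-sheet increments are locally Brownian. This would show that the disjoint and weakly ordered suprema coincide and that the weak optimizer is in fact disjoint. For the rightmost optimizer, I would use a crossing argument: if $\pi$ and $\pi'$ are optimizers, then the pointwise maxima $\pi\vee\pi'$ and minima $\pi\wedge\pi'$ have total length summing to at least that of $\pi$ and $\pi'$, so both are optimizers. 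Taking the supremum over the (closed) set of optimizers, together with upper semicontinuity of length, then gives a rightmost optimizer.

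For part 2, with endpoints fixed, I would argue by contradiction. Two distinct optimizers would, via the lattice property above, produce an ordered pair of optimizers that differ on some rational time interval. At a rational intermediate time $r$, the optimizer is determined by maximizing $z \mapsto \cL(\bx, s; \bz, r) + \cL(\bz, r; \by, t)$ over $\bz$. The two summands are independent, and each is locally absolutely continuous with respect to Brownian-type processes. The standard argument that the argmax of the sum of two independent Brownian-like functions is a.s.\ unique then yields uniqueness at each rational $r$, and hence uniqueness of the optimizer.

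For part 3, I would use the convergence of Brownian LPP to $\cL$ in a topology strong enough to carry optimizers, namely the graph topology of \cite{dauvergne2021scaling}. The prelimit multi-path values $\cS_n(\bx,\by)$ are suprema over disjoint tuples, so any subsequential limit of optimizers is a weakly ordered tuple in $\cL$. The lower bound $\liminf_n \cS_n \ge \cL(\bx,0;\by,1)$ would come from approximating a fixed disjoint optimizer in $\cL$ by disjoint lattice tuples, which is possible because the limiting paths are strictly separated on compact subintervals. The matching upper bound again requires the weak-versus-strict issue from part 1. Alternatively, part 3 would follow from Theorem~\ref{T:shadow-sheet}(iv): one checks that $\cL(\cdot,0;\cdot,1)$ on $\mathfrak X$ is translation invariant and satisfies \eqref{E:AiryLE_retrieval}--\eqref{E:DtM_AirySheet}, and the uniqueness in (iv) then identifies it as the extended Airy sheet.
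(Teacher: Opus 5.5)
The paper does not prove this theorem; it imports it from \cite{dauvergne2021disjoint}. Your sketch therefore has to stand on its own, and it does not. The step you correctly flag as the main obstacle is resolved by a false claim: the weakly ordered and disjoint suprema in \eqref{E:Lxy-11} do \emph{not} coincide, and a weak optimizer is typically not disjoint.

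Take $\bx=\by=(0,0)$. Every path from $(0,0)$ to $(0,1)$ has length at most $\cL(0,0;0,1)$, so the weakly ordered optimizers are exactly pairs of geodesics. Almost surely the geodesic from $(0,0)$ to $(0,1)$ is unique, so the only weak optimizer is two copies of it, which is not disjoint. Quantitatively, in the coupling of Theorem \ref{T:shadow-sheet} the disjoint value is $\cS(0^2,0^2)=\cA_1(0)+\cA_2(0)<2\cA_1(0)$. The same thing happens for distinct but nearby endpoints, because geodesics coalesce; this is what Lemma \ref{lemma:coalescence_equivalence} and the rarity in Theorem \ref{thm:Hammond_conj} quantify.

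Your perturbation heuristic therefore points the wrong way. Where two paths of a weak optimizer share a stretch, both are geodesic there, and pulling them apart can only decrease the total length. So Arzel\`a--Ascoli plus upper semicontinuity gives a weakly ordered limit whose length is at least the disjoint supremum. Nothing prevents that limit from touching, in which case its length may strictly exceed the disjoint supremum, and you have no mechanism for extracting a genuinely disjoint optimizer.

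What is missing is an argument intrinsic to the disjoint problem. One example would be showing that in the extended composition law over $\mathbf z\in\R^k_\le$ at an intermediate time, maximizers never lie on a wall $\{z_i=z_{i+1}\}$. That is a property of multi-point values, not of single-path lengths. The proofs in \cite{dauvergne2021disjoint} rely on multi-point LPP across the Airy line ensemble and resampling arguments, not on single-path regularity.

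The gap propagates to the other parts:
\begin{enumerate}
\item \textbf{Rightmost optimizer.} Your rightmost optimizer is a monotone limit of disjoint tuples, and such a limit can touch. So the set of optimizers is not known to be closed, and the supremum construction is unjustified.
\item \textbf{Part 2.} The template is reasonable: both optimizers maximize the split value at each rational time, then you use argmax uniqueness. But it needs an input you do not have. You would need $\mathbf z\mapsto\cL(\bx,s;\mathbf z,r)$ to be locally absolutely continuous with respect to some Brownian-type field in $k$ variables. This is not a standard fact and would itself need proof, as would the extended composition law you use.
\item \textbf{Part 3, upper bound.} As you note yourself, this is the same weak-versus-strict issue.
\item \textbf{Part 3, alternative route.} Going through Theorem \ref{T:shadow-sheet}(iv) is circular. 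You would need to couple the metric-defined values $\cL(\bx,0;\by,1)$ with a parabolic Airy line ensemble satisfying \eqref{E:AiryLE_retrieval} and \eqref{E:DtM_AirySheet}. The only such coupling available comes from subsequential limits of $\cS_n$, and identifying those limits with the metric-defined extended values is exactly part 3. The paper uses the implication in the opposite direction: in Section \ref{SS:turnaround-geometry} it invokes part 3 to transfer the coupling of Theorem \ref{T:shadow-sheet} to $\cL$. Also, the tightness in Theorem \ref{T:shadow-sheet}(i) is itself taken from \cite{dauvergne2021disjoint}.
\end{enumerate}
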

	Now, by part $3$ of the above theorem and Theorem \ref{T:shadow-sheet}, we can couple $\cL$ to a parabolic Airy line ensemble $\cA$ so that Theorem \ref{T:shadow-sheet}(iii) holds with $\cS(\bx, \by) :=\cL(\bx, 0; \by, 1)$. In the remainder of this section, we work with this coupling. 
	
	To talk about LPP across the Airy line ensemble, it will be useful to introduce \textbf{switchback paths}. For $-x \le 0 \le y$, a function $\pi:[-x, y] \to \N$ is a \textbf{switchback path} if $\pi$ is nondecreasing and left-continuous on $[-x, 0]$ and nonincreasing and right-continuous on $[0, y]$. For $u\in[0,x]$, set $\tilde\pi(u)=\pi(-u)$. We define its length 
	$$
	\|\pi\|_\cA = \|\tilde \pi|_{[0, x]}\|_{\tilde \cA} +  \cA_{\pi(0)}(0) + \|\pi|_{[0, y]}\|_\cA. 
	$$
	For $\bx, \by \in [0, \infty)^k_\le$, a disjoint $k$-tuple $\pi = (\pi_1, \dots, \pi_k)$ of switchback paths from $(-\bx, 1)$ to $(\by, 1)$ is such that each $\pi_i$ is a switchback path on $[-x_i, y_i]$, and $\pi_i < \pi_{i+1}$ for all $i$. With this definition, we can reformulate \eqref{E:DtM_AirySheet} as saying that
	\begin{equation}
		\label{E:Sxy11}
		\cS(\bx, \by) = \sup_\pi \sum_{i=1}^k \|\pi_i\|_\cA,
	\end{equation}
	where the supremum is over all disjoint $k$-tuples of switchback paths from $-\bx$ to $\by$. Since the maximum in \eqref{E:DtM_AirySheet} is attained, this supremum is a maximum, and we call a maximizer an Airy optimizer (or geodesic when $k = 1$).

	\begin{lemma} \label{lemma:coalescence_equivalence}
		In the above coupling, the following holds almost surely for all $k, \ell \in \N$, $\mathbf{x}, \mathbf{y} \in [0, \infty)^{k}_{\leq}$, and $\mathbf{x}', \mathbf{y}'  \in [0, \infty)^{\ell}_{\leq}$ with $x_k \le x_1'$ and $y_k \le y_1'$.
		
		There exist optimizers $\pi, \pi'$ in $\cL$ from $(\mathbf{x}, 0) \to (\mathbf{y}, 1)$ and $(\mathbf{x}', 0) \to (\mathbf{y}', 1)$ such that $(\pi, \pi')$ is a disjoint $(k + \ell)$-tuple if and only if there are Airy optimizers $\tau, \tau'$ from $-\mathbf{x}$ to $\mathbf{y}$ and $-\mathbf{x}'$ to $\mathbf{y}'$ such that $(\tau, \tau')$ is a disjoint $(k + \ell)$-tuple.
	\end{lemma}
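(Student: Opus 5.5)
The approach is to reduce both sides of the equivalence to the same statement about the extended sheet, namely additivity
\[
\cS((\bx,\bx'),(\by,\by')) = \cS(\bx,\by) + \cS(\bx',\by'),
\]
where $(\bx,\bx') \in [0,\infty)^{k+\ell}_\le$ denotes concatenation. I would show that in $\cL$ the existence of disjoint optimizers $\pi,\pi'$ with $(\pi,\pi')$ a disjoint $(k+\ell)$-tuple is equivalent to this additivity. I would then show that the existence of disjoint Airy optimizers $\tau,\tau'$ is also equivalent to it, using \eqref{E:Sxy11}. Since Theorem \ref{T:shadow-sheet}(iii) identifies $\cS$ on both sides in our coupling, the lemma follows immediately.

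For the landscape side, suppose $(\pi,\pi')$ is disjoint with each part optimal. Then $(\pi,\pi')$ is a candidate in \eqref{E:Lxy-11} for $\cL((\bx,\bx'),0;(\by,\by'),1)$, which gives $\ge$. The reverse inequality $\le$ always holds, since any disjoint $(k+\ell)$-tuple splits into a $k$-tuple and an $\ell$-tuple, each bounded by the respective optimum. Conversely, assume additivity. Take an optimizer $\rho$ for the $(k+\ell)$-problem, which exists by Theorem \ref{T:dauvergne2021disjoint}(1). Split $\rho$ into its first $k$ and last $\ell$ paths. These are disjoint tuples whose lengths sum to $\cS(\bx,\by)+\cS(\bx',\by')$, and each is bounded by its own optimum, so each must be an optimizer. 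One care point is the boundary case $x_k = x_1'$ or $y_k = y_1'$, where paths from different groups share an endpoint. The definition of disjoint tuples only demands strict ordering on the open interval $(0,1)$, so this is consistent with the definitions and no extra work is needed.

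The Airy side is the same argument with switchback paths in place of landscape paths. Disjoint Airy optimizers $\tau,\tau'$ form a candidate in \eqref{E:Sxy11} for the concatenated endpoints, giving $\ge$. The bound $\le$ holds by splitting any disjoint switchback tuple. Conversely, additivity together with the fact that the supremum in \eqref{E:Sxy11} is attained (from Theorem \ref{T:shadow-sheet}(iii)) yields an Airy optimizer for the $(k+\ell)$-problem, and its two pieces are then optimal.

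The main obstacle is checking that the disjointness notions really match under concatenation. For switchback paths, disjointness was defined as $\pi_i<\pi_{i+1}$, and $\pi_k$ and $\pi'_1$ have domains $[-x_k,y_k]$ and $[-x_1',y_1']$. Concatenation must produce an admissible tuple in the sense of the multi-path LPP definition with ordering constraints $\bx$ and $\by$ nondecreasing, and the split of an optimizer must give admissible $k$- and $\ell$-tuples, including when endpoints coincide. I would handle this by unwinding the semi-discrete LPP definition of $f[\bp\to\bq]$ used in \eqref{E:DtM_AirySheet}, which is stated with weak orderings of endpoints. I would check that the requirement $\pi_i<\pi_{i+1}$ is imposed only on the common open time interval, so that restriction and concatenation preserve admissibility.

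The final ingredient is that all of this holds almost surely simultaneously for all parameters. This comes directly from the almost sure statements of Theorem \ref{T:dauvergne2021disjoint}(1) and from the identity \eqref{E:DtM_AirySheet} holding for all $\bx,\by$ at once, since it is an identity of continuous functions in the coupling.
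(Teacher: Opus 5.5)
Your proposal is correct and follows the paper's proof. The paper also observes that both events coincide with the additivity event $\cS(\bx,\by)+\cS(\bx',\by')=\cS(\{\bx,\bx'\},\{\by,\by'\})$, using only that the suprema in \eqref{E:Lxy-11} and \eqref{E:Sxy11} are attained; you spell out the two inclusions and the endpoint and disjointness bookkeeping that the paper leaves implicit.
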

	
	\begin{proof}
		The above immediately follows after noticing that both events are equal to the event where
		\begin{align*}
			\mathcal{S}(\mathbf{x}, \mathbf{y}) + \mathcal{S}(\mathbf{x}', \mathbf{y}') &= \mathcal{S}( \{\mathbf{x}  , \mathbf{x}' \} , \{ \mathbf{y} , \mathbf{y}' \} ).
		\end{align*}
		Note that this uses that the suprema in \eqref{E:Lxy-11} and \eqref{E:Sxy11} are always attained. 
	\end{proof}

	\begin{defn} \label{defn:turnaround_index}
		For $x, y > 0$, define the \textbf{turnaround index} $k = k(x, y) \in \N$ to be the largest positive integer so that 
		\begin{align}
			\label{E:Sxy}
			\mathcal{S}(x, y) &= \tilde{\mathcal{A}}[(0, k) \to (x, 1)] + \mathcal{A} [(0, k) \to (y, 1)] + \mathcal{A}_k(0).
		\end{align}
	\end{defn}
	Note that it is not difficult to check (i.e. by Brownian Gibbs resampling) that for fixed $x, y$, almost surely there is a unique $k$ such that \eqref{E:Sxy} holds. See \cite[Lemma B.1]{Ham19d} and \cite[Lemmas 2.15--2.16]{dauvergne2021disjoint} for similar proofs in the setting of Brownian LPP. We need to define \eqref{E:Sxy} using the maximal $k$ to take care of exceptional $x, y$ with multiple maximizers.

	The following theorem shows that the turnaround may be characterized by a coalescence problem in $\Lag$. 
	
	\begin{thm} \label{T:k_geom}
		For $\ell \in \N$, let $\tau^\ell = (\tau^\ell_1, \dots, \tau^\ell_\ell)$ be the (almost surely unique) optimizer in $\cL$ from $(0^{\ell}, 0)$ to $(0^{\ell}, 1)$ where $0^{\ell} = (0, 0, \ldots, 0)$ is an $\ell$-tuple (and for $\ell = 0$, set $\tau^0 \defeq \emptyset$). In addition, for $x, y > 0$, let $\pi_x^y$ be the rightmost geodesic from $(x,0)$ to $(y , 1)$. Then almost surely for any $x, y > 0$,
		\begin{align*}
			k(x, y) = \min \{ \ell \in \N \, | \, \tau_\ell^\ell(r) = \pi_x^y(r) \text{ for some } r \in [0, 1]\}. 
		\end{align*}
		
	\end{thm}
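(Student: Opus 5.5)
The approach is to reduce Theorem \ref{T:k_geom} to Lemma \ref{lemma:coalescence_equivalence}, applied with the $\ell$-tuple $\bx = \by = 0^\ell$ and the single points $\bx' = x$, $\by' = y$. This is allowed since $0 \le x$ and $0 \le y$. The target is the following equivalence, valid almost surely simultaneously for all $\ell \in \N$ and all $x, y > 0$:
\[
\tau^\ell_\ell \text{ and } \pi_x^y \text{ do not meet on } [0,1] \iff k(x,y) > \ell .
\]
Given this, the set of $\ell$ for which $\tau^\ell_\ell$ meets $\pi_x^y$ is exactly $\{\ell : \ell \ge k(x,y)\}$. Its minimum is therefore $k(x,y)$, which is the theorem. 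Throughout, work on the almost sure event where the optimizers $\tau^\ell$ are unique for every $\ell$ (a countable family, by Theorem \ref{T:dauvergne2021disjoint}(2)) and where Lemma \ref{lemma:coalescence_equivalence} holds for all parameters.

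\emph{Landscape side.} Since $\tau^\ell_\ell(0) = 0 < x$ and $\tau^\ell_\ell(1) = 0 < y$, the two paths cannot meet at $r \in \{0,1\}$. So meeting on $[0,1]$ is the same as meeting on $(0,1)$. If $\pi_x^y$ avoids $\tau^\ell_\ell$, continuity gives $\tau^\ell_\ell < \pi_x^y$ on $(0,1)$. Then $(\tau^\ell, \pi_x^y)$ is a disjoint $(\ell+1)$-tuple of optimizers, because $\tau^\ell$ is the only optimizer from $(0^\ell,0)$ to $(0^\ell,1)$.

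Conversely, suppose some geodesic $\pi$ from $(x,0)$ to $(y,1)$ satisfies $\tau^\ell_\ell < \pi$ on $(0,1)$. The rightmost geodesic satisfies $\pi_x^y \ge \pi$, so it also stays strictly to the right of $\tau^\ell_\ell$. Hence non-meeting is equivalent to the existence of optimizers $\tau^\ell$ and $\pi$ forming a disjoint $(\ell+1)$-tuple.

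\emph{Airy side.} By Lemma \ref{lemma:coalescence_equivalence}, the landscape condition is equivalent to the existence of an Airy optimizer from $-0^\ell$ to $0^\ell$ and an Airy geodesic from $-x$ to $y$ that together form a disjoint tuple.
\begin{itemize}[nosep]
\item A switchback $\ell$-tuple from $-0^\ell$ to $0^\ell$ lives on the degenerate interval $\{0\}$ and is just an index set $I \in \N^\ell_<$, with length $\sum_{i\in I}\cA_i(0)$.
\item Since $\cA_1 > \cA_2 > \cdots$ strictly, the unique maximizer is $I = \{1,\dots,\ell\}$.
\item Disjointness with a switchback geodesic $\sigma$ then reduces to the single condition $\sigma(0) > \ell$, since it only needs checking at the common time $0$.
\end{itemize}
Consequently the Airy condition holds if and only if some maximizing index in \eqref{E:Sxy} exceeds $\ell$. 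Because $k(x,y)$ is defined as the largest maximizing index, this is equivalent to $k(x,y) > \ell$, completing the equivalence.

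The step I expect to need the most care is the bookkeeping in this last reduction. One must check that the switchback conventions in \eqref{E:Sxy11} really allow a degenerate optimizer on $\{0\}$, and that Lemma \ref{lemma:coalescence_equivalence} is valid with the repeated coordinates $0^\ell$. The underlying reason for both is the identity $\cS(0^\ell,0^\ell) = \sum_{i\le \ell}\cA_i(0)$ from \eqref{E:AiryLE_retrieval}. A second delicate point is the passage from ``some geodesic'' to the rightmost geodesic in the landscape. That uses the ordering $\pi_x^y \ge \pi$ together with the fact that the optimizer $\tau^\ell$ is unique; the quantifier over the index $\ell$ is harmless, since there are only countably many $\ell$.
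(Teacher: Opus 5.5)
Your proof is correct and takes essentially the same route as the paper. The paper phrases the landscape step directly as the splitting identity $\cS(0^\ell,0^\ell)+\cS(x,y)=\cS(\{0^\ell,x\},\{0^\ell,y\})$, which is the content of Lemma \ref{lemma:coalescence_equivalence}; it then argues as you do, using that the unique Airy optimizer from $0^\ell$ to $0^\ell$ is the index set $\{1,\dots,\ell\}$, so that disjointness reduces to a switchback geodesic with $\pi'(0)\ge \ell+1$, i.e.\ $\ell<k(x,y)$.
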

	
	\begin{remark}
		Although we only state and prove the result for the Airy sheet, a version of this characterization of the turnaround index holds for any last-passage percolation model.
	\end{remark}
	
	\begin{proof}
		By Theorem \ref{T:dauvergne2021disjoint}.2, we may restrict to the almost sure event where for each $\ell \in \N$, there exists a unique disjoint optimizer from $(0^{\ell}, 0)$ to $(0^{\ell}, 1)$. Since $\pi_x^y$ is rightmost, we have that $\tau_\ell^\ell(r) < \pi_x^y(r)$ for all $r \in [0, 1]$ if and only if
		\begin{align}
			\label{E:l-split}
			\mathcal{S} (0^{\ell}, 0^{\ell}) + \mathcal{S} (x, y) &= \mathcal{S} \left( \{ 0^{\ell} , x\} , \{ 0^{\ell} , y\} \right).
		\end{align}
		Using that \eqref{E:Sxy11} is always attained, this is equivalent to the event in $\cA$ where there is a switchback geodesic $\pi'$ from $-x$ to $y$ which is disjoint from an optimizer from $0^\ell$ to $0^\ell$. Since the Airy lines are almost surely strictly ordered at all points, there is almost surely a unique Airy optimizer $\sigma$ from $0^\ell$ to $0^\ell$ given by $\sigma_i(0) = i$ for all $i$. Therefore the above display is equivalent to the event where there exists a switchback geodesic $\pi'$ from $-x$ to $y$ with $\pi'(0) \ge \ell + 1$. The maximal value of $\pi'(0)$ is precisely the turnaround index $k(x, y)$, and so \eqref{E:l-split} is equivalent to the event where $\ell < k(x, y)$. This yields the result.

	\end{proof}
	
	\begin{remark}
		Using this geometric definition of $k(x, y)$ and the fact that geodesic melon widths tend to $\infty$, one can prove Theorem \ref{T:shadow-sheet-simple}  using just the coalescing properties of the landscape. We sketch out this argument in Appendix \ref{sec:shadow_sheet_geomproof}. 
	\end{remark}

	\subsection{The Width of a Geodesic Melon in the Directed Landscape}
	
	Using our characterization of the turnaround index, we are able to obtain explicit estimates on the width of geodesic melons, proving Theorem \ref{T:melon_width}. As a technical input, we first bound the following last passage problem in the Airy line ensemble.

	\begin{lemma} \label{lemma:AiryLPP_bound} 
		Let $\{ \hat{\mathcal{A}}_{n} \}_{n \in \N}$ be the stationary Airy line ensemble given by $\hat{\cA}_n(x) = \mathcal{A}_n(x) + x^2$. There exist constants $c, d > 0$ so that for all $x > 0$, $k \in \N$ and $s \geq 0$
		\begin{align*}
			\prob\left(  \hmA [(0,k) \to (x, 1)] \geq 2 \sqrt{2 kx} + s \sqrt{x} k^{-1/6} \right)  & \leq c \exp \left( - d s^{3/2} \right)
		\end{align*}		
	\end{lemma}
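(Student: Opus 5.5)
Our plan is to transfer the problem to Brownian last passage percolation, where the LPP value across the top $k$ lines of the melon is just a Brownian LPP value across $k$ independent Brownian motions, and then use the known moderate deviation bound for the top GUE eigenvalue (Lemma \ref{lemma:ledoux_rider_GUE_small}). Concretely, note first that $\hmA[(0,k)\to(x,1)] = \cA[(0,k)\to(x,1)] + x^2$, since the parabola contributes $\sum_i (t_{i-1}^2 - t_i^2) = x^2 - 0$ telescopically. The semi-discrete Airy LPP value should be a limit of the prelimiting quantities $A^n[(0,k)\to(x,1)]$ from Theorem \ref{lemma:AiryLE_defn}, and by the RSK isometry (Lemma \ref{lemma:RSK_Isometry_semidiscrete}) applied to the environment $B$ restricted to a time window, LPP across the top $k$ lines of the melon $WB^n$ from $(n,k)$ to $(n+2xn^{2/3},1)$ can be compared with Brownian LPP. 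The cleanest route: by the Brownian Gibbs property / the fact that the top $k$ lines of the Airy line ensemble on $[0,x]$, conditioned on their values at $0$ and the $(k+1)$st line, are nonintersecting Brownian bridges, one could instead directly bound LPP across $k$ lines in terms of Brownian increments. We will take the first (prelimit) route since it gives exact Gaussian structure.

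Key steps. (1) Show that for the stationary ensemble, $\hmA[(0,k)\to(x,1)]$ is stochastically dominated by $\sup_\pi \sum_{i=1}^k (\hat\cA_i(t_{i-1})-\hat\cA_i(t_i))$, and that the increments $\hat\cA_i(\cdot)-\hat\cA_i(0)$ on $[0,x]$ for $i\le k$ are, after conditioning on boundary data, nonintersecting Brownian bridges lying below a curve. Using monotone coupling for the Brownian Gibbs property (removing the lower boundary $\cA_{k+1}$ and raising the endpoint data is not monotone in the right direction, so instead) we will use the Markov/RSK structure in the prelimit: the increments of $WB^n$ on $[n, n+2xn^{2/3}]$ restricted to the top $k$ lines relate, via Lemma \ref{L:mc-consequence} with the bound $Wf[(b,k)\to(y,1)] \le f[(b,\cdot)\dots]$, to LPP across $k$ independent Brownian motions plus a correction. (2) LPP across $k$ independent Brownian motions over time $T$ has the law of $\sqrt{T}\lambda_1^{(k)}$, the top eigenvalue of a $k\times k$ GUE (Theorem \ref{lemma:Brownian_Burke}). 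With $T=2x$ (after the $n^{-1/3}$, $n^{2/3}$ scalings the Brownian variance becomes $2$), this gives $\sqrt{2x}\lambda_1^{(k)}$, with mean $\approx 2\sqrt{2kx}$ and, by Lemma \ref{lemma:ledoux_rider_GUE_small}, $\P(\lambda_1^{(k)} \ge 2\sqrt{k}(1+\epsilon)) \le ce^{-dk\epsilon^{3/2}}$ (and $e^{-dk\epsilon^2}$ for $\epsilon>1$). Setting $2\sqrt{2x k}\,\epsilon = s\sqrt{x}k^{-1/6}$, i.e.\ $\epsilon \asymp s k^{-2/3}$, gives $k\epsilon^{3/2} \asymp s^{3/2}$, exactly the claimed bound; for $\epsilon>1$ the $\epsilon^2$ tail is even better.

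(3) Control the drift correction: the stationary lines are not driftless Brownian motions, and the melon lines have mean $c_{n,i}\sqrt t$ profiles; after subtracting the parabola, the compensator from $\E A^n$ is handled as in the proof of Lemma \ref{lemma:k_tightness}, and we expect it to be nonpositive or absorbed into constants. The main obstacle is step (1): justifying the domination of Airy LPP across the top $k$ lines by Brownian LPP across $k$ free Brownian motions. I would first try the exact prelimiting identity: by Lemma \ref{L:mc-consequence}, $B[(b',n)\to(y,1)]\ge WB[(b,k)\to(y,1)] + WB[(b',n)\to(b,k)]$, and conversely the LPP from $(b,k)$ to $(y,1)$ in $WB$ equals a difference of such Brownian LPP values; failing a clean identity, I would use the Brownian Gibbs property with a stochastic monotonicity argument, conditioning on $\hat\cA_i(0)$ and $\hat\cA_i(x)$, $i\le k$, and dropping the lower curve, so that increments become Brownian bridges whose endpoint displacement has its own (one-point Airy) tail, contributing an extra $O(\sqrt{x})$-type error that must be checked to fit within $s\sqrt{x}k^{-1/6}$.
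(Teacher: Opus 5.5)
\textbf{There is a gap at step (1).} Your step (2) is fine. Once you have $\E e^{\lambda \hmA[(0,k)\to(x,1)]} \le \E e^{\lambda B[(0,k)\to(x,1)]}$ for all $\lambda \ge 0$, with $B$ independent Brownian motions of diffusivity $2$, then Theorem \ref{lemma:Brownian_Burke}, Lemma \ref{lemma:ledoux_rider_GUE_small} and a Chernoff bound give the $s^{3/2}$ tail exactly as you compute. The problem is that you correctly flag step (1) as the crux but never resolve it, and neither of your suggested routes can resolve it.

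\emph{The metric-composition route.} Lemma \ref{L:mc-consequence} only bounds the melon LPP by a difference of $n$-line quantities. In the limit it reads $\cA[(0,k)\to(y,1)] \le \cS(x',y) - \tilde\cA[(0,k)\to(x',1)] - \cA_k(0)$, which just says that each term in the DtM maximum is at most $\cS(x',y)$. It involves no independent Brownian motions. The choice $x'=0$ returns only the trivial bound $\cA_1(y)-\cA_k(0)$, which is of order $k^{2/3}$. That is useless when $x$ is small compared with $k^{1/3}$, and this is exactly the regime where the lemma is used in Theorem \ref{T:melon_width}.

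\emph{The Gibbs route.} The functional $f \mapsto \sup_\pi \sum_i (f_i(t_{i-1})-f_i(t_i))$ is not monotone in the lines, since each line enters with both signs. So the monotone coupling for the Brownian Gibbs property cannot remove the floor $\cA_{k+1}$. Even with the floor removed and endpoints fixed, the top $k$ lines are non-intersecting bridges rather than independent ones, so the same non-monotone comparison problem remains.

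\textbf{The missing idea is a convex-order comparison, not a stochastic domination.} Lemma \ref{lemma:wu_convex} (from \cite{wu2024applications}) gives $\E G(\cA^n - \E\cA^n) \le \E G(B)$ for every functional $G$ that depends convexly on finitely many time values and is invariant under adding a constant to each line.
\begin{itemize}
\item LPP over paths whose jump times lie in a finite grid is a supremum of linear functionals, hence convex, and it is invariant under line-wise shifts. The same holds for its exponential $e^{\lambda(\cdot)}$ with $\lambda \ge 0$.
\item Apply the comparison to this exponential, send $n\to\infty$ via Fatou, and refine the grid via monotone convergence. This yields the exponential-moment comparison above.
\item You should then run your GUE estimate through the Laplace transform of $B[(0,k)\to(x,1)]$, rather than through its tail directly.
\end{itemize}

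This also makes your step (3) unnecessary. By stationarity, $\E\cA_i(t) = \E\cA_i(0) - t^2$, so $\cA - \E\cA$ differs from $\hmA$ by a constant on each line. The two therefore have exactly the same LPP values, and there is no drift correction to control.
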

	
	The proof of Lemma \ref{lemma:AiryLPP_bound} is a short application of the powerful results from \cite{wu2024applications}.
	
	\begin{lemma}[Special case of Proposition 2.9, \cite{wu2024applications}]
		\label{lemma:wu_convex}
		Fix $R = \II{k, \ell} \times[a, b] \subset \Z \times \R$, and let $(B_k, \ldots, B_{\ell})$ be independent Brownian motions on $[a, b]$ of diffusion coefficient $2$, all starting at $0$. Let $\cA^n$ be the approximation of the parabolic Airy line ensemble introduced in Lemma \ref{L:An-limit}. Let $G: C \left( R , \, \R \right) \to \R$ be a functional satisfying the following:
		\begin{itemize}
			\item $G$ only depends convexly on finitely many times. That is, there exists a finite set $\Pi \subset [a,b]$, and a convex function $g: \R^{\II{k, \ell} \times \Pi} \to \R$ so that $G= g \circ \operatorname{Res}_{\II{k, \ell} \times \Pi}$. Here $\operatorname{Res}_{\II{k, \ell} \times \Pi}$ is the restriction map given by $z \mapsto \{ z_{j}(t)\}_{(j, t) \in \II{k, \ell} \times \Pi}$. 
			\item $G$ is invariant under adding constants to individual lines:
			$G((f_i+c_i)_{i=k}^{\ell})=G(f)$ for every
			$(c_k,\ldots,c_\ell)\in\R^{\ell-k+1}$.
		\end{itemize}
		Then
		\begin{align*}
			\E [G(\cA^n - \E \cA^n)] &\leq \E [G (B)].
		\end{align*}
	\end{lemma}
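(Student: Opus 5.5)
The plan is to deduce the comparison from a Gaussian comparison inequality for log-concave perturbations of Gaussian measures, namely Hargé's inequality. In the form I would use, it says the following. Let $\gamma$ be a centered Gaussian measure on $\R^N$, and let $\mu = e^{-V}\gamma/Z$ with $V$ convex (possibly taking the value $+\infty$). Then for every convex $g$,
$$
\int g(z - m_\mu)\, d\mu(z) \le \int g\, d\gamma, \qquad m_\mu := \int z \, d\mu(z).
$$
So the work is to show that the finite-dimensional marginal of $\cA^n$ which $G$ sees is of this form, with the correct Gaussian reference measure. Once that is done, the centering $\E \cA^n$ appearing in the statement is exactly $m_\mu$.

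\textbf{Step 1: reduce to Dyson's Brownian motion.} Undo the scalings in Theorem \ref{lemma:AiryLE_defn} and Lemma \ref{L:An-limit}. By the equality in law of Lemma \ref{L:An-limit}, we may use $A^n$ in place of $\cA^n$. Then $A^n$ is obtained from $WB^n$ by a time change, a Brownian rescaling, and subtraction of a deterministic affine function of time.
\begin{itemize}
\item The deterministic affine part cancels after centering.
\item The rescaling turns standard Brownian motions into Brownian motions of the diffusion coefficient appearing in the statement; I expect the factor $2$ to come from $x \mapsto 2xn^{2/3}$ together with the $n^{-1/3}$ normalization, and I would check this constant directly.
\end{itemize}
So it suffices to prove $\E[G(W - \E W)] \le \E[G(B)]$, where $W$ is $n$-level Dyson Brownian motion from $0$ and $B$ consists of independent Brownian motions from $0$ with matching variance. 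The restriction to lines $\II{k,\ell}$ is harmless: view $G$ as a convex function of all $n$ lines that ignores lines outside $\II{k,\ell}$.

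Enlarge $\Pi$ to $\Pi' = \{0 < t_1 < \dots < t_m\}$ containing $\Pi$. By Theorem \ref{lemma:Brownian_Burke} and the Karlin--McGregor formula, the joint density of $(W(t_1), \dots, W(t_m))$ is the $h$-transform
$$
\prod_{r} \det\bigl[p(t_r - t_{r-1}, x^{r-1}_i - x^r_j)\bigr] \cdot \frac{\Delta(x^m)}{\Delta(0^+)}
$$
on Weyl chambers. For continuous-time paths, this equals the weak limit as the mesh of $\Pi'$ goes to $0$ of the following measures: independent Brownian motions sampled on $\Pi'$, tilted by $\Delta(W(t_m))$, and conditioned on ordering at every mesh time.

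\textbf{Step 2: log-concavity of the mesh approximation.} For a fixed mesh, this approximating measure has density, with respect to the Gaussian law $\gamma$ of $B|_{\Pi'}$, proportional to
$$
\Delta(x^m)\, \prod_r \mathbf 1\{x^r \in \text{chamber}\}.
$$
Here $\log \Delta$ is concave on the (convex) Weyl chamber, and each chamber indicator is log-concave. Hence $V = -\log \Delta - \sum_r \log \mathbf 1$ is convex, and Hargé's inequality applies with $g$ the convex function underlying $G$, composed with restriction to $\Pi$:
$$
\E_{\mu_{\Pi'}}\bigl[g(X - m_{\mu_{\Pi'}})\bigr] \le \E[g(B|_\Pi)].
$$
The time-$0$ values are all $0$ for both $W$ and $B$. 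The invariance of $G$ under adding constants to lines lets us take $B$ starting from $0$ at time $a$ rather than at time $0$: the increments of $B$ on $[a,b]$ have the same law as Brownian motion started at $0$ at time $a$.

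\textbf{Step 3: pass to the limit.} As the mesh tends to $0$, the finite-dimensional laws on $\Pi$ converge to those of $W$, and the means converge too. I expect this limit to be the main obstacle. First, $g$ is convex, hence continuous but not necessarily bounded, so I need uniform integrability of $g(X - m)$. I would handle this by first proving the inequality for $g$ of at most linear growth (e.g.\ Lipschitz convex truncations, $g_R = \sup$ of finitely many affine minorants) and then using monotone convergence on both sides. For Lipschitz $g$, uniform Gaussian tails for the discretely-conditioned ensembles suffice; these follow from Hargé itself applied to $g(z) = e^{\lambda |z_j|}$-type convex functions. Second, I must confirm that tilting by $\Delta$ with conditioning only at mesh times converges to Dyson BM. This follows from the Karlin--McGregor representation and the limiting description of the conditioning in Theorem \ref{lemma:Brownian_Burke}. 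If this step proves delicate, a fallback is to avoid meshes altogether: verify directly that each Karlin--McGregor determinant $\det[e^{x_iy_j/t}]$ is log-concave in $(x,y)$ on the product of chambers. That would show the exact finite-dimensional density on $\Pi$ is Gaussian times a log-concave function, and Hargé would then apply without any limit.
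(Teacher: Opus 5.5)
The paper does not prove this lemma; it imports it from \cite{wu2024applications}. Your strategy is a sound way to prove it. The reductions are fine: the rescaling, the affine term cancelling after centering, the use of invariance under constants to move the start of $B$ to time $a$, and the truncation $g_R \uparrow g$.

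The gap is in the one input that carries all the content. You need that the law of Dyson's Brownian motion at the finitely many Dyson times corresponding to $\Pi$ is $e^{-V}\gamma$, with $V$ convex and $\gamma$ the law of independent Brownian motions at the same times. Neither of your routes establishes this as written.

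\textbf{The displayed density is degenerate at the start.} With $x^0 = 0$, the first determinant and $\Delta(0^+)$ both vanish. From the origin, Dyson's Brownian motion enters through the law $\propto \Delta(x^1)^2 \prod_i p(t_1, x^1_i)$.

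\textbf{The main route's limit is not justified.} You claim that $\Delta$-tilted Brownian motions, conditioned to be ordered at mesh times, converge to Dyson's Brownian motion as the mesh refines down to time $0$. Theorem~\ref{lemma:Brownian_Burke} does not give this, since it concerns continuous-time conditioning. With mesh points accumulating at the boundary point $0$, you are in the territory of invariance principles for walks conditioned to stay in a Weyl chamber. You could avoid this by conditioning only at mesh times in $[\epsilon, t_m]$, using dominated convergence, and then letting $\epsilon \to 0$ via continuity of Dyson's Brownian motion in its starting point. That still needs the uniform-integrability bookkeeping you sketch.

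\textbf{The fallback is false as stated.} We have $\det[e^{x_i y_j/t}] = e^{\sum_i x_i y_i/t}\,P_t(x,y)$, with $P_t$ as below. Along $(x, y) \mapsto (x + s\mathbf{1}, y + s\mathbf{1})$, the factor $P_t$ is constant while $\sum_i x_i y_i/t$ is strictly convex in $s$. So the determinant is not log-concave; for one line it is just $e^{xy/t}$.

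\textbf{The repair.} What has to be log-concave is the density relative to $\gamma$, not the determinant. Take the Dyson times $0 < t_1 < \cdots < t_m$ corresponding to $\Pi$, and divide the exact finite-dimensional density by the Gaussian one. The Vandermonde factors of the $h$-transform telescope against the entrance law, giving
\[
\frac{d\mu}{d\gamma}(x^1, \ldots, x^m) \propto \Delta(x^1)\,\Delta(x^m) \prod_{r=2}^{m} P_{t_r - t_{r-1}}(x^{r-1}, x^r), \qquad P_s(x, y) := \frac{\det[p(s, x_i - y_j)]_{i,j=1}^n}{\prod_{i=1}^n p(s, x_i - y_i)},
\]
on the product of Weyl chambers, and $0$ off it.

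By Karlin--McGregor, $P_s(x, y)$ is the probability that the independent Brownian bridges $x_i + (y_i - x_i)u/s + \beta_i(u)$, $u \in [0, s]$, never meet. This event is an intersection of half-spaces in $(x, y, \beta)$. Applying Pr\'ekopa's theorem at finitely many times $u$ and then taking a monotone limit shows $P_s$ is jointly log-concave. Also, $\log|\Delta|$ is concave on the chamber.

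Thus $V$ is convex and Harg\'e's inequality applies directly to $g$ on $\Pi$. Your rescaling then finishes the proof, with no mesh, no limiting measures and no uniform-integrability argument.
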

	
	We would like to apply Lemma \ref{lemma:wu_convex} to the Airy line ensemble $\cA$, rather than $\cA^n$, by using that $\cA^n - \E \cA^n \implies \cA - \E \cA$. Here the convergence follows from Lemma \ref{L:An-limit} (the convergence of the expectations follows by uniform
	integrability from the GUE eigenvalue-counting estimates
	in \cite[Lemmas 2.2--2.3]{gustavsson2005gaussian}
	and the tail bounds in \cite{ledoux2010small}). All our functions $G$ are non-negative, and so the conclusion of the lemma then holds in the $n \to \infty$ limit by Fatou's lemma.
	
	\begin{proof}[Proof of Lemma \ref{lemma:AiryLPP_bound}]
		Let $\pi: (0, k) \to (x, 1)$ be any upright path. Notice for any such path, the following functional is convex and depends on finitely many times:
		\begin{align*}
			G_{\pi}(f) &= f(\pi) \defeq \sum_{i = 1}^{k} f_i(t_{i - 1}) - f_i(t_{i}),
		\end{align*}
		where in the above, $0 = t_{k} \leq \cdots \leq t_0 = x$ are the jump times of $\pi$. Each $G_\pi$ is invariant under adding constants to individual
		lines, since every path weight is a sum of increments on each
		line. 
		Since the supremum of a family of convex functions is again a convex function, for any finite set $\Pi \subset [0, x]$, the function
		\begin{align*}
			G_{\Pi}(f) &= \sup \{ f(\pi) \, | \, \textrm{ jump times of } \pi  \subset \Pi \}
		\end{align*}
		satisfies Lemma \ref{lemma:wu_convex}.
		Furthermore, since the exponential of a convex function is also convex for any $\lambda \geq 0$, the function $\exp \left( \lambda G_{\Pi}(f) \right)$ also satisfies this lemma. Since $\exp( \lambda G_{\Pi})$ is non-negative, we can apply Lemma \ref{lemma:wu_convex} to the Airy line ensemble with this function. 
		
		Letting $\Pi_n \defeq 2^{-n} \Z \cap [0, x] \cup \{ x\}$, we have
		\begin{align*}
			\E \left[ \exp \left( \lambda G_{\Pi_n}(\cA - \E \cA) \right)  \right] & \leq \E \left[ \exp \left( \lambda G_{\Pi_n}(B) \right)  \right].
		\end{align*}
		As the sequence $G_{\Pi_n}(f)$ is increasing in $n$, by the monotone convergence theorem we have 
		\begin{align*}
			\E \left[ \exp \left( \lambda \cdot (\cA - \E \cA)[(0, k) \to (x, 1)]  \right)  \right] & \leq 	\E \left[ \exp \left( \lambda \cdot B [(0, k) \to (x, 1)]  \right)  \right],
		\end{align*}
		where in the above we have used the fact that for any $f \in C(\II{1, k} \times  [0, x] , \R)$,
		\begin{align*}
			\lim_{n \to \infty} G_{\Pi_n}(f) = f [(0, k) \to (x, 1)]. 
		\end{align*}
		At this point, we can replace $\cA - \E \cA$ by the stationary Airy line ensemble $\hmA$, noting that LPP values are invariant under constant shifts of the lines.
		The proof then follows from Markov's inequality. Define
		\begin{align*}
			Z_{\hmA} &= \frac{\hmA[(0,k ) \to (x, 1)]  - 2 \sqrt{2kx}}{\sqrt{x} k^{-1/6}} , \qquad 
			Z_{B} = \frac{B[(0,k ) \to (x, 1)]  - 2 \sqrt{2kx}}{\sqrt{x} k^{-1/6}}  
		\end{align*}
		
		Then
		\begin{align} \label{E:ldp_estimate}
			\prob\left(  \hmA [(0,k) \to (x, 1)] \geq 2 \sqrt{2kx} + s \sqrt{x} k^{-1/6} \right) & \leq \inf_{\lambda \geq 0} e^{- \lambda s} \E e^{\lambda Z_{\hmA}}  \leq \inf_{\lambda \geq 0} e^{-\lambda s} \E e^{\lambda Z_{B}}. 
		\end{align}		
		By Theorem \ref{lemma:Brownian_Burke}, we have that $B[(0, k) \to (x, 1)]\stackrel{d}{=} W^{(k)}_1(x)$. Hence, we may apply the GUE tail bounds from \cite{ledoux2010small} to bound the Laplace Transform of $Z_B$,
		\begin{align*}
			\E e^{\lambda Z_B} & \leq 1 + \lambda \int_{0}^{\infty} e^{\lambda t } \prob \left( Z_B > t \right) \, dt  \leq 1 + c \lambda \int_{0}^{\infty} e^{\lambda t - d t^{3/2}} \, dt  \leq c e^{d \lambda^{3}},
		\end{align*}
		where in the above we allow $c, d$ to change line by line. Plugging this back into \eqref{E:ldp_estimate} and optimizing over $\lambda$ concludes the proof. 
	\end{proof}
	
	To prove Theorem \ref{T:melon_width}, we also need standard bounds on the Airy point locations and geodesic deviations.
	
	\begin{lemma}[Corollary 4.3, \cite{dauvergne2021bulk}]
		\label{L:bulk-lemma}
		Let $\kappa = (3 \pi/2)^{2/3}$. There exist $c, b >0$ such that for all $i \in \mathbb{N}$ and $m > b\log(i +1)$, we
		have that
		$$
		\mathbb P(|\cA_i(0)+ \kappa i^{2/3}| \ge m i^{-1/3}) \le c e^{-m/5}.
		$$
	\end{lemma}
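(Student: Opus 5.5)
The plan is to work directly with the Airy point process $\{\cA_j(0)\}_{j \in \N}$, which is a determinantal point process with the Hermitian Airy kernel $K_{\mathrm{Ai}}$. The argument parallels the GUE computation in the proof of Lemma \ref{L:GUE-tail}, but uses a sharp variance bound in place of the crude estimate $\operatorname{Var} \le \E$.

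\textbf{Step 1: reduction to counting.} For $a \in \R$ let $N_a = \#\{j : \cA_j(0) \ge a\}$. Then
$$
\{\cA_i(0) \ge a\} = \{N_a \ge i\}, \qquad \{\cA_i(0) < a\} = \{N_a \le i-1\}.
$$
Since $K_{\mathrm{Ai}}$ restricted to $[a, \infty)$ is a trace-class operator with spectrum in $[0,1]$, $N_a$ is a sum of independent Bernoulli random variables. In particular, Bernstein's inequality gives
$$
\P(|N_a - \E N_a| \ge u) \le 2\exp\Big(-\frac{u^2}{2(\operatorname{Var} N_a + u/3)}\Big).
$$

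\textbf{Step 2: mean and variance asymptotics.} Next I would record the two inputs from the Airy kernel.
\begin{itemize}
\item Uniformly in $t \ge 0$,
$$
\E N_{-t} = \int_{-t}^\infty K_{\mathrm{Ai}}(x,x)\,dx = \frac{2}{3\pi} t^{3/2} + O(1).
$$
This follows from the one-point density asymptotics $K_{\mathrm{Ai}}(x,x) = \pi^{-1}\sqrt{|x|} + O(|x|^{-1})$ as $x \to -\infty$, together with superexponential decay as $x \to +\infty$.
\item Uniformly in $a$,
$$
\operatorname{Var} N_a = \int_a^\infty\int_{-\infty}^a K_{\mathrm{Ai}}(x,y)^2\,dy\,dx \le C\log(2+|a|).
$$
This is the Soshnikov-type logarithmic variance bound. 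I would either cite it or derive it from the standard bounds on $K_{\mathrm{Ai}}$ off the diagonal, which decay like $|x-y|^{-1}$ in the bulk regime.
\end{itemize}
Now set $a_\pm = -\kappa i^{2/3} \pm m i^{-1/3}$. Since $\frac{2}{3\pi}\kappa^{3/2} = 1$, a Taylor expansion gives
$$
\E N_{a_\pm} = i \mp \pi^{-1}\kappa^{1/2} m + O(1) + O(m^2 i^{-1}),
$$
valid as long as $m \le \delta i$ for a small $\delta > 0$.

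\textbf{Step 3: tail bounds in the main range.} In the range $b\log(i+1) < m \le \delta i$, the deviation $u \asymp m$ needed to make $N_{a_+} \ge i$ or $N_{a_-} \le i - 1$ dominates both the $O(1)$ error and the variance $O(\log i)$. Bernstein's inequality then gives a bound $\exp(-c' m)$. Taking $b$ large makes $u \ge c m$ with $\operatorname{Var} \le u$, so the exponent is linear in $m$. Tuning constants, replacing $m$ by a constant multiple if needed, and enlarging $c$ yields $e^{-m/5}$.

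\textbf{Step 4: large $m$.} For $m > \delta i$ I would treat the two directions separately.
\begin{itemize}
\item \emph{Upper deviation.} Use $\cA_i(0) \le \cA_1(0)$ and the Tracy--Widom upper tail $\P(\cA_1(0) \ge s) \le c e^{-d s^{3/2}}$. Here $s \gtrsim m i^{-1/3} \ge m^{2/3}\delta^{1/3}$, which is more than enough.
\item \emph{Lower deviation.} The event is that $N_a \le i-1$ for some $a \le -\kappa i^{2/3} - \delta i^{2/3}$, where $\E N_a \ge (1+\delta')i$. Bernstein then gives decay $\exp(-c(\E N_a - i))$. One checks that $\E N_a - i \gtrsim m$ throughout this range, using $t^{3/2}$ growth for $t = |a|$.
\end{itemize}

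\textbf{Main obstacle.} I expect the main obstacle to be the uniform logarithmic variance bound in Step 2, together with the $O(1)$-uniform mean asymptotics. These require careful Airy-function asymptotics across the transition region near $0$. If a direct derivation is messy, the fallback is to transfer the known GUE rigidity/counting bounds (as in \cite{gustavsson2005gaussian}) through the edge scaling limit of Theorem \ref{lemma:AiryLE_defn}, with uniformity in $n$.
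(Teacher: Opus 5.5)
The paper gives no proof of this lemma; it imports it from \cite{dauvergne2021bulk}. Your framework is the standard one and can be completed: Bernoulli-sum structure of $N_a$, Soshnikov's asymptotics $\E N_{-t}=\frac{2}{3\pi}t^{3/2}+O(1)$ and $\operatorname{Var}N_a=O(\log(2+|a|))$, then Bernstein. However, the upper-deviation half of Step~4 fails as written.

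You bound $\P(\cA_i(0)\ge -\kappa i^{2/3}+mi^{-1/3})$ by $\P(\cA_1(0)\ge s)$ and assert that $s$ is at least a constant times $mi^{-1/3}$. But the threshold is $s=mi^{-1/3}-\kappa i^{2/3}$, which is negative for all $\delta i<m<\kappa i$. There the Tracy--Widom tail gives nothing; for example $\P(\cA_1(0)\ge -\kappa i^{2/3}/2)\to 1$.

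The repair stays inside your method. The cap $m\le\delta i$ is only there to control the Taylor remainder, and it is not needed. Concavity of $x\mapsto 1-(1-x)^{3/2}$ on $[0,1]$ gives $i-\E N_{a_+}\ge m/\kappa-O(1)$ for every $m\le\kappa i$, so Bernstein covers all thresholds $\le 0$. (For the lower deviation, convexity of $(1+x)^{3/2}$ likewise gives $\E N_{a_-}-i\ge \frac{3m}{2\kappa}-O(1)$ for all $m$, so that part of your argument is fine.) For $m>\kappa i$ the threshold is $\ge 0$:
\begin{itemize}
\item When $\kappa i<m\le Ki$, use $\{N_a\ge i\}\subset\{N_0\ge i\}$ and the Chernoff bound $\P(N_0\ge i)\le (e\E N_0/i)^i$, which decays superexponentially in $i$.
\item When $m\ge Ki$ with $K$ large, use the Tracy--Widom tail. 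Here $s\ge(1-\kappa/K)mi^{-1/3}$, hence $s^{3/2}\ge(1-\kappa/K)^{3/2}K^{1/2}m$.
\end{itemize}

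Second, ``replacing $m$ by a constant multiple'' is not a valid step. The exponent $1/5$ must come out of the estimate, and enlarging $c$ cannot compensate for a smaller exponent. It does come out, but only with bookkeeping. Take $b$ large so that $\operatorname{Var}N_a\le\eta u$ and the mean errors are at most $\eta m$, for small $\eta$. Bernstein then gives exponent $\frac{3u}{2(1+3\eta)}$, where $u\approx\frac{3}{2\kappa}m\approx 0.53\,m$ (or $u\ge m/\kappa-O(1)$ in the extended range). This is comfortably above $m/5$. With only $\operatorname{Var}N_a\le u$, as you wrote, you get $\frac{3u}{8}\approx 0.2001\,m$, which leaves no room for the error terms.

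Finally, a pointwise error $O(|x|^{-1})$ in $K_{\mathrm{Ai}}(x,x)$ only yields $\E N_{-t}=\frac{2}{3\pi}t^{3/2}+O(\log t)$. The $O(1)$ needs the oscillatory form $-\cos(\frac43|x|^{3/2})/(4\pi|x|)$ of that term. This is harmless here, since $m>b\log(i+1)$ absorbs an $O(\log t)$ error.
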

	
	\begin{lemma}[part of Proposition 12.3, \cite{dauvergne2022directed}]
		\label{L:geo-bound}
		Let $\pi$ be the (almost surely unique) geodesic in $\cL$ from $(0,0)$ to $(0, 1)$. Then there are constants $c, d > 0$ such that for all $m > 0$, 
		$$
		\P(\sup_{t \in [0, 1]} |\pi(t)| > m) \le c e^{-dm^3}.
		$$
	\end{lemma}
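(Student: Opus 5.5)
Since this lemma is imported from \cite{dauvergne2022directed}, here is how I would re-derive it from the metric composition law plus a uniform tail bound on the landscape around its parabolic shape. Fix $m>0$. By symmetry it suffices to bound the event $E_m$ that $\pi(t)\ge m$ for some $t\in(0,1)$. On $E_m$, the metric composition law together with geodesicity of $\pi$ give
\[
\cL(0,0;0,1)=\cL(0,0;\pi(t),t)+\cL(\pi(t),t;0,1)
\]
for that $t$. By the shape function of $\cL$, the right-hand side has mean approximately $-\pi(t)^2/t-\pi(t)^2/(1-t)\le -4m^2$. The left-hand side is a GUE Tracy--Widom variable. Its lower tail $\P(\cL(0,0;0,1)\le -m^2)$ decays like $e^{-dm^6}$, which is far better than needed. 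So it remains to show that with probability at least $1-ce^{-dm^3}$, every pair of landscape values above is within about $m^2$ of its parabolic mean.

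\textbf{Key input.} The main tool is the global modulus bound on $\cL$ from \cite{dauvergne2022directed}. There is a random constant $C$ with $\P(C>a)\le ce^{-da^{3/2}}$ such that for all $(x,s;y,t)$ in a fixed compact time window,
\[
\Big|\cL(x,s;y,t)+\frac{(x-y)^2}{t-s}\Big|\le C\,(t-s)^{1/3}\log^{4/3}\!\Big(\frac{2(\|u\|+2)}{t-s}\Big),
\]
where $u=(x,s;y,t)$. I do not want to use this with a single global constant, because the logarithm near $t=0$ would cost more than the parabola gains.

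\textbf{Dyadic decomposition in time.} Instead I would split the times into dyadic blocks $t\in[2^{-j-1},2^{-j}]$, together with the symmetric blocks near $t=1$. On the $j$-th block, KPZ scaling invariance of $\cL$ rescales the short segment to unit time. The parabolic deficit there is at least $m^2 2^{j}$. The fluctuation scale is $2^{-j/3}$. The tail bound applied to the supremum over the rescaled strip $\{(y,t): y\ge m,\ t\in[2^{-j-1},2^{-j}]\}$ then gives a probability of order
\[
\exp\!\big(-d\,(m^2 2^{j}\cdot 2^{j/3})^{3/2}\big)=\exp(-d\,m^3 2^{2j}).
\]
The long complementary segment from $(y,t)$ to $(0,1)$ is handled by the same bound at unit scale. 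Large values of $y$ are absorbed because the quadratic $y^2/t$ dominates the logarithmic growth in $\|u\|$. Summing over $j\ge 0$ and over the mirror blocks yields $ce^{-dm^3}$.

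\textbf{Main obstacle.} The delicate step is making the supremum over the unbounded spatial range $y\ge m$ and over $t$ near the endpoints uniform. In particular, the logarithmic factor in the modulus bound must be dominated by the parabolic gain $y^2 2^j$ without losing the cubic exponent. I would handle this by a further dyadic split in $y$, into ranges $y\in[2^i m,2^{i+1}m]$. Each such piece has deficit of order $4^i m^2 2^j$, which makes the double sum over $i,j$ converge.
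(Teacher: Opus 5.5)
The paper gives no proof of this lemma; it is imported from Proposition 12.3 of \cite{dauvergne2022directed}. Your overall strategy is sound: metric composition at a time where $\pi(t)\ge m$, the Tracy--Widom lower tail $e^{-dm^6}$ for $\cL(0,0;0,1)\le -m^2$, and comparing the deficit $y^2/(t(1-t))\ge 4y^2$ with the fluctuations of the two pieces. But the step meant to deliver the cubic exponent fails as you set it up. In each block you treat the supremum of the normalized fluctuation over the rescaled strip $\{y\ge m\}$ as an $O(1)$ variable with tail $e^{-da^{3/2}}$. The only uniform input you invoke is the global modulus bound, and it contains the factor $\log(\|u\|+2)$, which on that strip is at least of order $\log m$. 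Take the base piece $y\in[m,2m]$, $t\in[1/4,3/4]$. There the deficit is of order $m^2$, while your bound on the fluctuations is of order $C\log^{4/3}m$. So you need $C\lesssim m^2/\log^{4/3}m$, and the resulting failure probability is only $\exp(-dm^3/\log^2 m)$. Your dyadic split in $y$ makes the double sum over $i,j$ converge but does not touch this $i=j=0$ term. KPZ rescaling does not help either: rescaling $y\approx m$ to unit size sends $t\approx 1/2$ to $t'\approx m^{-3/2}$, so the $\log\|u\|$ loss just becomes a $\log(1/t')$ loss. Your stated reason for avoiding a global constant is also misplaced. Near $t=0$ the fluctuation $Ct^{1/3}\log^{4/3}(1/t)$ is negligible next to $y^2/t$, so the large-$j$ blocks were never the problem. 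The loss occurs at times bounded away from $0$ and $1$ with $y\asymp m$, which is exactly where ``the quadratic dominates the logarithm'' fails.

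The missing idea is to localize in space with the symmetries of $\cL$ before applying any modulus bound. Use translation invariance together with shear invariance: indexed by $(x,s;y,t)$, the process $\cL(x+cs,s;y+ct,t)+\frac{(x+cs-y-ct)^2}{t-s}$ has the same joint law as $\cL(x,s;y,t)+\frac{(x-y)^2}{t-s}$. With these you can move each unit cell $\{y\in[k,k+1]\}$, cut in time into $O(k)$ pieces, onto a fixed compact set. There the supremum of the upper deviation of $\cL+\frac{(x-y)^2}{t-s}$ has tail $ce^{-da^{3/2}}$ with no $\log k$ loss, and this works for both $\cL(0,0;y,t)$ and $\cL(y,t;0,1)$. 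On the event $\cL(0,0;0,1)\ge -m^2$, the available deficit on the cell is at least $4k^2-m^2\ge 3k^2$ for $k\ge m$. So each cell fails with probability at most $\mathrm{poly}(k)\,e^{-dk^3}$, and summing over $k\ge m$ gives $ce^{-dm^3}$. Your dyadic time blocks remain useful near $t\in\{0,1\}$, where the extra factor $2^{j}$ in the deficit absorbs any logarithms. Without this localization step, the argument as written proves only $c\exp(-dm^3/\log^2 m)$, not the stated bound.
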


	\begin{proof}[Proof of Theorem \ref{T:melon_width}]
		We begin with the proof of the upper bound, which is easier.  Let $\pi_{\ell}=(\pi_{\ell,1},\ldots,\pi_{\ell,\ell}) \defeq \pi_{0^{\ell}}^{0^{\ell}}$ be the disjoint optimizer from $(0^{\ell}, 0)$ to $(0^{\ell}, 1)$. Thus $\pi_{\ell,\ell}$ is its rightmost component. We work on the almost sure event where $\pi_{\ell}$ is unique for each $\ell \in \N$. Fix a constant $C > 2^{-1/2} (3 \pi / 2)^{1/3}$ and let $\tau_{\ell}$ be the rightmost geodesic from $(C \ell^{1/3} , 0)$ to $(C \ell^{1/3}, 1)$. It then follows that
		\begin{align*}
			\prob \left( \max_{t \in [0, 1]} \pi_{\ell,\ell}(t) \leq 2 C \ell^{1/3} \right) & \geq 1  - \prob \left(  \pi_{\ell} \cap \tau_{\ell} \neq \emptyset  \right) - \prob \left( \max_{t \in [0, 1]} \tau_{\ell}(t) \geq 2 C \ell^{1/3} \right) \\
			& \geq 1  - \prob \left( \pi_{\ell} \cap \tau_{\ell} \neq \emptyset \right) - 2e^{-d \ell},
		\end{align*}
		where in the last step we used Lemma \ref{L:geo-bound}. Hence, to conclude the upper bound on the optimizer width, it suffices to bound $\prob (\pi_{\ell} \cap \tau_{\ell} \neq \emptyset)$. By Theorem \ref{T:k_geom},
		\begin{align} \label{E:k_sum}
			\prob \left( \pi_{\ell} \cap \tau_{\ell} \neq \emptyset \right)  &= \prob \left(  k (C \ell^{1/3}  , C \ell^{1/3}) \leq \ell \right).
		\end{align}
		To bound the right-hand side above, we use that by \eqref{E:DtM_AirySheet_simple}, on the event $k(x, y) \le \ell$, we have
		\begin{align*}
			\cS(x, y) &= \max_{k \le \ell} \tilde{\mathcal{A}}[(0,k) \to (x, 1)] + \mathcal{A}_k(0) + \mathcal{A}[(0, k) \to (y, 1)] \\
			&\le \max_{k \le \ell} \mathcal{A}_1(-x) + \mathcal  A_1(y) - \mathcal{A}_k(0) \\
			&= \mathcal{A}_1(-x) + \mathcal  A_1(y) - \mathcal{A}_\ell(0).
		\end{align*}
		Here in the inequality, we have used that for any environment $f$ with $f_1 > f_2 > \cdots > f_k$, $f[(0, k) \to (x, 1)] \le f_1(x) - f_k(0)$.
		Now, using that $\cS(C \ell^{1/3}, C \ell^{1/3}) \stackrel{d}{=} \cA_1(0)$, and appealing to the tail bounds in Lemma \ref{L:bulk-lemma}, and standard Tracy-Widom tail bounds and the stationarity of $\cA(x) + x^2$, we have
		\begin{align*}
			\prob \left(\cS(C \ell^{1/3}, C \ell^{1/3}) \le \mathcal{A}_1(-C \ell^{1/3}) + \mathcal  A_1(C \ell^{1/3}) - \mathcal{A}_\ell(0)  \right) &\leq ce^{-d\ell}.
		\end{align*}

		Now we proceed with the lower bound. Fix $\epsilon > 0$ and let $\sigma_{\ell}$ be the rightmost geodesic from $( 2\epsilon \ell^{1/3}    , 0 )$ to $(2\epsilon \ell^{1/3}   , 1)$. Then 
		\begin{align*}
			\prob \left(  \max_{t \in [0, 1]}  \pi_{\ell,\ell}(t) \geq \epsilon  \ell^{1/3}   \right) & \geq 1 - \prob \left(  \pi_{\ell} \cap \sigma_{\ell}    = \emptyset \right) - \prob \left(  \min_{t \in [0, 1]} \sigma_{\ell}(t)   \leq \epsilon \ell^{1/3}   \right) \\
			&\geq 1 - \prob \left(  \pi_{\ell} \cap \sigma_{\ell}   = \emptyset \right) - 2 e^{-d \ell}
		\end{align*}
		where again in the last step we used Lemma \ref{L:geo-bound}. By Theorem \ref{T:k_geom},
		\begin{align*}
			\left\{ \pi_{\ell} \cap \sigma_{\ell}   = \emptyset  \right\}  
			&= \{ k(2 \epsilon \ell^{1/3}   , 2 \epsilon \ell^{1/3}   ) > \ell \}.
		\end{align*}
		Now, 
		\begin{align*}
			\P(  k(2 \epsilon \ell^{1/3}&   , 2 \epsilon \ell^{1/3}) > \ell) \\
			&\le \P(\cS(2 \epsilon \ell^{1/3}   , 2 \epsilon \ell^{1/3}) < - \ell^{2/3}) + \sum_{j = \ell + 1}^\infty \P(\cS(2 \epsilon \ell^{1/3}   , 2 \epsilon \ell^{1/3}) > - \ell^{2/3}, k(2 \epsilon \ell^{1/3}   , 2 \epsilon \ell^{1/3}) = j) \\      
			&\le c e^{-d\ell^2} + \sum_{j = \ell + 1}^\infty \P(\mathcal{A}_j(0) + \mathcal{A}[(0, j) \to (2 \epsilon \ell^{1/3}  , 1) ]  + \tilde{\mathcal{A}}[(0, j) \to (2 \epsilon \ell^{1/3}   , 1) ]    \geq - \ell^{2/3}), 
		\end{align*}
		where the inequality uses \eqref{E:DtM_AirySheet_simple} and Tracy-Widom lower tail bound. Since $\mathcal{A}[(0, j) \to (2 \epsilon \ell^{1/3}  , 1) ]  \stackrel{d}{=} \tilde{\mathcal{A}}[(0, j) \to (2 \epsilon \ell^{1/3}   , 1) ]$, each of the probabilities in the above sum is bounded above by
		$$
		\P(\cA_j(0) > -2j^{2/3})
		+2\P(\mathcal{A}[(0,j)\to(2\epsilon\ell^{1/3},1)]
		>j^{2/3}/2)
		\le c\exp(-dj).
		$$
		where the bound uses Lemma \ref{lemma:AiryLPP_bound} and Lemma \ref{L:bulk-lemma}. Summing the above over $j \ge \ell +1$ yields the result.
	\end{proof}

	\subsection{Proof of Theorem \ref{thm:Hammond_conj}} \label{sec:Hammond_proof}
	
	In this section we prove Theorem \ref{thm:Hammond_conj}. The proof combines the DtM representation for the Airy sheet with Brownian Gibbs resampling for the Airy line ensemble. Before proceeding with the proof, we recall the Brownian Gibbs property.
	
	\begin{thm}[\cite{CH}]
		\label{T:Brownian-Gibbs}
		Let $\cA$ be the parabolic Airy line ensemble. For an index $k \in \N$ and an interval $[a, b]$, let $\mathcal F_{k, a, b}$ be the $\sigma$-algebra generated by $(\cA_i(x) : (i, x) \notin \II{1, k} \times [a, b])$. Then conditional on  $\mathcal F_{k, a, b}$, the law of $(\cA_1, \dots, \cA_k):[a, b] \to \R$ is equal to that of $k$ independent Brownian bridges $B_1, \dots, B_k:[a, b] \to \R$ with diffusivity $2$ and endpoints $B_i(a) = \cA_i(a), B_i(b) = \cA_i(b)$, conditioned on the non-intersection event
		$$
		B_1(t) > \cdots > B_k(t) > \cA_{k+1}(t), \quad t \in [a, b].
		$$
	\end{thm}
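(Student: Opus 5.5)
The plan is to prove the Brownian Gibbs property in the prelimit, where it is an exact statement about non-intersecting Brownian motions, and then pass it through the convergence in Theorem \ref{lemma:AiryLE_defn}. \emph{Step 1 (prelimit).} By Theorem \ref{lemma:Brownian_Burke}, $WB^n$ is $n$ independent Brownian motions conditioned on $WB_1 \ge \cdots \ge WB_n$, i.e.\ a Doob $h$-transform of killed Brownian motion with $h$ the Vandermonde. Using the Markov property together with the Karlin--McGregor formula, I would show the following. Conditional on everything outside $\II{1,k}\times[a',b']$, the top $k$ lines on $[a',b']$ are independent Brownian bridges between the given endpoints, conditioned on $B_1>\cdots>B_k>WB_{k+1}$. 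The point is that the $h$-transform density factors, and the $h$-factors depending only on outside data cancel in the conditional law.

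\emph{Step 2 (rescaling).} Pass to $A^n$. The time change $x\mapsto n+2xn^{2/3}$ combined with division by $n^{1/3}$ turns standard Brownian motion into Brownian motion of variance $2n^{2/3}/n^{2/3}=2$. Subtracting the common affine function $2n+2yn^{2/3}$ from all lines changes neither the Brownian bridge law between fixed endpoints nor the ordering constraint. Hence $A^n$ satisfies exactly the stated Gibbs property with diffusivity $2$, on every $[a,b]$ inside its domain, once $n$ is large.

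\emph{Step 3 (limit).} Reformulate the property as the identity
$$
\E\bigl[F(\cA|_{\II{1,k}\times[a,b]})\,G\bigr]=\E\bigl[\Phi_F(\cA_{\le k}(a),\cA_{\le k}(b),\cA_{k+1}|_{[a,b]})\,G\bigr],
$$
required for bounded continuous $F$ and bounded continuous $\mathcal F_{k,a,b}$-measurable $G$. Here $\Phi_F$ is the conditioned-bridge expectation, namely the ratio $\E[F(B)\mathbf 1_{\mathrm{NI}}]/\P(\mathrm{NI})$. This identity holds for $A^n$, and by Skorokhod representation $A^n\to\cA$ uniformly on compacts. The numerator and denominator of $\Phi_F$ are continuous in the boundary data wherever the limiting data is strictly ordered, because the non-intersection event is an open set whose boundary has bridge-probability zero. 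So the identity passes to the limit provided the acceptance probability $\P(\mathrm{NI})$ stays bounded away from $0$ in the limit.

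\emph{Main obstacle.} The hard step is exactly this non-degeneracy: showing that $\cA_k(a)>\cA_{k+1}(a)$ and $\cA_k(b)>\cA_{k+1}(b)$ almost surely, and that the prelimiting acceptance probabilities are tight away from zero, so that $\Phi_F$ is evaluated at continuity points. I would first try to obtain strict ordering at fixed times from the determinantal (Airy kernel) structure of $\cA(a)$, which has no repeated points. I would then bound the acceptance probability below by a Brownian Gibbs resampling argument in the spirit of \cite{CH}. That argument resamples the top $k$ lines on a slightly larger interval and uses monotone coupling to compare with bridges that are well separated from $\cA_{k+1}$ with positive probability. Given this, a monotone class argument in $F$ and $G$ yields the conditional law as stated.
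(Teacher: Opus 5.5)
The paper does not prove this theorem; it imports it from \cite{CH}. Your outline is correct and is essentially the argument of that reference: the exact Gibbs property for Dyson Brownian motion (the $h$-transform factors cancel conditionally), its invariance under the edge rescaling with diffusivity $2$, and passage to the limit using strict ordering of $\cA$ at the fixed times $a,b$, which follows from simplicity of the Airy point process.

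One simplification: since Theorem \ref{lemma:AiryLE_defn} already supplies convergence, you do not need a separate lower bound on the prelimiting acceptance probabilities. Strict ordering at $a,b$ makes the limiting acceptance probability positive, and it makes the numerator and denominator of $\Phi_F$ continuous there, so the ratio converges. In \cite{CH}, acceptance-probability control is needed chiefly for tightness. Your claim that the non-intersection event has a null boundary does need a short justification, for instance that the minimum of a Brownian bridge minus a continuous function has no atoms.
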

	
	Now, recall from the introduction that we let $M(\epsilon)$ denote the maximal number of disjoint geodesics in the directed landscape which start and end in the interval $[0, \epsilon]$ and go from time $0$ to $1$. By Lemma \ref{lemma:coalescence_equivalence}, the event $\{M(\epsilon) \ge k\}$ is equivalent to the event where there exist $\bx, \by \in [0, \epsilon]^k_<$ and Airy geodesics $\pi_i$ from $-x_i$ to $y_i$ such that $(\pi_1, \dots, \pi_k)$ is a disjoint $k$-tuple. In particular, letting $\epsilon_k = \frac{\epsilon}{k}(1, \ldots, k )$, we have that 
	$$
	\P(M(\epsilon) \ge k) \ge \P\left( \text{There exist disjoint Airy geodesics from $-\epsilon_{k, i}$ to $\epsilon_{k, i}$ for $i = 1, \dots, k$} \right).
	$$
	Call the event on the right-hand side above $E^{\epsilon, k}$.
	To prove the lower bound on $ \P(M(\epsilon) \ge k)$ in Theorem \ref{thm:Hammond_conj}, we will show that the right-hand side above is at least $c \epsilon^{(k^2 - 1)/2}$.
	
	\begin{figure}[htbp]
		\centering
		\includegraphics[width=0.8\textwidth]{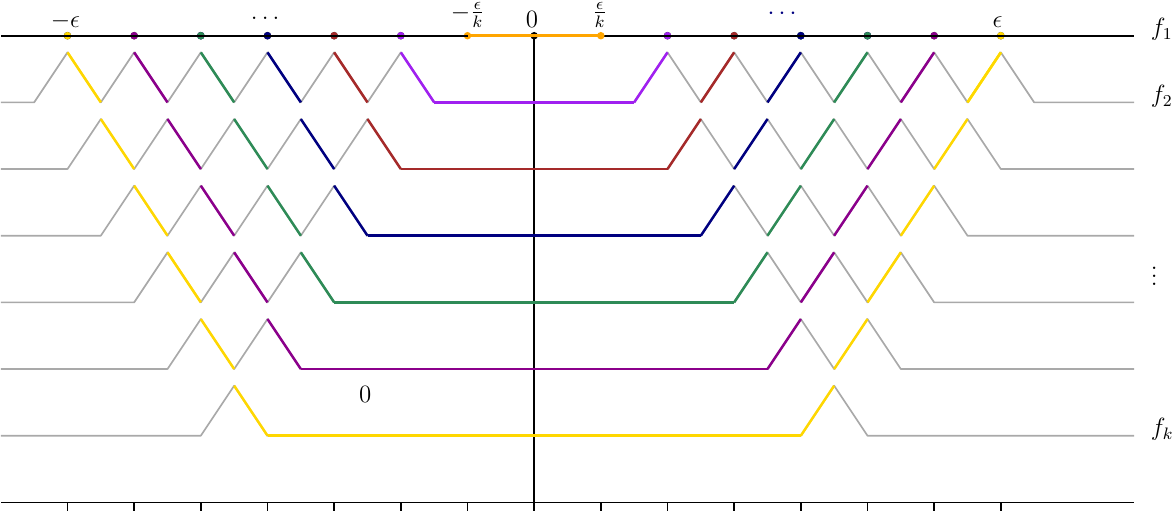}
		\caption{The functions and Airy optimizers given in the proof of Theorem \ref{thm:Hammond_conj}}
		\label{fig:nonint_paths_example}
	\end{figure}
	To that end, we define functions $f_1,\ldots,f_k:\R\to\R$ as in Figure \ref{fig:nonint_paths_example}. First, define the saw-tooth function
	$$
	g(x)=\min_{z\in 2\Z}|x-z|.
	$$
	Set $h_1\equiv 0$ and, for $i=2,\ldots,k$, define
	$$
	h_i(x)
	=
	1-i+g(|x|+i)\mathbf{1}\{|x|\in[i,2k+2-i]\}.
	$$
	Finally, setting $s=\epsilon/(2k)$, define
	$$
	f_i(x)=s^{1/2}h_i(x/s),
	\qquad i=1,\ldots,k.
	$$
	
	Now let $\hat{\mathcal{A}}_i(x) = \mathcal{A}_i(x) - \mathcal{A}_1(x)$, and for $a, b\in \R$ consider the events
	\begin{align*}
		T^a &= \left\{ \mathcal{A}_1(0) \geq a  \right\} ,\\
		B^{\epsilon}_k &= \left\{  \max_{i = 2, \ldots , k} \max_{x \in [-\epsilon, \epsilon]}  | \hat{\mathcal{A}}_i(x) -  f_i(x)| \leq \alpha \epsilon^{1/2} \right\}
		\cap   \left\{  \sup_{x, y \in [-\epsilon, \epsilon]}  |\mathcal{A}_{1}(y) - \mathcal{A}_1(x)| \leq \alpha \epsilon^{1/2} \right\} , \\
		G_k^b &= \left\{ \sup_{j > k} \sup_{x, y \in [0, 1]} \mathcal{A}_j(0) + \mathcal{A}[(0, j) \to (y, k+1)] + \tilde{\mathcal{A}}[(0, j) \to (x, k + 1)]  \leq b \right\}  , \\
		A^{a, b}_{\epsilon, k} &=   T^a \cap B^{\epsilon}_k   \cap G_k^b. 
	\end{align*}
	
	\begin{lemma}
		\label{L:ab1}
		Suppose $a - b > 1$. There exists $\alpha_0, \epsilon_0 > 0$ depending only on $k$ such that for all $0 <\alpha < \alpha_0$ and $0 < \epsilon < \epsilon_0$ we have
		\begin{align*}
			A^{a, b}_{\epsilon, k} \subset E^{\epsilon, k}. 
		\end{align*} 
	\end{lemma}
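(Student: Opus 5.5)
The plan is to work entirely on the Airy line ensemble side and show that, on $A^{a,b}_{\epsilon,k}$, for each $i$ every Airy geodesic from $-\epsilon_{k,i}$ to $\epsilon_{k,i}$ is forced to lie in a prescribed ``corridor'' of the top $k$ lines, and that these corridors are pairwise disjoint. The argument has two stages: first rule out turnaround indices $j>k$ using $T^a$ and $G^b_k$, and then solve a deterministic last passage problem in the top $k$ lines, which by $B^\epsilon_k$ are uniformly close to $\cA_1(0)+f_i$.

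\emph{Step 1 (turnaround index at most $k$).} Fix $x,y\in[0,\epsilon]$ and take a switchback path with turnaround index $j>k$. I would split its right half at the time $t\le y$ where it first reaches line $k+1$, and split its left half in the same way. The part below line $k$ is bounded using $G^b_k$: together with $\cA_j(0)$ it contributes at most $b$. The remaining increments on lines $1,\dots,k$ over $[-\epsilon,\epsilon]$ are bounded using $B^\epsilon_k$. That event gives $|\cA_i(u)-\cA_i(v)|\le |f_i(u)-f_i(v)|+3\alpha\epsilon^{1/2}$, and $\|f_i\|_\infty=O_k(\epsilon^{1/2})$. Hence such a path has length at most $b+C_k\epsilon^{1/2}$.

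On the other hand, the path that stays on line $1$ has length $\cA_1(0)+(\cA_1(x)-\cA_1(0))+(\cA_1(y)-\cA_1(0))\ge a-2\alpha\epsilon^{1/2}$. Since $a-b>1$, taking $\epsilon_0$ small gives that every Airy geodesic (and every Airy optimizer built from these endpoints) has turnaround index in $\II{1,k}$. The same bound applies to $k$-tuples, by bounding each component separately.

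\emph{Step 2 (deterministic model problem).} Put $s=\epsilon/(2k)$, so that $\epsilon_{k,i}=2is$. Because subtracting $\cA_1$ changes every switchback path in the top $k$ lines by the same amount up to $O(\alpha\epsilon^{1/2})$, the problem reduces, after rescaling by $s$ and $s^{1/2}$, to switchback LPP in the fixed environment $(h_1,\dots,h_k)$ with endpoints $\pm 2i$. I would compute this by hand.

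Each $h_i$ sits at level $1-i$ near the origin. Outside $|x|\in[i,2k+2-i]$ it is flat, and inside that range it is a sawtooth with slopes $\pm1$ whose peaks touch $h_{i-1}$ and whose troughs touch $h_{i+1}$; the inner lines have shorter sawtooth ranges and their teeth nest inside those of $h_{i-1}$. The claim to verify is that the optimal switchback path from $-2i$ to $2i$ is unique. It turns around on line $i$, collects the rising teeth of lines $1,\dots,i$ by jumping at the peaks and troughs, and stays strictly inside a region that avoids the corresponding optimal paths for all $i'\ne i$. I also need a quantitative margin: every switchback path leaving this corridor, or changing its turnaround index, should lose at least some $\delta_k>0$ in the rescaled model.

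\emph{Step 3 (perturbation).} The perturbation error is at most $C_k\alpha\epsilon^{1/2}=C'_k\alpha s^{1/2}$, while the margin is $\delta_k s^{1/2}$. Choosing $\alpha_0<\delta_k/(2C'_k)$ then ensures that every Airy geodesic from $-\epsilon_{k,i}$ to $\epsilon_{k,i}$ lies in the $i$-th corridor. Since the corridors are disjoint, these $k$ geodesics form a disjoint $k$-tuple, so we are in $E^{\epsilon,k}$. Equivalently, by Lemma \ref{lemma:coalescence_equivalence}, this gives $\cS(\epsilon_k,\epsilon_k)=\sum_i\cS(\epsilon_{k,i},\epsilon_{k,i})$.

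\emph{Main obstacle.} The hard part is Step 2. It requires checking, with an explicit positive margin, that the sawtooth configuration forces turnaround at exactly line $i$ and yields nested corridors that are strictly separated. The difficulty is that the jumps happen where adjacent lines touch in the model, so I would first check whether ties occur there. If they do, I would shrink the teeth slightly (for example, separate the lines by a small fixed amount) so that the model optimizers are unique with a margin. Fixing constants that work uniformly in $i\le k$ is where most of the bookkeeping lies.
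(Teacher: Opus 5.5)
Your proposal is correct and follows the paper's proof. You rule out turnaround indices $j>k$ by comparing with the top-line path, which has length $\ge a-2\alpha\epsilon^{1/2}$, bounding the portion on lines $\ge k+1$ by $G^b_k$ and the rest by $B^\epsilon_k$ (split where the path leaves line $k+1$, not where it first enters it). You then reduce by scale invariance to the $\alpha=0$ sawtooth model and perturb, and your explicit margin $\delta_k$ makes precise what the paper calls ``a continuity argument.''

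You will not need to modify the teeth, which would change $B^\epsilon_k$ and hence the lemma itself. The rescaled right half has length $(j-1)-\sum_{l<j}(h_l-h_{l+1})(t_l)$, where $h_l-h_{l+1}\ge 0$ vanishes exactly at the isolated points $t\in\{l+2,l+4,\dots,2k-l\}$. For turnaround $i$, the ordering $t_{i-1}\le\cdots\le t_1\le 2i$ forces $t_l=2i-l$, giving value $i-1$. Any other turnaround has value at most $i-2$. So the model optimizers are unique and pairwise disjoint, with the required positive margin.
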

	
	\begin{proof}
		Throughout the proof we assume $\epsilon, \alpha$ are sufficiently small. First, on the event $A^{a, b}_{\epsilon, k}$, the turnaround index for any Airy geodesic starting at $-x \in [-\epsilon, 0]$ and ending at $y \in [0, \epsilon]$ is at most $k$. Indeed, for this we just need to check that for any switchback path $\pi$ with $\pi(0) = j \ge k + 1$, that 
		$$
		\|\pi\|_\cA < a - 1/2 < \cA_1(-x) + \cA_1(y) - \cA_1(0),
		$$
		since the right-hand side above is the length of the switchback path that stays on the top line. Here the second inequality uses the regularity of $\cA_1$ in the event $B^\epsilon_k$ and that $\cA_1(0) \ge a$.
		By splitting up $\pi$ into pieces from $(-x, 1)$ to line $k$, from line $k + 1$ down to line $j$ and back, and again from line $k$ to $(y, 1)$, and using the definition of $G^b_k$, we can write
		\begin{align*}
			\|\pi\|_\cA \le b + \sup_{\substack{w,x,y,z\in[0,\epsilon]\\w\leq y,\ z\leq x}} \mathcal{A}[(w, k) \to (y, 1)] + \tilde{\mathcal{A}}[(z, k) \to (x, 1)].  
		\end{align*}
		The event $B^\epsilon_k$ guarantees that the second term above is at most $1/2$ for small $\epsilon$. Since $a - b > 1$, the result follows.
		
		From here it is easy to see that for small enough $\alpha$, any Airy geodesics from $-\epsilon_{k, i}$ to $\epsilon_{k, i}$ are disjoint for $i = 1, \dots, k$. By the above, these geodesics only use the top $k$ lines, and so this is a deterministic statement about functions satisfying the bounds in $B^\epsilon_k$. Next, it is easy to check (i.e. by examining Figure \ref{fig:nonint_paths_example}) that this is true when $\alpha = 0$, when the Airy paths equal the functions $f_i$ and geodesics are uniquely obtained and disjoint. By a continuity argument, this also holds for all small enough $\alpha$. Finally, this problem is scale-invariant in $\epsilon$, and so this holds for all small enough $\alpha$, uniformly over $\epsilon > 0$ as desired.
	\end{proof}
	
	The theorem now follows from the following bound.
	
	\begin{lemma} \label{lemma:prob_lwr_bnd}
		We can find $a > b + 1$ and $\alpha > 0$ such that for all $\epsilon > 0$ small enough, 
		\begin{align*}
			\prob (A^{a, b}_{\epsilon , k}) \geq C \epsilon^{(k^2 - 1)/2},
		\end{align*}
		where $C > 0$ is a constant depending on $a, b, \alpha$, and $k$.
	\end{lemma}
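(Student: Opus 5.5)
Fix $a > b+1$ with $b$ chosen large enough that $\P(G_k^b)$ is close to $1$ (which is possible since, by the tightness of the turnaround index (Lemma \ref{lemma:k_tightness}, passed to the limit) and the bounds of Lemmas \ref{lemma:AiryLPP_bound} and \ref{L:bulk-lemma}, the supremum defining $G_k^b$ is an almost surely finite random variable), and then take $a = b+2$. The plan is to condition on the $\sigma$-algebra $\mathcal F = \mathcal F_{k,-1,1}$ from Theorem \ref{T:Brownian-Gibbs} and use Brownian Gibbs resampling of the top $k$ lines on $[-1,1]$. The event $G_k^b$ only depends on the top $k$ lines through last passage values from line $k+1$ upward, and these are nonincreasing in lines $1,\dots,k$ only through... in fact $G_k^b$ is measurable with respect to lines $j \ge k+1$ (paths ending at line $k+1$), hence $\mathcal F$-measurable. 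So it suffices to find an $\mathcal F$-measurable event $F$ of probability bounded below, contained in $G_k^b$, on which $\P(T^a \cap B^\epsilon_k \mid \mathcal F) \ge C\epsilon^{(k^2-1)/2}$. I would take $F = G_k^b \cap \{|\cA_i(\pm 1)| \le L, i \le k\} \cap \{\sup_{[-1,1]} \cA_{k+1} \le -L'\}$ for suitable constants $L, L'$ (in the regime where $a$ is large compared with the floor so that the floor is negligible); $\P(F) > 0$ follows from tightness of these quantities and the choice of $b$.

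On $F$, the conditional law of $(\cA_1,\dots,\cA_k)$ on $[-1,1]$ is that of $k$ Brownian bridges with bounded endpoints, conditioned to avoid each other and the floor $\cA_{k+1}$. The acceptance probability of this conditioning is bounded below on $F$, so it suffices to lower bound the probability, for \emph{free} independent Brownian bridges, of the event that they are non-intersecting, stay above the floor, satisfy $\cA_1(0) \ge a$, and satisfy the closeness conditions in $B^\epsilon_k$. I would decompose this via the Markov property at times $\pm\epsilon$. Outside $[-\epsilon,\epsilon]$, the bridges are required to go from their endpoints to a configuration at $\pm\epsilon$ with $\cA_1(\pm\epsilon) \approx a$ and $\cA_i(\pm\epsilon) - \cA_1(\pm\epsilon) \approx f_i(\pm\epsilon) = s^{1/2}h_i(\pm \epsilon/s)$, so the gaps are of order $\epsilon^{1/2}$, while staying ordered. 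By the Karlin--McGregor formula, the non-intersection density for $k$ bridges ending at mutual gaps of order $\delta = \epsilon^{1/2}$ (within a window of width $\alpha\delta$) is of order $\Delta(\text{gaps}) \cdot \delta^{k-1} \asymp \delta^{\binom k2} \delta^{k-1}$ on each side, where the factor $\delta^{k-1}$ comes from the volume of the allowed window for the relative positions. The center piece on $[-\epsilon,\epsilon]$, started and ended near the scaled profile $f$, is scale invariant by Brownian scaling, so it contributes a constant $c(\alpha,k) > 0$ (this includes the near-top-line regularity condition in $B^\epsilon_k$).

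This gives a total of $\delta^{2\binom k2 + 2(k-1)} = \epsilon^{(k^2-k)/2 + k - 1}$, which overshoots the target exponent. The bookkeeping must therefore be done more carefully. The Karlin--McGregor kernel for bridges from macroscopic gaps to gaps of size $\delta$ behaves like $\Delta(\text{gaps})\asymp \delta^{\binom k2}$ times the Lebesgue density, and the window volume for the $k-1$ relative coordinates is $(\alpha\delta)^{k-1}$; the window for the overall height $\cA_1(\pm\epsilon)$ is of order $1$ because it only needs to be $\ge a$ roughly. Moreover, the two windows at $\pm\epsilon$ are essentially the same constraint up to scale-$\delta$ fluctuations, and the central piece gives a transition density of order $\delta^{-(k-1)}$ relative to them. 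Hence the count is $\delta^{\binom k2}\cdot\delta^{\binom k2}\cdot \delta^{k-1}\cdot \delta^{k-1}\cdot\delta^{-(k-1)} = \delta^{k^2-1}$, which equals $\epsilon^{(k^2-1)/2}$ as required. The main obstacle is exactly this careful accounting: getting matching lower bounds on the Karlin--McGregor determinants near coincident points uniformly in $\epsilon$, while simultaneously handling the floor and the condition $\cA_1(0) \ge a$. For the floor and the height condition, I would note that lifting the whole top-$k$ configuration to height about $a$ costs only a constant factor, since $a$ is fixed.
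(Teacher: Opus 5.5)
Your architecture is the paper's, but the proof is not complete. You resample the top $k$ lines on $[-1,1]$ by Brownian Gibbs, with $G^b_k$ measurable with respect to the exterior $\sigma$-algebra. You split at times $\pm\epsilon$ by the Markov property and extract a Vandermonde factor $\asymp\epsilon^{(k^2-k)/4}$ from each outer Karlin--McGregor kernel, a constant from the middle piece by scaling, and a window volume $\epsilon^{(k-1)/2}$.

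The gap is that the proposal stops exactly where the content of the lemma lies. You leave the uniform lower bound on the Karlin--McGregor kernels as ``the main obstacle'', and your exponent count is only heuristic. The first count is wrong, as you note. The corrected one rests on saying the height window is of order $1$ at both $\pm\epsilon$, which $B^\epsilon_k$ rules out at $+\epsilon$, since it forces $|\cA_1(\epsilon)-\cA_1(-\epsilon)|\le\alpha\sqrt\epsilon$.

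The missing ingredient is that $q_t(\mathbf w,\mathbf z)=p_t(\mathbf w,\mathbf z)/(\Delta(\mathbf w)\Delta(\mathbf z))$ extends continuously and positively to the closed Weyl chamber, so it is bounded below on bounded sets. Organise the bookkeeping as follows:
\begin{itemize}
\item Let $\mathbf u$ be the configuration at $-\epsilon$, restricted to a bounded set $U$. So pin $\cA_1(-\epsilon)$ to a window such as $[a+1,a+2]$, not merely $\ge a$.
\item Write the configuration at $+\epsilon$ as $\mathbf u+\mathbf v$, with $\mathbf v$ in a $k$-dimensional box $V$ of side $\asymp\eta\sqrt\epsilon$.
\item The $q_t$ bound then gives $p_{1-\epsilon}(\mathbf x,\mathbf u)\,p_{1-\epsilon}(\mathbf u+\mathbf v,\mathbf y)\ge c\,\Delta(\mathbf u)\Delta(\mathbf u+\mathbf v)\ge c'\epsilon^{(k^2-k)/2}$.
\item By Brownian scaling, $\int_V p_{2\epsilon}(\mathbf u,\mathbf u+\mathbf v)\,d\mathbf v$ is bounded below uniformly in $\mathbf u\in U$.
\item Only $\mathrm{Vol}(U)\asymp\epsilon^{(k-1)/2}$ carries a volume factor, so no window is counted twice.
\end{itemize}

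Second, your event $F$ is too weak to give a bound uniform on $F$. You drop the normalisation and work with free bridges, so the outer kernels also carry $\Delta(\mathbf x)\Delta(\mathbf y)$, where $\mathbf x=(\cA_i(-1))_{i\le k}$ and $\mathbf y=(\cA_i(1))_{i\le k}$. Your $F$ puts no lower bound on the gaps $\cA_j(\pm1)-\cA_{j+1}(\pm1)$ for $j<k$, and these can be arbitrarily small on $F$. So, contrary to your assertion, the acceptance probability is not bounded below there, and neither is the free-bridge probability. Add $\min_{j<k,\,t=\pm1}(\cA_j(t)-\cA_{j+1}(t))\ge 2\delta$ to $F$, as the paper's $F^{\sigma,\delta}_k$ does. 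Alternatively, keep the denominator $p_2(\mathbf x,\mathbf y)$, in which these factors cancel.

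Two smaller points:
\begin{itemize}
\item Pushing the floor down to $\sup_{[-1,1]}\cA_{k+1}\le -L'$ with $L'>L$ is a valid and slightly simpler substitute for the paper's regularity conditions on $\cA_{k+1}$ near $\pm1$. You should state $L'>L$ explicitly. Positivity of this atypical event also needs a Gibbs argument, not tightness.
\item Your route to $\P(G^b_k)\to 1$ is roundabout: Lemma \ref{lemma:AiryLPP_bound} is a fixed-$x$ bound, and tightness of the turnaround index is irrelevant. It is simpler to note that extending a path from line $k+1$ up to line $1$ costs nothing. The DtM identity then gives $G^b_k\supseteq\{\max_{[0,1]^2}\cS\le b\}$.
\end{itemize}
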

	
	The main contribution to the rarity of this event is due to enforcing the gaps between the first $k$ Airy lines to lie within a window of size $\mathcal{O} (\epsilon^{1/2})$. This should scale according to the Karlin-McGregor formula for Dyson Brownian Motion which, to leading order, is given by
	\begin{align*}
		\prod_{i < j} (\epsilon^{1/2})^{2}  \times \textrm{Vol} \sim \epsilon^{\frac{k (k - 1)}{2}} \times \epsilon^{\frac{k - 1}{2}},
	\end{align*}
	The first term above follows from a lower bound on the probability density and the volume term corresponds to the Lebesgue measure of the window.

	\begin{proof}[Proof of Lemma \ref{lemma:prob_lwr_bnd}]

		Fix $k \in \N$ and let $\mathcal{F}_{\textrm{ext}} = \sigma \left\{   \mathcal{A}_i(x) : \, (i, x) \not\in \{ 1, \ldots, k\} \times (-1, 1) \right\}$. For $\sigma >1$ and  $\delta > 0$, let $F_k^{\sigma, \delta}$ be the intersection of the following four $\mathcal{F}_{\textrm{ext}}$-measurable events:
		\begin{align*}
			&\max_{j = 1, \ldots, k, t = \pm 1}  |\mathcal{A}_j(t) - \E \mathcal{A}_{j}(1)|   \leq  \sigma , \quad \max_{t \in [- 1, 1]}  |\mathcal{A}_{k+1}(t) - \E \mathcal{A}_{k+1}(0)|   \leq  \sigma, \\
			&\min_{j = 1, \ldots, k, t = \pm 1} \mathcal{A}_j(t) - \cA_{j+1}(t) \ge 2 \delta, \quad
			\max_{t \in [1 - \sigma^{-1},1], \iota = \pm 1}|\mathcal{A}_{k+1}(\iota t) - \mathcal{A}_{k+1}(\iota)|   \leq  \delta.
		\end{align*}
		It suffices to show for some choice of $a > b + 1$ and $\sigma, \delta > 0$ that
		\begin{align*}
			\prob (A^{a, b}_{\epsilon, k} \cap F_k^{\sigma, \delta}) & \geq C \epsilon^{\frac{k^2 - 1}{2}}. 
		\end{align*}
		First, observe that by continuity and strict ordering of $\cA$, $\P(F_k^{\sigma, \delta}) > 0$ for all small enough $\delta > 0$ and all large enough $\sigma > 0$ given $\delta$. For the remainder of the proof, we will fix $\sigma, \delta > 0$ so that this is the case. Next, observe that $\P(G^b_k) \to 1$ as $b \to \infty$.
		For this, observe that $G^b_k$ is $\mathcal{F}_{\ext}$-measurable and contains the event where
		\begin{align*}
			\left\{ \sup_{x, y\in [0, 1]}  \mathcal{S}(x, y) \leq b  \right\} ,
		\end{align*}
		since on the above event,
		\begin{align*}
			&\sup_{j \geq k + 1} \max_{x, y \in [0, 1]} \mathcal{A}_j(0) + \mathcal{A}[(0, j) \to (y, k+1)] + \tilde{\mathcal{A}}[(0, j) \to (x, k+1)]   \\
			&\leq \sup_{j \in \N} \max_{x, y \in [0, 1]} \mathcal{A}_j(0) + \mathcal{A}[(0, j) \to (y, 1)] + \tilde{\mathcal{A}}[(0, j) \to (x, 1)]   \\
			&=  \max_{x, y \in [0, 1]} \mathcal{S}(x, y) \leq b.
		\end{align*}
		Hence, we may pick $b$ large enough so that $\P(G^b_k \cap F_k^{\sigma, \delta}) > 0$. For this choice of $b, \sigma, \delta$, to complete the proof we must show that
		\begin{align*}
			\prob (A^{a, b}_{\epsilon, k} \mid F_k^{\sigma, \delta} \cap G^b_k) = \prob (T^a \cap B^\epsilon_k \mid F_k^{\sigma, \delta} \cap G^b_k)  \geq C \epsilon^{\frac{k^2 - 1}{2}}. 
		\end{align*}
		Now fix some $0 < \eta < \alpha/10$ and define the event
		\begin{align*}
			C_{k,a}^{\epsilon} &= \left\{  \max_{i = 2, \ldots , k} \max_{t = \pm \epsilon}  |\hat{\mathcal{A}}_i(t) - f_i(t)|  \leq \eta \epsilon^{1/2} \right\} \cap \left\{  \mathcal{A}_1(- \epsilon) \in [a+1, a + 2] \right\} \cap \left\{  |\mathcal{A}_1(\epsilon) - \mathcal{A}_1(-\epsilon) |  \leq \eta \sqrt{\epsilon}\right\},
		\end{align*}
		and let $\mathcal P$ denote the conditional law of $\cA$ given $\mathcal F_\ext$. Then
		\begin{align} \label{E:conditioning_step}
			\E \mathcal{P} (B^{\epsilon}_k \cap T^a) \mathbbm{1}_{F_k^{\sigma, \delta} \cap G^b_k} & \geq \E \mathcal{P}(C_{k,a}^{\epsilon}) \mathcal{P}(B^{\epsilon}_k   \cap T^a \, | \, C_{k,a}^{\epsilon})\mathbbm{1}_{F_k^{\sigma, \delta} \cap G^b_k}.
		\end{align}
		By Brownian scaling and the Brownian Gibbs property, for large enough $a$ there exists a constant $c > 0$ depending on $k, \alpha, \sigma, \delta$ but not on $\epsilon$ such that
		\begin{align*}
			\mathcal{P} (B^{\epsilon}_k  \cap T^a \, | \, C_{k,a}^{\epsilon}) \mathbbm{1}_{F_k^{\sigma, \delta} \cap G^b_k} & \geq c \mathbbm{1}_{F_k^{\sigma, \delta} \cap G^b_k}.
		\end{align*}
		Indeed, given the event $C_{k,a}^{\epsilon}$ and $a$ large enough, on the interval $[-\epsilon, \epsilon]$, the event $B^\epsilon_k$ automatically guarantees that $\cA_{k}(x) > \cA_{k+1}(x)$ for $x \in [-\epsilon, \epsilon]$, so the probability of this event is bounded below by the case of no lower boundary. It then follows by Brownian scaling that this probability is bounded below by a strictly positive constant. 
		
		It remains to bound the probability of the event $C_{k,a}^{\epsilon}$ on the event $F_k^{\sigma, \delta}$. Let $\mathbb{P}_{\mathrm{NI}}^{\mathbf{x}, \mathbf{y}}$ be the law of $k$ non-intersecting Brownian Motions $B_1, \ldots, B_k$ on the interval $[-1, 1]$ starting at $\mathbf{x}$ and ending at $\mathbf{y}$. Letting $x = (\mathcal{A}_1(-1), \ldots, \mathcal{A}_k(-1))$ and $y = (\mathcal{A}_1(1), \ldots, \mathcal{A}_k(1))$, and $H = \{ B_k(s) \geq \mathcal{A}_{k + 1}(s) \textrm{ for all } s \in [-1, 1]\}$, we find that 
		\begin{align*}
			\E \mathcal{P} (C_{k,a}^{\epsilon}) \mathbbm{1}_{F_k^{\sigma, \delta}} &\geq \E \mathbb{P}_{\mathrm{NI}}^{\mathbf{x}, \mathbf{y}} \left( C_{k,a}^{\epsilon} \cap H  \right)  \mathbbm{1}_{F_k^{\sigma, \delta}} 
			= \E \mathbb{P}_{\mathrm{NI}}^{\mathbf{x}, \mathbf{y}} \left( C_{k,a}^{\epsilon} \right) \mathbb{P}_{\mathrm{NI}}^{\mathbf{x},\mathbf{y}} \left( H \, |\,  C_{k,a}^{\epsilon} \right)  \mathbbm{1}_{F_k^{\sigma, \delta}} . 
		\end{align*}
		Now, on $F_k^{\sigma, \delta}$, there exists an $\epsilon$-independent constant $d$ so that 
		\begin{align*}
			\mathbb{P}_{\mathrm{NI}}^{\mathbf{x},\mathbf{y}} \left( H \, |\,  C_{k,a}^{\epsilon} \right)  \mathbbm{1}_{F_k^{\sigma, \delta}} & \geq d.
		\end{align*}
		
		We may bound the probability of $C_{k,a}^{\epsilon}$ using the Karlin-McGregor formula and the Markov Property for Dyson Brownian Motion. For $k$ tuples $\mathbf{u}, \mathbf{v} \in \R^{k}_{>}$ and $t > 0$, define $p_t(\mathbf{u}, \mathbf{v})$ as
		\begin{align*}
			p_t(\mathbf{u}, \mathbf{v}) &= \det(p_t (u_i, v_j))_{i, j = 1}^k
		\end{align*}
		where $p_t(u_i, v_j) = \frac{1}{\sqrt{4 \pi t}} e^{-(u_i - v_j)^2/(4t)}$. Define the sets $U, V \subset \R^k$ as
		\begin{align*}
			U &= \left\{\mathbf u\in\R^k:
			u_1\in[a+1,a+2],\
			|u_i-u_1-f_i(\epsilon)|
			\leq \frac{\eta}{4}\sqrt{\epsilon}
			\quad\forall i=2,\ldots,k\right\},\\
			V &= \left\{\mathbf v\in\R^k:
			|v_i|\leq \frac{\eta}{4}\sqrt{\epsilon}
			\quad\forall i=1,\ldots,k\right\}.
		\end{align*}
		Then on the event $F_k^{\sigma, \delta}$, for $\mathbf{x} = (\mathcal{A}_1(-1), \ldots, \mathcal{A}_k(-1))$ and $\mathbf{y} = (\mathcal{A}_1(1), \ldots, \mathcal{A}_k(1))$ we have
		\begin{align*}
			\prob^{\mathbf{x}, \mathbf{y}}_{NI} (C_{k,a}^{\epsilon}) \mathbbm{1}_{F_k^{\sigma, \delta}} & \geq \left( p_2(\mathbf{x}, \mathbf{y}) \right)^{-1} \int_{\mathbf{u} \in U} p_{1 - \epsilon} (\mathbf{x}, \mathbf{u}) \int_{\mathbf{v} \in V} p_{2 \epsilon} (\mathbf{u}, \mathbf{u} + \mathbf{v}) p_{1 - \epsilon} (\mathbf{u} + \mathbf{v}  , \mathbf{y}) \, d \mathbf{v} \, d \mathbf{u}. 
		\end{align*}
		On the event $F_k^{\sigma, \delta}$, it is clear that $(p_2(\mathbf{x}, \mathbf{y}))^{-1}$ is non-vanishing as $\epsilon \to 0$ for all $\mathbf{u} \in U$, $\mathbf{v} \in V$. Furthermore, by Brownian scaling, it is also clear that for all $\mathbf{u} \in U$ that the second integral term $\int_{v \in V} p_{2 \epsilon} (\mathbf{u} , \mathbf{u} + \mathbf{v}) \, d \mathbf{v}$ is also non-vanishing as $\epsilon \to 0$. Next, the following function extends continuously to the boundary of the chamber and is uniformly bounded below by a positive constant on any bounded set,
		\begin{align*}
			q_t(\mathbf{w}, \mathbf{z}) &= \frac{p_t(\mathbf{w}, \mathbf{z})}{\Delta(\mathbf{w})  \Delta(\mathbf{z} )},
		\end{align*}
		where $\Delta(\mathbf{w}) = \prod_{i < j} (w_i - w_j)$ is the Vandermonde product. Hence, there exists a constant $\tilde{c} > 0$ independent of $\epsilon$ so that for all $\epsilon $ small enough and all $\mathbf{u} \in U$, $\mathbf{v} \in V$
		\begin{align*}
			p_{1 - \epsilon} (\mathbf{x}, \mathbf{u}) p_{1 - \epsilon} (\mathbf{u} + \mathbf{v}, \mathbf{y}) & \geq \tilde{c} \Delta (\mathbf{u}) \Delta(\mathbf{u} + \mathbf{v}) 
			\geq c' \epsilon^{\frac{k^{2} - k}{4}} \cdot \epsilon^{\frac{k^2 - k}{4}}= c'\epsilon^{\frac{k^2 - k}{2}},
		\end{align*}
		Hence, 
		\begin{align*}
		 \prob^{\mathbf{x} , \mathbf{y}}_{NI} (C^{\epsilon}_{k,a}) \mathbbm{1}_{F^{\sigma, \delta}_k} \      \geq  c' \epsilon^{\frac{k^2 - k}{2}} \textrm{Vol} (U) \mathbbm{1}_{F_k^{\sigma, \delta}} \geq c \epsilon^{\frac{k^2 - k}{2}} \epsilon^{\frac{k - 1}{2}} \mathbbm{1}_{F_k^{\sigma, \delta}} = c \epsilon^{\frac{k^2 - 1}{2}} \mathbbm{1}_{F_k^{\sigma, \delta}} .
		\end{align*}
		Furthermore, on the event $F_k^{\sigma, \delta}$, there exists a constant $d > 0$ so that
		\begin{align*}
			\mathcal{P}(C^{\epsilon}_{k, a}) = \frac{\prob^{\mathbf{x} , \mathbf{y}}_{NI} (H \cap C^{\epsilon}_{k,a})}{\prob^{\mathbf{x} , \mathbf{y}}_{NI} (H)} \geq \prob^{\mathbf{x} , \mathbf{y}}_{NI} (C^{\epsilon}_{k,a}) \prob^{\mathbf{x} , \mathbf{y}}_{NI} (H \, | \, C^{\epsilon}_{k, a}) \geq d \cdot \prob^{\mathbf{x}, \mathbf{y}}_{NI} (C^{\epsilon}_{k, a}). 
		\end{align*}
		Taking an expectation $\E$, we therefore find that there is a constant $c > 0$ so that
		\begin{align*}
			\E \mathcal{P}  \left(T^a \cap B^{\epsilon}_k \right) \mathbbm{1}_{F_k^{\sigma, \delta} \cap G^b_k}  \geq c  \epsilon^{\frac{k^2 - 1}{2}} \prob \left(  F_k^{\sigma, \delta} \cap G^b_k\right),
		\end{align*}
		as desired.
	\end{proof}

	\appendix

	\section{A geometric approach to Theorem \ref{T:shadow-sheet-simple}} \label{sec:shadow_sheet_geomproof}
	
	The classification of the turnaround index in Theorem \ref{T:k_geom} indicates that the DtM Isometry for the Airy sheet is closely connected with the coalescing nature of the directed landscape. To further clarify this relationship, we prove the following theorem, which gives an alternative perspective on the DtM isometry in Theorem \ref{T:shadow-sheet-simple} and whose proof is based purely on path-switching arguments.
	
	\begin{thm}
		\label{T:alternate-approach}
		Let $\cS$ be the extended Airy sheet.
		For all $x, y \ge 0$ and $k \in \N$
		$$
		\cS(x, y) \ge \cS(\{0^{k-1}, x \}, 0^k) + \cS(0^k, \{0^{k-1}, y \}) -\cS (0^k, 0^k) - \cS(0^{k-1}, 0^{k-1}).
		$$
		Moreover, if there exists some $k \in \N$ such that the optimizer from $(0^k, 0)$ to $(0^k, 1)$ intersects the rightmost geodesic from $(x, 0)$ to $(y, 1)$, then the above inequality is an equality for the smallest such $k$.
	\end{thm}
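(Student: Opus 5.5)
The plan is to argue entirely inside the directed landscape, using path switching (cut-and-paste of optimizers at crossing points) together with the fact that sums of path lengths are preserved under such rearrangements. Write $\tau^\ell$ for the optimizer from $(0^\ell,0)$ to $(0^\ell,1)$ as in Theorem \ref{T:k_geom}. The inequality is a tropical (max-plus) shadow of the Desnanot--Jacobi identity used in Lemma \ref{lemma:discrete_nonintpaths}, and the equality case comes from running the switching in reverse.

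\emph{Step 1: the inequality.} Let $\rho=(\rho_1,\dots,\rho_k)$ be an optimizer from $(\{0^{k-1},x\},0)$ to $(0^k,1)$, and let $\sigma=(\sigma_1,\dots,\sigma_k)$ be an optimizer from $(0^k,0)$ to $(\{0^{k-1},y\},1)$, both given by Theorem \ref{T:dauvergne2021disjoint}.1.
\begin{itemize}
\item The multiset of $2k$ paths $\rho\cup\sigma$ starts at $0$ with multiplicity $2k-1$ and at $x$ once, and ends at $0$ with multiplicity $2k-1$ and at $y$ once.
\item I would sort the $2k$ paths pointwise in time: at each $t$, take the $j$-th smallest of the values $\{\rho_i(t),\sigma_i(t)\}$. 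The reverse triangle inequality \eqref{E:triangle} makes path length additive under splicing at common points, so this pointwise sorting preserves total length.
\item Then I would group the sorted paths into three families: a $k$-tuple from $0^k$ to $0^k$, a $(k-1)$-tuple from $0^{k-1}$ to $0^{k-1}$, and a single path from $x$ to $y$.
\end{itemize}
Bounding each group's length by the corresponding value of $\cS$ gives the claimed inequality.

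The technical point is that after sorting, paths inside each group are only weakly ordered, not strictly disjoint. To handle this I would invoke the fact from \cite{dauvergne2021disjoint} that the extended landscape value \eqref{E:Lxy-11} is unchanged if the supremum is taken over weakly ordered (non-crossing) $k$-tuples. If this becomes awkward, there is a fallback. The same inequality holds in Brownian LPP by the tropicalization (Lemma \ref{lemma:temp_change}) of the Desnanot--Jacobi step in Lemma \ref{lemma:discrete_nonintpaths}, since the $k$-th summand there dominates the partition function up to the remainder. Because every quantity involved is a value of $\cS_n$, and $\cS_n\to\cS$ by Theorem \ref{T:shadow-sheet}(i), the inequality would then pass to the limit.

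\emph{Step 2: the equality case.} Let $k$ be minimal such that $\tau^k$ meets the rightmost geodesic $\pi$ from $(x,0)$ to $(y,1)$.
\begin{itemize}
\item By ordering of optimizers, $\tau^k_k$ is the component that meets $\pi$.
\item By minimality, $\tau^{k-1}_{k-1}$ stays strictly left of $\pi$. Exactly as in the proof of Theorem \ref{T:k_geom}, this gives
\[
\cS(\{0^{k-1},x\},\{0^{k-1},y\})=\cS(0^{k-1},0^{k-1})+\cS(x,y).
\]
\item It therefore remains to prove the reverse inequality
\[
\cS(\{0^{k-1},x\},\{0^{k-1},y\})+\cS(0^k,0^k)\le \cS(\{0^{k-1},x\},0^k)+\cS(0^k,\{0^{k-1},y\}).
\]
\end{itemize}
For this I would take $\tau^{k-1}\cup\{\pi\}$ and $\tau^k$, and pick a time $r$ with $\tau^k_k(r)=\pi(r)$. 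Then I swap tails at $r$:
\begin{itemize}
\item the path that follows $\pi$ on $[0,r]$ and $\tau^k_k$ on $[r,1]$, taken together with $\tau^k_1,\dots,\tau^k_{k-1}$, is a candidate for $\cS(\{0^{k-1},x\},0^k)$;
\item the path that follows $\tau^k_k$ on $[0,r]$ and $\pi$ on $[r,1]$, taken together with $\tau^{k-1}$, is a candidate for $\cS(0^k,\{0^{k-1},y\})$.
\end{itemize}
Total length is preserved, which gives the reverse inequality.

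\emph{Main obstacle.} I expect the hardest step to be verifying that the swapped families are genuinely admissible, i.e.\ ordered. This requires:
\begin{itemize}
\item $\tau^{k-1}_{k-1}\le\tau^k_k$ on $[0,r]$, which should follow from the monotonicity/nesting of melon optimizers in \cite{dauvergne2021disjoint};
\item $\tau^k_{k-1}\le \pi$ on $[0,r]$, which should follow because $\tau^{k}_{k-1}\le\tau^k_k$ and $\pi$ is rightmost, so a crossing could be uncrossed without loss of length.
\end{itemize}
Any remaining weak touchings would again be absorbed by the weak-ordering characterization of the extended landscape value.
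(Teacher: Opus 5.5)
\textbf{Step 1.} Your Step 1 follows the paper's sorting argument, but the patch you propose for it is not available. The extended landscape value is \emph{not} unchanged if weakly ordered tuples are allowed. For example, taking the geodesic from $(0,0)$ to $(0,1)$ twice gives $2\cS(0,0)$, which a.s.\ strictly exceeds $\cS(0^2,0^2)$ by uniqueness of that geodesic. Fortunately Step 1 does not need the patch. At each $t\in(0,1)$ a value occurs at most twice among your $2k$ paths, once per optimizer. So if $\omega_1\le\cdots\le\omega_{2k}$ are the order statistics, the odd-indexed paths $\omega_1,\omega_3,\dots,\omega_{2k-1}$ and the even-indexed paths $\omega_2,\dots,\omega_{2k-2}$ are each strictly ordered, and $\omega_{2k}$ runs from $x$ to $y$. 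Your Desnanot--Jacobi fallback would also work.

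\textbf{The gap: Step 2 for $k\ge 2$.} Your second family, $\tau^{k-1}$ together with $\tau^k_k|_{[0,r]}\oplus\pi|_{[r,1]}$, is a disjoint $k$-tuple only if $\tau^{k-1}_{k-1}<\tau^k_k$ strictly on $(0,r]$. Interlacing gives only $\tau^{k-1}_{k-1}\le\tau^k_k$. Nothing in your argument prevents $\tau^{k-1}_{k-1}$ from meeting $\tau^k_k$ before the set of times where $\tau^k_k=\pi$; one checks this set is an interval $[s_k,r_k]$.
\begin{itemize}
\item The choice of $r$ does not help, since every $r\in[s_k,r_k]$ gives the same spliced paths.
\item Pairing $\tau^{k-1}$ with the other spliced path only moves the problem to $[r,1)$.
\item So if $\tau^{k-1}_{k-1}$ meets $\tau^k_k$ both before $s_k$ and after $r_k$, no single swap at the top level yields admissible families, and the weak-ordering patch fails as explained above.
\end{itemize}
Your first family is fine, but the reason is $\tau^k_{k-1}\le\tau^{k-1}_{k-1}<\pi$ (interlacing plus minimality), not an uncrossing argument.

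\textbf{What is missing: the paper's cascade down the levels.} Let $q_k<s_k$ be the last time before $s_k$, and $t_k>r_k$ the first time after $r_k$, at which $\tau^{k-1}_{k-1}$ meets $\tau^k_k$.
\begin{itemize}
\item Optimality and uniqueness of the melons force $\tau^{k-1}_{k-1}$ and $\tau^k_{k-1}$ to meet in $[q_k,t_k]$.
\item They must also coincide on the interval $[s_{k-1},r_{k-1}]$ between their first and last meetings there.
\item One then defines $q_{k-1},t_{k-1}$ from the meetings of $\tau^{k-1}_{k-2}$ and $\tau^k_{k-1}$ around $[s_{k-1},r_{k-1}]$, and repeats down to level $1$.
\end{itemize}
The two families are built by switching at every level:
\begin{itemize}
\item at each level $i<k$, from $\tau^{k-1}_i$ to $\tau^k_i$ at time $s_i$ (respectively from $\tau^k_i$ to $\tau^{k-1}_i$ at time $r_i$);
\item at level $k$, between $\pi$ and $\tau^k_k$.
\end{itemize}
Disjointness then uses that $\tau^{k-1}_{i-1}$ avoids $\tau^k_i$ on $(q_i,t_i)$ and that $r_{i-1}\ge q_i$. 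Your construction is the special case $s_i=r_i=0$ for all $i<k$. It is admissible only when $\tau^{k-1}_{k-1}$ avoids $\tau^k_k$ on $(0,r_k]$.
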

	
	To see how Theorem \ref{T:alternate-approach} is connected to Theorem \ref{T:shadow-sheet-simple}, observe first that the `if' condition in the `Moreover' statement follows from the divergence 
	\begin{equation}
		\label{E:diverge}
		\lim_{\ell \to \infty} \max_{t \in [0, 1]} \tau^\ell_\ell(t) = \infty,
	\end{equation}
	where $\tau^\ell$ is the unique optimizer in $\cL$ from $(0^\ell, 0)$ to $(0^\ell, 1)$. Equation \eqref{E:diverge} can be proven using soft arguments, as in \cite{basu_interlacing}, though we do not do so here. Alternately, a line ensemble proof is possible along the lines of Theorem \ref{T:melon_width}. Given \eqref{E:diverge}, Theorem \ref{T:alternate-approach} implies that 
	$$
	\cS(x, y) = \max_{k \in \N} \cS(\{0^{k-1}, x \}, 0^k) + \cS(0^k, \{0^{k-1}, y \}) -\cS (0^k, 0^k) - \cS(0^{k-1}, 0^{k-1}).
	$$
	The DtM isometry follows from this statement, together with the following lemma.

	\begin{lemma}
		\label{L:A0ky1}
		Under the coupling of the Airy Sheet and Airy line ensemble in Theorem \ref{T:shadow-sheet}, for $k \in \N$, $x, y \in \R_{+}$
		\begin{align}
			\A[(0, k) \to (y, 1)] &= \Sheet(0^k ,  \{ 0^{k - 1} , y\}) - \Sheet(0^k , 0^k) \label{E:AS_coupling1} \\
			\tilde{\cA}[(0, k) \to (x, 1)] &=  \Sheet(\{ 0^{k - 1}, x \}, 0^k) - \Sheet(0^k , 0^k)  \label{E:AS_coupling2}
		\end{align}
		where $\tilde{\mathcal{A}}_i(x) = \mathcal{A}_i(-x)$. 
	\end{lemma}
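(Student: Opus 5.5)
The plan is to prove both identities in the prelimit for Brownian LPP, in the coupling $(\cS_n,\cA^n)$ used to prove Theorem \ref{T:shadow-sheet}(iii), and then pass to the limit. Recall that $\cA^n$ agrees with $A^n$, the rescaled melon $WB$, on $[0,\infty)$, and with $\tilde A^n$, the rescaled $\tilde W^n B$, on $(-\infty,0]$. Along a subsequence $(\cS_n,\cA^n)\to(\cS,\cA)$ jointly in the uniform-on-compact topology. Single-path LPP values $f[(0,k)\to(y,1)]$ over a fixed finite set of lines and a compact interval are continuous functionals of $f$ in this topology. So it suffices to show, for every $n\ge k$ and $y\ge 0$,
\begin{equation*}
A^n[(0,k)\to(y,1)] = \cS_n(0^k,\{0^{k-1},y\}) - \cS_n(0^k,0^k),
\end{equation*}
and the analogous statement with $\tilde A^n$ and $\cS_n(\{0^{k-1},x\},0^k)$.

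To prove the prelimiting identity, I first use the RSK isometry, Lemma \ref{lemma:RSK_Isometry_semidiscrete}. It rewrites
\begin{equation*}
B[(0^k,n)\to(\{n^{k-1},\,n+2yn^{2/3}\},1)] = WB[(0^k,n)\to(\{n^{k-1},\,n+2yn^{2/3}\},1)].
\end{equation*}
The key deterministic claim is then the following. For a melon $g=Wf$ with $g_1\ge\cdots\ge g_n$ and $g_i(0)=0$, and for $t\le s$,
\begin{equation*}
g[(0^k,n)\to(\{t^{k-1},s\},1)] = \sum_{i=1}^{k-1} g_i(t) + g_k(t) + g[(t,k)\to(s,1)],
\end{equation*}
where the last term carries the convention that path weights include $-g_k(t)$.

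\begin{itemize}
\item The lower bound is immediate: let the $k$ paths follow lines $1,\dots,k$ up to time $t$, and let the $k$-th path then travel from $(t,k)$ up to $(s,1)$.
\item For the upper bound, I would not argue directly. Instead I take the zero-temperature (max-plus) version of Lemma \ref{lemma:LPP_SHE_Sheet} and Lemma \ref{lemma:LPP_values_KPZLE}, which were derived only from Karlin–McGregor/LGV determinants and so tropicalize via Lemma \ref{lemma:temp_change}, in the fully discrete setting.
\item An equivalent route is Corollary \ref{C:LPP-result} with $\mathbf{i}$ ranging over the first $k$ columns, followed by a discretize-and-limit argument exactly as in the proof of Lemma \ref{C:f-corollary}.
\end{itemize}

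Applying the claim with $t=n$ (after the shift by $n$, i.e.\ at rescaled position $0$) and $s=n+2yn^{2/3}$, then centering and rescaling, gives
\begin{equation*}
\cS_n(0^k,\{0^{k-1},y\}) = \sum_{i=1}^{k} A^n_i(0) + A^n[(0,k)\to(y,1)].
\end{equation*}
Also $\cS_n(0^k,0^k)=\sum_{i\le k}A^n_i(0)$ by the defining property of the melon. Subtracting the two gives \eqref{E:AS_coupling1} in the prelimit. The linear centering terms $2kn$ and $2n^{2/3}y$ cancel consistently, since only one path is displaced by $y$.

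For \eqref{E:AS_coupling2}, I run the same argument with the roles of start and end swapped. The formula \eqref{E:tildeWf} shows that $\tilde W^n B$ encodes $B[(s^\ell,n)\to(n^\ell,1)]$, and Lemma \ref{C:f-corollary} is exactly the tool that converts LPP from $(\{0^{k-1},x\},n)$ to $(n^k,1)$ into a path from $(0,k)$ to $(x,1)$ in $\tilde W^n B$, plus $\sum_{j\le k}\tilde W^nB_j(0)$. After rescaling, the terms $\sum_{j\le k}\tilde A^n_j(0)$ cancel against $\cS_n(0^k,0^k)$, since $\tilde A^n(0)=A^n(0)$ by Lemma \ref{L:An-limit}.

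I expect the main obstacle to be the upper bound in the deterministic melon identity. It requires ruling out $k$-tuples in which the first $k-1$ paths leave lines $1,\dots,k-1$ before time $t$. I would first try to obtain it for free from the determinantal/tropical machinery described above, rather than by a path-switching argument. The passage to the limit is then routine.
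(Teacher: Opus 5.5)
Your outline is sound, and the melon identity you state is true. But the one nontrivial step in \eqref{E:AS_coupling1}, the upper bound $g[(0^k,n)\to(\{t^{k-1},s\},1)]\le\sum_{i\le k}g_i(t)+g[(t,k)\to(s,1)]$, is not proved, and neither route you point to supplies it.

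\textbf{Route via Corollary \ref{C:LPP-result}.} Taking $\mathbf i=(1,\dots,k)$ together with the constraint $i_\ell\ge j_\ell$ forces $\mathbf j=(1,\dots,k)$. You then only recover the sum of the top $k$ melon values at the terminal time. That corollary puts its endpoints on the right edge, never on the top line at two times $t<s$. Even if you instead go through Lemma \ref{C:f-corollary} plus metric composition at time $t$, you still need the comparison below.

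\textbf{Route via tropicalization.} Lemmas \ref{lemma:LPP_SHE_Sheet} and \ref{lemma:LPP_values_KPZLE} are continuum statements. Their proofs use derivatives of ratios of $M^t$, the integral identity of Lemma \ref{lemma:cauchy-binet-identity}, and $\epsilon\to0$ limits. They are not subtraction-free rational identities on a finite array, so Lemma \ref{lemma:temp_change} does not apply without first formulating and proving a discrete positive-temperature analogue.

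Your treatment of \eqref{E:AS_coupling2} via Lemma \ref{C:f-corollary} with $I=(1,\dots,k)$ is fine. Note that $\tilde A^n(0)=A^n(0)$ holds pathwise by \eqref{E:tildeWf}, not merely in law as Lemma \ref{L:An-limit} states.

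\textbf{How to close the gap.} The missing bound is elementary and uses only $g_1\ge\cdots\ge g_n$ and $g_n(0)=0$.
\begin{itemize}
\item A path from $(0,n)$ to $(t,L)$ with jump times $0=t_n\le\cdots\le t_{L-1}=t$ has weight $g_L(t)-g_n(0)+\sum_{l=L+1}^n[g_l(t_{l-1})-g_{l-1}(t_{l-1})]\le g_L(t)$.
\item In a disjoint $k$-tuple, strict ordering on $(0,t)$ gives $\pi_i(t^-)\ge i$. Hence $\|\pi_i\|\le g_i(t)$ for $i<k$.
\item Split $\pi_k$ at time $t$ (jumps at $t$ contribute nothing). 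This bounds it by $g_J(t)+g[(t,J)\to(s,1)]$ with $J=\pi_k(t^-)\ge k$.
\item Finally, take a path from $(t,J)$ with jump times $t=u_J\le\cdots\le u_0=s$ and replace its portion below line $k$ by a stay on line $k$. This increases $g_J(t)+\|\cdot\|$ by $\sum_{l=k+1}^J[g_{l-1}(u_{l-1})-g_l(u_{l-1})]\ge0$, so $g_J(t)+g[(t,J)\to(s,1)]\le g_k(t)+g[(t,k)\to(s,1)]$.
\end{itemize}
With this, your argument is complete.

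\textbf{Comparison with the paper.} The paper avoids a separate melon identity. It sets $\bx=0^k$ in the already-established prelimiting DtM formula \eqref{E:shadow-sheet-pre}. There the $\tilde A^n$ path term vanishes, and $\sum_{j\in I}\tilde A^n_j(0)\le\sum_{j\le k}\tilde A^n_j(0)$ with equality only at $I=(1,\dots,k)$. This gives $\cS_n(0^k,\by)-\cS_n(0^k,0^k)=A^n[(0,(1,\dots,k))\to(\by,1)]$, and it then specializes to $\by=(0^{k-1},y)$. The same ordering comparison as above is what guarantees that $I=(1,\dots,k)$ maximizes the full expression there, so the two proofs share their core. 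The paper's is shorter because Lemma \ref{L:mc-consequence} already packages the metric composition and rotation steps. Yours handles \eqref{E:AS_coupling1} directly in the melon, at the cost of proving that comparison by hand.
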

	
	\begin{proof}
		We only prove the first identity, as the second follows identically. Observe that if we set $\bx = 0^k$ in \eqref{E:shadow-sheet-pre} then $\tilde A^n[(0, I) \to (0^k, 1)] + \sum_{j \in I} \tilde A^n_j(0) \le \sum_{j=1}^k \tilde A^n_j(0)$ with equality if and only if $I = (1, \dots, k)$. Therefore
		$$
		\cS_n(0^k, \by) - \cS_n(0^k, 0^k) = A^n[(0, (1, \dots, k)) \to (\mathbf{y}, 1)].
		$$
		Setting $\by = (0^{k-1}, y)$, the right-hand side above equals $A^n[(0, k) \to (y, 1)]$, and \eqref{E:AS_coupling1} follows by taking $n \to \infty$.
	\end{proof}
	
	We expect that Lemma \ref{L:A0ky1} also has a purely geometric proof, but have not pursued this here. For example, in the $k = 2$ case, \eqref{E:AS_coupling1} reduces to the statement that
	$$
	\cS(0, 0) + \cS(0, y) - \cS(0^2, \{0, y\}) = \min_{x \in [0, y]} 2\cS(0,x) - \cS(0^2, x^2).
	$$
	Both sides above represent disjointness costs: the left-hand side is the penalty for forcing two paths from $(0,0)$ to $(0,1)$ and $(y, 1)$ to be disjoint, and the right-hand side is the minimal penalty over $x \in  [0, y]$ for forcing two paths from $(0,0)$ to $(x, 1)$ to be disjoint. Note that local minima of the gap process $2\cS(0,x) - \cS(0^2, x^2)$ are in one-to-one correspondence with non-uniqueness points for geodesics from $(0,0)$ to $\R \times \{1\}$ \cite[Theorem 1.1]{dauvergne2026gap}, which already gives some geometric interpretation to the right-hand side above.
	
	\begin{proof}[Proof of Theorem \ref{T:alternate-approach}]
		By continuity it is enough to prove the result for $x, y \in \Q_+$. We furthermore work in the almost sure event where for every $x, y \in \Q_{+}$, there exists a unique directed geodesic between $(x, 0)$ and $(y, 1)$, and for each $\ell \in \N$, there is a unique disjoint optimizer from $(0^{\ell}, 0) \to (0^{\ell}, 1)$ where $0^{\ell} = (0, 0 , \ldots, 0)$ is an $\ell$-tuple.
		
		We will first use a path-crossing argument to show that for all $k \in \N$,
		\begin{align}
			\Sheet(\{ 0^{k - 1}, x\} , 0^k) + \Sheet(0^k , \{ 0^{k - 1}, y\}) \leq \Sheet(0^{k - 1}, 0^{k - 1}) + \Sheet(0^k , 0^k) + \Sheet(x, y) \label{E:airysheet_ineq1}
		\end{align}
		Suppose $\pi \in Q[\{ 0^{k - 1}, x\} \to 0^k]$ and $\pi' \in Q[0^k \to \{ 0^{k - 1}, y\}]$ are rightmost optimizers. For every $t \in [0, 1]$, let $\sigma_{2k}(t) \ge \cdots \ge \sigma_1(t)$ be the order statistics of the set $\{\pi_i(t), \pi_i'(t) : i = 1, \dots, k\}$. We leave it as an exercise to check that
		$$
		\sum_{i=1}^{2k} \|\sigma_i\|_\cL = \sum_{i=1}^{k} \|\pi_i\|_\cL + \sum_{i=1}^{k} \|\pi_i'\|_\cL = \Sheet(\{ 0^{k - 1}, x\} , 0^k) + \Sheet(0^k , \{ 0^{k - 1}, y\}). 
		$$

		Now define $\tau' = (\sigma_1, \sigma_3, \dots, \sigma_{2k-1})$, let $\tau = (\sigma_2, \sigma_4, \dots, \sigma_{2k-2})$, and set $\rho = \sigma_{2k}$. Noting that $\rho$ is a path from $(x, 0)$ to $(y, 1)$, to complete the proof of \eqref{E:airysheet_ineq1}, we just need to show that $\tau, \tau'$ are disjoint $(k-1)$- and $k$-tuples with all start and end points at $0$. This is immediate from the disjointness of the paths $\pi, \pi'$, which implies that for any indices $i, i + 1, i + 2$ with $i \in \II{1, 2k-2}$ and any $t \in (0, 1)$, one of the inequalities
		$$
		\sigma_i(t) \le \sigma_{i+1}(t) \le \sigma_{i+2}(t)
		$$
		must be strict.

        \begin{figure}[htbp]
		\centering
		\includegraphics[width=0.8\textwidth]{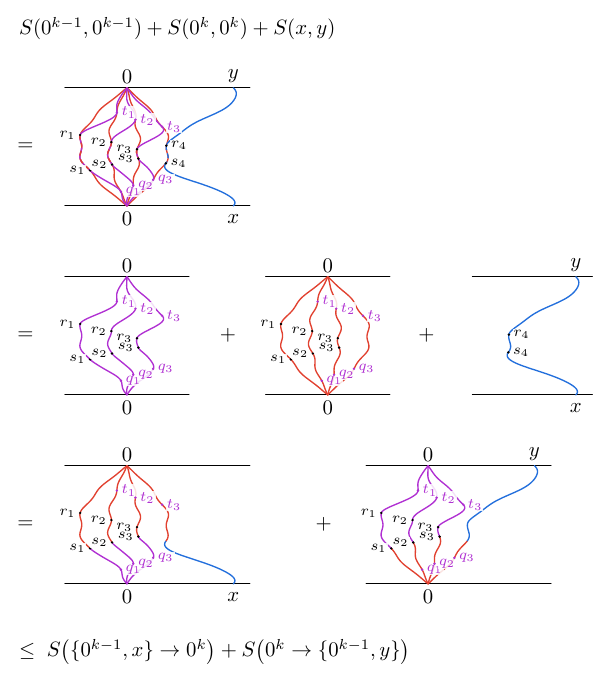} 
		\caption{A sketch of the path crossing argument used in the 'Moreover' statement.} 
		\label{fig:Geometric_Melon_pathswitch}
	\end{figure} 
		
		We move on to the `Moreover' statement. Let $k$ be the smallest integer such that the optimizer $\tau'$ from $(0^k,0)$ to $(0^k,1)$ intersects the rightmost geodesic $\rho$ from $(x, 0)$ to $(y, 1)$.  First, in the case when $k = 1$, the desired equality is simply
		$$
		\cS(x, y) + \cS(0, 0) = \cS(x, 0) + \cS(0, y).
		$$
		To prove this, letting $t$ be any time when $\tau'(t) = \rho(t)$, the concatenations $\tau|_{[0, t]} \oplus \rho|_{[t, 1]}$ and $\rho|_{[0, t]} \oplus \tau|_{[t, 1]}$ give paths from $(0,0)$ to $(y, 1)$ and from $(x, 0)$ to $(0, 1)$, respectively. The inequality LHS$\le$RHS above follows. 
		
		The case when $k \ge 2$ requires more delicacy.
		Let $\tau$ be the optimizer from $(0^{k-1},0)$ to $(0^{k-1},1)$. By our choice of $k$, $\tau$ and $\rho$ are disjoint, whereas $\tau'$ and $\rho$ intersect.
		
		To show that \eqref{E:airysheet_ineq1} is an equality, we will construct disjoint $k$-tuples $\pi$ from $(\{0^{k-1}, x\}, 0)$ to $(0^k, 1)$ and $\pi'$ from $(0^k, 0)$ to $(\{0^{k-1}, y\}, 1)$ such that for all $t \in [0, 1]$, the $2k$-tuples
		\begin{equation}
			\label{E:pipipi}
			(\pi_i(t), \pi_i'(t) : i = 1, \dots, k), \qquad (\tau_1(t), \dots \tau_{k-1}(t), \tau_1'(t), \dots, \tau_k'(t), \rho(t))
		\end{equation}
		are permutations of each other.

		First, note that $\tau$ and $\tau'$ are interlaced, so that  $\tau_{i}' \leq \tau_{i} \leq \tau_{i+1}'$ for $i = 1, \ldots, k - 1$. This can be proven with path-crossing arguments, or alternately follows from the same fact in the prelimit, see \cite[Lemma 2.3]{dauvergne2021disjoint} or \cite[Proposition 2.5]{basu_interlacing}.
		Hence, $\tau_{k-1}'$ is disjoint from $\rho$. Next, we claim that the set of $t \in [0, 1]$ where $\tau_k'(t) = \rho(t)$ is an interval. Indeed, let 
		$$
		s_k = \inf \{t \in [0, 1] : \tau_k'(t) = \rho(t)\}, \qquad r_k = \sup \{t \in [0, 1] : \tau_k'(t) = \rho(t)\}. 
		$$
		The path $\rho|_{[s_k, r_k]}$ is a geodesic from $(\rho(s_k), s_k)$ to $(\rho(r_k), r_k)$ which is disjoint from $\tau_{k-1}'$. Therefore $\tau'_k|_{[s_k, r_k]} = \rho|_{[s_k, r_k]}$, since otherwise we could swap this segment out for $\rho|_{[s_k, r_k]}$ and create a new disjoint $k$-tuple from $(0^k, 0)$ to $(0^k, 1)$ of equal or greater weight, contradicting the optimality/uniqueness of $\tau'$.
		
		Now, let $t_k$ be the first time $\tau_{k - 1}$ rejoins $\tau_k'$ after $r_k$, and let $q_k$ be the last time $\tau_{k-1}$ and $\tau_k'$ meet before time $s_k$. That is:
		\begin{align*}
			t_k &= \inf \{ t \in [r_k, 1] : \tau_{k-1}(t) = \tau_{k}'(t)\}, \\
			q_k &= \sup \{ q \in [0, s_k] : \tau_{k-1}(q) = \tau_k'(q) \}.
		\end{align*}
		Note that $q_k < s_k < r_k < t_k$. Now, we claim that $\tau_{k-1}$ and $\tau_{k-1}'$ must meet in the interval $[q_k, t_k]$. Indeed, if not then we could switch the two path segments $\tau_k'|_{[q_k, t_k]}$ and $\tau_{k-1}|_{[q_k, t_k]}$ in either $\tau$ or $\tau'$ and contradict either uniqueness or optimality of one of these $k$-tuples.
		Define
		\begin{align*}
			s_{k-1} &= \inf \{ s \in [q_k, t_k] : \tau_{k-1}'(s) = \tau_{k-1}(s)\}.\\
			r_{k-1} &= \sup \{ r \in [q_k, t_k] : \tau_{k-1}'(r) = \tau_{k-1}(r)\}.		
		\end{align*}
		Similarly to before, on the interval $[s_{k - 1}, r_{k-1}]$, one must necessarily have that $\tau_{k-1}' = \tau_{k - 1}$, since otherwise we could switch the path segments $\tau_{k-1}|_{[s_{k - 1}, r_{k-1}]}$ and $\tau_{k-1}'|_{[s_{k - 1}, r_{k-1}]}$ in either $\tau$ or $\tau'$.

		We may now iterate this process, defining for $i = k - 1, k - 2, \ldots , 2$:
		\begin{align*}
			t_i &= \inf \{ t \in [r_i, 1] : \tau_{i-1}(t) = \tau_i'(t)\} \\
			q_i &= \sup \{ q \in [0, s_i] : \tau_{i-1}(q) = \tau_i'(q) \} \\
			r_{i-1} &= \sup \{ r \in [q_i, t_i] : \tau_{i-1}'(r) = \tau_{i-1}(r)\} \\
			s_{i-1} &= \inf \{ s \in [q_i, t_i] : \tau_{i-1}'(s) = \tau_{i-1}(s)\}
		\end{align*}
		We now define $\pi \in Q[\{ 0^{k - 1}, x\} \to 0^k]$ and $\pi' \in Q[0^{k} \to \{ 0^{k - 1}, y\}]$ as follows. Define for $i = 1, \ldots, k$
		\begin{align*}
			\pi_{i}(t) &= 
			\begin{cases}
				\tau_i'(t) & t \geq s_i \\ 
				\rho(t) & t \leq s_i \textrm{ and } i = k \\
				\tau_{i}(t) & t \leq s_i  \textrm{ and } i \neq k
			\end{cases} \\
			\pi_i'(t) &= \begin{cases}
				\tau_i'(t) & t \leq r_i \\
				\rho(t) & t \geq r_i \textrm{ and } i = k \\
				\tau_{i}(t) & t \geq r_i \textrm{ and } i \neq k
			\end{cases} 
		\end{align*}
		Since $\tau_i = \tau_i'$ on the intervals $[s_i, r_i]$ for $i < k$, and $\tau_k' = \rho$ on $[s_k, r_k]$, the permutation property \eqref{E:pipipi} holds. It is also straightforward to check that the start and endpoints of $\pi, \pi'$ are as desired. It remains to show disjointness of the paths in $\pi$ and $\pi'$. We show that the paths in $\pi'$ are disjoint, as the proof for $\pi$ is symmetric.
		
		We first prove that $\pi_k'$ is disjoint from $\pi_{k - 1}'$. Since both $\tau_{k-1}$ and $\tau_{k-1}'$ are disjoint from $\rho$, the only possible way for $\pi_k'$ and $\pi_{k- 1}'$ to coalesce is if $\pi_{k - 1}'$ leaves $\tau_{k-1}'$ and coalesces into $\pi_{k}'$ before $\pi_{k}'$ switches to $\rho$. In other words, there would exist a point $t \in (r_{k-1}, s_k)$ with $\tau_{k - 1}(t) = \tau_k'(t)$. However, the construction of $t_k, q_k$ implies that $\tau_{k-1}$ and $\tau_k'$ are disjoint on $(q_k, t_k)$. Since $r_{k-1} \ge q_k$, no such $t$ can exist.

		Now we show that $\pi_i'$ and $\pi_{i-1}'$ are disjoint for $2 \le i \le k-1$. Again, $\tau_{i-1}'(r) < \tau_i(r)$ for all $r \in (0, 1)$ by interlacing. Therefore $\pi_i'$ and $\pi_{i-1}'$ can only coalesce if $\pi_{i-1}'$ leaves $\tau_{i-1}'$ and coalesces with $\pi_{i}'$ before $\pi_i'$ joins $\tau_i$. In other words, there would exist a point $t \in (r_{i-1}, s_i)$ with $\tau_{i - 1}(t) = \tau_i'(t)$. Again, the construction of $t_i, q_i$ implies that $\tau_{i-1}$ and $\tau_i'$ are disjoint on $(q_i, t_i)$. Since $r_{i-1} \ge q_i$, no such $t$ can exist. This completes the proof.
	\end{proof}
	
	\printbibliography
	
\end{document}